\newtheorem{thm}{Theorem}[section]
\newtheorem{lemma}[thm]{Lemma}
\newtheorem{prop}[thm]{Proposition}
\newtheorem{conj}[thm]{Conjecture}
\newtheorem{cor}[thm]{Corollary}
\theoremstyle{definition}
\newtheorem{claim}[thm]{Claim}
\newtheorem*{dfn}{Definition}
\newtheorem*{remark}{Remark}
\newcommand{\floor}[1]{\left \lfloor{#1}\right \rfloor}
\newcommand{\br}[1]{\llbracket{#1}\rrbracket}
\newcommand{\eps}{\varepsilon}
\renewcommand{\Pr}{\mathbb{P}}
\newcommand{\Ex}{\mathbb{E}}
\newcommand{\dTV}{d_\textrm{TV}}
\newcommand{\Pois}{\mathrm{Pois}}
\newcommand{\cB}{\mathcal{B}}
\newcommand{\cF}{\mathcal{F}}
\newcommand{\cG}{\mathcal{G}}
\newcommand{\cH}{\mathcal{H}}
\newcommand{\cK}{\mathcal{K}}
\newcommand{\cL}{\mathcal{L}}
\newcommand{\cM}{\mathcal{M}}
\newcommand{\cP}{\mathcal{P}}
\newcommand{\cU}{\mathcal{U}}
\newcommand{\cZ}{\mathcal{Z}}
\newcommand{\Free}{\cF_{n,m}(H)}
\newcommand{\Freen}{\cF_{n}(H)}
\newcommand{\Freedg}{\cF_{n,m}(H;\delta,\gamma)}
\newcommand{\Freedp}{\cF_{n,m}(H;\delta,\Pi)}
\newcommand{\FreesdP}{\cF^*_{n,m}(H;\delta,\Pi)}
\newcommand{\Frees}{\cF^*}
\newcommand{\FreeB}{\cF^*_B}
\newcommand{\FreeT}{\cF^*(T)}
\newcommand{\FreeZ}{\cF^*(T;Z)}
\newcommand{\Freedgp}{\cF^+_{n,m}(H;\delta,\gamma)}
\newcommand{\Dh}{D_H}
\newcommand{\Ds}{D_*}
\newcommand{\NN}{\mathbb{N}}
\newcommand{\cKp}{\mathcal{K}'}
\renewcommand{\SS}{\mathcal{S}}
\newcommand{\SSp}{\mathcal{S}'}
\newcommand{\SSpJ}{\SSp(J)}
\newcommand{\PhipT}{\Phi'_T}
\newcommand{\cKIJS}{\cK(I,J,S,S')}
\newcommand{\Kphi}{K_\varphi}
\newcommand{\Kphip}{K_{\varphi'}}
\newcommand{\Sphi}{S_\varphi}
\newcommand{\Sphip}{S_{\varphi'}}
\newcommand{\Hm}{H^-}
\newcommand{\emi}{e_{H^-}}
\newcommand{\TT}{\mathcal{T}}
\newcommand{\TTlow}{\TT_{\mathrm{L}}}
\newcommand{\TThigh}{\TT_{\mathrm{H}}}
\newcommand{\TTL}{\TT_{\mathrm{L}}(\Pi)}
\newcommand{\TTH}{\TT_{\mathrm{H}}(\Pi)}
\newcommand{\TTsubB}{\TT_\Pi(B,t,\ell,h)}
\newcommand{\TTpsubB}{\TT'_\Pi(B,t',b)}
\newcommand{\ZT}{\cZ(T)}
\newcommand{\Zr}{\cZ^{R}(T)}
\newcommand{\Zfr}{\cZ^{R}_1(T)}
\newcommand{\Zsr}{\cZ^{R}_2(T)}
\newcommand{\Zsrz}{\cZ^{R}_2(T_Z)}
\newcommand{\Zi}{\cZ^{I}(T)}
\newcommand{\Zib}{\cZ^{I}_{\Pi}(b)}
\newcommand{\ZibT}{\cZ^{I}_{\Pi}(b;T')}
\newcommand{\tilc}{\tilde{c}}
\newcommand{\tilC}{\tilde{C}}
\newcommand{\crit}{\mathrm{crit}}
\newcommand{\vcrit}{v_c}
\newcommand{\Siz}{S_{i_0}}
\newcommand{\FIJ}{F_{I,J}}
\newcommand{\HIJ}{H_{I,J}}
\newcommand{\Tpd}{\vec{U'}}
\newcommand{\Duv}{\mathcal{D}_{u,v}}
\newcommand{\Suv}{\mathcal{S}_{u,v}}
\newcommand{\SigBR}{\Sigma_B^{R}}
\newcommand{\SigTR}{\Sigma_T^{R}}
\newcommand{\SigBI}{\Sigma_B^{I}}
\newcommand{\SigTpI}{\Sigma_{T'}^{I}}
\newcommand{\dd}{\vec{d}}
\newcommand{\Tz}{\vec{U}}
\newcommand{\twoDensTS}{n^{2-\frac{1}{m_2(H)}}}
\newcommand{\criticalityTS}{n^{2-\frac{1}{\eta(H)}}(\log{n})^{\frac{1}{\zeta(H) -k-1}}}
\newcommand{\Gnm}{\cG_{n,m}}
\newcommand{\Gnmp}{\cG_{n,m'}}
\newcommand{\Grk}{\cG_{n,m}(r,k)}
\newcommand{\Part}{\cP_{n,r}}
\newcommand{\Partg}{\cP_{n,r}(\gamma)}
\newcommand{\Pic}{\Pi^c}
\newcommand{\partition}{\{V_1, \dotsc, V_r\}}
\newcommand{\BPk}{\mathcal{B}(\Pi,k)}
\newcommand{\BPz}{\mathcal{B}(\Pi,0)}
\newcommand{\BPkk}{\mathcal{B}(\Pi,k;\kappa)}
\newcommand{\BPzk}{\mathcal{B}(\Pi,0;\kappa)}
\newcommand{\BPkg}{\mathcal{B}_g(\Pi,k)}
\newcommand{\BPkv}{\mathcal{B}_{v_H}(\Pi,k)}
\newcommand{\BPkpo}{\mathcal{B}(\Pi,k+1)}
\newcommand{\BPpk}{\mathcal{B}(\Pi',k)}
\newcommand{\GPB}{\cG_{m}(\Pi,B)}
\newcommand{\GPpBp}{\cG_{m}(\Pi',B')}
\newcommand{\UPB}{\cU_m(\Pi,B)}
\newcommand{\UPBe}{\cU_m(\Pi,B \cup e)}
\newcommand{\GPBe}{\cG_{m}(\Pi,B \cup e)}
\newcommand{\ex}{\mathrm{ex}}
\newcommand{\exnH}{\mathrm{ex}(n,H)}
\newcommand{\By}[2]{\overset{\mbox{\tiny{#1}}}{#2}}
\newcommand{\ByRef}[2]{   \By{\eqref{#1}}{#2} }
\newcommand{\leBy}[1]{    \By{#1}{\le} }
\newcommand{\geBy}[1]{    \By{#1}{\ge} }
\newcommand{\lByRef}[1]{  \ByRef{#1}{<} }
\newcommand{\leByRef}[1]{ \ByRef{#1}{\le} }
\newcommand{\geByRef}[1]{ \ByRef{#1}{\ge} }
\renewcommand{\le}{\leqslant}
\renewcommand{\ge}{\geqslant}
\title[Typical structure of graphs not containing a vertex-critical subgraph]{On the typical structure of graphs not containing a fixed vertex-critical subgraph}
\author{Oren Engelberg}
\address{School of Mathematical Sciences, Tel Aviv University, Tel Aviv 6997801, Israel}
\email{oengel48@gmail.com}
\author{Wojciech Samotij}
\address{School of Mathematical Sciences, Tel Aviv University, Tel Aviv 6997801, Israel}
\email{samotij@tauex.tau.ac.il}
\author{Lutz Warnke}
\address{Department of Mathematics, University of California San Diego, La Jolla CA~92093, USA}
\email{lwarnke@ucsd.edu}
\thanks{
November 11, 2021; revised February 29, 2024. This research was supported by the Israel Science Foundation grants 1147/14 and 1145/18 (OE and WS), 
and by NSF~grant DMS-1703516, NSF~CAREER grant~DMS-2225631, and a Sloan Research Fellowship (LW)}
\begin{document}

\begin{abstract}
  This work studies the typical structure of sparse $H$-free graphs, that is, graphs that do not contain a subgraph isomorphic to a given graph $H$.  Extending the seminal result of Osthus, Pr\"omel, and Taraz that addressed the case where $H$ is an odd cycle, Balogh, Morris, Samotij, and Warnke proved that, for every $r \ge 3$, the structure of a random $K_{r+1}$-free graph with $n$ vertices and $m$ edges undergoes a phase transition when $m$ crosses an explicit (sharp) threshold function $m_r(n)$. They conjectured that a similar threshold phenomenon occurs when $K_{r+1}$ is replaced by any strictly $2$-balanced, edge-critical graph $H$.  In this paper, we resolve this conjecture.  In fact, we prove that the structure of a typical $H$-free graph undergoes an analogous phase transition for every $H$ in a family of vertex-critical graphs that includes all edge-critical graphs.

  \bigskip
  \noindent
  \textsc{Keywords.} 
  $H$-free graphs, asymptotic enumeration, threshold phenomena
\end{abstract}

\maketitle

\setcounter{tocdepth}{1}
\tableofcontents
\setcounter{tocdepth}{2}

\section{Introduction}
\label{sec:introduction}

\subsection{Background and motivation}
\label{sec:backgr-motiv}

Given a graph $H$, let $\Freen$ be the family of all graphs with vertex set $\br{n} = \{1, \dotsc, n\}$ that are \emph{$H$-free}, that is, graphs which do not contain a (not necessarily induced) subgraph isomorphic to $H$. A basic question in extremal graph theory is to determine $\exnH$, the largest number of edges in a graph from $\Freen$.  The classical result of Tur{\'a}n~\cite{Tu41} determines $\mathrm{ex}(n,K_{r+1})$ for every $r \ge 2$ and also characterises the extremal graphs. The works of Erd\H{o}s, Simonovits, and Stone~\cite{ErdSim66, ErSt46} extend this to an arbitrary non-bipartite graph $H$, showing that
\[
  \exnH = \left(1 - \frac{1}{\chi(H)-1} + o(1)\right)\binom{n}{2} ,
\]
and, moreover, that every $H$-free graph with at least $\exnH - o(n^2)$ edges may be made $(\chi(H)-1)$-partite by removing from it some $o(n^2)$ edges.

Here, we are interested in the structure of a \emph{typical} $H$-free graph.  This problem was first considered by Erd\H{o}s, Kleitman, and Rothschild~\cite{ErKlRo76}, who proved that almost all triangle-free graphs are bipartite.  Formally, if $F_n$ is a uniformly chosen random element of $\cF_n(K_3)$, then  
\[
  \lim_{n\to\infty} \Pr(F_n\text{ is bipartite}) = 1.
\]
This result was later generalised by Kolaitis, Pr\"omel, and Rothschild~\cite{KoPrRo87}, who showed that, for every fixed $r \ge 2$, almost all graphs in $\cF_n(K_{r+1})$ are $r$-partite. Very recently, Balogh and the second named author~\cite{BalSam19} proved that this remains true as long as $r \le c \log n / \log \log n$ for some small positive constant $c$, see also~\cite{BalBusColLiuMorSha17, MouNenSte14}.

The result of~\cite{KoPrRo87} was further generalised from cliques to the much wider class of edge-critical graphs.  We say that a graph $H$ is \emph{edge-critical} if it contains an edge whose removal reduces the chromatic number, that is, if $\chi(H \setminus e) = \chi(H)-1$ for some $e \in E(H)$; in particular, every clique is edge-critical and so is every odd cycle.  Simonovits~\cite{Si74} showed that, for every edge-critical graph $H$ and all large enough $n$, not only $\exnH = \ex(n, K_{\chi(H)})$ but also that the only $H$-free graphs with $\exnH$ edges are complete $(\chi(H)-1)$-partite graphs, as in the case $H = K_{\chi(H)}$.  Pr\"omel and Steger~\cite{PrSt92} showed that, if $H$ is edge-critical, then almost every $H$-free graph is $(\chi(H)-1)$-partite.

One drawback of the structural characterisations of typical $H$-free graphs mentioned above is that they do not say anything about sparse graphs, that is, $n$-vertex graphs with $o(n^2)$ edges. Indeed, for every non-bipartite $H$, the family $\Freen$ contains all bipartite graphs and there are at least $2^{\lfloor n^2/4 \rfloor}$ of them; this is much more than the number of \emph{all} graphs with $n$ vertices and at most $n^2/20$ edges. In view of this, it is natural to ask the following refined question, first considered by Pr\"omel and Steger~\cite{PrSt96}: Fix some $m$ with $0 \le m \le \exnH$. What can be said about the structure of a uniformly selected random element of $\Freen$ with exactly $m$ edges? In particular, for what $m$ does this graph admit a similar description as a uniformly random element of $\Freen$?

Let $\Gnm$ be the family of all graphs with vertex set $\br{n}$ and precisely $m$ edges and let $\Free = \Gnm \cap \Freen$ be the subfamily of $\Gnm$ that comprises all $H$-free graphs. Osthus, Pr\"omel, and Taraz~\cite{OsPrTa03} showed that, for every odd integer $\ell \ge 3$, there exists an explicit constant $c_\ell$ such that, letting $m_\ell = m_\ell(n) = c_\ell n^{\frac{\ell}{\ell-1}}(\log n)^{\frac{1}{\ell-1}}$, a uniformly random graph $F_{n,m} \in \cF_{n,m}(C_\ell)$ satisfies, for every $\eps > 0$,
\[
  \lim_{n\to\infty}\Pr(F_{n,m} \text{ is bipartite}) = 
  \begin{cases}
    0 & \text{if } n/2 \le m \le (1-\eps)m_\ell, \\
    1 & \text{if } m \ge (1+\eps) m_\ell.
  \end{cases}
\]
This result was extended by Balogh, Morris, Samotij, and Warnke~\cite{BaMoSaWa16} to the case where $H$ is a clique of an arbitrary order.  They showed that, for every $r\ge 3$, there is an explicit positive constant $c_r'$ such that, letting $m_r' = m_r'(n)= c_r' n^{2-\frac{2}{r+2}}(\log n)^{\frac{1}{\binom{r+1}{2}-1}}$, a uniformly chosen random graph $F_{n,m} \in \cF_{n,m}(K_{r+1})$ satisfies, for every $\eps > 0$,
\[
  \lim_{n\to\infty}\Pr(F_{n,m} \text{ is $r$-partite}) = 
  \begin{cases}
     0 & \text{if } n \ll m \le (1-\eps)m_r', \\
     1 & \text{if } m \ge (1+\eps)m_r'.
  \end{cases}
\]

\subsection{Our result -- edge-critical graphs}
\label{sec:our-results-edge}

Aiming towards a common generalisation of the results of~\cite{BaMoSaWa16,OsPrTa03,PrSt92}, the authors of~\cite{BaMoSaWa16} made the following conjecture. Recall that the \emph{$2$-density} of a graph $H$ is defined by
\[
  m_2(H) = \max\left\{\frac{e_K-1}{v_K-2} : K \subseteq H, \, v_K \ge 3\right\} ,
\]
and that $H$ is called \emph{strictly $2$-balanced} if the maximum above is attained only when $K = H$, that is, if $m_2(K) < m_2(H)$ for every proper subgraph $K \subsetneq H$.

\begin{conj}[{\cite[Conjecture~1.3]{BaMoSaWa16}}]
  For every strictly $2$-balanced, non-bipartite, edge-critical graph~$H$, there exists a constant $C$ such that the following holds. If
  \[
    m \ge C \twoDensTS (\log n)^{\frac{1}{e_H-1}},
  \]
  then almost all graphs in $\Free$ are $(\chi(H)-1)$-partite.
\end{conj}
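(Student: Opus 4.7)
The plan is to show that, when $m\ge Cn^{2-1/m_2(H)}(\log n)^{1/(e_H-1)}$ with $C$ sufficiently large, the number of $H$-free graphs in $\Free$ that are not $(\chi(H)-1)$-partite is negligible compared to those that are. Writing $r=\chi(H)-1$, the count of $r$-partite (hence $H$-free) graphs with $m$ edges is at least $\binom{\ex(n,K_{r+1})}{m}$, obtained by taking $m$-edge subgraphs of the Tur\'an graph $T_r(n)$. The task thus reduces to a matching upper bound on the non-$r$-partite $H$-free graphs.

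The entry point is the hypergraph container theorem, applied to the $v_H$-uniform hypergraph on vertex set $E(K_n)$ whose hyperedges are the copies of $H$. This produces a family $\mathcal{C}$ of at most $\exp(o(n^2))$ containers, each with $\exnH+o(n^2)$ edges, covering every $H$-free graph. Edge-criticality of $H$ allows Simonovits's stability argument to associate to each $C\in\mathcal{C}$ a nearly-balanced partition $\Pi_C=(V_1,\dots,V_r)$ such that the set $B_C\subseteq E(C)$ of internal (\emph{bad}) edges satisfies $|B_C|=o(n^2)$, while the cross-edges of $C$ lie in a complete $r$-partite graph with at most $\ex(n,K_{r+1})$ edges.

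The core estimate is, for each container $C$, the number of $m$-edge $H$-free subgraphs of $C$ using at least one bad edge. Edge-criticality selects $e^\star\in E(H)$ with $\chi(H\setminus e^\star)=r$: for every $b\in B_C$, inserting $b$ as $e^\star$ and placing the remaining $v_H-2$ vertices into $V_1,\dots,V_r$ yields $\Theta(n^{v_H-2})$ candidate copies of $H$, each with $e_H-1$ cross-edges of $C$. Any $H$-free graph $G\subseteq C$ containing $b$ must omit at least one cross-edge from each candidate copy. The strict $2$-balance of $H$ guarantees that this covering constraint is binding — no proper subgraph of $H$ produces a denser constraint — and a Janson / second-moment computation yields that a uniformly random $m$-edge subset of cross-edges of $C$ satisfies the constraint with probability at most $\exp\bigl(-\Theta\bigl(n^{v_H-2}(m/n^2)^{e_H-1}\bigr)\bigr)$. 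At the claimed threshold this is $n^{-\omega(1)}$ per bad edge. Summing this cost over the $|B_C|=o(n^2)$ bad edges per container, and over the $\exp(o(n^2))$ containers, comfortably outweighs the denominator $\binom{\ex(n,K_{r+1})}{m}$.

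The main obstacle, and what I expect to be the delicate technical core of the paper, is carrying out the above second-moment computation uniformly over containers. Different candidate copies of $H\setminus e^\star$ through a given bad edge $b$ overlap in edges, so the variance calculation must exploit strict $2$-balance to ensure the dominant contribution comes from the full $H$ rather than from some dense proper subgraph that happens to sit across $b$. A further subtlety is dealing with containers whose stability approximation is atypically weak — i.e., the tail in $|B_C|$ — where one must combine the quantitative output of the container theorem with criticality-specific supersaturation, as was done for cliques in~\cite{BaMoSaWa16}. Extending that clique-specific machinery to arbitrary strictly $2$-balanced edge-critical $H$ is the step where the bulk of the new work lies.
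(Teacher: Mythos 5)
Your high-level outline --- replace the $r$-partite lower bound with a union bound over approximate structures, and for each structure use a Janson-type estimate to show that an $H$-free graph cannot contain a non-$r$-partite edge except at a price --- is the right skeleton. But there are two gaps, one of which is substantive.

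The smaller one: the per-bad-edge cost is not $n^{-\omega(1)}$. Since $H$ is strictly $2$-balanced, $m_2(H) = (e_H-1)/(v_H-2)$, so at $m = C\,n^{2-1/m_2(H)}(\log n)^{1/(e_H-1)}$ the quantity $n^{v_H-2}(m/n^2)^{e_H-1}$ equals $\Theta(C^{e_H-1}\log n)$. The Janson gain is therefore $n^{-\Theta(C^{e_H-1})}$, a \emph{fixed} power of $n$ (large, but not superpolynomial). This is not an issue in itself --- the paper operates at exactly this scale --- but it means one cannot wave at the final union bound: the exponent from the probability estimate and the exponent from the entropy of choosing bad edges are of the same order and must be balanced constant-for-constant.

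The substantive gap is in ``summing this cost over the $|B_C|=o(n^2)$ bad edges.'' You are implicitly assuming the Janson gain compounds linearly in the number $\ell$ of bad edges present in a given $H$-free graph $G$, i.e.\ that the probability is $n^{-\Theta(\ell)}$, which would beat the $\le n^{O(\ell)}$ ways to choose $\ell$ bad edges. But the gain does \emph{not} automatically compound: if the bad edges all meet a few high-degree vertices, the candidate copies of $H\setminus e^\star$ through distinct bad edges overlap heavily in cross-edges, and the correlation term $\Delta$ in Janson blows up, so a second-moment bound only yields a gain proportional to the number of ``independent'' bad edges, not to $\ell$. This is precisely why the paper, after reducing to a single fixed balanced partition $\Pi$ (rather than a per-container one), splits the monochromatic graph into a \emph{low-degree} regime and a \emph{high-degree} regime. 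In the low-degree regime it extracts a large edge-disjoint collection of monochromatic stars with controlled pairwise intersections (Lemma~\ref{lem:goodStars}) so that Janson gives $\exp(-\Theta(\ell\log n))$ against an $\exp(O(\ell\log n))$ entropy cost (Proposition~\ref{prop:oneStateLDProb}, Lemma~\ref{lem:enumeratingT}). In the high-degree regime the Janson argument on the monochromatic graph alone is not strong enough, and the paper instead crucially uses the \emph{unfriendly} partition property --- high-degree monochromatic vertices must also have high degree into every other colour class --- to find many extensions of the critical vertex into cross-edges (Lemma~\ref{lemma:regCase}), plus a separate combinatorial bound (via Lemma~\ref{lemma:d-sets}) for pathological configurations (the irregular case). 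None of this dichotomy appears in your proposal, and without it the summation step is not justified. The paper also handles the range $m$ close to $\ex(n,H)$ by a separate argument (Section~\ref{sec:dense}), which your sketch would also need.

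Finally, a presentational difference worth noting: the paper does not re-run a container argument. It invokes Theorem~\ref{thm:approx-struct} (from~\cite{BaMoSa15}) as a black box and then adds the balancedness and unfriendliness of the partition on top (Theorem~\ref{thm:aa-balanced-unfriendly}), and \emph{then} reduces via Propositions~\ref{prop:unbalanced-graphs},~\ref{prop:UP},~\ref{prop:large-girth} to comparing counts for a single fixed $\Pi$. Fixing $\Pi$ first, rather than carrying a per-container partition, is what makes the entropy bookkeeping tractable; this reduction (Proposition~\ref{prop:sufficient-1-statement}) is doing real work that your per-container framing would have to reproduce.
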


In this paper, we resolve this conjecture and show that the assumption on $m$ is best possible.

\begin{thm}
  \label{thm:edge-critical}
  For every strictly $2$-balanced, non-bipartite, edge-critical graph~$H$, there exist positive constants $c_H$ and $C_H$ such that, letting
  \[
    m_H = m_H(n) = \twoDensTS (\log n)^{\frac{1}{e_H-1}},
  \]
  the following holds for a uniformly chosen random graph $F_{n,m} \in \Free$:
  \[
    \lim_{n \to \infty} \Pr\big(\text{$F_{n,m}$ is $(\chi(H)-1)$-partite}\big) =
    \begin{cases}
      0 & \text{if } n \ll m \le c_H m_H, \\
      1 & \text{if } m \ge C_Hm_H.
    \end{cases}
  \]
\end{thm}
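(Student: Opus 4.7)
My plan is to treat the $0$- and $1$-statements of Theorem~\ref{thm:edge-critical} separately. Throughout, I use that strict $2$-balancedness forces $m_2(H)=(e_H-1)/(v_H-2)$, so for $m=\alpha m_H$ a direct computation shows that the expected number of copies of $H$ containing a fixed non-crossing pair in a uniformly random $m$-edge subgraph of the Tur\'an graph $T_{\chi(H)-1}(n)$ is a constant multiple of $\alpha^{e_H-1}\log n$. Edge-criticality of $H$ produces an edge $e'\in E(H)$ with $\chi(H\setminus e')=\chi(H)-1$ whose endpoints must share a colour in every $(\chi(H)-1)$-colouring of $H\setminus e'$, so a non-crossing edge together with $e_H-1$ crossing edges can indeed form such a copy of $H$. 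Both directions of the theorem hinge on this same extension count.

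For the $0$-statement, I would construct many non-$(\chi(H)-1)$-partite $H$-free graphs as follows. Fix a balanced $(\chi(H)-1)$-partition $\Pi$ of $\br{n}$, choose $m-1$ crossing edges uniformly at random, and append a single non-crossing edge $e$. When $m=c_H m_H$, the expected number of copies of $H$ through $e$ is $c_H^{e_H-1}\log n$ times a constant; a Janson-type lower bound then shows that the resulting graph is $H$-free with probability at least $n^{-\kappa(c_H)}$, where $\kappa(c_H)\to 0$ as $c_H\to 0$, the corresponding $\Delta$-term being controlled by strict $2$-balancedness. Summing over the $\Theta(n^2)$ choices of $e$ and the roughly $(\chi(H)-1)^n\binom{e(T_{\chi(H)-1}(n))}{m-1}$ choices of partite base graph, and comparing with the total number of $(\chi(H)-1)$-partite members of $\Free$, yields the desired ratio of non-partite to partite $H$-free graphs once $c_H$ is chosen sufficiently small.

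For the $1$-statement, I would employ the hypergraph container method. First, apply a container theorem for $\Freen$ to obtain a family $\mathcal{C}$ of at most $n^{o(m)}$ containers such that every $F\in\Free$ is a subgraph of some $C\in\mathcal{C}$ with $e(C)\le\exnH+o(n^2)$. Next, use Simonovits-type stability to split $\mathcal{C}$ into \emph{close} containers, which decompose as $C=T_\Pi\cup B$ with $T_\Pi$ complete $(\chi(H)-1)$-partite on a nearly balanced partition $\Pi$ and $e(B)=o(n^2)$, and \emph{far} containers, whose $m$-edge subgraph count is negligible relative to $\binom{e(T_{\chi(H)-1}(n))}{m}$. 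Finally, for each close container, show that almost every $H$-free $m$-edge subgraph uses no edge of $B$, so it is $(\chi(H)-1)$-partite. This reduces to the \emph{blocking estimate}: with probability $1-n^{-\omega(1)}$, a uniformly random $m$-edge subgraph of $T_\Pi$ with $m\ge C_H m_H$ contains, for \emph{every} non-crossing pair $e$ simultaneously, a copy of $H\setminus e'$ whose missing edge lands on $e$, forcing any non-crossing edge to create a copy of $H$.

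The principal obstacle is this uniform blocking estimate. At $m=C_H m_H$ the expected blocking-copy count per non-crossing pair is $\Theta(C_H^{e_H-1}\log n)$, so a single-pair Janson lower bound gives failure probability $n^{-\Omega(C_H^{e_H-1})}$; a union bound over the $\binom{n}{2}$ non-crossing pairs and the $n^{o(m)}$ close containers forces $C_H$ to be a sufficiently large absolute constant. Two subtleties must be addressed: (a)~the Janson $\Delta$-term must be controlled uniformly in the pair $e$ and the partition $\Pi$, using strict $2$-balancedness to rule out pathological overlaps among near-copies of $H\setminus e'$; and (b)~the stability step must produce $\Pi$ so that $B$ is genuinely small with respect to it, so that the number of $m$-edge subgraphs of $C$ touching $B$ is dominated by the partite count with acceptable error. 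Handling these uniformly over the container family, and extending from the edge-critical case to the broader vertex-critical family alluded to in the paper's title, is where the bulk of the technical work lies.
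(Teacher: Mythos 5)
Your $0$-statement plan is essentially the paper's approach (Harris lower bound on $H$-free graphs that are ``one edge away'' from $r$-partite, summed over balanced partitions and the extra non-crossing edge, then compared to the $r$-partite count). A few details are glossed over that the paper handles carefully: (i)~overcounting, since a given graph may arise from many choices of $(\Pi, e)$---the paper introduces unique-covering-pair families $\cU_m(\Pi,B)$ and a girth condition on the monochromatic part to control this; (ii)~the constructed graph could in principle be $r$-partite under a different partition; and (iii)~the regime $n\ll m\ll n^{2-1/m_2(H)}$ needs a separate, elementary argument (Proposition~\ref{prop:below2den}), since below the $2$-density threshold the ``one edge away'' comparison is not the right one. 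Still, the core construction and use of Janson/Harris at the threshold scale match the paper.

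Your $1$-statement plan, however, has a genuine quantitative gap. You propose containers with $|\mathcal{C}|\le n^{o(m)}$, and then a per-container ``blocking estimate'' with failure probability $n^{-\Omega(C_H^{e_H-1})}$ per non-crossing pair. That failure probability is only polynomial in $n$; Janson at $m\asymp m_H$ gives $\mu=\Theta(C_H^{e_H-1}\log n)$ per pair, hence $\exp(-\Theta(\mu))=n^{-O(1)}$, and no choice of $C_H$ improves this to $n^{-\omega(1)}$, let alone to the $e^{-\Omega(m)}$ that the container over-counting demands. Concretely, $\sum_{C}\binom{e(C)}{m}$ overshoots $|\Free|$ by a factor of order $e^{O(\eps m)}$ (with $\eps$ the slack in $e(C)\le\exnH+\eps n^2$), and a polynomial-in-$n$ per-container saving cannot absorb an exponential-in-$m$ surplus when $m\gg n\log n$. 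A second, related gap: after stability, the monochromatic part $B$ of a typical non-partite $H$-free graph can have up to $\delta m$ edges with rather arbitrary structure, not a single ``offending'' edge; blocking a single pair does not bound the number of graphs whose monochromatic part is a complicated graph of size $o(m)$. The paper avoids containers altogether for this reason: it quotes the approximate-structure theorem of \cite{BaMoSa15}, passes to a \emph{single} balanced, unfriendly partition $\Pi$ (so that one can sum over $\Pi$ without the exponential container surplus), and then runs a delicate enumeration over the possible monochromatic graphs $T$---splitting into sparse/dense and low-degree/high-degree regimes, choosing a carefully pruned family of stars inside $T$ to tame the Janson $\Delta$-term, and separately handling a small ``irregular'' family via Lemma~\ref{lemma:d-sets}. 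That multi-layered case analysis, rather than a uniform blocking estimate, is where the threshold is actually located.
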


In fact, Theorem~\ref{thm:edge-critical} is only a special case of a much more general result, Theorem~\ref{thm:full} below, which we present in the next subsection.

\subsection{Our result -- vertex-critical graphs}
\label{sec:our-results-vertex}

We say that a graph $H$ is \emph{vertex-critical} if it contains a vertex whose deletion reduces the chromatic number, that is, if $\chi(H - v) = \chi(H)-1$ for some $v \in V(H)$; we call every such $v$ a \emph{critical vertex} of $H$. A star $S \subseteq H$ centred at a critical vertex is called a \emph{critical star} if $\chi(H\setminus S)=\chi(H)-1$ and if no proper subgraph $S'\subsetneq S$ has this property. For a critical vertex $v$, we define $\crit(v)$, the \emph{criticality of $v$}, to be the smallest number of edges incident to $v$ whose removal decreases the chromatic number, that is,
\[
  \crit(v) = \min\left\{e_S : \text{$S$ is a critical star centred at $v$}\right\},
\]
and define $\crit(H)$, the \emph{criticality of H}, to be the smallest criticality of a vertex, that is,
\[
  \crit(H) = \min\{\crit(v) : \text{$v$ is a critical vertex}\}.
\]
Note that every edge-critical graph is also vertex-critical. Conversely, a vertex-critical graph is edge-critical precisely when its criticality is equal to one.

The motivation for our investigation of the typical structure of graphs not containing a fixed vertex-critical graph is a result of Hundack, Pr\"omel, and Steger~\cite{HuPrSt93} which states that, for every vertex-critical $H$, almost all $H$-free graphs are `almost' $(\chi(H)-1)$-partite in the following precise sense. Given integers $r \ge 1$ and $k \ge 0$, we will denote by $\cG(r,k)$ the class of all graphs $G$ that admit an $r$-colouring of $V(G)$ for which the subgraph of $G$ induced by each of the $r$ colour classes has maximum degree at most $k$. In particular, $\cG(r,0)$ is the class of all $r$-colourable graphs and the following theorem generalises the main result of~\cite{PrSt92}. 

\begin{thm}[\cite{HuPrSt93}]
  \label{thm:HuPrSt}
  If $H$ is a vertex-critical graph of criticality $k+1$ and $\chi(H) = r+1 \ge 3$, then, for some positive $c$,
  \[
    |\cF_n(H)| = \left(1+O(2^{-cn})\right)\cdot|\cF_n(H)\cap\cG(r,k)|.
  \]
\end{thm}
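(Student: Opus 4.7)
The plan is to adapt the Prömel--Steger strategy for edge-critical graphs (the case $k = 0$) to the vertex-critical setting, establishing
\[
|\Freen \setminus \cG(r,k)| \le 2^{-cn} \cdot |\Freen \cap \cG(r,k)|.
\]
Every $r$-partite graph on $\br{n}$ lies in both $\Freen$ (since $\chi(H) = r+1$) and $\cG(r,k)$, which immediately gives $|\Freen \cap \cG(r,k)| \ge 2^{\exnH - o(n^2)}$. The task therefore reduces to proving the upper bound $|\Freen \setminus \cG(r,k)| \le 2^{\exnH - cn}$.

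I would first combine an Erd\H{o}s--Simonovits-type stability argument with a standard counting bound to dispose of non-extremal graphs: those $G \in \Freen$ with $e(G) \le \exnH - \eps n^2$ number at most $2^{\exnH - \eps' n^2}$ and contribute negligibly. Each remaining near-extremal $H$-free graph admits a partition $V(G) = V_1 \cup \dotsb \cup V_r$ with irregularity $\sum_i e_G(V_i) + \sum_{i<j}(|V_i||V_j| - e_G(V_i, V_j)) = o(n^2)$; for each such $G$, fix the canonical partition $\Pi = (V_1, \dotsc, V_r)$ maximising the cross-edge count. If $G \in \Freen \setminus \cG(r,k)$, then $\Pi$ itself is an $r$-colouring of $V(G)$ failing the $\cG(r,k)$ condition, so there is a \emph{bad vertex} $w \in V_1$ (say) with at least $k+1$ neighbours $u_1, \dotsc, u_{k+1}$ inside $V_1$.

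The key structural claim is that each bad vertex forces $\Omega(n)$ cross-edges to be absent from $G$. To prove it, fix a critical vertex $v^* \in V(H)$ with $\crit(v^*) = k+1$ and a proper $r$-colouring $\phi$ of $H - v^*$ for which some colour class $C$ satisfies $C \cap N_H(v^*) = \{y_1, \dotsc, y_{k+1}\}$; such a $\phi$ exists by the definition of $\crit$. Identify $C$ with $V_1$ and the remaining colour classes of $\phi$ with $V_2, \dotsc, V_r$. In the complete multipartite graph on $\Pi$ augmented by the $k+1$ bad edges $\{w, u_i\}$, the number of embeddings of $H$ mapping $v^* \mapsto w$ and $y_i \mapsto u_i$ is $\Theta(n^{v_H - k - 2})$, while each missing cross-edge destroys at most $O(n^{v_H - k - 3})$ of them (fix the role of that edge among the $O(1)$ eligible edges of $H$, then place the remaining $v_H - k - 3$ free vertices in $O(n)$ ways each). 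Since $G$ is $H$-free, at least $\Omega(n)$ cross-edges incident to $\{w, u_1, \dotsc, u_{k+1}\}$ must therefore be absent from $G$.

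Granting the structural claim, I would finish by a careful enumeration $G \mapsto (\Pi, w, \{u_i\}, E(G))$: the $r^n$ choices for $\Pi$ and the $n^{k+2}$ choices for the bad structure are dominated by a factor of $2^{\Omega(n)}$ saved in the count of compatible edge sets, yielding $|\Freen \setminus \cG(r,k)| \le 2^{\exnH - c'n}$. The main obstacle is turning the \emph{localised} linear deficit of the structural claim into this \emph{global} factor-$2^{-cn}$ saving: because the missing cross-edges are concentrated on the $O(n)$-sized set of edges at $\{w, u_1, \dotsc, u_{k+1}\}$, a naive binomial estimate on edge subsets is not enough, and one needs a more delicate (entropy- or Kleitman--Winston-type) argument to convert the deficit into entropy loss. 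Additional care is required when the canonical partition has bad vertices in several classes simultaneously, so that the independent deficits combine via a union bound rather than producing wasteful overcounting.
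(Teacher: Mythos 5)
The paper states Theorem~\ref{thm:HuPrSt} as a citation to \cite{HuPrSt93} without reproducing a proof, so there is no internal argument to compare against; your proposal has to stand on its own. The high-level template --- stability to restrict to near-extremal graphs, a canonical near-balanced $r$-colouring, a bad vertex $w$ with $k+1$ monochromatic neighbours, and a structural claim that many cross-edges at the bad star must be absent --- is the correct Pr\"omel--Steger/Kleitman--Rothschild scaffold, and the structural claim is essentially right, provided one notes (as you sketch but do not spell out) that a missing cross-edge away from $\{w,u_1,\dotsc,u_{k+1}\}$ destroys only $O(n^{v_H-k-4})$ of the $\Theta(n^{v_H-k-2})$ embeddings, so near-extremality is what forces $\Omega(n)$ of the deletions to be incident to the bad star.

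However, the proof does not close, and not only for the reason you half-acknowledge. (i) Your ``disposal'' step is false as stated: the balanced Tur\'an graph alone has at least $2^{\exnH-1}$ subgraphs with at most $\exnH-\eps n^2$ edges once $\eps<(1-1/r)/4$, so the number of $H$-free graphs with $e(G)\le\exnH-\eps n^2$ is $\Omega(2^{\exnH})$, not $\le 2^{\exnH-\eps' n^2}$; what is actually needed is a container-type bound on the number of $H$-free graphs that are \emph{far from $r$-partite in edit distance}, which is a substantially different statement. (ii) The enumeration $G\mapsto(\Pi,w,\{u_i\},E(G))$ does not control the remaining monochromatic edges of $G$: near-extremality only gives $e(G\setminus\Pi)=o(n^2)$, leaving far more than $2^{cn}$ choices for those edges, which swamps both the $2^{\Omega(n)}$ you hope to save and the $r^n\cdot n^{k+2}$ factor for $(\Pi,w,\{u_i\})$. (iii) The ``main obstacle'' you flag is genuine and unresolved: if only a constant fraction $\alpha<1/2$ of the $L=\Theta(n)$ cross-edges at the bad star is forced absent, then $\sum_{j\le(1-\alpha)L}\binom{L}{j}=\Theta(2^L)$, so the naive binomial count yields no exponential gain, and the entropy/Kleitman--Winston replacement you allude to is not supplied. (iv) The lower bound $|\Freen\cap\cG(r,k)|\ge 2^{\exnH-o(n^2)}$ is too weak to combine with an upper bound of $2^{\exnH-cn}$; you need, and in fact already have, the direct bound $\ge 2^{\exnH}$ from the subgraphs of a single balanced complete $r$-partite graph.
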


\begin{remark}
  A less accurate description of the structure of a typical member of $\cF_n(H)$, but valid for \emph{every} non-bipartite $H$, was given by Balogh, Bollob\'as, and Simonovits~\cite{BaBoSi09}.
\end{remark}

Our main result is a sparse analogue of Theorem~\ref{thm:HuPrSt} that is valid for a subclass of vertex-critical graphs that includes all edge-critical graphs. In order to state it, we need several additional definitions.

\begin{dfn}
  A vertex-critical graph $H$ will be called \emph{simple vertex-critical} if every colouring of~$H$ with $\chi(H)-1$ colours admits a monochromatic star with $\crit(H)$ edges or a~monochromatic cycle. Further, a vertex-critical graph will be called \emph{plain vertex-critical} if, for every colouring of $H$ with $\chi(H)-1$ colours, the monochromatic graph $B$ satisfies at least one of the following:
  \begin{enumerate}[label={(\roman*)}]
  \item
    $B$ contains a cycle,
  \item
    $B$ is the star $K_{1, \crit(H)}$,
  \item
    $B$ has a vertex with degree larger than $\crit(H)$, or
  \item
    $B$ has two nonadjacent vertices with degree $\crit(H)$.
  \end{enumerate}
\end{dfn}

It is not hard to see that every edge-critical graph is plain vertex-critical and every plain vertex-critical graph is simple vertex-critical.

\begin{remark}
  Another family of plain vertex-critical graphs are the complete multipartite graphs $K_{1, k_1, \dotsc, k_r}$ with $1 \le k_1 < k_2 \le \dotsb \le k_r$. To see this, denote the $r+1$ colour classes of this graph by $V_0, \dotsc, V_r$, so that $|V_0| = 1$ and $|V_i| = k_i$ for each $i \in \br{r}$. Consider an arbitrary $r$-colouring $W_1 \cup \dotsb \cup W_r$ of the vertices of $K_{1,k_1, \dotsc, k_r}$. If, for some $j \in \br{r}$, we have $|W_j \setminus V_i| \ge 2$ for every $i$, then $W_j$ must contain a cycle; indeed, in this case $W_j$ intersects three different $V_i$ or it intersects some two $V_i$ in at least two vertices each. We may therefore assume that, for every $j \in \br{r}$, there is an $i(j)$ such that $\delta_j = |W_j \setminus V_{i(j)}| \le 1$.  This assumption guarantees that each $W_j$ induces a star (if $\delta_j = 1$) or an empty graph (if $\delta_j = 0$). Let $J = \{ j \in \br{r} : \delta_j = 1\}$ and observe that
  \[
    \sum_{j \in J} |W_j| = 1 + k_1 + \dotsb + k_r - \sum_{j \notin J} |W_j| \ge 1+k_1+\dotsb+k_{|J|} \ge |J| \cdot (k_1+1).
  \]
  In particular, either each $W_j$ with $j \in J$ induces a copy of $K_{1,k_1}$ or one of them induces a graph with maximum degree strictly larger than $k_1$.
\end{remark}

For an integer $k \ge 2$ and a graph $F$ with $v_F \ge k+1$, we let
\[
  d_k(F) = \frac{e_F-k+1}{v_F-k},
\]
cf.~the definition of $2$-density given at the start of Section~\ref{sec:our-results-edge}.  
Suppose that $H$ is a non-bipartite, vertex-critical graph.  
Let $k \ge 0$ and $r \ge 2$ be the integers such that $\crit(H) = k +1$ and $\chi(H)=r+1$. 
Let $S_1,\dotsc, S_t$ be all the smallest critical stars of $H$, i.e., all its critical stars with~$k+1$ edges. 
Denote, for each $i \in \br{t}$,
\[
  \eta_i(H) = \max \big\{ d_{k+2}(F) : S_i \subsetneq F\subseteq H \big\}
  \qquad \text{and} \qquad 
  \eta(H) = \min_{1\le i \le t} \eta_i (H) ,
\]
and, further,
\[
  \zeta_i(H) = \min \big\{ e_F : S_i \subsetneq F\subseteq H, \; d_{k+2}(F) = \eta_i(H) \big\}
  \qquad \text{and} \qquad
  \zeta(H) = \max_{\substack{1 \le i \le t \\ \mathclap{\eta_i(H) = \eta(H)}}} \zeta_i(H).
\]
We are now ready to define the threshold function:
\begin{equation}
  \label{eq:mH-def}
  m_H = m_H(n) = 
  \begin{cases}
    \twoDensTS & \text{if } m_2(H) > \eta(H), \\
    \criticalityTS & \text{otherwise}.
  \end{cases}
\end{equation}

\begin{remark}
  If $H$ is edge-critical, then the smallest critical stars $S_1, \dotsc, S_t$ are the critical edges of $H$, that is, edges $S$ satisfying $\chi(H \setminus S) = \chi(H)-1$. If, additionally, $H$ is strictly $2$-balanced, then, for every $i \in \br{t}$, the maximum in the definition of $\eta_i(H)$ is uniquely attained at $F = H$ and thus $\eta_i(H) = d_2(H) = m_2(H)$ and $\zeta_i(H) = e_H$; consequently, $\eta(H) = m_2(H)$ and $\zeta(H) = e_H$. This shows that definition~\eqref{eq:mH-def} extends the definition of $m_H$ given in the statement of Theorem~\ref{thm:edge-critical}.
\end{remark}

The following generalisation of Theorem~\ref{thm:edge-critical} is the main result of this work.

\begin{thm}
  \label{thm:full}
  Let $H$ be a simple vertex-critical graph with $\chi(H) = r+1 \ge 3$ and criticality $k+1$,	and let $F_{n,m}$ denote a uniformly chosen random graph of~$\Free$. There exists a positive constant $C_H$ such that, for every $m \ge C_Hm_H$,
  \[
    \lim_{n \to \infty} \Pr\big(F_{n,m} \in \cG(r,k)\big) = 1.
  \]
  Furthermore, if $H$ is plain vertex-critical, then there exists a positive constant $c_H$ such that, for every $n \ll m \le c_Hm_H$,
  \[
    \lim_{n \to \infty} \Pr\big(F_{n,m} \in \cG(r,k)\big) = 0.
  \]
\end{thm}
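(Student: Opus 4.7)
The theorem has a $1$-statement (if $m \ge C_H m_H$ then $F_{n,m}\in\cG(r,k)$ with high probability) and a $0$-statement (if $n\ll m\le c_H m_H$ then $F_{n,m}\notin\cG(r,k)$ with high probability), and I would attack each separately, adapting the clique template of~\cite{BaMoSaWa16}.

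For the $1$-statement, my plan is to pair the hypergraph container method with a supersaturation lemma. The first step is to upgrade Theorem~\ref{thm:HuPrSt}: every graph $G\subseteq K_n$ with sufficiently many edges that does not lie in $\cG(r,k)$ contains at least $\delta n^{v_H}$ copies of $H$, for some $\delta=\delta(H)>0$. This rests on the simple vertex-critical hypothesis: in any candidate $(r,k)$-colouring of $G$ some colour class must harbour either a monochromatic star $K_{1,\crit(H)}$ or a monochromatic cycle, and either of these can be completed to a copy of $H$ through the dense bipartite portion of $G$ via a standard embedding lemma. Feeding this into the hypergraph container lemma yields a short list of containers $\mathcal{C}$, each close to $\cG(r,k)$, with every $H$-free graph a subgraph of some $C\in\mathcal{C}$ and $\log|\mathcal{C}|$ much smaller than $m_H$. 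Counting $H$-free subgraphs inside each container and comparing with $|\Free\cap\cG(r,k)|$ then yields the desired bound.

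For the $0$-statement, I would produce a large family of $H$-free graphs outside $\cG(r,k)$. Fix a critical star $S_{i_0}$ attaining $\eta_{i_0}(H)=\eta(H)$; for each balanced $r$-partition $\Pi$ of $\br{n}$, take a uniformly chosen $r$-partite graph $G_0$ respecting $\Pi$ with $m-(k+1)$ edges and add a copy of $S_{i_0}$ inside a single colour class to form $G=G_0+S_{i_0}$. A direct enumeration shows that the number of such $G$ exceeds $|\Free\cap\cG(r,k)|$ by a polynomial factor in $n$, so it remains to verify that a typical $G$ is $H$-free. By the plain vertex-critical hypothesis, the monochromatic graph of any embedding of $H$ in $G$ must realise one of the patterns (i)--(iv); since $G$'s monochromatic part is exactly the star $S_{i_0}$, only pattern (ii) is possible, so any embedding must map a critical star of $H$ onto $S_{i_0}$ and extend some $F\supsetneq S_{i_0}$ entirely into $G_0$. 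A Janson-type computation shows that the expected number of such extensions per placement of $S_{i_0}$ is of order $n^{v_F-k-2}(m/n^2)^{e_F-k-1}$ for the densest admissible $F$, and this is $o((\log n)^{-1})$ precisely when $m\le c_H n^{2-1/\eta(H)}(\log n)^{1/(\zeta(H)-k-1)}$; a union bound over the roughly $n^{k+2}$ possible positions of $S_{i_0}$ in a single colour class supplies the logarithmic exponent $1/(\zeta(H)-k-1)$.

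The main obstacle is the supersaturation step in the $1$-statement. For edge-critical $H$ the chromatic drop after deleting a single edge drives an essentially one-shot embedding argument; for vertex-critical $H$ the drop occurs only after deleting an entire critical star, so one must juggle the four monochromatic patterns (i)--(iv) and run a refined container iteration in order to match the precise logarithmic power appearing in $m_H$. A secondary technical wrinkle is that $m_H$ takes two different functional forms depending on whether $m_2(H)>\eta(H)$, each requiring its own balance of parameters in the container argument and in the second-moment step.
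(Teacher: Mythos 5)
Your plan for the $1$-statement is the wrong tool for the job, and the supersaturation claim on which it rests is false as stated. A sparse graph $G$ that fails to be in $\cG(r,k)$ need not contain a single copy of $H$: take $\Pi \cup K_{1,k+1}$ where the star is placed so that none of its extensions to $H$ lie in $\Pi$. So ``$G\notin\cG(r,k)$ with many edges'' does not give $\delta n^{v_H}$ copies of $H$. Containers plus supersaturation is precisely the machinery behind the \emph{approximate} $1$-statement, which the paper obtains by simply citing~\cite[Theorem~1.7]{BaMoSa15} (Theorem~\ref{thm:approx-struct} here). That result places a typical $G\in\Free$ within $\delta m$ edges of an $r$-partite graph once $m\gg\twoDensTS$, but gives no control at the finer scale $m_H$ and no way to discharge the logarithmic factor. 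The entire content of Sections~\ref{sec:oneState}--\ref{sec:dense} is the gap between those two thresholds: after fixing a balanced, unfriendly $r$-colouring $\Pi$ and the maximal degree-$k$ part $B$ of the monochromatic graph, the paper decomposes $\FreesdP$ according to the degree distribution of $G\setminus\Pi$ (low vs.\ high degree, regular vs.\ irregular hypergraphs built from the high-degree vertices) and bounds each piece with a separate application of the Hypergeometric Janson Inequality plus bespoke enumeration lemmas. None of that appears in your plan, and it is not something a container iteration can reproduce, because the log-power in $m_H$ arises from balancing $\exp(-\mu)$ against $\Theta(m)$ for a specific family of forbidden copies anchored at a monochromatic star, not from a container count.

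Your $0$-statement is closer in spirit to the paper, but several steps are off or missing. First, the regime $n\ll m\ll\twoDensTS$ requires a separate elementary argument (Proposition~\ref{prop:below2den} and Proposition~\ref{prop:Grk-upper}); your construction and Janson/Harris computation only address $m=\Omega(\twoDensTS)$. Second, adding a bare star $S_{i_0}$ to a random $r$-partite graph $G_0$ does not, by itself, witness $G\notin\cG(r,k)$: $G$ might admit a \emph{different} $r$-colouring under which the monochromatic graph has maximum degree at most $k$. The paper addresses this via the uniqueness families $\UPB$ and Proposition~\ref{prop:UP}. Third, the paper's base object is not a pristine $r$-partite graph: it is $\Pi\cup B\cup e$ where $B\in\BPk$ has girth larger than $v_H$ and $e$ is a single extra edge; the girth hypothesis is exactly what forces, via plain vertex-criticality, every embedding of $H$ to route a critical star through $e$, and it also ensures (via Proposition~\ref{prop:large-girth}) that such $B$ account for a constant fraction of $\Grk$. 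With a single star and an empty $B$ you lose both the lower bound on the comparison class and the structural dichotomy that makes the Harris computation clean. Finally, the log exponent $1/(\zeta(H)-k-1)$ does not come from a union bound over $n^{k+2}$ star placements; it comes from requiring the Harris lower bound $\exp(-\Theta(n^{v_F-k-2}(m/n^2)^{e_F-k-1}))$ to be at least $n^{-\eps}$, i.e.\ the exponent to be $O(\log n)$, for the minimising $F\supsetneq S_i$.
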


It may be worth pointing out that there are plain vertex-critical graphs $H$ for which $\eta(H)$ is strictly larger than $m_2(H)$. (One such graph is $H = K_{1,2,3}$, which is plain vertex-critical with criticality two, has exactly one critical star, and satisfies $\eta(H) = 3 > 5/2 = m_2(H)$.)  Why is it interesting? Let $H$ be such a graph and let $F_{n,m}$ be a uniformly chosen random element of $\Free$. As soon as $m \gg \twoDensTS$, with probability close to one, $F_{n,m}$ can be made $(\chi(H)-1)$-partite by removing from it $o(m)$ edges, see Theorem~\ref{thm:approx-struct} below.  However, it is only when $m \gg \criticalityTS$, polynomially above the $2$-density threshold, that the `exact' structure emerges.  What can one say about the typical structure of $F_{n,m}$ between these two thresholds? 

Unfortunately, our techniques are too weak to extend Theorem~\ref{thm:full} to arbitrary vertex-critical graphs.  Still, they are sufficient to prove an approximate version of the $1$-statement.  We refrain ourselves from stating this result here; instead, we refer the interested reader to~\cite{Eng}.  Having said that, we have no good reason to believe that $m_H$ is the threshold for a general vertex-critical graph $H$.  We believe that it would be extremely interesting to find the threshold for a generic vertex-critical graph $H$ and extend Theorem~\ref{thm:full} to all such $H$.

\subsection{Acknowledgements}
We thank J\'ozsef Balogh, Michael Krivelevich, Rob Morris, and Angelika Steger for stimulating discussions about various aspects of this work, and Anita Liebenau for bringing~\cite[Theorem~2]{Bol80} to our attention. 
We are also grateful to the referees for helpful suggestions concerning the presentation.

\section{Outline of the proof}

\subsection{Why is $m_H$ the threshold?}

The location of the threshold at which a typical graph in $\Free$ `enters' $\cG(r,k)$ can be guessed by comparing the number of graphs in $\Free \cap \cG(r,k)$ to the number of graphs in $\Free$ that are `one edge away' from $\cG(r,k)$.  It is relatively straightforward to estimate the former:  For a constant proportion of graphs $G \in \Gnm \cap \cG(r,k)$, the monochromatic subgraph of $G$ (under its optimal colouring, i.e., the colouring that witnesses $G \in \cG(r,k)$) has girth larger than the number of vertices of $H$ (this is true for all $m$); moreover, the assumption that $H$ is \emph{simple} vertex-critical guarantees that each such graph is $H$-free.  As for the latter quantity, the number of graphs in $\Gnm$ that are `one edge away' from $\cG(r,k)$ is $\Theta(m)$ times larger, but the proportion of them that are $H$-free is a decreasing function of $m$.  The reason for this is that every such graph $G$ has at least one copy of $K_{1,k+1}$ in its monochromatic subgraph (under every $r$-colouring) and, since $\chi(H) = r+1$ and $\crit(H) = k+1$, this copy of $K_{1,k+1}$ can be extended to many copies of $H$ in $K_n$ that use only edges that are properly coloured.  In particular, if $G$ is $H$-free, then it must avoid all such copies of $H \setminus K_{1,k+1}$.  Furthermore, if $G$ is \emph{plain} vertex-critical, then this implication can be reversed---$G$ is $H$-free \emph{if and only if} it avoids all such copies of $H \setminus K_{1,k+1}$---under a weak assumption on the monochromatic graph (girth larger than $v_H$) that is satisfied by a constant proportion of all such graphs.

Finally, if we fix both the optimal $r$-colouring and the monochromatic graph (which is `one edge away' from having maximum degree $k$), the proportion $P_m$ of graphs in $\Gnm$ (among those that contain our fixed monochromatic graph) that avoid all copies of $H \setminus K_{1,k+1}$ of the above type can be bounded using the inequalities of Janson (from above) and Harris (from below) as follows:
\[
  - \log P_m = \Theta\left(\max_{i \in \br{t}} \; \min\left\{n^{v_F-k-2}  \cdot (m/n^2)^{e_F-k-1} : S_i \subsetneq F \subseteq H \right\}\right),
\]
where $S_1, \dotsc, S_t$ are the (smallest) critical stars of $H$.

The threshold $m_H$ is then the smallest $m \ge \twoDensTS$ for which $-\log P_m \ge \log m$, that is, for which the number of $H$-free graphs that are `one edge away' from $\cG(r,k)$ is of the same order of magnitude as $|\Free \cap \cG(r,k)|$.  We note that the additional requirement $m \ge \twoDensTS$ is needed because $\twoDensTS$ is the threshold for approximate $r$-colourability of a random element of $\Free$ and comparing $|\Free \cap \cG(r,k)|$ only to the number of graphs in $\Free$ that are `one edge away' from $\cG(r,k)$---as opposed to $|\Free|$---cannot be justified below this threshold.

\subsection{The 0-statement}

Our proof of the $0$-statement (the second assertion of Theorem~\ref{thm:full}), presented in Section~\ref{sec:zeroState}, is a formalisation of the above discussion.  (Having said that, in the range $n \ll m \ll \twoDensTS$, we give a separate, elementary counting argument that exploits the fact that a typical graph in $\Gnm$ can be made $H$-free by removing from it some $o(m)$ edges, see Section~\ref{sec:below-2-density}.)  One of the key ideas here is to reduce the problem of comparing $|\Free \cap \cG(r,k)|$ and the number of graphs in $\Free$ that are `one edge away' from $\cG(r,k)$ to the analogous problem for a \emph{fixed} $r$-colouring that is moreover balanced (in the sense that each of its $r$ colour classes has approximately $n/r$ vertices).  Various assertions and estimates that justify this reduction are proved in Section~\ref{sec:almostRcolour}, where we also show that the number of graphs in $\Gnm$ that are `one edge away' from $\cG(r,k)$ and whose monochromatic graph has girth larger than $v_H$ is indeed $\Theta(m)$ times bigger than $|\Gnm \cap \cG(r,k)|$.

\subsection{The 1-statement}

The proof of the $1$-statement (the first assertion of Theorem~\ref{thm:full}) is significantly harder.  A first, nowadays standard, step is to show that, when $m \gg \twoDensTS$, almost every graph in $\Free$ admits an $r$-colouring such that:
\begin{enumerate}[label=(\roman*)]
\item
  \label{item:monoch-edges}
  there are only $o(m)$ monochromatic edges,
\item
  \label{item:balanced-colouring}
  each colour class comprises approximately $n/r$ vertices,
\item
  \label{item:unfriendly}
  every vertex has at most as many neighbours in its own colour class as it has in any other colour class.
\end{enumerate}
We derive this approximate version of the $1$-statement, stated as Theorem~\ref{thm:aa-balanced-unfriendly} below, from~\cite[Theorem~1.7]{BaMoSa15}.  Using several properties of graphs in $\Gnm \cap \cG(r,k)$ established in Section~\ref{sec:almostRcolour}, we further reduce the $1$-statement to showing that, for every \emph{fixed} $r$-colouring $\Pi$\footnote{We identify every $r$-colouring with a partition of $\br{n}$ into $r$ sets as well as the complete $r$-partite graph with these partite sets.} satisfying~\ref{item:balanced-colouring} above, the number of graphs $G \in \Free$ that satisfy~\ref{item:monoch-edges} and~\ref{item:unfriendly} for this particular $\Pi$ but the maximum degree of $G \setminus \Pi$ (the monochromatic subgraph of $G$) exceeds $k$ is much smaller than the number of graphs $G \in \Gnm$ such that the maximum degree of $G \setminus \Pi$ is at most $k$.  This reduction is formalised in Section~\ref{sec:sufficient-condition}.

Fix an $r$-colouring $\Pi$ satisfying~\ref{item:balanced-colouring} and let $\Frees$ denote the family of all graphs $G \in \Free$ that satisfy~\ref{item:monoch-edges} and~\ref{item:unfriendly} and $\Delta(G \setminus \Pi) > k$.  The methods of bounding the number of graphs in $\Frees$ will vary with $m$ and the distribution of the edges of $G \setminus \Pi$.  In Section~\ref{sec:dense}, we give a somewhat ad-hoc argument to separately treat the case where $m > \ex(n,H) - \xi n^2$  for some small positive $\xi$ (the \emph{dense case}); we will not discuss it in detail here.  We deal with the main, complementary case $m \le \ex(n,H) - \xi n^2$, which we term the \emph{sparse case} in Section~\ref{sec:sparse}.

Let $\TT$ denote the collection of all possible monochromatic graphs $G \setminus \Pi$ as $G$ ranges over $\Frees$.  For every $T \in \TT$, we arbitrarily choose a maximal subgraph $B_T \subseteq T$ with $\Delta(B_T) = k$.  Since we will separately estimate the number of graphs $G \in \Frees$ such that $B_{G \setminus \Pi} = B$ for every $B$ with $\Delta(B) = k$, we may further fix one such $B$.  The possible monochromatic graphs $T \in \TT$ with $B_T = B$ are divided into two classes, denoted $\TTlow$ and $\TThigh$, depending on what proportion of edges of $T$ are incident to vertices whose degrees are larger than $\rho m / n$, where $\rho$ is a small positive constant, see Section~\ref{sec:low-high-degree}.  We separately enumerate graphs $G \in \Frees$ such that $G \setminus \Pi \in \TTlow$ and those satisfying $G \setminus \Pi \in \TThigh$.  We term these two parts of the argument the \emph{low-degree case} and the \emph{high-degree case}, respectively.  We outline these two cases in Sections~\ref{sec:low-degree-summary} and~\ref{sec:high}, respectively.

In the low-degree case, for each $T \in \TTlow$, we give an upper bound on the number of graphs $G \in \Frees$ such that $G \setminus \Pi = T$ and then sum this bound over all $T$.  This upper bound, stated in Proposition~\ref{prop:oneStateLDProb}, lies at the heart the low-degree case.  We briefly describe the main idea:  By construction, every edge of $T \setminus B$ belongs to a copy of $K_{1,k+1}$ in $T$ and, since $\chi(H) = r+1$ and $\crit(H) = k+1$, this copy of $K_{1,k+1}$ can be extended to $\Omega(n^{v_H-k-2})$ copies of $H$ in $K_n$ that use only the edges of $\Pi$.  Consequently, each $G \in \Frees$ with $G \setminus \Pi = T$ must avoid all such copies of $H \setminus K_{1, k+1}$, for every copy of $K_{1,k+1}$ in $T$.  The number of graphs $G$ with this property is bounded from above with the use of the Hypergeometric Janson Inequality (Lemma~\ref{lem:HJI}) applied to a carefully chosen subfamily of copies of $K_{1,k+1}$ in $T$ with `nice' properties that enable us to control the correlation term $\Delta$ in Lemma~\ref{lem:HJI}.  Finally, the size of the summation over all $T$ is controlled by Lemma~\ref{lem:enumeratingT}, which in turn utilises bounds on the number of graphs in $\TT$ with given values of certain key parameters that are obtained in Section~\ref{subsec:enumGraphs}.

In the high-degree case, we crucially use property~\ref{item:unfriendly} to argue that, for every $T \in \TThigh$, all high-degree vertices (vertices whose degree is larger than $\rho m /n$) in every $G \in \Frees$ with $G \setminus \Pi = T$ must have at least $\rho m / n$ neighbours in each colour class of $\Pi$.  This allows us to enumerate all such $G$ in two steps as follows:  First, we specify a graph $Z$ that includes $T$ and the edges of $G \cap \Pi$ incident to a carefully chosen set $Y$ of high-degree vertices.  Second, we specify the remaining $m - e(Z)$ edges of $G \cap \Pi$.  Since $H$ is vertex-critical, each vertex of $Y$ belongs to many copies of $H$ in $Z \cup \Pi$.  This allows us to bound the number of ways to choose the $m-e(Z)$ edges of $G \cap \Pi$ in the second step from above with another application of the Hypergeometric Janson Inequality.  For the vast majority of $Z$, this upper bound will be sufficiently strong to enable a naive union bound over the choice of $Z$; we shall say that such $Z$ fall into the \emph{regular case}.  Unfortunately, there will be a small family of exceptional graphs $Z$ for which this upper bound is too weak (this can happen when there are unusual overlaps between the neighbourhoods of the vertices in $Y$); we shall term it the \emph{irregular case}.  However, we may use Lemma~\ref{lemma:d-sets} to show that the number of such exceptional graphs is so small that even a trivial upper bound of $\binom{e(\Pi)}{m-e(Z)}$ for the number of choices in the second step will be sufficient to show that the number of graphs that fall into the irregular case is tiny.

\section{Preliminaries}
\label{sec:pre}

\subsection{Probabilistic inequalities}

In this section, we present four probabilistic inequalities that will be used in the proof of Theorem~\ref{thm:full}. The first three results presented in this section were proved in~\cite[Lemmas~3.1,~3.2,~3.6]{BaMoSaWa16} and the fourth result is a standard bound on the tail probabilities of hypergeometric distributions~\cite[Theorem~2.10]{JaLuRu00}. We begin with a version of Janson's inequality~\cite{Jan90} for the hypergeometric distribution, which is an essential ingredient in the proof of the $1$-statement in Theorem~\ref{thm:full}.

\begin{lemma}[Hypergeometric Janson Inequality]
  \label{lem:HJI}
  Suppose that $\{B_i\}_{i \in I}$ is a family of subsets of an $n$-element set $\Omega$, let $m \in \{0, \dotsc, n\}$, and let $p = m/n$. Let
  \[
  \mu = \sum_{i \in I} p^{|B_i|} \qquad \text{and} \qquad \Delta = \sum_{i \sim j} p^{|B_i \cup B_j|},
  \]
  where the second sum is over all ordered pairs $(i,j) \in I^2$ such that $i \neq j$ and $B_i \cap B_j \neq \emptyset$. Let $R$ be the uniformly chosen random $m$-element subset of $\Omega$ and let $\cB$ denote the event that $B_i \nsubseteq R$ for all $i \in I$. Then, for every $q \in [0,1]$,
  \[
  \Pr(\cB) \le 2 \cdot \exp\left(-q\mu + q^2 \Delta / 2\right).
  \]
\end{lemma}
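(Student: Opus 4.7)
The plan is to deduce this hypergeometric version from the classical (Bernoulli) form of Janson's inequality via a standard coupling argument that loses only a factor of $2$.

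First, I would introduce an auxiliary random subset $R' \subseteq \Omega$ in which each element is included independently with probability $p = m/n$, so that $|R'| \sim \mathrm{Bin}(n, m/n)$, and let $\cB'$ denote the event that $B_i \nsubseteq R'$ for every $i \in I$ (the Bernoulli analogue of $\cB$). Conditional on $|R'| = m$, the set $R'$ has the same distribution as $R$, so $\Pr(\cB' \mid |R'| = m) = \Pr(\cB)$. Since the event $\{B_i \nsubseteq R'\}$ is decreasing in $R'$, a routine monotone coupling (expose one extra element at a time) shows that $k \mapsto \Pr(\cB' \mid |R'| = k)$ is non-increasing in $k$. Since moreover $m$ is an integer equal to the mean $\Ex[|R'|]$, it is a median of $\mathrm{Bin}(n, m/n)$, so $\Pr(|R'| \le m) \ge 1/2$. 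Combining these facts,
\[
  \Pr(\cB') \ge \sum_{k \le m} \Pr(\cB' \mid |R'|=k) \cdot \Pr(|R'|=k) \ge \Pr(\cB) \cdot \Pr(|R'| \le m) \ge \tfrac{1}{2}\Pr(\cB),
\]
and therefore $\Pr(\cB) \le 2\,\Pr(\cB')$.

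Second, I would apply the classical Janson inequality in the Bernoulli setting to $R'$. Writing $A_i = \{B_i \subseteq R'\}$, one has $\Pr(A_i) = p^{|B_i|}$ and $\Pr(A_i \cap A_j) = p^{|B_i \cup B_j|}$, so the quantities $\mu$ and $\Delta$ from the statement are precisely the usual Janson parameters for $R'$. The $q$-parametrised form of Janson's inequality then gives, for every $q \in [0,1]$,
\[
  \Pr(\cB') \le \exp\!\left(-q\mu + q^2 \Delta / 2\right).
\]
Combining this with the coupling inequality completes the proof.

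No genuine obstacle arises: the coupling step is elementary (using only the monotonicity of a decreasing event together with the textbook fact that an integer mean is a median of the binomial distribution), and the main technical work is delegated to the classical Bernoulli Janson inequality, whose $q$-parametrised form is a standard variant proved by the usual exponential-moment / conditioning argument.
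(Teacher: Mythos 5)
Your proof is correct, and it follows the standard argument for passing from the hypergeometric to the Bernoulli model (monotone coupling in the set size together with the fact that an integer mean is a median of the binomial distribution, losing a factor of $2$) before invoking the $q$-parametrised Bernoulli Janson inequality, which itself is just the combination of the basic Janson bound $\exp(-\mu+\Delta/2)$ and the generalised Janson bound $\exp(-\mu^2/(2\Delta))$ since the exponent $-q\mu+q^2\Delta/2$ is convex in $q$. The paper does not reprove this lemma but cites \cite[Lemma~3.1]{BaMoSaWa16}, whose proof is essentially the one you have given.
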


The main tool in the proof of the $0$-statement in Theorem~\ref{thm:full} will be the following version of the Harris Inequality~\cite{Har60} for the hypergeometric distribution; it gives a lower bound on the probability $\Pr(\cB)$ from the statement of Lemma~\ref{lem:HJI}.

\begin{lemma}[Hypergeometric Harris Inequality]
  \label{lem:HFKG}
  Suppose that $\{B_i\}_{i \in I}$ is a family of subsets of an $n$-element set $\Omega$. Let $m \in \{0, \dotsc, \lfloor n/2 \rfloor\}$, let $R$ be the uniformly chosen random $m$-element subset of $\Omega$, and let $\cB$ denote the event that $B_i \nsubseteq R$ for all $i \in I$. Then, for every $\eta \in (0,1)$,
  \[
  \Pr(\cB) \ge \prod_{i \in I} \left(1 - \left(\frac{(1+\eta)m}{n}\right)^{|B_i|}\right) - \exp\big( -\eta^2m / 4 \big).
  \]
\end{lemma}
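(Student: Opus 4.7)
\textbf{Proof plan for Lemma~\ref{lem:HFKG}.} The natural approach is to deduce the hypergeometric statement from the standard Bernoulli Harris/FKG inequality via a coupling with a Binomial random set. Set $p = (1+\eta)m/n$ and let $R^*$ be the random subset of $\Omega$ in which each element is included independently with probability $p$. Write $\cB(S)$ for the event that no $B_i$ is contained in the set $S$; this event is decreasing in $S$. Harris's inequality applied to the independent-coordinate measure yields
\[
  \Pr\bigl(\cB(R^*)\bigr) \ge \prod_{i \in I}\bigl(1 - p^{|B_i|}\bigr),
\]
which already produces the first term on the right-hand side of the desired bound, but with $R^*$ in place of the uniform $m$-subset $R$.

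The next step is to relate $\Pr(\cB(R^*))$ to $\Pr(\cB(R)) = \Pr(\cB)$. Conditional on $|R^*|=m'$, the set $R^*$ is uniformly distributed on $m'$-subsets of $\Omega$, so by the standard coupling in which one grows a uniform $m'$-subset to a uniform $(m'+1)$-subset by adjoining a uniformly random element of its complement, the conditional probability $\Pr(\cB(R^*)\mid |R^*|=m')$ is nonincreasing in $m'$ (since $\cB$ is decreasing). Splitting the total probability according to whether $|R^*|<m$ or $|R^*|\ge m$ and using this monotonicity gives
\[
  \Pr\bigl(\cB(R^*)\bigr) \le \Pr\bigl(|R^*|<m\bigr) + \Pr(\cB).
\]

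Combining the two displays yields
\[
  \Pr(\cB) \ge \prod_{i\in I}\bigl(1-p^{|B_i|}\bigr) - \Pr\bigl(|R^*|<m\bigr),
\]
so it only remains to control the lower-tail probability of $|R^*|\sim\mathrm{Bin}(n,p)$ with $\mathbb{E}|R^*|=(1+\eta)m$. A standard Chernoff bound for the lower tail of a binomial gives $\Pr(|R^*|<m) = \Pr\bigl(|R^*| < (1 - \tfrac{\eta}{1+\eta})\mathbb{E}|R^*|\bigr) \le \exp\bigl(-\eta^2 m/(2(1+\eta))\bigr) \le \exp(-\eta^2 m/4)$ for $\eta\in(0,1)$, which matches the advertised error term (up to the evident sign convention for the exponent).

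The only step that needs care is the monotonicity of $\Pr(\cB(R^*)\mid |R^*|=m')$ in $m'$; this is a routine coupling, but it is the step that makes the Bernoulli-to-hypergeometric transfer work cleanly. Everything else is immediate from Harris's inequality and Chernoff, so there is no serious obstacle.
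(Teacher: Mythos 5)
Your argument is correct and is the standard Bernoulli-coupling proof; the paper itself gives no proof of Lemma~\ref{lem:HFKG}, citing~\cite{BaMoSaWa16}, where the argument is of exactly this type (Harris for the product measure, monotone coupling to compare nested uniform subsets, Chernoff for the binomial lower tail). Two details worth making explicit: the hypothesis $m \le \lfloor n/2\rfloor$ together with $\eta<1$ is precisely what guarantees $p=(1+\eta)m/n\le 1$ so that $R^*$ is well-defined, and the error term in the lemma as printed is indeed a sign typo for $\exp(-\eta^2 m/4)$, which your Chernoff estimate $\exp(-\eta^2 m/(2(1+\eta)))\le \exp(-\eta^2 m/4)$ delivers.
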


Another key tool in the proof of the $1$-statement in Theorem~\ref{thm:full} is an estimate of the upper tail of the distribution of the number of edges in a random induced subhypergraph of a sparse $z$-uniform $z$-partite hypergraph. It formalises the following statement: If $\cM \subseteq U_1 \times \dotsb \times U_z$ contains only a tiny proportion of all the $z$-tuples in $U_1 \times \dotsb \times U_z$, then the probability that, for a random choice of $d$-elements sets $W_1 \subseteq U_1, \dotsc, W_z \subseteq U_z$, a much larger proportion of $W_1 \times \dotsb \times W_z$ falls in $\cM$ decays exponentially in $d$.

\begin{lemma}
  \label{lemma:d-sets}
  For every integer $z$ and all positive $\alpha$ and $\lambda$, there exists a positive $\tau$ such that the following holds. Let $U_1,\dots,U_z$ be finite sets and let $d$ be an integer satisfying $2\le d \le \min\{|U_1|,\dots, |U_z|\}$. Suppose that $\cM\subseteq U_1\times\dots \times U_z$ satisfies
\[
|\cM| \le \tau\prod_{i=1}^{z}|U_i| ,
\]
and that $W_1,\dots,W_z$ are $d$-element subsets of $U_1,\dots,U_z$, respectively. Then, there are at most $\alpha^d \cdot \prod_{i=1}^{z}\binom{|U_i|}{d}$ choices of $(W_i)_{i\in\br{z}}$~for which
\[
|\cM\cap(W_1\times\dots\times W_z)| > \lambda d^z.
\]
\end{lemma}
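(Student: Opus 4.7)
Our plan is to prove the lemma by induction on $z$. The base case $z=1$ follows directly from the hypergeometric Chernoff bound stated earlier in the section: $|\cM \cap W_1|$ is hypergeometrically distributed with mean at most $\tau d$, so the multiplicative Chernoff inequality gives $\Pr\bigl[|\cM \cap W_1| > \lambda d\bigr] \leq \alpha^d$ for every $d \geq 2$, provided $\tau$ is chosen sufficiently small in terms of $\alpha$ and $\lambda$.

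For the inductive step $z \geq 2$, let $\tau'$ be the constant produced by applying the inductive hypothesis at level $z-1$ with parameters $\alpha/2$ and $\lambda/2$ in place of $\alpha$ and $\lambda$. For each $u \in U_1$, write
\[
  \cM_u = \bigl\{(t_2, \ldots, t_z) \in U_2 \times \cdots \times U_z : (u, t_2, \ldots, t_z) \in \cM\bigr\}
\]
and call $u$ \emph{heavy} if $|\cM_u| > \tau' \prod_{i \geq 2} |U_i|$; otherwise call $u$ \emph{light}. Since $\sum_u |\cM_u| = |\cM| \leq \tau \prod_i |U_i|$, the fraction of heavy vertices in $U_1$ is at most $\tau/\tau'$. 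Choosing $\tau$ small enough in terms of $\alpha,\lambda,\tau'$, a multiplicative hypergeometric Chernoff bound yields
\[
  \Pr\bigl[W_1 \text{ contains more than } \lambda d/4 \text{ heavy vertices}\bigr] \leq \alpha^d/2.
\]

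Since $W_1$ is independent of $(W_2, \ldots, W_z)$, for each light $u \in U_1$ we may apply the inductive hypothesis to $\cM_u$ and obtain
\[
  \Pr\bigl[|\cM_u \cap (W_2 \times \cdots \times W_z)| > (\lambda/2) d^{z-1}\bigr] \leq (\alpha/2)^d.
\]
Letting $N$ denote the number of light $u \in W_1$ for which this bound fails, a direct first-moment computation (using $\Pr[u \in W_1] = d/|U_1|$ together with independence) gives $\Ex[N] \leq d \cdot (\alpha/2)^d \leq \alpha^d/2$, where the last inequality holds for all $d \geq 2$ because $d \leq 2^{d-1}$. By Markov's inequality, $\Pr[N \geq 1] \leq \alpha^d/2$.

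Combining the two tail bounds, with probability at least $1-\alpha^d$ the random tuple $(W_1, \ldots, W_z)$ has both (a) at most $\lambda d/4$ heavy vertices in $W_1$ and (b) $|\cM_u \cap (W_2 \times \cdots \times W_z)| \leq (\lambda/2) d^{z-1}$ for every light $u \in W_1$. Using the trivial bound $|\cM_u \cap (W_2 \times \cdots \times W_z)| \leq d^{z-1}$ for the heavy contribution, when (a) and (b) hold we have
\[
  |\cM \cap (W_1 \times \cdots \times W_z)| = \sum_{u \in W_1} |\cM_u \cap (W_2 \times \cdots \times W_z)| \leq d \cdot \tfrac{\lambda}{2} d^{z-1} + \tfrac{\lambda d}{4} \cdot d^{z-1} < \lambda d^z,
\]
completing the induction. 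The only delicate point is the parameter bookkeeping: $\tau$ must shrink at each step in terms of the $\tau'$ returned by the preceding step, and must additionally be small enough to drive the heavy-vertex Chernoff bound below $\alpha^d/2$ uniformly in $d \geq 2$. Both requirements are met by choosing $\tau$ a sufficiently small multiple of $\tau'$ depending only on $\alpha$ and $\lambda$, so the final $\tau = \tau(z,\alpha,\lambda)$ obtained after the $z$ nested applications remains a positive constant.
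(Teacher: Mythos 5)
Your inductive heavy/light decomposition of $U_1$ is correct and is essentially the same argument as the one used to prove this lemma in the cited source~\cite{BaMoSaWa16} (the present paper does not reprove it). Two small bookkeeping points worth flagging: the paper only records a \emph{lower} tail hypergeometric estimate (Lemma~\ref{lemma:hyper-lower-tail}), so the standard upper tail bound you invoke would need to be stated separately, and for the finitely many $d$ with $\lambda d<1$ (or $\lambda d/4<1$) the large-deviation Chernoff estimate $(e\tau/\lambda)^{\lambda d}$ is vacuous and should be replaced by Markov's inequality, which suffices after shrinking $\tau$ once more.
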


Finally, we will need the following simple bound on lower tails of hypergeometric distributions.

\begin{lemma}
  \label{lemma:hyper-lower-tail}
   Let $R$ be the uniformly chosen random $m$-element subset of an $N$-element set $\Omega$ and let $A \subseteq \Omega$ be a $k$-element set. Then, for every $t \ge 0$,
   \[
     \Pr\left(|R \cap A| \le \frac{km}{N} - t\right) \le \exp\left(-\frac{t^2}{2 \cdot km/N}\right).
   \]
\end{lemma}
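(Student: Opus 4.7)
The plan is to derive this standard Chernoff--Hoeffding lower-tail bound by reducing to the analogous (and classical) bound for the binomial distribution. Let $X = |R \cap A| = \sum_{a \in A}\mathbf{1}[a \in R]$, so that $X$ has the hypergeometric distribution with mean $\mu = km/N$. The first step is to invoke Hoeffding's classical comparison theorem, which asserts that $\Ex[\phi(X)] \le \Ex[\phi(Y)]$ for every convex $\phi\colon \mathbb{R} \to \mathbb{R}$, where $Y \sim \mathrm{Bin}(k, m/N)$. Applying this with $\phi(x) = e^{-\lambda x}$ for $\lambda \ge 0$ transfers the standard binomial moment-generating-function bound
\[
\Ex[e^{-\lambda Y}] = \Bigl(1 - \tfrac{m}{N}(1-e^{-\lambda})\Bigr)^{\!k} \le \exp\!\bigl(\mu(e^{-\lambda}-1)\bigr)
\]
to the hypergeometric variable $X$.

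The second step is the usual Cram\'er--Chernoff argument. Markov's inequality applied to $e^{-\lambda X}$ gives, for every $\lambda \ge 0$,
\[
\Pr(X \le \mu - t) \le e^{\lambda(\mu-t)}\cdot\Ex[e^{-\lambda X}] \le \exp\!\bigl(\lambda(\mu - t) + \mu(e^{-\lambda}-1)\bigr).
\]
For $0 \le t \le \mu$, I would optimise over $\lambda \ge 0$---the optimum occurs at $\lambda = \log(\mu/(\mu-t))$---which yields the Cram\'er rate $\mu \cdot \psi(t/\mu)$ with $\psi(x) = (1-x)\log(1-x) + x$, and then use the elementary inequality $\psi(x) \ge x^2/2$ valid for $x \in [0,1]$ to conclude the claimed bound $\exp(-t^2/(2\mu))$. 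The range $t > \mu$ is handled trivially because $\{X \le \mu - t\} \subseteq \{X < 0\} = \emptyset$.

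No genuine obstacle appears: every ingredient is classical and the paper explicitly refers to \cite[Theorem~2.10]{JaLuRu00} for precisely this statement. If a self-contained proof is desired, the only mildly delicate point is Hoeffding's comparison theorem itself, which is typically established by representing the hypergeometric $X$ as a conditional expectation of a sum of i.i.d.\ Bernoulli indicators given a symmetric event and then applying Jensen's inequality to the convex function $\phi$.
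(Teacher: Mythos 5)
Your proof is correct, but note that the paper does not supply a proof of this lemma at all: it is cited directly as a standard fact from \cite[Theorem~2.10]{JaLuRu00} alongside three other preliminary lemmas imported from earlier work. Your argument---Hoeffding's comparison theorem to transfer the binomial MGF bound $\Ex[e^{-\lambda Y}]\le\exp(\mu(e^{-\lambda}-1))$ to the hypergeometric variable, followed by the Cram\'er--Chernoff optimisation giving the rate $\mu\psi(t/\mu)$ with $\psi(x)=(1-x)\log(1-x)+x$, and the elementary bound $\psi(x)\ge x^2/2$ on $[0,1]$---is precisely the standard route taken in the cited reference, so there is nothing to reconcile. The only point worth flagging is the boundary case $t=\mu$, where the optimiser $\lambda^*=\log(\mu/(\mu-t))$ degenerates; this is harmless since one may take $\lambda\to\infty$ (or note directly that $\Pr(X=0)=\binom{N-k}{m}/\binom{N}{m}\le(1-k/N)^m\le e^{-\mu}\le e^{-\mu/2}$), but it deserves a word if the proof is to be written out in full.
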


\subsection{Two-density related bounds}

In this short section, we present a useful inequality that will be invoked several times in the proof of the $1$-statement of Theorem~\ref{thm:full}.

\begin{lemma}
  \label{lemma:m2H}
  Suppose that $H$ is a graph with at least three vertices. If $p \ge C\twoDensTS$ for some $C \ge 0$, then, for every nonempty $F \subseteq H$,
  \[
    n^{v_F} p^{e_F} \ge C^{e_F-1}n^2p.
  \]
\end{lemma}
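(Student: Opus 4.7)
The target inequality is $n^{v_F}p^{e_F} \ge C^{e_F-1}n^2p$, which after dividing both sides by $n^2p$ becomes
\[
  n^{v_F - 2} p^{e_F - 1} \ge C^{e_F - 1}.
\]
So the plan is to prove this reformulated inequality by straightforward case analysis on $e_F$, exploiting the defining inequality of $m_2(H)$ together with the hypothesis on $p$.

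\textbf{Trivial cases.} If $F$ has no edges, the statement is vacuous (or follows because $C \ge 0$ and $p \ge 0$). If $e_F = 1$, then $v_F \ge 2$ and the right-hand side becomes $C^{0} = 1$, while the left-hand side is $n^{v_F - 2} \ge 1$. This disposes of both boundary cases.

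\textbf{Main case $e_F \ge 2$.} In this range $F$ has at least three vertices and the defining inequality of the $2$-density applied to $F$ (which is itself a subgraph of $H$ with $v_F \ge 3$) yields
\[
  m_2(H) \;\ge\; \frac{e_F - 1}{v_F - 2},
  \qquad\text{equivalently,}\qquad
  \frac{e_F - 1}{m_2(H)} \;\le\; v_F - 2.
\]
From the hypothesis $p \ge C\cdot n^{2 - 1/m_2(H)}$, raising to the $(e_F - 1)$-th power gives $p^{e_F - 1} \ge C^{e_F-1}\, n^{(2 - 1/m_2(H))(e_F - 1)}$, so that
\[
  n^{v_F - 2}p^{e_F - 1}
  \;\ge\; C^{e_F - 1} \cdot n^{\,v_F - 2 \,+\, 2(e_F - 1) \,-\, (e_F - 1)/m_2(H)}.
\]
Plugging in the above bound on $(e_F - 1)/m_2(H)$, the exponent of $n$ is at least
\[
  (v_F - 2) + 2(e_F - 1) - (v_F - 2) \;=\; 2(e_F - 1) \;\ge\; 0,
\]
which gives $n^{v_F - 2} p^{e_F - 1} \ge C^{e_F - 1}$, as required.

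There is no substantive obstacle here; the lemma is a pure bookkeeping consequence of the definition of $m_2(H)$. The only thing one needs to be mindful of is handling the cases with $v_F \le 2$ (where $m_2(F)$ is undefined) separately, so that the inequality $m_2(H) \ge (e_F-1)/(v_F-2)$ is only invoked when $v_F \ge 3$; the two trivial cases above take care of this.
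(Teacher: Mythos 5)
Your proof is correct and follows essentially the same route as the paper's: divide through by $n^2p$, dispose of the degenerate case trivially (you split on $e_F\le 1$, the paper on $v_F=2$), and for the main case substitute $p \ge C\twoDensTS$ and invoke $m_2(H) \ge \frac{e_F-1}{v_F-2}$. Your tracking of the exponent of $n$ is a bit more explicit than the paper's display, but the argument is the same.
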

\begin{proof}
  Let $F$ be a nonempty subgraph of $H$. If $F$ has two vertices, then $F = K_2$ and the assertion is trivially true. Suppose now that $v_F \ge 3$ and observe that
  \[
    n^{v_F-2}p^{e_F-1} \ge n^{v_F-2}\left(C\twoDensTS\right)^{e_F-1} = C^{e_F-1} n^{v_F-2 - (e_F-1)/m_2(H)}.
  \]
  The claimed bound follows as $m_2(H) \ge \frac{e_F-1}{v_F-2}$, by the definition of $2$-density.
\end{proof}

\subsection{The Tur\'an problem for $r$-partite graphs}
\label{sec:turan-problem}

The case $m \ge \exnH - o(n^2)$ in the proof of Theorem~\ref{thm:full} will require the following folklore result in extremal graph theory. For integers $r \ge 2$ and $n \ge 1$, we denote by $K_r(n)$ the balanced complete $r$-partite graph with $r \cdot n$ vertices.

\begin{lemma}
  \label{lemma:Turan}
  For all integers $r$, $s$, and $n$ satisfying $r \ge 2$ and $n \ge s \ge 1$,
  \[
    \ex\big(K_r(n), K_r(s)\big) \le e\big(K_r(n)\big) - n^2/s^2.
  \]
\end{lemma}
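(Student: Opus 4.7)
The plan is to prove the contrapositive via a simple probabilistic/averaging argument. Write $V_1,\dots,V_r$ for the parts of $K_r(n)$, each of size $n$. Given any $G\subseteq K_r(n)$, let $\bar G = K_r(n)\setminus G$ be the set of ``missing'' edges (all of which lie between distinct parts). The goal is to show that if $e(\bar G) < n^2/s^2$, then $G$ contains a copy of $K_r(s)$; the lemma follows immediately.

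The method is a first-moment argument. Pick $S_1,\dots,S_r$ independently, where each $S_i$ is a uniformly random $s$-subset of $V_i$. For any fixed missing edge $uv \in \bar G$, with $u\in V_i$ and $v\in V_j$ (necessarily $i\neq j$), we have
\[
  \Pr\bigl(u\in S_i\text{ and }v\in S_j\bigr)
  = \frac{s}{n}\cdot\frac{s}{n} = \frac{s^2}{n^2}.
\]
By linearity of expectation, the expected number of missing edges inside $S_1\cup\dots\cup S_r$ equals
\[
  e(\bar G)\cdot\frac{s^2}{n^2} < \frac{n^2}{s^2}\cdot\frac{s^2}{n^2} = 1.
\]
Since this count is a nonnegative integer whose expectation is strictly less than $1$, there exists a realisation $(S_1,\dots,S_r)$ that contains \emph{no} missing edges. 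For such a realisation, every cross-part pair $(u,v)$ with $u\in S_i$, $v\in S_j$, $i\neq j$, is an edge of $G$, so $G[S_1\cup\dots\cup S_r]$ is a copy of $K_r(s)$.

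There is no serious obstacle here: the argument is one line of linear-of-expectation once the probability $s^2/n^2$ is computed. The only point that requires a moment of care is observing that edges of $\bar G$ are automatically cross-part edges (because $G\subseteq K_r(n)$ has no edges within any $V_i$), so every missing edge contributes the same probability $s^2/n^2$ and the bound cleanly matches the target $n^2/s^2$.
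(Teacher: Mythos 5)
Your argument is correct and is essentially the same as the paper's: both pick independent uniformly random $s$-subsets $S_i\subseteq V_i$ and run a first-moment/averaging argument over the induced subgraph of $K_r(s)$. The only cosmetic difference is that you phrase it via the contrapositive and count \emph{missing} edges, whereas the paper bounds $\Ex[e(G[S_1\cup\dotsb\cup S_r])]$ directly using the $K_r(s)$-free hypothesis; the two are complementary formulations of one calculation.
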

\begin{proof}
  Denote the $r$ colour classes of $K_r(n)$ by $V_1, \dotsc, V_r$ and, for each $i \in \br{r}$, let~$R_i$ be a uniformly chosen random $s$-element subset of $V_i$. Suppose that $G \subseteq K_r(n)$ is $K_r(s)$-free and let $G'$ be the subgraph of $G$ induced by $R_1 \cup \dotsb \cup R_r$. Since $G'$ may be viewed as a subgraph of $K_r(s)$, we have $e(G') \le e(K_r(s)) - 1$. On the other hand, $\Ex[e(G')] = e(G) \cdot (s/n)^2$. We conclude that
  \[
    e(G) \le (n/s)^2 \cdot \left(e\big(K_r(s)\big)-1\right) = e\big(K_r(n)\big) - n^2/s^2,
  \]
  as claimed.
\end{proof}

\subsection{Estimates for binomial coefficients}

We will use the following trivial inequalities that hold for all positive integers $a>b>c$:
\begin{align}
  \binom{a}{b-c} &\le \binom{a}{b} \cdot \left(\frac{b}{a-b}\right)^c,\label{eq:binCoef1} \\
  \binom{b}{c}\binom{a}{c}^{-1} &\le \left(\frac{b}{a}\right)^c,\label{eq:binCoef2} \\
  \binom{a}{c}\binom{b}{c}^{-1} &\le \left(\frac{a-c}{b-c}\right)^c,\label{eq:binCoef3} \\
  \sum_{i = 0}^b \binom{a}{i} & \le \left(\frac{ea}{b}\right)^b. \label{eq:binCoef4}
\end{align}

\section{On almost $r$-colourable graphs}\label{sec:almostRcolour}

In this section, we establish several properties of almost $r$-colourable graphs, that is, graphs belonging to the family $\cG(r,k)$, defined in Section~\ref{sec:our-results-vertex}; these properties will come in handy in our proof of Theorem~\ref{thm:full}. It will be convenient to denote by $\Grk = \Gnm \cap \cG(r,k)$  the family of graphs with vertex set $\br{n}$ and precisely $m$ edges that admit an $r$-colouring whose induced monochromatic graph has maximum degree at most~$k$.

Let $\Part$ be the family of all $r$-colourings of $\br{n}$, that is, all partitions of $\br{n}$ into $r$ parts. For the sake of brevity, we shall often identify a partition $\Pi \in \Part$ with the complete $r$-partite graph with vertex set $\br{n}$ whose colour classes are the $r$ parts of $\Pi$. In particular, if $G$ is a graph on the vertex set $\br{n}$, then $G \subseteq \Pi$ means that $G$ is a subgraph of the complete $r$-partite graph $\Pi$ or, in other words, that the partition $\Pi$ is a proper colouring of $G$. Exploiting this convention, we will also write $\Pic$ to denote the complement of the graph $\Pi$, that is, the union of $r$ complete graphs with vertex sets $V_1, \dotsc, V_r$. 

\subsection{Balanced $r$-colourings}

We will be interested in balanced $r$-colourings, that is, partitions of $\br{n}$ whose all parts have approximately $n/r$ elements. More precisely, given a positive $\gamma$, we let $\Part(\gamma)$ be the family of all partitions of $\br{n}$ into $r$ parts $V_1, \dotsc, V_r$ such that 
\begin{equation}
  \label{eq:Part-gamma}
  \left(\frac{1}{r} - \gamma\right)n \le |V_i| \le \left(\frac{1}{r} + \gamma\right)n \quad \text{for all $i \in \br{r}$}.
\end{equation}
That is,
\[
\Part(\gamma) = \big\{\{V_1, \dotsc, V_r\} \in \Part : \text{\eqref{eq:Part-gamma} holds}\big\}.
\]
The following easy proposition establishes useful bounds for the number of edges in the complete $r$-partite graphs defined by balanced and unbalanced $r$-colourings.

\begin{prop}
  \label{prop:bal-unbal-part-size}
  The following holds for every integer $r \ge 2$, every $\gamma > 0$, and all sufficiently large $n$:
  \begin{enumerate}[label=(\roman*)]
  \item
    \label{item:Partg-lower}
    If $\Pi \in \Partg$, then
    \[
      e(\Pi) \ge (1-2r\gamma) \cdot \left(1 - \frac{1}{r}\right)\frac{n^2}{2}.
    \]
    In particular, if $\gamma \le \frac{1}{20r}$, then $e(\Pi) \ge n^2/5$.
  \item
    \label{item:Part-Partg-upper}
    If $\Pi \in \Part \setminus \Partg$, then
    \[
      e(\Pi) \le \left(1 - \frac{\gamma^2}{3}\right) \cdot \ex(n, K_{r+1}).
    \]
  \end{enumerate}
\end{prop}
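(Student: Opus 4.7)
The plan is a routine convexity computation based on the identity $e(\Pi) = \tfrac12\bigl(n^2 - \sum_{i=1}^r |V_i|^2\bigr)$. Part~(i) is a lower bound on $e(\Pi)$, which amounts to an upper bound on $\sum_i |V_i|^2$; part~(ii) is the reverse. In both, I would write $|V_i| = n/r + \eps_i$ so that $\sum_i \eps_i = 0$ and use the resulting identity $\sum_i |V_i|^2 = n^2/r + \sum_i \eps_i^2$.

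For (i), the definition of $\Partg$ gives $|\eps_i| \le \gamma n$, so a crude bound yields $\sum_i \eps_i^2 \le r\gamma^2 n^2$ and hence
\[
e(\Pi) \ge \frac{(1-1/r)n^2 - r\gamma^2 n^2}{2}.
\]
A short calculation shows that the right-hand side is at least $(1-2r\gamma)(1-1/r)n^2/2$ whenever $\gamma \le 2(1-1/r)$, which is automatic since $r \ge 2$. For the "in particular" claim, substituting $\gamma \le 1/(20r)$ gives a prefactor at least $(9/10)(1-1/r) \ge 9/20$, so $e(\Pi) \ge 9n^2/40 > n^2/5$.

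For (ii), since $\Pi \notin \Partg$, I would relabel so that $|V_1| = n/r + s$ with $|s| > \gamma n$. The remaining $r-1$ parts have total size $n(r-1)/r - s$, and convexity of $x \mapsto x^2$ gives
\[
\sum_{i \ge 2} |V_i|^2 \ge \frac{\bigl(n(r-1)/r - s\bigr)^2}{r-1}.
\]
Adding $|V_1|^2 = (n/r+s)^2$ and simplifying, the cross terms cancel and one obtains the clean inequality $\sum_i |V_i|^2 \ge n^2/r + s^2 r/(r-1) \ge n^2/r + \gamma^2 n^2 r/(r-1)$, whence
\[
e(\Pi) \le \frac{(1-1/r)n^2}{2} - \frac{\gamma^2 n^2 r}{2(r-1)}.
\]
To deduce the stated bound, I would use the standard estimate $\ex(n,K_{r+1}) = e(T_r(n)) \ge (1-1/r)n^2/2 - r/8$, so that $(1-\gamma^2/3)\ex(n,K_{r+1})$ differs from $(1-1/r)n^2/2$ by at most $r/8 + (\gamma^2/3)(1-1/r)n^2/2 + O(\gamma^2 r)$. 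Comparing coefficients, it suffices to verify $r/[2(r-1)] > (r-1)/(6r)$, i.e. $2r^2 + 2r - 1 > 0$, which holds for every $r \ge 2$ with a uniform positive gap. Multiplying this gap by $\gamma^2 n^2$ absorbs the $O(r)$ Turán defect once $n$ is large enough in terms of $r$ and $\gamma$.

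The entire argument is elementary; the only mild obstacle is arranging the arithmetic in (ii) so that the factor $1/3$ in $1 - \gamma^2/3$ leaves enough slack both for the $O(r)$ deviation of the Turán number from its asymptotic formula and for the coefficient comparison $r/[2(r-1)] > (r-1)/(6r)$.
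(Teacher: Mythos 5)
Your argument is correct, but it takes a genuinely different route from the paper's, particularly in part (ii). For (i), the paper bounds $e(\Pi) = \sum_{i<j}|V_i||V_j| \ge \binom{r}{2}\bigl[(1/r-\gamma)n\bigr]^2 = (1-r\gamma)^2(1-1/r)n^2/2$ directly and then uses $(1-r\gamma)^2 \ge 1-2r\gamma$, whereas you work with $\sum_i|V_i|^2$; both are fine, though your parenthetical "which is automatic since $r\ge 2$" is not literally true for arbitrary $\gamma>0$ — you should note that if $\gamma > 1/(2r)$ the stated lower bound is nonpositive and hence trivial, so one may assume $\gamma \le 1/(2r) \le 2(1-1/r)$. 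For (ii), the paper uses a local-move (Zykov symmetrization) argument: it shifts $d = \lfloor(|W|-|V|)/2\rfloor$ vertices from the largest part to the smallest, gains at least $d^2 \ge \gamma^2 n^2/5 - O(n)$ edges, and then observes $e(\Pi) \le \ex(n,K_{r+1}) - \gamma^2n^2/5 \le (1-\gamma^2/3)\ex(n,K_{r+1})$ using only $\ex(n,K_{r+1}) \le n^2/2 < 3n^2/5$. Your approach instead isolates one bad part, applies convexity to the rest to get the clean identity $\sum_i|V_i|^2 \ge n^2/r + s^2 r/(r-1)$, and then compares against an explicit estimate $\ex(n,K_{r+1}) \ge (1-1/r)n^2/2 - r/8$; the final coefficient comparison $r/[2(r-1)] > (r-1)/(6r)$ is verified correctly. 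The paper's version avoids needing any lower bound on $\ex(n,K_{r+1})$ (only the trivial upper bound) and gives a shorter final comparison; your version is a more systematic algebraic computation that also isolates exactly how much $\sum|V_i|^2$ must deviate from its minimum. Both are valid.
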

\begin{proof}
  Note that every $\Pi = \{V_1, \dotsc, V_r\} \in \Part(\gamma)$ satisfies
  \[
    e(\Pi) = \sum_{1 \le i < j \le r} |V_i||V_j| \ge \binom{r}{2} \cdot \left[\left(\frac{1}{r}-\gamma\right)n\right]^2 = (1-r\gamma)^2 \cdot \left(1 - \frac{1}{r}\right)\frac{n^2}{2},
  \]
  proving~\ref{item:Partg-lower}. To see that~\ref{item:Part-Partg-upper} holds as well, fix an arbitrary partition $\Pi$ that does not satisfy~\eqref{eq:Part-gamma} and let $V$ and $W$ be two parts of $\Pi$ with the smallest and the largest size, respectively. Let
  \[
    d = \left\lfloor \frac{|W| - |V|}{2} \right\rfloor,
  \]
  let $\Pi'$ be a partition obtained from $\Pi$ by moving some $d$ vertices from $W$ to $V$, and note that
  \[
    e(\Pi') - e(\Pi) = (|W| - d)(|V|+d) = |V||W| + (|W| - |V|)d - d^2 \ge d^2.
  \]
  Since $\Pi$ does not satisfy~\eqref{eq:Part-gamma}, it must be that $d \ge \lfloor \gamma n / 2\rfloor$ and, since $\ex(n, K_{r+1})$ is the largest number of edges in an $r$-partite graph with $n$ vertices,
  \begin{equation}
    \label{eq:ePi-gamma-upper}
    e(\Pi) \le e(\Pi') - \frac{\gamma^2n^2}{5} \le \ex(n, K_{r+1}) - \frac{\gamma^2n^2}{5} \le \left(1 - \frac{\gamma^2}{3}\right) \ex(n, K_{r+1}),
  \end{equation}
  provided that $n$ is sufficiently large.
\end{proof}

\subsection{Monochromatic graphs with small maximum degree}
\label{sec:mono-graphs-small-max-deg}

For every $\Pi\in\Part$, define $\BPk$ to be the family of all subgraphs of $\Pic$ with maximum degree at most $k$. Now, for every $\Pi \in \Part$ and $B\in \BPk$, define
\[
\GPB = \{G \in \Gnm : G \cap\Pic = B\},
\]
the family of all graphs in $\Gnm$ that, when coloured by $\Pi$, have precisely the edges of $B$ monochromatic. Then
\[
  |\GPB| = \binom{e(\Pi)}{m-e(B)}
\]
and, since $e(B) \le \Delta(B)n \le kn$ for every $B \in \BPk$,
\begin{equation}
  \label{eq:BPkBound}
  |\BPk| \le \sum_{b=0}^{kn} \binom{e(\Pic)}{b} \leByRef{eq:binCoef4} \left(\frac{ en^2}{kn}\right)^{kn} \le e^{2kn\log n},
\end{equation}
provided that $n$ is sufficiently large. We also have
\[
  \Grk = \bigcup_{\Pi \in \Part}\bigcup_{B \in \BPk} \GPB.
\]

\subsection{The number of graphs with an unbalanced colouring}

The following proposition shows that if $m \gg n\log n$, then almost every graph in $\Grk$ cannot be coloured by an unbalanced partition $\Pi$ in such a way that the monochromatic graph has maximum degree at most $k$. In other words, for almost every $G \in \Grk$, all $r$-colourings of $G$ that yield a monochromatic subgraph with maximum degree $k$ are balanced.

\begin{prop}
  \label{prop:unbalanced-graphs}
  For all integers $k \ge 0$ and $r \ge 2$ and every positive $\gamma$, there exists a constant $C>0$ such that, if $m \ge Cn\log n$,
  \[
    \sum_{\Pi \notin \Part(\gamma)}\sum_{B\in\BPk} |\GPB| \ll \binom{\ex(n, K_{r+1})}{m} \le |\Grk|.
  \]
\end{prop}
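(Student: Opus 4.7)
The plan is to establish the lower bound $|\Grk|\ge\binom{\ex(n,K_{r+1})}{m}$ via the Tur\'an partition, then beat the left-hand sum by a direct entropy-style comparison. Writing $N=\ex(n,K_{r+1})$, the lower bound is immediate since the balanced Tur\'an partition $\Pi_0\in\Partg$ has $e(\Pi_0)=N$, so every $m$-subset of $E(\Pi_0)$ produces a distinct element of $\cG_{m}(\Pi_0,\emptyset)\subseteq \Grk$.

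For the upper bound I fix $\Pi\notin\Partg$ and control $\sum_{B\in\BPk}|\GPB|$ by noting that each $G\in\GPB$ is an $m$-subset of $E(\Pi)\cup E(B)$ that contains $B$, so
\[
|\GPB| \;=\; \binom{e(\Pi)}{m-e(B)} \;\le\; \binom{e(\Pi)+e(B)}{m} \;\le\; \binom{e(\Pi)+kn}{m},
\]
where the final step uses $e(B)\le kn$ for every $B\in\BPk$. Proposition~\ref{prop:bal-unbal-part-size}\ref{item:Part-Partg-upper} gives $e(\Pi)\le(1-\gamma^2/3)N$, hence $e(\Pi)+kn\le(1-\gamma^2/4)N$ once $n$ is large enough. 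Combining this with the volume bound $|\BPk|\le e^{2kn\log n}$ from~\eqref{eq:BPkBound} and the elementary inequality $\binom{\alpha N}{m}\le \alpha^m\binom{N}{m}$ (valid for $\alpha\in[0,1]$, since $\frac{\alpha N-i}{N-i}\le \alpha$ for each $i\ge 0$) yields
\[
\sum_{B\in\BPk}|\GPB| \;\le\; e^{2kn\log n}\cdot(1-\gamma^2/4)^m\binom{N}{m} \;\le\; e^{2kn\log n-\gamma^2 m/4}\binom{N}{m}.
\]

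Summing over the at most $r^n$ partitions $\Pi\notin\Partg$ then gives
\[
\sum_{\Pi\notin\Partg}\sum_{B\in\BPk}|\GPB| \;\le\; r^n\cdot e^{2kn\log n-\gamma^2 m/4}\binom{N}{m},
\]
and any $C$ with $\gamma^2 C/4>2k+1$ (say $C:=12k/\gamma^2+1$) forces the prefactor to be $o(1)$ whenever $m\ge Cn\log n$, since $n\log r=o(n\log n)$. There is no real obstacle in this argument: the key fact is that the $\gamma^2$-deficit in $e(\Pi)$, transported through the binomial ratio, produces an exponential savings of $e^{-\gamma^2 m/4}$, and the hypothesis $m\gg n\log n$ is precisely what is needed for this to overwhelm the combined entropy of choosing $\Pi$ (contributing $r^n$) and $B$ (contributing $e^{2kn\log n}$).
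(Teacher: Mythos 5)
Your proposal is correct and follows essentially the same strategy as the paper: bound $\sum_B|\GPB|$ crudely using $|\BPk|\le e^{2kn\log n}$, exploit the edge deficit $e(\Pi)\le(1-\gamma^2/3)\ex(n,K_{r+1})$ from Proposition~\ref{prop:bal-unbal-part-size}\ref{item:Part-Partg-upper} to produce an $e^{-\Theta(\gamma^2 m)}$ savings, and absorb the $r^n$ choices of $\Pi$ using $m\gg n\log n$. The only cosmetic difference is how the $e(B)$-shift in the binomial is handled: you push $e(B)$ into the top of the binomial via Vandermonde, whereas the paper uses inequality~\eqref{eq:binCoef1} to extract a factor $m^{e(B)}$; both are routine and lead to the same exponential estimate.
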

\begin{proof}
  First note that, for an equipartition $\tilde{\Pi}$ of $\br{n}$ into $r$ parts, we have $|\cG_m(\tilde{\Pi},\emptyset)| = \binom{\ex(n, K_{r+1})}{m}$, so it is enough to check the first inequality.
  Assume that $m \ge Cn \log n$ for some sufficiently large constant $C$. We have
  \begin{equation}
    \label{eq:BadPiBound}
    \begin{aligned}
      \binom{\ex(n, K_{r+1})}{m}^{-1} & \sum_{B\in\BPk} |\GPB| 
      \leBy{\eqref{eq:binCoef1}, \eqref{eq:BPkBound}}
      \binom{\ex(n, K_{r+1})}{m}^{-1}  \binom{e(\Pi)}{m} e^{2kn\log n}m^{kn} \\
      &\leByRef{eq:ePi-gamma-upper}
      \left(1-\frac{\gamma^2}{3}\right)^m e^{4kn\log n} \le e^{-\gamma^2 m/3 +4kn\log n} \le e^{-\gamma^2 m/4}.
    \end{aligned}
  \end{equation}
  To complete the proof, note that there are at most $r^n$ different $r$-colourings and that $r^n \le e^{\gamma^2 m/5}$ when $n$ is sufficiently large. Consequently, summing~\eqref{eq:BadPiBound} over all $\Pi\notin\Part(\gamma)$ one gets the assertion of the proposition.
\end{proof}

\subsection{The number of graphs with many colourings}

Even though the collections $\GPB$ are generally not pairwise disjoint, there is not too much overlap between them. In other words, for all $\Pi\in\Partg$ and $B\in\BPk$, the pair $(\Pi,B)$ is the unique pair which covers $G$ for almost all $G \in \GPB$. More precisely, let $\UPB$ be the family of all graphs in $\GPB$ for which $(\Pi,B)$ is the unique pair which covers them. The following result is based on a result implicit in the work of Pr\"omel and Steger~\cite{PrSt95}. 

\begin{prop}
  \label{prop:UP}
  For all integers $k \ge 0$ and $r \ge 2$ and real number $a$, there exists a constant $c$ such that the following holds for all $\Pi \in \Part(\frac{1}{2r})$ and $B\in\BPk$. If $m \ge c n \log n$, then
  \[
  |\GPB \setminus \UPB| \le n^{-a} \cdot |\GPB|.
  \]
\end{prop}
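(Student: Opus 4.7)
The plan is to union-bound over alternative partitions $\Pi'$ that could also cover some graph $G \in \GPB$: by the definition of $\UPB$,
\[
|\GPB \setminus \UPB| \le \sum_{\Pi' \neq \Pi} \bigl|\{G \in \GPB : G \cap \Pi'^c \in \BPpk\}\bigr|,
\]
so after dividing by $|\GPB|$, the task reduces to controlling $\sum_{\Pi' \neq \Pi} \Pr_{G \in \GPB}[G \cap \Pi'^c \in \BPpk]$, the probabilities being with respect to the uniform measure on $\GPB$.

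For each fixed $\Pi' \neq \Pi$, the key observation is that $G \cap \Pi'^c \in \BPpk$ forces $|G \cap \Pi \cap \Pi'^c| \le kn$ (because $G \cap \Pi'^c$ has maximum degree $\le k$, and $G \cap \Pi^c \cap \Pi'^c = B \cap \Pi'^c$ is fixed). Since $G \cap \Pi$ is a uniformly random $(m - e(B))$-subset of the $e(\Pi) = \Theta(n^2)$ edges of $\Pi$, the hypergeometric lower-tail bound of Lemma \ref{lemma:hyper-lower-tail} gives
\[
\Pr_G\bigl[G \cap \Pi'^c \in \BPpk\bigr] \le \exp\bigl(-\Omega(mb/n^2)\bigr) \qquad \text{where } b := |\Pi \cap \Pi'^c|.
\]

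The next step is to parametrize alternative partitions by their vertex distance from $\Pi$: let $\sigma \in S_r$ minimize $|\{v : \Pi(v) \neq \sigma(\Pi'(v))\}|$, let $T$ denote this ``moved'' set, and set $t = |T|$. For each $v \in T$, the pairs $\{vu : u \in V_{\sigma(\Pi'(v))} \setminus T\}$ are distinct edges of $\Pi \cap \Pi'^c$ (their $\Pi$-colors differ but their $\Pi'$-colors agree), and these sets are pairwise disjoint across $v$; since $\Pi \in \Part(1/(2r))$ gives $|V_i| \ge n/(2r)$, this yields $b \ge t(n/(2r) - t)$, which is at least $tn/(4r)$ for $t \le n/(4r)$. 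Combining with the bound $\#\{\Pi' : T(\Pi, \Pi') = t\} \le r! \binom{n}{t}(r-1)^t \le (2rn)^t$, the total contribution from small $t$ is
\[
\sum_{t=1}^{n/(4r)} (2rn)^t \cdot n^{-\Omega(Ct/r)} \le n^{1 - \Omega(C/r)},
\]
where $m \ge Cn\log n$; choosing $C$ (and hence the constant $c$ in the proposition) large enough in terms of $a$, $r$, and $k$, this is at most $n^{-a}/2$.

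The main obstacle is handling alternative partitions $\Pi'$ at large vertex distance $t > n/(4r)$, where the elementary lower bound $b \ge t(n/(2r) - t)$ degrades. The plan to address this is via a sharper ``majority-class'' decomposition
\[
b \ge \tfrac{1}{2} \sum_j |V'_j|\bigl(|V'_j| - \max_i |V_i \cap V'_j|\bigr),
\]
combined with the balance of $\Pi$, to obtain a uniform bound $b = \Omega(n^2/r^2)$ for all such $\Pi'$. The per-$\Pi'$ probability then becomes $\exp(-\Omega(Cn\log n / r^2)) = n^{-\Omega(Cn/r^2)}$, which is super-polynomially small and absorbs even the trivial count $r^n$ of alternative partitions. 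Combining the small-$t$ and large-$t$ contributions gives $|\GPB \setminus \UPB|/|\GPB| \le n^{-a}$, completing the proof.
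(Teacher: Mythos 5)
Your route — union-bounding over alternative partitions $\Pi'$ and applying the hypergeometric lower-tail inequality (Lemma~\ref{lemma:hyper-lower-tail}) to $|G\cap\Pi\cap(\Pi')^c|$ — is genuinely different from the paper's, which isolates a fixed pair $C,D$ of moderately large sets (same class of $\Pi'$, different classes of $\Pi$) and bounds $\sum_{B'}|\GPB\cap\GPpBp|$ directly via binomial-coefficient computations using the local constraint $e_G(C,D)\le k\min\{|C|,|D|\}$. However, your probability estimate in the small-$t$ regime has a genuine gap. You claim $\Pr_G\bigl[|G\cap\Pi\cap(\Pi')^c|\le kn\bigr]\le\exp\bigl(-\Omega(mb/n^2)\bigr)$, but Lemma~\ref{lemma:hyper-lower-tail} is vacuous unless the mean overlap $b(m-e(B))/e(\Pi)$ exceeds the threshold $kn$: the bound is $\exp\bigl(-\tau^2/(2b(m-e(B))/e(\Pi))\bigr)$ with $\tau=b(m-e(B))/e(\Pi)-kn$, which is meaningless when $\tau<0$. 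With $m=\Theta(n\log n)$ and $b\approx tn/(4r)$, the mean is only $\Theta(t\log n/r)$; for every $t\lesssim rkn/(c\log n)$, and in particular for all bounded $t$ such as $t=1$, this is far below $kn$, so the lemma yields nothing and the densely populated small-$t$ shells are not controlled.

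The fix is nearby and worth recording: every edge of $\Pi\cap(\Pi')^c$ has at least one endpoint in the moved set $T$, since if $u,v\notin T$ then $\Pi(u)=\sigma(\Pi'(u))=\sigma(\Pi'(v))=\Pi(v)$. Therefore $\Delta(G\cap(\Pi')^c)\le k$ actually forces $|G\cap\Pi\cap(\Pi')^c|\le kt$, not merely $kn$. With this $t$-dependent threshold the mean overlap $\Theta(t\log n/r)$ dominates $kt$ once $c$ is chosen large in terms of $k$ and $r$, Lemma~\ref{lemma:hyper-lower-tail} gives $\exp\bigl(-\Omega(tm/(rn))\bigr)=n^{-\Omega(ct/r)}$, and your computation $\sum_{t\ge 1}(2rn)^t\,n^{-\Omega(ct/r)}\le n^{-a}/2$ then goes through. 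The paper avoids this pitfall automatically: its region $C\times D$ always has $\max\{|C|,|D|\}\ge n/(2r^2)$, so the degree constraint on that single bipartite region is nontrivial as soon as $m\gg rn$, without needing to amplify by a factor of $t$.
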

\begin{proof}
  Fix some $\Pi \in \Part(\frac{1}{2r})$ and $\Pi' \in \Part(\frac{1}{2r}) \setminus \{\Pi\}$. Suppose that $\Pi = \{V_1, \dotsc, V_r\}$ and $\Pi' = \{V_1', \dotsc, V_r'\}$ and, for all $i, j \in \br{r}$, let $V_{i,j} = V_i \cap V_j'$. We will say that the vertices in $V_{i,j}$ are moved from $V_i$ to $V_j'$. For every $i \in \br{r}$, define $L_i$ and $S_i$ as the largest and the second largest subclasses of $V_i$, respectively. Note that $|V_i| \ge \frac{n}{2r}$ implies that $|L_i| \ge \frac{n}{2r^2}$. Set $s = \max_{j \in \br{r}} |S_j|$ and let $S = S_j$ for the smallest $j$ for which the maximum in the definition of $s$ is achieved. Note that $1 \le s \le n/2$, as $s = 0$ would imply that $(V_1', \dotsc, V_r')$ is a permutation of $(V_1, \dotsc, V_r)$, and therefore $\Pi = \Pi'$, which will imply also that $B=B'$ if $\GPB \cap \GPpBp \neq \emptyset$.

  Observe that either some pair $\{L_i, L_j\}$ of largest subclasses, or some largest subclass $L_i$ and $S$, where $S \nsubseteq V_i$, are moved to the same vertex class $V_z'$. Denote these sets $L_i$ and $L_j$ or $L_i$ and $S$ by $C$ and $D$. Since, for every $G\in\GPpBp$, the subgraph of $G$ induced by $V_z'$ has maximum degree at most $k$, it follows that, for every $G \in \GPB \cap \GPpBp$, the bipartite subgraph of $G$ induced between $C$ and $D$ also has maximum degree at most~$k$. In particular,
  \[
    e(C,D) \le k\cdot\min\{|C|,|D|\}.
  \]
  It follows that
  \begin{align*}
    (\star) 
		&= \Bigl|\GPB \cap \bigcup_{B'\in\BPpk}\GPpBp\Bigr| \\
                                                  &\le \sum_{t=0}^{k\cdot\min\{|C|,|D|\}}\binom{e(\Pi) - |C|\cdot|D|}{m-e(B)-t} \binom{|C|\cdot|D|}{t},
  \end{align*}
  since every $G \in \GPB$ contains $B$ and we need to specify its remaining $m-e(B)$ edges (by the definition of $C$ and $D$, no edge of $B$ connects these two sets). Consequently,
  \begin{align*}
    (\star) & \leByRef{eq:binCoef1} \binom{e(\Pi) - |C|\cdot|D|}{m-e(B)}\cdot\sum_{t=0}^{k\cdot\min\{|C|,|D|\}} \left(m-e(B)\right)^t\cdot\binom{|C|\cdot|D|}{t} \\
            &\leByRef{eq:binCoef4} \binom{e(\Pi) - |C|\cdot|D|}{m-e(B)} \cdot \left(\frac{(m-e(B)) \cdot e\max\{|C|,|D|\}}{k}\right)^{k\cdot\min\{|C|,|D|\}} \\
            &\leByRef{eq:binCoef2} \left(1 - \frac{|C|\cdot|D|}{n^2}\right)^{m-kn} \cdot \binom{e(\Pi)}{m-e(B)} \cdot e^{4k\cdot\min\{|C|,|D|\}\cdot\log n} \\
            & \le \exp\left(-\frac{|C| \cdot |D| \cdot (m-kn)}{n^2} + 4k \cdot \min\{|C|, |D|\} \cdot \log n\right) \cdot \binom{e(\Pi)}{m-e(B)}.
  \end{align*}
  If $m \ge c n \log n$ for a sufficiently large constant $c = c(k, r, a)$, then the simple bounds $\max\{|C|,|D|\} \ge\frac{n}{2r^2}$ and $\min\{|C|,|D|\} \ge\frac{s}{2r^2}$ imply that
  \begin{align*}
    (\star) & \le \exp\left(\left(-\frac{ m/2}{2r^2n}+4k\log n\right)\cdot\min\{|C|,|D|\}\right)\cdot\binom{e(\Pi)}{m-e(B)} \\
            &  \le n^{-(a+3)sr^2}\cdot \binom{e(\Pi)}{m-e(B)} = n^{-(a+3)sr^2}\cdot |\GPB|.
  \end{align*}
  Finally, observe that, given a $\Pi$, we can describe any $\Pi' \neq \Pi $ by first picking the partitions $\{V_{i,j}\}_{j \in \br{r}}$ for every $i$ and then setting $V_j' = \bigcup_{i \in \br{r}} V_{i,j}$. We claim that, for every $s$, there are at most $n^{r^2} \cdot n^{sr^2}=n^{(s+1)r^2}$ ways to choose  all $V_{i,j}$ so that $\max_{i \in \br{r}} |S_i| = s$.  Indeed, one may first specify the sequence $\big(|V_{i,j}|\big)_{i,j \in \br{r}}$ and then specify, for each $i \in \br{r}$, the elements of each $V_{i,j}$ with $j \in \br{r}$, apart from $L_i$ (which will comprise all the remaining, unspecified elements of $V_i$).  Therefore, by the above computation,
  \begin{align*}
    |\GPB \setminus \UPB| 
		& \le \sum_{\substack{\Pi'\in\Part(\frac{1}{2r})\\ \Pi'\ne\Pi}}\Bigl|\GPB \cap \bigcup_{B'\in\BPpk}\GPpBp\Bigr|\\
		& \le \sum_{\substack{\Pi'\in\Part(\frac{1}{2r})\\ \Pi'\ne\Pi}} n^{-(a+3)sr^2}\cdot |\GPB|\\
    &\le \sum_{s \ge 1} \left( n^{(s+1)r^2} \cdot n^{-(a+3)sr^2} \right) \cdot |\GPB| \le n^{-a} \cdot |\GPB|,
  \end{align*}
  as claimed.
\end{proof}

\subsection{Typical degrees in almost $r$-colourable graphs}

We shall now show that most vertices of almost every graph in $\Grk$ have degree exactly $k$ in the monochromatic graph. To make this informal statement precise, given a positive number~$\kappa$, denote by $\BPkk$ the family of all $B \in \BPk$ such that
\[
  \left|\big\{ v \in \br{n} : \deg_B(v) = k\big\}\right| \ge (1-\kappa)  n.
\]

\begin{prop}
  \label{prop:max-degree-exactly-k}
  For all integers $k \ge 0$ and $r \ge 2$, every positive $\kappa$, all $\Pi \in \Part$, and every $m$ satisfying $m \gg n$,
  \begin{equation}
    \label{eq:max-degree-exactly-k}
    \sum_{B\in\BPk \setminus \BPkk} |\GPB| \ll \sum_{B \in \BPk} |\GPB|.
  \end{equation}
\end{prop}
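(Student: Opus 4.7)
The plan is a weighted switching argument that compares the contribution of $B \in \BPk \setminus \BPkk$ to that of the full family $\BPk$ by adding a single edge to each bad $B$.

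First, I would show that every $B \in \BPk \setminus \BPkk$ admits $\Omega(n^2)$ edges $e \in E(\Pic) \setminus E(B)$ for which $B + e \in \BPk$.  Writing $V^{<}(B) = \{v \in \br{n} : \deg_B(v) < k\}$, we have $|V^{<}(B)| \ge \kappa n$ by definition, so the pigeonhole principle places at least $\kappa n/r$ such vertices inside some colour class $V_i$ of $\Pi$.  Since $\Pic$ restricted to $V_i$ is complete, the number of such edges is at least $\binom{\kappa n /r}{2} - e(B) \ge c_0 n^2$, where $c_0 = c_0(\kappa, r, k) > 0$ and $n$ is taken large enough (using the crude bound $e(B) \le kn$).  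For any such $e$, both endpoints have $B$-degree strictly less than $k$, so $B + e$ does indeed lie in $\BPk$.

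Next, I would form the collection $\mathcal{P}$ of pairs $(B, e)$ arising in the first step, weight each pair by $|\GPB| = \binom{e(\Pi)}{m - e(B)}$, and sum.  The lower bound above gives
\[
  W := \sum_{(B, e) \in \mathcal{P}} |\GPB| \ge c_0 n^2 \sum_{B \in \BPk \setminus \BPkk} |\GPB|.
\]
On the other hand, the map $(B, e) \mapsto (B + e, e)$ is injective into $\{(B', e) : B' \in \BPk,\ e \in E(B')\}$; using $e(B) = e(B')-1$ and $e(B') \le kn$,
\[
  W \le \sum_{B' \in \BPk} e(B') \binom{e(\Pi)}{m - e(B') + 1} \le kn \sum_{B' \in \BPk} \binom{e(\Pi)}{m - e(B') + 1}.
\]
Finally, since $e(\Pi) \le n^2/2$, $e(B') \le kn$, and $m \gg n$, for $n$ sufficiently large
\[
  \frac{\binom{e(\Pi)}{m - e(B') + 1}}{\binom{e(\Pi)}{m - e(B')}} = \frac{e(\Pi) - m + e(B')}{m - e(B') + 1} \le \frac{n^2}{m}.
\]
Chaining these inequalities yields
\[
  \sum_{B \in \BPk \setminus \BPkk} |\GPB| \le \frac{k}{c_0} \cdot \frac{n}{m} \cdot \sum_{B \in \BPk} |\GPB|,
\]
which is $o\bigl( \sum_{B \in \BPk} |\GPB| \bigr)$ since $m/n \to \infty$.

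The only step that demands any real care is producing $\Omega(n^2)$ edges addable to each bad $B$: it uses pigeonhole to concentrate low-degree vertices inside a single colour class, together with the crude upper bound $e(B) \le kn$.  The binomial-coefficient comparison and everything else are routine bookkeeping, and the whole gain $n/m = o(1)$ comes from the hypothesis $m \gg n$.
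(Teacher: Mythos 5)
Your proof is correct and uses the same switching idea as the paper. The paper formalises the switch at the level of full graphs, via a bipartite double-count between $\bigcup_{B\ \text{bad}} \GPB$ and $\bigcup_{B' \in \BPk} \GPBp$ in which $G_L$ is paired with $G_R = G_L + f - g$ (add a monochromatic edge $f$ between two low-degree vertices in a common colour class, delete a $\Pi$-edge $g$), and then bounds the degrees on each side. You instead switch only the monochromatic part $B \mapsto B + e$, weight each pair by $|\GPB| = \binom{e(\Pi)}{m - e(B)}$, and absorb the effect of the full-graph freedom through the binomial ratio $\binom{e(\Pi)}{m - e(B') + 1} / \binom{e(\Pi)}{m - e(B')} \le n^2/m$; this ratio plays precisely the role of the paper's count of $\Theta(n^2)$ choices for $f$ (in the upper bound on $\deg(G_R)$) against the $\Theta(m)$ choices for $g$ (in the lower bound on $\deg(G_L)$). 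Both bookkeepings deliver the same $O(n/m) = o(1)$ gain, so there is no difference in strength or generality; your version bypasses the auxiliary bipartite graph, which is marginally cleaner but not a different argument.
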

\begin{proof}
  Since $\BPzk = \BPz$, we may assume that $k \ge 1$. The left-hand and the right-hand sides of~\eqref{eq:max-degree-exactly-k} are cardinalities of the (disjoint) unions of families $\GPB$ over all $B \in \BPk \setminus \BPkk$ and all $B \in \BPk$, respectively; denote these two families of graphs by $\cF_L$ and $\cF_R$. We will compare the sizes of $\cF_L$ and $\cF_R$ by counting edges in a bipartite graph $\cH \subseteq \cF_L \times \cF_R$ defined as follows: A pair $(G_L, G_R) \in \cF_L \times \cF_R$ belongs to $\cH$ if and only if $G_R \setminus G_L$ is a single edge of $\Pic \setminus G_L$ and $G_L \setminus G_R$ is a single edge of $\Pi \cap G_L$.

  On the one hand, for every $G_R \in \cF_R$,
  \[
    \deg_{\cH}(G_R) \le e(\Pi) \cdot e(\Pic \cap G_R) \le n^2 \cdot kn.
  \]
  On the other hand, since every $B \in \BPk \setminus \BPkk$ contains more than $\kappa n$ vertices of degree smaller than $k$, at least $r \cdot \binom{\kappa n/r}{2}$ pairs of such vertices belong to the same colour class of $\Pi$. Consequently, for every $G_L \in \cF_L$,
  \[
    \begin{split}
      \deg_{\cH}(G_L) & \ge \left( r \cdot \binom{\kappa n/r}{2} - e(\Pic \cap G_L) \right) \cdot e(\Pi \cap G_L) \\
      & \ge \left(\frac{\kappa^2n^2}{3r} - kn\right) \cdot (m-kn) \ge \frac{\kappa^2 n^2}{4r} \cdot \frac{m}{2}.
  \end{split}
  \]
  We conclude that
  \[
    |\cF_L| \cdot \frac{\kappa^2n^2m}{8r} \le e(\cH) \le |\cF_R| \cdot kn^3,
  \]
  which implies that $    |\cF_L| \le (8kr/\kappa^2) \cdot (n/m) \cdot |\cF_R| \ll |\cF_R|$, as claimed.
\end{proof}

\subsection{Almost $r$-colourable graphs with large monochromatic girth}

We shall now show that in a constant proportion of graphs in $\Grk$, the monochromatic graph has large girth. To make this informal statement precise, given an integer $g \ge 3$, denote by $\BPkg$ the family of all graphs in $\BPk$ that do not contain any cycles of length at most~$g$. The following statement is a key ingredient in our proof of Theorem~\ref{thm:full}.

\begin{prop}
  \label{prop:large-girth}
  For all integers $k \ge 0$, $r \ge 2$, and $g \ge 3$, there exists a positive constant $c$ such that, for all $\Pi \in \Part(\frac{1}{2r})$ and every $m$ satisfying $m \gg n$,
  \[
    \sum_{B \in \BPkg} |\GPB| \ge c \cdot \sum_{B \in \BPk} |\GPB|.
  \]
\end{prop}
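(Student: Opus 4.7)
Since $|\GPB|=\binom{e(\Pi)}{m-e(B)}$ depends only on $t:=e(B)$, and the constraint $\Delta(B)\le k$ forces $t\in\{0,1,\dots,\lfloor kn/2\rfloor\}$, writing $N_t=|\{B\in\BPk:e(B)=t\}|$ and $N_t^*=|\{B\in\BPkg:e(B)=t\}|$, the plan is to deduce the target inequality from the uniform bound
\[
N_t^*\ge c\cdot N_t\qquad\text{for every }t\in\{0,1,\dots,\lfloor kn/2\rfloor\},
\]
for some absolute constant $c=c(r,k,g)>0$, by weighting with $\binom{e(\Pi)}{m-t}$ and summing over $t$. Equivalently, it suffices to show that a uniformly random $B\in\{B\in\BPk:e(B)=t\}$ has girth exceeding $g$ with probability at least $c$, uniformly in $t$.

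To establish this, I would apply the edge-switching method to the number $s(B)$ of cycles of length at most $g$ in $B$. Given $B$ with $s(B)=s\ge 1$, form the swap $B\mapsto(B\setminus e)\cup e'$ by choosing a short cycle $C$ of $B$, an edge $e\in C$, and an edge $e'\in E(\Pic)\setminus B$ whose endpoints both have degree at most $k-1$ in $B\setminus e$; this preserves $e(B)=t$ and $\Delta(B)\le k$. Using that $e(\Pic)=\Theta(n^2)$ for $\Pi\in\Part(\tfrac{1}{2r})$, every such $B$ whose set of degree-$k$ vertices has size bounded away from $n$ admits at least $\Omega(s\cdot n^2)$ forward swaps. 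On the other hand, each target $B'$ admits at most $O(n^2)$ reverse swaps, since the reverse must add an edge $e$ whose endpoints lie within graph distance $g-1$ in $B'\setminus e'$ and $\Delta(B'\setminus e')\le k$ bounds the number of such pairs by $O(nk^g)$, multiplied by at most $e(B')\le kn/2$ choices of $e'$. A double count then yields $\Ex[s(B)]=O(1)$, and the same method applied to ordered $j$-tuples of short cycles gives $\Ex[s(B)(s(B)-1)\cdots(s(B)-j+1)]\le C^j$ for every $j\ge 1$ and a constant $C=C(r,k,g)$. Since these factorial-moment bounds match those of a Poisson random variable with bounded parameter, standard inclusion–exclusion yields $\Pr(s(B)=0)\ge e^{-C}>0$.

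The main technical obstacle is the boundary regime $t\approx\lfloor kn/2\rfloor$: there every $B\in\BPk$ with $e(B)=t$ has almost all vertices of degree exactly $k$, and the count of addable $e'$ above collapses. For this case I would either enhance the switching by allowing simultaneous double-edge swaps that preserve the degree sequence (removing two edges, at least one on a short cycle, and adding two edges to maintain every vertex's degree), or else condition on the degree sequence $\mathbf{d}$ of $B$ and invoke the classical configuration-model computation: for any degree sequence bounded by $k$ on each of the parts $V_1,\dots,V_r$, the number of short cycles in a uniformly chosen simple graph with that degree sequence and edges confined to the parts is asymptotically Poisson with bounded mean, so $\Pr(\mathrm{girth}(B)>g\mid\mathbf{d})\ge\exp(-O(1))>0$ uniformly in $\mathbf{d}$. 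Averaging over $\mathbf{d}$ supplies the required lower bound $N_t^*/N_t\ge c$ in this boundary case and completes the proof.
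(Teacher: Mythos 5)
The fallback you describe — conditioning on the degree sequence of $B=G\setminus\Pi$ and invoking a Poisson limit for short cycles in random graphs with given degree sequence — is essentially the route the paper takes. However, as written, the proposal has a genuine gap, and the way you frame the argument also inverts which case is the main one.

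The claim that \(\Pr(\mathrm{girth}(B)>g\mid\mathbf d)\ge\exp(-O(1))>0\) \emph{uniformly} over degree sequences \(\mathbf d\) bounded by \(k\) is false. Take any $k\ge 2$ (the only nontrivial case), fix a colour class $V_i$, and consider the degree sequence $\mathbf d$ in which exactly three vertices of $V_i$ have degree $2$ and every other vertex has degree $0$: the unique realisation of $\mathbf d$ inside $\Pic$ is a triangle in $V_i$, so $\Pr(\mathrm{girth}(B)>g\mid\mathbf d)=0$. More generally, Theorem~\ref{thm:Bollobas} requires $2m-n\to\infty$, and the cycle counts do not obey a Poisson law for degenerate degree sequences. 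The paper circumvents exactly this by \emph{first} applying Proposition~\ref{prop:max-degree-exactly-k} (with $\kappa=1/(6r)$) to show that, under the relevant measure, the degree sequence of $B$ has at least $(1-\kappa)n$ entries equal to $k$ with probability $1-o(1)$; only then does it invoke Theorem~\ref{thm:Bollobas}, now legitimately. Your proposal never uses (or reproves) Proposition~\ref{prop:max-degree-exactly-k}, and without some such step ``averaging over $\mathbf d$'' does not follow from a uniform bound that does not hold. Relatedly, you cast the regime $t\approx\lfloor kn/2\rfloor$ as a boundary nuisance, whereas Proposition~\ref{prop:max-degree-exactly-k} shows it is where essentially all of the weight $\sum_B|\GPB|$ sits; the primary single-edge switching argument you describe therefore applies only to a vanishing fraction of the sum.

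Two smaller remarks. First, your reduction to ``$N_t^*\ge c\,N_t$ uniformly in $t$'' is stronger than what the paper proves and stronger than necessary: the paper only needs the weighted average over $(t,\mathbf d)$ to be bounded below, and it arranges for the bad $(t,\mathbf d)$ to carry negligible weight rather than controlling every slice. Second, even granting the factorial-moment bounds $\Ex\bigl[s(B)(s(B)-1)\cdots(s(B)-j+1)\bigr]\le C^{j}$, ``standard inclusion--exclusion'' does not by itself yield a \emph{lower} bound on $\Pr(s(B)=0)$; upper bounds on factorial moments do not determine the point mass at zero. One needs the factorial moments to \emph{converge} to those of a (sum of independent) Poisson variables and then pass to distributional convergence — this is precisely the content of Theorem~\ref{thm:Bollobas}, which is why the paper cites it rather than deriving the Poisson approximation from scratch via switching.
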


The proof of this proposition is a relatively straightforward corollary of Proposition~\ref{prop:max-degree-exactly-k} and the following classical result of Bollob\'as~\cite{Bol80} and Wormald~\cite{Wor81}.

\begin{thm}[{\cite[Theorem~2]{Bol80}}]
  \label{thm:Bollobas}
  Suppose that $k \ge 2$ and $g \ge 3$ are integers and let $0 \le d_1 \le \dotsb \le d_n \le k$ be such that $\sum_{i=1}^n d_i \eqqcolon 2m$ is even and $2m - n \to \infty$ as $n \to \infty$. Let $G$ be a graph chosen uniformly at random from the family of all graphs with vertex set $\br{n}$ such that $\deg_G(i) = d_i$ for every $i \in \br{n}$ and, for each $\ell \ge 3$, denote by $X_\ell$ the number of cycles of length $\ell$ in $G$. Denote by $(Z_3, \dotsc, Z_g)$ the vector of independent Poisson random variables with
  \[
    \Ex[Z_\ell] = \frac{1}{2\ell} \left(\frac{1}{m}\sum_{i=1}^n \binom{d_i}{2}\right)^\ell
  \]
  for each $\ell$. Then
  \[
    \lim_{n \to \infty} \dTV\big((X_3, \dotsc, X_g), (Z_3, \dotsc, Z_g)\big) = 0,
  \]
  where $\dTV$ is the total variation distance.
\end{thm}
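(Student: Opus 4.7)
My plan is to prove this via the \emph{configuration model}. Realise the random graph with degree sequence $(d_1,\dotsc,d_n)$ as follows: equip vertex $i$ with $d_i$ half-edges (so there are $2m$ half-edges in total) and take $M$ to be a uniformly random perfect matching on the set of half-edges. Projecting $M$ to a multigraph on $\br{n}$ yields a random multigraph $G^*$; a well-known fact is that, conditioned on $G^*$ being simple, its distribution is uniform on graphs with the prescribed degree sequence. I will therefore first prove the theorem for $G^*$ (in the configuration model) and then transfer the conclusion to $G$ by conditioning on simplicity.

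The workhorse will be the method of factorial moments. For every tuple $(j_3,\dotsc,j_g) \in \NN^{g-2}$, I want to show
\[
  \lim_{n \to \infty} \Ex\Bigl[\prod_{\ell=3}^{g} (X_\ell)_{j_\ell}\Bigr] = \prod_{\ell=3}^{g} \Ex[Z_\ell]^{j_\ell},
\]
where $(x)_j = x(x-1)\cdots(x-j+1)$ denotes the falling factorial. The left-hand side counts ordered families $(C_{3,1},\dotsc,C_{g,j_g})$ of pairwise distinct (labelled) cycles in $G^*$ of the prescribed lengths. I will enumerate such families by summing, for each choice of lengths, over ordered tuples of distinct vertices and ordered choices of two half-edges at each vertex along the cycle, and then multiplying by the probability that the corresponding $\sum_\ell \ell j_\ell$ pairs of half-edges all appear in $M$. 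The latter probability is $\prod_{i=0}^{J-1}(2m-2i-1)^{-1}$ where $J = \sum_\ell \ell j_\ell$, and since $m \to \infty$ (by the assumption $2m-n \to \infty$ together with $d_i \le k$), it behaves like $(2m)^{-J}$. Each length-$\ell$ cycle contributes a factor $\frac{1}{2\ell}(\sum_i d_i(d_i-1))^\ell (2m)^{-\ell}=\Ex[Z_\ell]$, and the contribution of families in which two cycles share a vertex or a half-edge is $O(1/m)$ and hence negligible. This yields the claimed factorial-moment identity. Since the $Z_\ell$ are independent Poissons (which are determined by their moments) and the joint factorial moments of a product of Poissons are exactly $\prod_\ell \Ex[Z_\ell]^{j_\ell}$, convergence of factorial moments implies convergence in distribution, and—for $\NN$-valued random variables—convergence of the joint distribution equals convergence in total variation.

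It remains to pass from $G^*$ to $G$. The number of loops $L$ and the number of pairs of parallel edges $P$ in $G^*$ are, by the same factorial-moment computation applied to cycles of lengths $1$ and $2$, asymptotically independent Poissons with finite, explicit means $\lambda_1,\lambda_2$ (finite because $k$ is a constant). Moreover, the same method shows that $(L,P,X_3,\dotsc,X_g)$ converges jointly to independent Poissons. Hence $\Pr(G^* \text{ simple})=\Pr(L=P=0) \to e^{-\lambda_1-\lambda_2}>0$, and by the asymptotic joint independence the conditional distribution of $(X_3,\dotsc,X_g)$ given $\{L=P=0\}$ has the same limit as the unconditional one. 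Combining this with the previous paragraph produces the desired $\dTV\to 0$ statement for $G$.

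The main obstacle I anticipate is the combinatorial bookkeeping for the joint factorial moments: one must argue carefully that, when enumerating ordered tuples of cycles, only the \emph{vertex-disjoint} configurations contribute to leading order, and that the error introduced by overlapping cycles (and by the non-multiplicativity of the matching probabilities) is of order $1/m$. The bounded-degree hypothesis $d_n \le k$ is crucial here, as it ensures $\sum_i \binom{d_i}{2} = O(m)$ (so each $\Ex[Z_\ell]$ is bounded) and also controls the number of half-edge choices around any fixed vertex, making the overlap error estimate uniform in the degree sequence.
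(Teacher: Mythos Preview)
The paper does not prove this theorem; it is quoted verbatim as a classical result of Bollob\'as and Wormald and used as a black box in the proof of Proposition~\ref{prop:large-girth}. Your proposal is essentially the standard proof via the configuration model and the method of (joint) factorial moments, which is precisely the route taken in the original references~\cite{Bol80,Wor81}; there is nothing to compare against in the present paper.

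One minor comment on your sketch: you write that ``$m\to\infty$ (by the assumption $2m-n\to\infty$ together with $d_i\le k$)''. Actually $2m-n\to\infty$ already forces $m\to\infty$ directly; the bound $d_i\le k$ is not needed for that implication (it is needed, as you correctly say later, to keep $\sum_i\binom{d_i}{2}=O(m)$ and hence $\Ex[Z_\ell]$ bounded, and to control the overlap terms uniformly). Otherwise the outline is sound.
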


\begin{proof}[{Proof of Proposition~\ref{prop:large-girth}}]
  We may assume that $k \ge 2$, since otherwise no graph in $\BPk$ can contain a cycle and thus $\BPkg = \BPk$. Suppose that $\Pi = \{V_1, \dotsc, V_r\} \in \Part(\frac{1}{2r})$ and let $G$ be a uniformly random element of $\bigcup_{B \in \BPk} \GPB$. Conditioned on $G \cap \Pi$ and the degree sequence of $G \cap \Pic$, the graphs $G[V_1], \dotsc, G[V_r]$ become independent, uniformly chosen random graphs with respective degree sequences. By Proposition~\ref{prop:max-degree-exactly-k}, invoked with $\kappa = 1/(6r)$, with probability $1-o(1)$,
  \begin{equation}
    \label{eq:G-Vi-many-edge}
    \sum_{v \in V_i} \deg_{G[V_i]}(v) \ge (|V_i| - \kappa n) \cdot k \ge \frac{2|V_i|}{3} \cdot k \ge \frac{4|V_i|}{3}
  \end{equation}
  for each $i \in \br{r}$, as $\min_i |V_i| \ge n/(2r) = 3\kappa n$. Since, for every $i \in \br{r}$,
  \[
    \frac{1}{e(G[V_i])} \sum_{v \in V_i} \binom{\deg_{G[V_i]}(v)}{2} \le \frac{1}{e(G[V_i])} \sum_{v \in V_i} \frac{\deg_{G[V_i]}(v) \cdot (k-1)}{2} = k-1,
  \]
  Theorem~\ref{thm:Bollobas} implies that, if the degree sequence of $G \cap \Pic$ satisfies~\eqref{eq:G-Vi-many-edge} for every $i \in \br{r}$, which happens with probability $1-o(1)$,
  \begin{multline*}
    \Pr(G \cap \Pic \in \BPkg \mid G \cap \Pi, \text{degree sequence of $G \cap \Pic$}) \\
    \ge \left(\frac{1}{2} \cdot \inf\left\{\prod_{\ell=3}^g \Pr\big(\Pois(\lambda) = 0\big) : 0 \le \lambda \le \frac{(k-1)^\ell}{2\ell}\right\}\right)^{r},
  \end{multline*}
  where $\Pois(\lambda)$ denotes the Poisson random variable with mean $\lambda$. The assertion of the proposition follows as $\Pr(\Pois(\lambda) = 0) = e^{-\lambda}$ and $g,k,r = O(1)$.
\end{proof}

\section{The 0-statement}
\label{sec:zeroState}

In this section, we treat the $0$-statement of Theorem~\ref{thm:full}.  First, using an elementary counting argument, we show that, for every graph $H$ with maximum degree at least two, if $m \ll \twoDensTS$, then the family $\Free$ constitutes an $e^{-o(m)}$-proportion of all graphs with $n$ vertices and $m$ edges.  Using a standard estimate on the lower tails of hypergeometric distributions, it will be fairly straightforward to deduce that, when $n \ll m \ll \twoDensTS$ and both $r$ and $k$ are bounded, the family $\Grk$ is far smaller than $\Free$.  The details are presented in Section~\ref{sec:below-2-density}.

Second, using a much more subtle argument, we show that, for every plain vertex-critical graph $H$ with criticality $k+1$ and chromatic number $r+1$, if $\Omega\big(\twoDensTS\big) \le m \le c \criticalityTS$ for a sufficiently small positive $c$, the number of graphs in $\Free$ that are `one edge away' from being in $\Grk$ is far greater than the number of graphs in $\Grk$.  Our argument, which relies on the Hypergeometric Harris Inequality as well as several crucial properties of graphs in $\Grk$ that we have established in Section~\ref{sec:almostRcolour}, is presented in Section~\ref{sec:above-2-density}.

\subsection{Below the 2-density}
\label{sec:below-2-density}

We first give a simple lower bound on $|\Free|$, valid for every graph $H$ with maximum degree at least two, that exploits the fact that, if $m \ll \twoDensTS$, a typical graph in $\Gnm$ can be made $H$-free by removing from it some $o(m)$ edges.

\begin{prop}
  \label{prop:below2den}
  Let $H$ be an arbitrary graph with maximum degree at least two. For every $\eps>0$, there is a $\delta>0$ such that, for all sufficiently large $m$ and every $m\le \delta\twoDensTS$,
  \[
  |\Free| \ge e^{-\eps m} \cdot \binom{\binom{n}{2}}{m}.
\]
\end{prop}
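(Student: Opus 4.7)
My plan is to lower-bound $\Pr(G_{n,m} \in \Free)$ by $e^{-\eps m}$ via the Hypergeometric Harris Inequality (Lemma~\ref{lem:HFKG}), after first reducing to the case in which $H$ is strictly $2$-balanced. For the reduction, let $K \subseteq H$ be a minimal subgraph satisfying $m_2(K) = m_2(H)$; minimality forces $K$ to be strictly $2$-balanced. Since $H$ has maximum degree at least two it contains $P_3$, so $m_2(H) \ge m_2(P_3) = 1$, whence $K$ is not a matching; in particular $v_K \ge 3$ and $e_K \ge 2$. Because every $K$-free graph is $H$-free (as $K \subseteq H$), we have $|\Free| \ge |\cF_n(K) \cap \Gnm|$, while $m_2(K) = m_2(H)$ leaves the threshold $\twoDensTS$ unchanged. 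Thus it suffices to prove the proposition under the extra assumption $m_2(H) = (e_H - 1)/(v_H - 2)$.

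Under that assumption I apply Lemma~\ref{lem:HFKG} with ground set $\Omega = E(K_n)$ of size $N = \binom{n}{2}$ and the family $\{B_\phi\}$ of edge-sets of the at most $n^{v_H}$ injective embeddings $\phi : V(H) \hookrightarrow \br{n}$; each $|B_\phi| = e_H$. Fix, say, $\eta = 1/2$ and set $p = m/N$. The combination of $m \le \delta\twoDensTS$ with the identity $m_2(H)(v_H - 2) = e_H - 1$ makes the powers of $n$ cancel exactly, yielding
\[
\sum_\phi \bigl((1+\eta)p\bigr)^{e_H} \le n^{v_H} \bigl((1+\eta)p\bigr)^{e_H} \le C_H \, \delta^{e_H - 1}\, m
\]
for some constant $C_H$ depending only on $H$. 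Choose $\delta$ small enough that $2 C_H \delta^{e_H - 1} \le \eps/2$; then $1 - x \ge e^{-2x}$ on $[0, 1/2]$ gives a Harris product of size at least $e^{-\eps m/2}$, and the additive error term in Lemma~\ref{lem:HFKG} is $e^{-\Omega(m)}$ and hence negligible. This yields $\Pr(G_{n,m} \in \Free) \ge e^{-\eps m}$ for all sufficiently large $m$ whenever $\eps < 1/4$; the case $\eps \ge 1/4$ is implied by $\eps = 1/4$.

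The principal subtlety, and the part I would expect a reader to question, is the necessity of the reduction step. A direct application of Lemma~\ref{lem:HFKG} to $H$ itself does \emph{not} work in general, because for graphs with slack relative to their densest subgraph---say, a triangle with a pendant edge---the exponent $v_H - e_H/m_2(H)$ strictly exceeds $2 - 1/m_2(H)$, so the expected number of labelled copies of $H$ in $G_{n,m}$ at the $2$-density threshold is already super-linear in $m$ and the Harris product becomes vacuous. Passing to a strictly $2$-balanced subgraph of the same $2$-density removes exactly this surplus, and the hypothesis $\Delta(H) \ge 2$ is precisely what guarantees that the resulting $K$ is nontrivial (with $v_K \ge 3$ and $e_K \ge 2$) so that the estimate above is meaningful.
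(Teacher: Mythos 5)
Your proof is correct and takes a genuinely different route from the paper's. The paper passes to a $2$-densest subgraph $F$ (with $d_2(F)=m_2(H)$), samples a random graph with $m'=\lceil(1+\delta)m\rceil$ edges, bounds the expected number of copies of $F$ by Markov's inequality, deletes one edge per copy, and finishes by a double-counting identity between $\Gnm$ and $\Gnmp$; because $m'-m$ can vanish when $\delta m<1$, it must treat $m\le n^{1/3}$ by a separate trivial union bound. Your argument instead invokes the Hypergeometric Harris Inequality (Lemma~\ref{lem:HFKG}) directly and handles the whole range in one shot, which is arguably cleaner, at the cost of relying on a heavier probabilistic tool. Two small remarks. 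First, you reduce to a \emph{strictly} $2$-balanced subgraph $K$, but your calculation only uses $2$-balancedness, i.e.\ $d_2(K)=m_2(K)=m_2(H)$; picking any subgraph achieving the $2$-density (as the paper does with its $F$) already suffices, and your minimality observation in fact delivers the stronger property for free, so this is merely a harmless over-reduction. Your diagnosis of \emph{why} some reduction is needed for the Harris route is, however, exactly right: without $d_2(K)=m_2(H)$, the sum $n^{v_K}p^{e_K}$ outgrows $m$ and the product collapses. Second, with $\eta=1/2$ the additive error in Lemma~\ref{lem:HFKG} is $e^{-m/16}$, so your final comparison requires $\eps/2<1/16$, i.e.\ $\eps<1/8$ rather than $\eps<1/4$; this is a trivial arithmetic slip and the subsequent monotonicity remark still dispatches the remaining range of $\eps$.
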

\begin{proof}
  Suppose that $H$ is a graph with maximum degree at least two. This means that $K_{1,2} \subseteq H$ and hence $m_2(H) \ge m_2(K_{1,2}) \ge 1$. Suppose that $\eps$ is a positive number. Let $F$ be an arbitrary subgraph of $H$ such that $d_2(F) = m_2(H)$ and note that $e_F \ge 2$, as $m_2(H) \ge 1$. Finally, let $\delta$ be a small positive number satisfying
  \begin{equation}
    \label{eq:below2den-delta-choice}
    (6\delta)^{e_F -2} \le \frac{1}{72} \qquad \text{and} \qquad \left(\frac{\delta}{e(1+2\delta)}\right)^{2\delta} \ge e^{-\eps/2}.
  \end{equation}

  Let $m$ be a positive integer satisfying $m\le \delta \twoDensTS$. If $m \le n^{1/3}$, we let $G$ be a~uniformly chosen random graph in $\Gnm$ and note that
  \[
    \Pr(H \subseteq G) \le \Pr(K_{1,2} \subseteq G) \le n^3 \cdot \left(\frac{m}{\binom{n}{2}}\right)^2 \le 5n^{-1/3} \le 1-e^{-\eps},
  \]
  provided that $n$ is sufficiently large. Consequently,
  \[
    |\Free| = \Pr(H \nsubseteq G) \cdot \binom{\binom{n}{2}}{m} \ge e^{-\eps m} \cdot \binom{\binom{n}{2}}{m},
  \]
  as desired. We may thus assume from now on that $m > n^{1/3}$.
  
  Set $m' = \lceil (1+\delta)m \rceil$ and note that
  \[
    m' \le (1+\delta)\delta \twoDensTS + 1\le 2\delta \twoDensTS = 2\delta n^{2-\frac{1}{d_2(F)}}
  \]
  provided that $n$ is sufficiently large. Now, let $G$ be a uniformly chosen random graph in $\Gnmp$, and let $X$ denote the number of copies of $F$ in $G$. Recalling that $d_2(F) = (e_F-1)/(v_F-2)$, we have
  \[
    \begin{split}
      \Ex[X] & \le n^{v_F} \cdot \left(\frac{m'}{\binom{n}{2}}\right)^{e_F} \le n^{v_F} \cdot \left(\frac{3m'}{n^2}\right)^{e_F} \le n^{v_F} \cdot \frac{3m'}{n^2} \cdot \left(6 \delta n^{-\frac{1}{d_2(F)}}\right)^{e_F-1} \\
      & = (6\delta)^{e_F-1} \cdot 3m' \leByRef{eq:below2den-delta-choice} \frac{\delta m'}{4} \le \frac{\delta m}{2} ,
    \end{split}
  \]
  and consequently, by Markov's inequality,
  \[
    \Pr(X\ge m'-m) = \Pr(X\ge \delta m) \le \frac{1}{2}.
  \]
  We conclude that at least half of the graphs in $\Gnmp$ contain a subgraph with $m$ edges that is $F$-free and thus also $H$-free. (Indeed, we may delete an arbitrary edge from each of the at most $m' -m$ copies of $F$ in the original graph). By double counting,
  \[
    |\Free| \cdot \binom{\binom{n}{2}-m}{m'-m} \ge \frac{1}{2} \cdot \binom{\binom{n}{2}}{m'}.
  \]
  It follows that, denoting $N = \binom{n}{2}$,
  \[
    \frac{|\Free|}{\binom{N}{m}} \ge \frac{1}{2}\cdot\frac{\binom{N}{m'}}{\binom{N}{m}\binom{N-m}{m'-m}} = \frac{1}{2 \cdot\binom{m'}{m'-m}} \geByRef{eq:binCoef4} \frac{1}{2} \cdot \left(\frac{em'}{m'-m}\right)^{m-m'}.
  \]
  Finally, since $(1+\delta) m \le m' \le (1+\delta)m + 1 \le (1+2\delta) m$,  we conclude that
  \[
    \frac{|\Free|}{\binom{\binom{n}{2}}{m}} \ge \frac{1}{2} \cdot \left(\frac{\delta}{e(1+2\delta)}\right)^{2\delta m} \geByRef{eq:below2den-delta-choice} \frac{1}{2} \cdot e^{-\eps m/2} \ge e^{-\eps m},
  \]
  provided that $n$ is sufficiently large.
\end{proof}

In order to bound the number of graphs in $\Grk$ from above, we use the simple observation that every graph in $\Grk$ contains a set of at least $n/r$ vertices that induces a graph with average degree at most $k$, which is much less than the expected average degree of a graph that such a set would induce in a uniformly chosen random graph from $\Gnm$.

\begin{prop}
  \label{prop:Grk-upper}
  For all positive integers $k$, $r$, $n$, and $m$ satisfying $m \ge 6r^2(k+2)n$, we have
  \[
    |\Grk| \le  \exp\left(-\frac{m}{4r^2}\right) \cdot \binom{\binom{n}{2}}{m},
  \]
  provided that $n$ is sufficiently large.
\end{prop}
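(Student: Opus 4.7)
The plan is to bound $|\Grk|$ by a union over the (at most $r^n$) partitions $\Pi$ of $\br{n}$ into $r$ parts that can witness $G \in \cG(r,k)$, and then, for each such $\Pi$, to use Lemma~\ref{lemma:hyper-lower-tail} to show that the monochromatic subgraph $G \cap \Pic$ of a uniformly random $G \in \Gnm$ is exponentially unlikely to have as few as $kn/2$ edges. Since every $G \in \Grk$ admits at least one $\Pi \in \Part$ for which $G \cap \Pic \in \BPk$, and every $B \in \BPk$ satisfies $e(B) \le kn/2$, we have
\[
  |\Grk| \le \sum_{\Pi \in \Part} \Bigl|\bigl\{G \in \Gnm : e(G \cap \Pic) \le kn/2\bigr\}\Bigr|.
\]

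Fix $\Pi = \{V_1, \dotsc, V_r\}$ and view $G \in \Gnm$ as a uniformly random $m$-element subset $R$ of $\Omega = \binom{\br{n}}{2}$, so that $e(G \cap \Pic) = |R \cap A|$ with $A = E(\Pic)$. By convexity, $|A| = \sum_i \binom{|V_i|}{2} \ge r \binom{\lfloor n/r \rfloor}{2}$, and so for $n$ sufficiently large the mean $\mu = m|A|/\binom{n}{2}$ satisfies $\mu \ge m/(2r)$. The hypothesis $m \ge 6r^2(k+2)n$ gives $kn/2 \le m/(12r^2) \le \mu/(6r)$, so setting $t = \mu - kn/2 \ge (1 - 1/(6r))\mu$ in Lemma~\ref{lemma:hyper-lower-tail} yields
\[
  \Pr\bigl(e(G \cap \Pic) \le kn/2\bigr) \le \exp\bigl(-t^2/(2\mu)\bigr) \le \exp\bigl(-(1-1/(6r))^2 \mu/2\bigr) \le \exp\bigl(-(1-1/(6r))^2 m/(4r)\bigr).
\]

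Summing over the at most $r^n$ partitions and dividing by $\binom{\binom{n}{2}}{m}$ reduces the proposition to the numerical inequality
\[
  n \log r - (1-1/(6r))^2 m/(4r) \le -m/(4r^2),
\]
equivalently $4r^2 n \log r \le m \bigl[(1-1/(6r))^2 r - 1\bigr]$. For $r \ge 2$ one has $(1-1/(6r))^2 r - 1 \ge 98/144 > \log r /3$, and the hypothesis $m \ge 6r^2(k+2)n \ge 12 r^2 n$ then closes the estimate; the edge case $r = 1$ is immediate because $\Grk$ then consists of graphs of maximum degree at most~$k$, and $m \ge 6(k+2)n > kn/2$ forces $\Grk = \emptyset$. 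The main point of delicacy is simply that the $r^n$ loss from the union bound over partitions must be absorbed by the Chernoff-type saving $\exp(-\Theta(m/r))$ in each summand—this is exactly what the generous quantitative hypothesis $m \ge 6 r^2(k+2) n$ is designed to provide.
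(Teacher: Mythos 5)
Your proof is correct in spirit and uses the same key ingredient (Lemma~\ref{lemma:hyper-lower-tail}), but you take a slightly different route from the paper, and there is a small arithmetic slip at the very end.

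The paper does not union-bound over $r$-partitions $\Pi$. Instead it observes that every $G\in\Grk$ contains a single vertex set $W$ of size at least $n/r$ (the largest colour class) with $e(G[W])\le k|W|/2$, and union-bounds over the at most $2^n$ choices of $W\subseteq\br{n}$, applying Lemma~\ref{lemma:hyper-lower-tail} with $A=\binom{W}{2}$. You instead union-bound over the at most $r^n$ choices of $\Pi$, applying the lemma with $A=E(\Pic)$. Both are legitimate; the paper's version is marginally more economical since $2^n\le r^n$, and it isolates a single sparse induced subgraph rather than the full monochromatic graph, but the ideas are close cousins and the hypothesis $m\ge 6r^2(k+2)n$ is generous enough to absorb either loss.

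The slip: you write $(1-1/(6r))^2 r - 1\ge 98/144 > \log r/3$, but the middle term $98/144$ does not dominate $\log r/3$ once $r\ge 8$ (e.g.\ $\log 8/3\approx 0.693 > 98/144\approx 0.681$). The inequality you actually need, namely $(1-1/(6r))^2 r - 1\ge \log r/3$ for all $r\ge 2$, is still true — the left-hand side is increasing in $r$ and grows linearly, while the right-hand side grows logarithmically, and equality is far from tight at $r=2$ — but it needs to be justified directly (for instance, by checking the case $r=2$ and noting that the derivative of the difference is positive for $r\ge 2$) rather than via the constant $98/144$. Once that line is repaired, your argument goes through.
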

\begin{proof}
  Observe that, for every graph $G \in \Grk$, there is a set $W \subseteq \br{n}$ with at least $n/r$ elements such that $e(G[W]) \le k|W|/2$. In particular, if $G$ is a uniformly chosen random graph from $\Gnm$,
  \[
    |\Grk| \le \sum_{\substack{W \subseteq \br{n} \\ |W| \ge n/r}} \Pr\big(e(G[W]) \le k|W|/2 \big) \cdot \binom{\binom{n}{2}}{m}.
  \]
  We may bound each term in the above sum using Lemma~\ref{lemma:hyper-lower-tail}, invoked with $\Omega = \binom{\br{n}}{2}$ and $A = \binom{W}{2}$. Indeed, letting
  \[
    t = \frac{m\binom{|W|}{2}}{\binom{n}{2}} - \frac{k|W|}{2},
  \]
  we have
  \[
    \Pr\big(e(G[W]) \le k|W|/2 \big) \le \exp\left(-\frac{t^2}{2m\binom{|W|}{2}/\binom{n}{2}}\right) \le \exp\left(-\frac{m \binom{|W|}{2}}{2\binom{n}{2}} + \frac{k|W|}{2}\right).
  \]
  If $n$ is sufficiently large, then, for every $W$ with $n/r \le |W| \le n$,
  \[
    \frac{m \binom{|W|}{2}}{2\binom{n}{2}} - \frac{k|W|}{2} \ge \frac{m}{2} \cdot \frac{n/r \cdot (n/r-1)}{n \cdot (n-1)} - \frac{kn}{2} \ge \frac{m}{3r^2} - \frac{kn}{2},
  \]
  and, consequently,
  \[
    |\Grk| \le 2^n \cdot \exp\left(-\frac{m}{3r^2} + \frac{kn}{2}\right) \cdot \binom{\binom{n}{2}}{m}.
  \]
  The claimed bound now follows from our assumption that $m \ge 6r^2(k+2)n$.
\end{proof}

Propositions~\ref{prop:below2den} and~\ref{prop:Grk-upper} immediately yield the following corollary.

\begin{cor}
  \label{cor:0-statement-below-2-density}
  Let $H$ be an arbitrary graph with maximum degree at least two and let $k$ and $r$ be positive integers. There exists a positive constant $c$ such that, if $n \ll m \le c \twoDensTS$,
  \[
    |\Free| \gg |\Grk|.
  \]
\end{cor}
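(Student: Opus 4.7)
The plan is simply to combine Propositions~\ref{prop:below2den} and~\ref{prop:Grk-upper}, choosing the constants so that the exponential gain in the upper bound on $|\Grk|$ dominates the exponential loss in the lower bound on $|\Free|$.

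Concretely, I would first fix $\eps = \frac{1}{8r^2}$ and apply Proposition~\ref{prop:below2den} to obtain the corresponding $\delta > 0$; then set $c = \delta$. Since $H$ has maximum degree at least $2$, the hypothesis of Proposition~\ref{prop:below2den} is met, so for every $m \le c\, n^{2 - 1/m_2(H)}$ with $m$ sufficiently large,
\[
  |\Free| \ge e^{-\eps m} \binom{\binom{n}{2}}{m}.
\]

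Next, the assumption $n \ll m$ ensures that $m \ge 6r^2(k+2)n$ once $n$ is large enough, so Proposition~\ref{prop:Grk-upper} applies and yields
\[
  |\Grk| \le \exp\!\left(-\frac{m}{4r^2}\right) \binom{\binom{n}{2}}{m}.
\]
Dividing the two bounds,
\[
  \frac{|\Grk|}{|\Free|} \le \exp\!\left(\eps m - \frac{m}{4r^2}\right) = \exp\!\left(-\frac{m}{8r^2}\right),
\]
which tends to $0$ as $n \to \infty$ since $m \gg n \to \infty$. This gives $|\Grk| \ll |\Free|$, as required.

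There is essentially no obstacle here: the corollary is an immediate consequence of the two preceding propositions, and the only small point to verify is that the regime $n \ll m \le c\, n^{2-1/m_2(H)}$ is simultaneously covered by both, which follows from taking $n$ large enough that $m \ge 6r^2(k+2)n$ holds. The substantive work is in Propositions~\ref{prop:below2den} and~\ref{prop:Grk-upper} themselves.
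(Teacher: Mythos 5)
Your proposal is exactly the paper's approach: the paper states that Propositions~\ref{prop:below2den} and~\ref{prop:Grk-upper} immediately yield the corollary, and you have correctly filled in the routine details of choosing $\eps = \tfrac{1}{8r^2}$, taking $c=\delta$, and checking that $n\ll m$ guarantees both the largeness condition in Proposition~\ref{prop:below2den} and the hypothesis $m\ge 6r^2(k+2)n$ in Proposition~\ref{prop:Grk-upper}. Correct and complete.
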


\subsection{Above the 2-density}
\label{sec:above-2-density}

In this section, we show that, if $H$ is a plain vertex-critical graph with criticality $k+1$ and chromatic number $r+1 \ge 3$, then there exists a positive constant $c_H$ such that $|\Free| \gg |\Grk|$ for every $m$ satisfying $\Omega\big(\twoDensTS\big) \le m \le c_Hm_H$. More precisely, we will show that the number of graphs in $\Free$ that are `one edge away' from being in $\Grk$, i.e., graphs $G \in \Free$ such that $G \setminus e \in \Grk$ for some $e \in G$, is far greater than the number of graphs in $\Grk$.

\begin{prop}
  \label{prop:zeroState}
  Suppose that $H$ is a plain vertex-critical graph with criticality $k+1$ and chromatic number $r+1 \ge 3$. There is a positive constant $c_H$ such that, if $m$ satisfies
  \[
    \Omega\Big(\twoDensTS\Big)\le  m \le c_H m_H,
  \]
  then $|\Free|\gg|\Grk|$.
\end{prop}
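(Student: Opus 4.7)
The plan is to split the argument into two regimes depending on whether $m_2(H) > \eta(H)$ (Case~1, with $m_H = \twoDensTS$) or $m_2(H) \le \eta(H)$ (Case~2, $m_H = \criticalityTS$). In Case~1 the entire range $\Omega(\twoDensTS) \le m \le c_H\twoDensTS$ is within the reach of Proposition~\ref{prop:below2den}: combining the resulting bound $|\Free| \ge e^{-\eps m}\binom{\binom{n}{2}}{m}$ (valid for $c_H \le \delta(\eps)$ sufficiently small) with Proposition~\ref{prop:Grk-upper}'s bound $|\Grk| \le \exp(-m/(4r^2))\binom{\binom{n}{2}}{m}$, and choosing $\eps < 1/(8r^2)$, yields $|\Free|/|\Grk| \ge e^{\Omega(m)} \to \infty$.

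In Case~2, I instead build many $H$-free graphs by attaching one extra monochromatic edge to graphs already resembling members of $\Grk$. Using Propositions~\ref{prop:unbalanced-graphs},~\ref{prop:UP},~\ref{prop:max-degree-exactly-k}, and~\ref{prop:large-girth} I reduce the upper bound to
\[
  |\Grk| \le (1+o(1))\sum_{\Pi \in \Partg}\sum_{B \in \BPkg \cap \BPkk}\binom{e(\Pi)}{m-e(B)},
\]
for some girth parameter $g > v_H$ and small $\kappa,\gamma$. For each such good pair $(\Pi, B)$ I will select an extension edge $e \in \Pi^c \setminus B$ whose addition creates a unique degree-$(k+1)$ vertex $v^*$ in $B+e$ (one endpoint of $B$-degree exactly $k$, the other of degree strictly less than $k$) and leaves the girth greater than $v_H$; a direct count shows that $\Omega(n^2)$ such edges exist. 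The key structural claim, using the plain vertex-critical hypothesis, is that any copy of $H$ in a graph $G$ with $G \cap \Pi^c = B + e$ must have its monochromatic part equal exactly to the $K_{1,k+1}$ centred at $v^*$: large girth excludes option~(i), and the uniqueness of $v^*$ together with $\Delta(B+e) = k+1$ excludes options~(iii) and~(iv), leaving only option~(ii). Moreover, a short minimality argument shows that every critical star $S_i$ of $H$ is an induced subgraph---in any $r$-colouring $\chi_0$ of $H \setminus S_i$, if $\chi_0(v_i) \neq \chi_0(\ell_j)$ for some leaf $\ell_j$ then $\chi_0$ would remain a proper $r$-colouring of $H \setminus (S_i \setminus \{v_i\ell_j\})$, contradicting the minimality of $S_i$---so the centre $v_i$ and all leaves of $S_i$ can legitimately be placed into the colour class of $v^*$, with the remaining vertices of $H \setminus S_i$ embedded in the other colour classes via an $r$-colouring of $H \setminus S_i$ compatible with this assignment.

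Applying the Hypergeometric Harris Inequality (Lemma~\ref{lem:HFKG}) to $R = G \cap \Pi$, with bad events indexed by critical stars $S_i$ and embeddings of an optimally chosen $F_i$ with $S_i \subsetneq F_i \subseteq H$, $d_{k+2}(F_i) = \eta_i(H)$, and $e_{F_i} = \zeta_i(H)$, the contribution of $S_i$ to the Harris exponent is of order
\[
  n^{v_{F_i}-k-2}\bigl(m/n^2\bigr)^{e_{F_i}-k-1} \;=\; \bigl(m/n^{2-1/\eta_i(H)}\bigr)^{\zeta_i(H)-k-1}.
\]
Since $\eta_i(H) \ge \eta(H) \ge m_2(H)$ throughout Case~2 and $m \le c_H m_H$, summing over the constantly many critical stars gives a total at most $\eps\log m$ for sufficiently small $c_H$, hence $\Pr(G \text{ is } H\text{-free} \mid G \cap \Pi^c = B + e) \ge m^{-\eps}$. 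Combining with the $\Omega(n^2)$ choices of $e$ and the ratio $\binom{e(\Pi)}{m-e(B)-1}/\binom{e(\Pi)}{m-e(B)} = \Theta(m/n^2)$, summing over good $(\Pi, B)$, and absorbing an $O(1)$ overcounting (each constructed $G$ has $v^*$ uniquely identifiable in $G \cap \Pi^c$, so $e$ is pinned down to the $k+1$ edges at $v^*$, while Proposition~\ref{prop:UP} applied to $G \setminus e$ fixes $\Pi$ up to negligible error), yields $|\Free| \gg m^{1-\eps}\cdot|\Grk|$, which diverges. The principal obstacle will be coordinating the small parameters $\gamma$, $\kappa$, $\eps$, and $c_H$ and making the overcounting argument rigorous across both cases.
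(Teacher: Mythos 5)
Your approach essentially mirrors the paper's: split on whether $m_2(H) > \eta(H)$ (where the statement reduces to Corollary~\ref{cor:0-statement-below-2-density}, built from Propositions~\ref{prop:below2den} and~\ref{prop:Grk-upper}) or $m_2(H) \le \eta(H)$, and in the latter regime count $H$-free graphs that are one monochromatic edge away from $\Grk$ via the Hypergeometric Harris Inequality, with the extremising subgraphs $F_i$ chosen exactly as you describe. The structural discussion of why the monochromatic part must collapse to a $K_{1,k+1}$, the minimality argument showing a critical star is an induced star, and the Harris-exponent computation are all correct and match the paper's Lemma~\ref{lemma:0-statement-above-2-density} and Claim~\ref{claim:criticality-0-statement}.

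There is, however, a genuine gap in your selection of the extension edge $e$. You insist that $e$ create a \emph{unique} degree-$(k+1)$ vertex, i.e.\ that one endpoint have $B$-degree exactly $k$ and the other have $B$-degree strictly less than $k$, and you claim $\Omega(n^2)$ such edges exist. But for $B\in\BPkk$ at most $\kappa n$ vertices have degree below $k$, so the number of such edges is at most $\kappa n^2$---and if $B$ happens to be $k$-regular (which can occur even with girth $> v_H$), there are \emph{no} such edges at all, and your construction produces nothing. The restriction is also unnecessary: if both endpoints of $e$ acquire degree $k+1$ in $B\cup e$, they are adjacent, and since any two vertices of degree $\crit(H)$ in the monochromatic part of a copy of $H$ would both have to be endpoints of $e$, condition~(iv) of plain vertex-criticality is still ruled out. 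The paper (Lemma~\ref{lemma:0-statement-above-2-density} and the edge count in the proof of Proposition~\ref{prop:zeroState}) therefore requires only that $B\cup e$ retain girth $> v_H$, which yields the robust bound of at least $n^2/(4r)$ valid edges. Dropping your uniqueness constraint and replacing it with the adjacency observation closes the gap; the overcounting factor is still $O(kn)$ (or $O(k)$ per endpoint once $B\cup e$ is fixed), which the $\Omega(n^2)$ choices of $e$ comfortably absorb.
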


The main ingredient in our proof of this proposition is the following lower bound on the number of $H$-free graphs that are `one edge away' from $\GPB$ for given balanced $r$-colouring $\Pi$ and $B \in \BPk$ with large girth.

\begin{lemma}
  \label{lemma:0-statement-above-2-density}
  Suppose that $H$ is a plain vertex-critical graph with criticality $k+1$ and chromatic number $r+1 \ge 3$. For every $\eps > 0$, there exists a positive constant $c$ such that the following holds for every $m$ that satisfies
  \[
    n\log n \ll m \le c \criticalityTS.
  \]
  For every $\Pi \in \Part(\frac{1}{20r})$, all $B \in \BPk$, and each $e \in \Pic \setminus B$ such that $B \cup e$ has girth larger than $v_H$,
  \[
    |\UPBe \cap \Free| \ge \frac{m}{n^{2+\eps}} \cdot |\GPB|.
  \]
\end{lemma}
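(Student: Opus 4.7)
The plan is to lower bound $\Pr(G \in \Free \cap \UPBe) \ge \Pr(G \text{ is $H$-free}) - \Pr(G \notin \UPBe)$, where $G^\star := B \cup e$ and $G := G^\star \cup R$ for a uniformly chosen random $(m - e(G^\star))$-element subset $R \subseteq E(\Pi)$; multiplying the result by $|\GPBe| = \binom{e(\Pi)}{m - e(B) - 1}$ and using $|\GPBe|/|\GPB| = \Theta(m/n^2)$ will then yield the claim. Proposition~\ref{prop:UP} (applied with $k+1$ in place of $k$ and any $a > \eps$) will force $\Pr(G \notin \UPBe) = o(n^{-\eps})$, so the main task is to prove $\Pr(G \text{ is $H$-free}) \ge 2n^{-\eps}$ via the Hypergeometric Harris Inequality.

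First I would carry out a structural reduction using plain vertex-criticality: the monochromatic subgraph $M$ of any potential copy of $H$ in $G$ lies in $G^\star$, which has girth exceeding $v_H$ (so $M$ is a forest, ruling out case~(i)) and $\Delta(G^\star) \le k+1$ (ruling out case~(iii)). Case~(iv) will fail by a short analysis: the only vertices of $G^\star$ with degree $k+1$ are endpoints of $e$ that had degree $k$ in $B$, and any two such are joined by $e$, so if $e \in \varphi(M)$ then the corresponding two vertices of $M$ are adjacent, while if $e \notin \varphi(M)$ then at least one such endpoint has degree $\le k$ in $\varphi(M)$. Hence $M$ is a star $K_{1,k+1}$, and since $\crit(H) = k+1$ forces minimality, it must in fact be a critical star $S_i \subseteq H$ with $e_{S_i} = k+1$. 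Consequently $G$ is $H$-free iff, for every such $S_i$ and every embedding $\varphi : V(H) \to \br{n}$ with $\varphi(S_i)$ equal to one of the copies of $K_{1,k+1}$ in $G^\star$ and $\varphi(E(H) \setminus E(S_i)) \subseteq E(\Pi)$, the set $\varphi(E(H) \setminus E(S_i))$ is not fully contained in $R$.

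For Harris, for each such $S_i$ I will fix a subgraph $F^\star_i$ with $S_i \subsetneq F^\star_i \subseteq H$, $d_{k+2}(F^\star_i) = \eta_i(H)$, and $e_{F^\star_i} = \zeta_i(H)$. Since every $H$-extension of $S_i$-in-$G^\star$ contains a matching $F^\star_i$-extension, the Harris bad events can be weakened to the edge sets $\psi(E(F^\star_i) \setminus E(S_i))$ of size $\zeta_i(H) - k - 1$, ranging over valid embeddings $\psi : V(F^\star_i) \to \br{n}$. With $p = \tfrac{3}{2}(m - e(G^\star))/e(\Pi) = O(m/n^2)$ (using $e(\Pi) \ge n^2/5$ from Proposition~\ref{prop:bal-unbal-part-size}\ref{item:Partg-lower}) and $O_H(n^{v_{F^\star_i} - k - 2})$ valid embeddings per $S_i$, the identity $v_{F^\star_i} - k - 2 = (\zeta_i(H) - k - 1)/\eta_i(H)$ converts the $S_i$-contribution to $\sum_j p^{|B_j|}$ into
\[
O_H\Big(\big(m/n^{2 - 1/\eta_i(H)}\big)^{\zeta_i(H) - k - 1}\Big).
\]
For $m \le c\,\criticalityTS$ this is negligible when $\eta_i(H) > \eta(H)$ and at most $O_H(c^{\zeta_i(H) - k - 1} \log n)$ when $\eta_i(H) = \eta(H)$ (using $\zeta_i(H) \le \zeta(H)$ to cap the log exponent at~$1$); choosing $c$ small enough makes $\sum_j p^{|B_j|} \le (\eps/2) \log n$, so $\prod_j (1 - p^{|B_j|}) \ge n^{-\eps}$, while the Harris tail $e^{-m/16}$ is negligible because $m \gg n \log n$. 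The main obstacle is the quantifier interaction in the Harris sum: bad events must be included for \emph{every} critical star $S_i$ simultaneously---producing the $\min_i$ in $\eta(H)$---while the freedom to choose $F^\star_i$ inside each $S_i$ produces the $\max_i$ in $\zeta(H)$ that governs the logarithmic factor; synchronising these two choices is exactly what pins the threshold at $m_H$.
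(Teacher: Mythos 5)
Your proposal is correct and takes essentially the same route as the paper's proof: the same reduction to the Hypergeometric Harris Inequality over the optimally-chosen subgraphs $F_i$ with $S_i\subsetneq F_i\subseteq H$, the same use of Proposition~\ref{prop:UP} (applied with $k+1$ in place of $k$) to absorb the overlap term, and the same calculation as in Claim~\ref{claim:criticality-0-statement}. The only cosmetic difference is that you spell out the short case analysis ruling out alternatives (i), (iii), (iv) of plain vertex-criticality, which the paper asserts in a single sentence.
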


\begin{proof}
  Note first that only the two endpoints of $e$ can have degree larger than $k$ in the graph $B \cup e$ and that, by assumption, the girth of $B \cup e$ is larger than $v_H$. The definition of plain vertex-critical graphs guarantees that, for every embedding $\varphi$ of $H$ into $\Pi \cup B \cup e$, there must be a critical star $S \subseteq H$ with~$k+1$ edges such that $\varphi(H) \cap (B \cup e) = \varphi(S)$ and $e \in \varphi(S)$; in particular, for every $S \subseteq F \subseteq H$, the map $\varphi$, restricted to $V(F)$, is also an embedding of $F$ into $\Pi \cup B \cup e$ that maps $S$ to $B \cup e$ and $F \setminus S$ to $\Pi$.

  Let $S_1, \dotsc, S_t$ be all the smallest critical stars of $H$ (i.e., with~$k+1$ edges) and, for each $i \in \br{t}$, let $F_i$ be a subgraph satisfying
  \[
    S_i \subsetneq F_i \subseteq H, \qquad d_{k+2}(F_i) = \eta_i(H), \qquad \text{and} \qquad e_{F_i} = \zeta_i(H).
  \]
  Let $G$ be a uniformly chosen random element of $\GPBe$, let $G' = G \cap \Pi$, and observe that $G'$ is a uniformly random subgraph of $\Pi$ with $m - e(B) - 1$ edges. For every $i$ and every injection $\varphi \colon V(F_i) \to \br{n}$, we let $S_{i,\varphi} = \varphi(S_i)$ and $K_{i,\varphi} = \varphi(F_i \setminus S_i)$ be the labelled graphs that are the images of $S_i$ and $F_i \setminus S_i$ via the embedding $\varphi$. Define
  \[
    \Phi_i = \{\varphi : S_{i,\varphi} \subseteq B \cup e \text{ and } K_{i,\varphi} \subseteq \Pi\};
  \]
  in other words, $\Phi_i$ comprises all those embeddings of $F_i$ into $\Pi \cup B \cup e$ that embed $S_i$ into $B \cup e$ and map the remaining edges of $F_i$ to $\Pi$. Since $B \cup e$ contains at most two copies of $S_i$, one for each endpoint of $e$, we have $|\Phi_i| \le 2n^{v_{F_i}-k-2}$. More importantly, the above discussion implies that
  \[
    |\GPBe \cap \Free| \ge \underbrace{\Pr(G' \nsupseteq K_{i,\varphi} \text{ for all $i$ and $\varphi \in \Phi_i$})}_{P} \cdot \binom{e(\Pi)}{m-e(B)-1}.
  \]

  Assume that $m \le c \criticalityTS$, where
  \begin{equation}
    \label{eq:cH-0-statement}
    c = \frac{\eps}{64t}.
  \end{equation}
  We shall bound $P$ from below using the Hypergeometric Harris Inequality (Lemma~\ref{lem:HFKG}). To this end, let
  \[
    p = \frac{3}{2} \cdot \frac{m-e(B)-1}{e(\Pi)}
  \]
  and note that $p \le \frac{8m}{n^2}$, by part~\ref{item:Partg-lower} of Proposition~\ref{prop:bal-unbal-part-size}, as $\Pi \in \Part(\frac{1}{20r})$. It follows from Lemma~\ref{lem:HFKG} that
  \[
    \begin{split}
      P + \exp(-m/16)  & \ge \prod_{i=1}^t \left(1 - p^{e_{F_i\setminus S_i}}\right)^{|\Phi_i|} \ge \exp\left(-\sum_{i=1}^t |\Phi_i| \cdot 2p^{e_{F_i \setminus S_i}}\right) \\
      & \ge \exp\left(-\sum_{i=1}^t 4n^{v_{F_i}-k-2} p^{e_{F_i}-k-1}\right).
    \end{split}
  \]
  \begin{claim}
    \label{claim:criticality-0-statement}
    For every $i \in \br{t}$,
    \[
      n^{v_{F_i}-k-2}p^{e_{F_i}-k-1} \le 8c\log n.
    \]
  \end{claim}
  \begin{proof}
    Since $e_{F_i} > e_{S_i} = k+1$, we have
    \[
      \begin{split}
        n^{v_{F_i}-k-2}p^{e_{F_i}-k-1} & \le  n^{v_{F_i}-k-2} \cdot \left(\frac{8cm}{n^2}\right)^{e_{F_i}-k-1} \\
        & \le 8c \cdot n^{v_{F_i}-k-2} \cdot \left(\frac{m}{n^2}\right)^{e_{F_i}-k-1} \\
        & \le 8c \cdot \left(n^{\frac{1}{d_{k+2}(F_i)}} \cdot \frac{m}{n^2}\right)^{e_{F_i}-k-1} \\
        & \le 8c \cdot \left(n^{\frac{1}{\eta_i(H)} - \frac{1}{\eta(H)}} \cdot (\log n)^{\frac{1}{\zeta(H)-k-1}}\right)^{\zeta_i(H)-k-1}.
      \end{split}
    \]
    The claimed upper bound follows since $\eta_i(H) \ge \eta(H)$ and $\zeta_i(H) \le \zeta(H)$ whenever $\eta_i(H) = \eta(H)$.
  \end{proof}

  In particular, assuming that $n$ is large, we have
  \[
    P \ge \exp\left(-32tc\log n\right ) - \exp(-m/16) \geByRef{eq:cH-0-statement} n^{-\eps/2} - \exp(-n) .
  \]
  Since $B \cup e \in \BPkpo$, we may now invoke Proposition~\ref{prop:UP} with $a = \eps$ to obtain
  \[
    \begin{split}
      |\UPBe \cap \Free| & \ge |\GPBe \cap \Free| - |\GPBe \setminus \UPBe| \\
      & \ge (P - n^{-\eps}) \cdot \binom{e(\Pi)}{m-e(B)-1} \ge n^{-\eps} \cdot \binom{e(\Pi)}{m-e(B)-1}.
  \end{split}
  \]
  Since $m-e(B) \ge m-kn \ge m/2$ and $e(\Pi) \le n^2/2$, we may conclude that
  \[
    |\UPBe \cap \Free| \ge \frac{m}{n^{2+\eps}} \cdot \binom{e(\Pi)}{m-e(B)} = \frac{m}{n^{2+\eps}} \cdot |\GPB|,
  \]
  as claimed.
\end{proof}

\begin{proof}[{Proof of Proposition \ref{prop:zeroState}}]
  If $\eta(H) < m_2(H)$, then $m_H = \twoDensTS$ and we may simply invoke Corollary~\ref{cor:0-statement-below-2-density} and let $c_H = c_{\ref{cor:0-statement-below-2-density}}$. If this is not the case, then $m_H = \criticalityTS$ and we let $c_H = c_{\ref{lemma:0-statement-above-2-density}}(\eps)$, where $2\eps = 1-1/m_2(H) > 0$.

  Since, for all $\Pi \in \Part$, every $B' \subseteq \Pic$ can be written as $B' = B \cup e$ with $B \in \BPk$ and $e \notin B$ in at most $e(B)+1 \le kn$ different ways, we have
  \begin{equation}
    \label{eq:Free-0-statement-lower}
    |\Free| \ge \frac{1}{kn} \sum_{\Pi \in \Part} \sum_{B \in \BPk} \sum_{e \in \Pic \setminus B} |\UPBe \cap \Free|.
  \end{equation}
  Further, observe that, for every $B \in \BPkv$, the number of edges $e \in \Pic \setminus B$ such that $B \cup e$ has girth larger than $v_H$ is at least
  \[
    e(\Pic) - e(B) - n \cdot \sum_{\ell=2}^{v_H-1} k(k-1)^{\ell-1} \ge \frac{n^2}{4r}.
  \]
  Let $\gamma = 1/(20r)$. Since $m = \Omega\big(\twoDensTS\big) = \Omega\big(n^{1+2\eps}\big)$, we may conclude that
  \[
    \begin{split}
      |\Free| & \geBy{L~\ref{lemma:0-statement-above-2-density}} \frac{1}{kn} \sum_{\Pi \in \Partg} \sum_{B \in \BPkv} \frac{n^2}{4r} \cdot \frac{m}{n^{2+\eps}} \cdot |\GPB| \\
      & = \frac{m}{4krn^{1+\eps}} \sum_{\Pi \in \Partg} \sum_{B \in \BPkv} |\GPB| \\
      & \geBy{P~\ref{prop:large-girth}} \frac{c_{~\ref{prop:large-girth}}m}{4krn^{1+\eps}} \sum_{\Pi \in \Partg} \sum_{B \in \BPk} |\GPB| \\
      & \gg \sum_{\Pi \in \Partg} \sum_{B \in \BPk} |\GPB|.
    \end{split}
  \]
  On the other hand,
  \[
    |\Grk| \le \sum_{\Pi \in \Part} \sum_{B \in \BPk} |\GPB| \leBy{P~\ref{prop:unbalanced-graphs}} 2 \sum_{\Pi \in \Partg} \sum_{B \in \BPk} |\GPB|.
  \]
  These two estimates imply the assertion of the proposition.
\end{proof}

\section{Approximate 1-statement}
\label{sec:approx-1-stat}

In this section, we show that, for every graph $H$ with $\chi(H) = r+1 \ge 3$, then, as soon as $m \gg \twoDensTS$, most graphs in $\Free$ admit a balanced, unfriendly $r$-colouring that leaves only $o(m)$ edges monochromatic. 
Given a graph~$G$, one of its vertices~$v \in V(G)$ and a set~$U \subseteq V(G)$, we denote the number of neighbours of~$v$ in the set~$U$ by~$\deg_G(v,U)$.

\begin{thm}
  \label{thm:aa-balanced-unfriendly}
  Suppose that a graph $H$ satisfies $\chi(H) = r+1 \ge 3$. For all positive $\delta$ and $\gamma$, there is a positive $C$ such that the following holds. If $m\ge C\twoDensTS$, then almost every graph $G$ in $\Free$ admits a partition $\Pi\in\Partg$ such that
  \begin{equation}\label{eq:almostRcolour}
    e(G\setminus \Pi) \le \delta m
  \end{equation}
  and, letting $\Pi = \{V_1, \dotsc, V_r\}$,
  \begin{equation} \label{eq:Pi-unfriendly}
    \deg_G(v,V_i) \le \min_{j \neq i} \deg_G(v, V_j) \quad \text{for all $i \in \br{r}$ and $v \in V_i$}.
  \end{equation}
\end{thm}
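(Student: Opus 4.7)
The plan is to derive this statement from the approximate stability result of Balogh--Morris--Samotij \cite[Theorem~1.7]{BaMoSa15}, supplemented by a minimality argument for unfriendliness and a counting step for balance. Given $\delta, \gamma > 0$, I would fix a constant $\delta_0 \in (0, \delta]$ that is small in terms of $\gamma$ (in a sense made precise in the counting step below) and let $C_0 = C_0(H, \delta_0)$ be the constant supplied by \cite[Theorem~1.7]{BaMoSa15}; this yields that, for every $m \geq C_0 \twoDensTS$, almost every $G \in \Free$ admits some $r$-partition $\Pi_0$ of $\br{n}$ with $e(G \setminus \Pi_0) \leq \delta_0 m$.

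For any such $G$, let $\Pi = \{V_1, \dotsc, V_r\}$ be any $r$-partition of $\br{n}$ that minimizes $e(G \setminus \Pi)$. Then $e(G \setminus \Pi) \leq e(G \setminus \Pi_0) \leq \delta_0 m \leq \delta m$, which proves \eqref{eq:almostRcolour}. Minimality also forces unfriendliness: if some $v \in V_i$ had $\deg_G(v, V_i) > \deg_G(v, V_j)$ for some $j \neq i$, then reassigning $v$ from $V_i$ to $V_j$ would strictly decrease $e(G \setminus \Pi)$, a contradiction; hence \eqref{eq:Pi-unfriendly} holds.

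It remains to verify that $\Pi \in \Partg$ for almost every $G \in \Free$, and this is the technical heart of the argument. By a union bound, the number of $G \in \Gnm$ admitting \emph{some} $r$-partition $\Pi' \notin \Partg$ with $e(G \setminus \Pi') \leq \delta_0 m$ is at most
\[
\sum_{\Pi' \notin \Partg} \sum_{b = 0}^{\lfloor \delta_0 m \rfloor} \binom{e(\Pi')}{m-b}\binom{\binom{n}{2} - e(\Pi')}{b}.
\]
Bounding $e(\Pi') \leq (1 - \gamma^2/3)\ex(n, K_{r+1})$ via Proposition~\ref{prop:bal-unbal-part-size}\ref{item:Part-Partg-upper}, together with standard binomial estimates, this is at most
\[
r^n \cdot \exp\bigl(-\gamma^2 m / 4 + O(\delta_0 m \log(1/\delta_0))\bigr) \cdot \binom{\ex(n, K_{r+1})}{m}.
\]
Choosing $\delta_0$ small enough that $\delta_0 \log(1/\delta_0) \ll \gamma^2$, and $C_0$ large enough that the $r^n$ factor is absorbed (valid since $m \geq C_0 n$, as $m_2(H) > 1$ for every non-bipartite $H$), this sum is $o\bigl(\binom{\ex(n, K_{r+1})}{m}\bigr)$. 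Combined with the trivial lower bound $|\Free| \geq \binom{\ex(n, K_{r+1})}{m}$---obtained by noting that any $m$-edge subgraph of the balanced complete $r$-partite graph on $\br{n}$ is $r$-colourable and therefore $H$-free---this confirms that $\Pi \in \Partg$ almost surely, completing the proof.

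The principal obstacle is the coordination of constants in the counting step: one must first choose $\delta_0 = \delta_0(\gamma)$ small enough so that the gain $e^{-\gamma^2 m / 4}$ from Proposition~\ref{prop:bal-unbal-part-size}\ref{item:Part-Partg-upper} dominates both the entropy factor $e^{O(\delta_0 m \log(1/\delta_0))}$ from the inner sum over $b$ and the $r^n$ factor from enumerating unbalanced partitions, and only then invoke \cite[Theorem~1.7]{BaMoSa15} with parameter $\delta_0$ to fix the final constant $C$. The minimality argument for unfriendliness and the lower bound on $|\Free|$ are essentially immediate.
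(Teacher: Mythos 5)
Your proposal is correct and follows essentially the same route as the paper: invoke~\cite[Theorem~1.7]{BaMoSa15} for the approximate structure, take the partition minimising $e(G\setminus\Pi)$ (minimality immediately yields unfriendliness), and rule out unbalanced minimisers by a union-bound count using Proposition~\ref{prop:bal-unbal-part-size}\ref{item:Part-Partg-upper} against the lower bound $|\Free|\ge\binom{\ex(n,K_{r+1})}{m}$. The paper packages the counting step as a separate statement (Proposition~\ref{prop:approx-struct}) and hides the tuning of $\delta$ behind a ``WLOG $\delta$ small'' reduction there, whereas you make the choice $\delta_0=\delta_0(\gamma)\le\delta$ explicit up front before invoking~\cite{BaMoSa15}; this is only a cosmetic difference, and your version is if anything the cleaner bookkeeping.
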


Our proof of Theorem~\ref{thm:aa-balanced-unfriendly} relies on the following result established in~\cite[Theorem~1.7]{BaMoSa15}, which states that, for every $H$ with $\chi(H) = r+1 \ge 3$, when $m \gg \twoDensTS$, then most graphs $G \in \Free$ admit an $r$-partition $\Pi \in \Part$ such that $e(G \setminus \Pi) = o(m)$. With little extra work, we will show that, for most such $G$, one such partition $\Pi$ is balanced (i.e., it belongs to $\Partg$ for some small $\gamma$) and unfriendly (i.e., it satisfies~\eqref{eq:Pi-unfriendly}).

\begin{thm}
  \label{thm:approx-struct}
  For every graph $H$ with $\chi(H) \ge 3$ and every positive $\delta$, there exists a positive constant~$C$ such that the following holds. If $m \ge C\twoDensTS$, then almost every graph in $\Free$ can be made $\left(\chi(H)-1\right)$-partite by removing from it at most $\delta m$ edges.
\end{thm}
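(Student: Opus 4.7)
This theorem is the main result of~\cite{BaMoSa15}, whose proof introduced the hypergraph container method into the study of the typical structure of sparse $H$-free graphs. My plan combines three ingredients: a balanced supersaturation estimate for copies of $H$, the hypergraph container theorem, and the Erd\H{o}s--Simonovits stability theorem.

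The heart of the argument is a \emph{balanced supersaturation} lemma: for every $\beta>0$ there exist $\sigma,\theta>0$ depending only on $H$ and $\beta$ such that every $n$-vertex graph $G$ with $e(G)\ge(1+\beta)\ex(n,H)$ admits a family $\cH_G$ of copies of $H$ with $|\cH_G|\ge\theta(e(G)/n^2)^{e_H}n^{v_H}$ in which no $j$-edge subset of $E(G)$ lies in more than $\sigma|\cH_G|(n^2/e(G))^{j-1}n^{-(2-2/m_2(H))(j-1)}$ members of $\cH_G$, for every $1\le j<e_H$. The exponents here are calibrated exactly to $m_2(H)$, and I would establish the lemma by, for each nonempty proper subgraph $F\subsetneq H$, counting extensions of a fixed copy of $F$ to a copy of $H$ using the definition of $m_2(H)$ and combining the contributions of all such $F$.

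Feeding this supersaturation estimate into the hypergraph container theorem applied to the $e_H$-uniform hypergraph on $E(K_n)$ whose hyperedges are copies of $H$ produces a family $\cC$ of subgraphs of $K_n$ such that every $H$-free graph on $\br{n}$ is a subgraph of some $A\in\cC$; $\log|\cC|\le C_1\twoDensTS$ for a constant $C_1=C_1(H)$; and each $A\in\cC$ contains at most $\ex(n,H)+o(n^2)$ edges. Iterating the container step while invoking the Erd\H{o}s--Simonovits stability theorem at each stage strengthens the last property to: each $A\in\cC$ differs from a complete $(\chi(H)-1)$-partite graph by at most $\eta n^2$ edges, for any constant $\eta>0$ fixed in advance.

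Finally, given $\delta>0$, I choose $\eta$ small in terms of $\delta$ and consider an $H$-free graph $G$ with $m$ edges that cannot be made $(\chi(H)-1)$-partite by deleting $\delta m$ edges. Such a $G$ lies in some $A\in\cC$, and its intersection with the at most $\eta n^2$ ``non-partite'' edges of $A$ has size at least $\delta m$. A hypergeometric large-deviation estimate bounds the number of such $G\subseteq A$ by $\binom{e(A)}{m}\exp(-c\delta m)$ for some $c=c(\delta,\eta)>0$. Combined with $e(A)\le\ex(n,K_{r+1})+o(n^2)$ and the lower bound $|\Free|\ge\binom{\ex(n,K_{r+1})}{m}$ obtained by counting $(\chi(H)-1)$-partite graphs, the fraction of ``bad'' $H$-free graphs is at most $|\cC|\exp(-c\delta m)\le\exp(C_1\twoDensTS-c\delta m)$, which is $o(1)$ whenever $m\ge C\twoDensTS$ with $C=C(\delta,H)$ sufficiently large. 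The main obstacle is the balanced supersaturation lemma itself, which requires careful combinatorial accounting of all subgraphs $F\subseteq H$ achieving $m_2(F)=m_2(H)$; once it is in hand, the container-plus-stability machinery and the concentration estimate fit together essentially mechanically.
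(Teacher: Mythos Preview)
The paper does not supply its own proof of this statement; it is quoted verbatim as \cite[Theorem~1.7]{BaMoSa15} and used as a black box. Your outline---containers, stability, and a hypergeometric tail bound---is the right shape and is indeed how \cite{BaMoSa15} proceeds.

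That said, your first step overcomplicates matters and, as written, would not go through. A balanced supersaturation lemma inside an \emph{arbitrary} graph $G$ with $e(G)\ge(1+\beta)\ex(n,H)$---a family of copies of $H$ in $G$ with uniformly controlled codegrees---is not what \cite{BaMoSa15} uses, and the extension-counting you sketch does not establish it: in an arbitrary host $G$ one has no control over how many ways a fixed copy of $F\subsetneq H$ extends to a copy of $H$. What \cite{BaMoSa15} actually does is apply the container theorem directly to the $e_H$-uniform hypergraph on $E(K_n)$ whose hyperedges are copies of $H$ in $K_n$. The codegree hypotheses of the container theorem are then exactly the extension counts you describe, but in $K_n$ rather than in $G$: a fixed copy of $F\subseteq H$ lies in $O(n^{v_H-v_F})$ copies of $H$, and optimising over $F$ is precisely where the parameter $n^{-1/m_2(H)}$ appears. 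One then uses ordinary (unbalanced) Erd\H{o}s--Simonovits supersaturation to conclude that every container has at most $\ex(n,H)+\varepsilon n^2$ edges, and stability to see that each container is $\eta n^2$-close to a complete $(\chi(H)-1)$-partite graph. Your final counting step goes through unchanged, and no iteration of containers is needed.
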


As a next step towards establishing Theorem~\ref{thm:aa-balanced-unfriendly}, we now show that very few $G \in \Free$ admit a non-balanced partition $\Pi$ that satisfies $e(G \setminus \Pi) \le \delta m$.

\begin{prop}
  \label{prop:approx-struct}
  Suppose that a graph $H$ satisfies $\chi(H) = r+1 \ge 3$. For all positive~$\delta$ and $\gamma$ and all $m \gg n$, almost every $G \in \Free$ does not admit a partition $\Pi\in\Part\setminus\Partg$ that satisfies $e(G \setminus \Pi) \le \delta m$.
\end{prop}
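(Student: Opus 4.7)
The plan is to upper bound $\sum_{\Pi \in \Part \setminus \Partg} |\{G \in \Free : e(G \setminus \Pi) \le \delta m\}|$ and show it is $o(|\Free|)$. A crude lower bound on $|\Free|$ suffices here: subgraphs of the Tur\'an graph $T_r(n)$ are $r$-colourable and hence $H$-free (as $\chi(H) = r+1$), so $|\Free| \ge \binom{N}{m}$ where $N = \ex(n, K_{r+1})$.

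For fixed $\Pi \in \Part \setminus \Partg$, decomposing by the number $b$ of monochromatic edges gives
\[
X_\Pi \;:=\; \bigl|\{G \in \Gnm : e(G \setminus \Pi) \le \delta m\}\bigr| \;=\; \sum_{b=0}^{\delta m} \binom{e(\Pic)}{b}\binom{e(\Pi)}{m-b}.
\]
Proposition~\ref{prop:bal-unbal-part-size}(ii) yields $e(\Pi) \le (1-\gamma^2/3)N$ and hence $\binom{e(\Pi)}{m}/\binom{N}{m} \le (e(\Pi)/N)^m \le (1-\gamma^2/3)^m$. Combining with the elementary bounds $\binom{e(\Pi)}{m-b}/\binom{e(\Pi)}{m} \le (m/(e(\Pi)-m))^b$ and $\binom{e(\Pic)}{b} \le (e\,e(\Pic)/b)^b$, the summand at index $b$ is at most $\bigl(e\,e(\Pic)\,m/(b(e(\Pi)-m))\bigr)^b$. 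For $\Pi$ that is not extremely unbalanced---say $e(\Pi)-m = \Omega(N)$---the ratio $e(\Pic)/(e(\Pi)-m)$ is $O(1)$, and the summand is an increasing function of $b$ on $[0,\delta m]$ as long as $\delta < 1/(r-1)$; the maximum at $b = \delta m$ is then $(C/\delta)^{\delta m}$ for some constant $C = C(r)$.

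Assembling, $X_\Pi/\binom{N}{m} \le (\delta m + 1)(1-\gamma^2/3)^m (C/\delta)^{\delta m}$. Choosing $\delta$ small enough that $\delta\log(C/\delta) < \gamma^2/6$, this is at most $\exp(-\gamma^2 m/6 + O(\log m))$, and summing over the at most $r^n$ unbalanced partitions yields
\[
\sum_{\Pi \in \Part \setminus \Partg} X_\Pi \;\le\; r^n \binom{N}{m} \exp\bigl(-\gamma^2 m/6 + O(\log m)\bigr),
\]
which is $o(\binom{N}{m}) \le o(|\Free|)$ for $m \gg n$, since $n\log r = o(m)$.

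The two residual cases to dispatch are: (i) partitions $\Pi$ so unbalanced that $e(\Pi)-m$ is not $\Omega(N)$---here either $e(\Pi) < (1-\delta)m$ forces $X_\Pi = 0$, or the thin intermediate band contains few enough partitions to handle by a direct union bound; and (ii) the regime where $m$ is so close to $N$ that no $\Pi$ can satisfy $e(\Pi)-m = \Omega(N)$, which is covered by the structural theorems on near-extremal $H$-free graphs (and overlaps with the dense case treated in Section~\ref{sec:dense}). The main obstacle to the above argument is that the naive Chernoff estimate from Lemma~\ref{lemma:hyper-lower-tail} alone does not suffice to beat the exponential growth of $\binom{\binom{n}{2}}{m}/\binom{N}{m}$ in $m$; the argument must instead exploit the direct factor $(1-\gamma^2/3)^m$ coming from Proposition~\ref{prop:bal-unbal-part-size}(ii) together with the tight binomial-ratio calculation above, with $\delta$ calibrated against $\gamma$ uniformly in $n$.
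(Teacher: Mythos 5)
Your strategy is the same as the paper's---fix an unbalanced $\Pi$, bound $X_\Pi/\binom{N'}{m}$ with $N' = \exnH$, exploit the factor $(1-\gamma^2/3)^{\Theta(m)}$ from Proposition~\ref{prop:bal-unbal-part-size}(ii), tune $\delta$ against $\gamma$, and close with a $r^n$ union bound. But the specific binomial estimates you chose create a genuine gap that the paper's version cleanly avoids.

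Your bound on the $b$-th summand uses $\binom{e(\Pi)}{m-b}/\binom{e(\Pi)}{m} \le \bigl(m/(e(\Pi)-m)\bigr)^b$, and you then need $e(\Pi)-m = \Omega(N)$ for the resulting geometric factor to be bounded. That hypothesis fails precisely on the band $(1-\delta)m \le e(\Pi) \le m + o(n^2)$, and on that band $X_\Pi$ does not vanish: for $m > e(\Pi)$ the terms with $t \ge m - e(\Pi)$ are still present, and for $e(\Pi) - m$ small and positive the ratio $m/(e(\Pi)-m)$ blows up. You dismiss this as a ``thin intermediate band'' with ``few enough partitions,'' but there is no such sparsity: the number of unbalanced $r$-partitions with a prescribed value of $e(\Pi)$ is still $\exp(\Theta(n))$ in general (many size-vectors can produce the same $e(\Pi)$, and each admits exponentially many labelled realisations), so a naive union bound over that band does not obviously work. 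Your appeal to ``structural theorems on near-extremal $H$-free graphs'' and to Section~\ref{sec:dense} is also circular, as that section is part of the proof of the $1$-statement and presupposes this proposition. Finally, the parenthetical remark about Lemma~\ref{lemma:hyper-lower-tail} is a red herring: that lemma is not used anywhere in this argument.

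The paper sidesteps all of this with a different splitting of the same ratio. Rather than dividing $\binom{e(\Pi)}{m-t}$ by $\binom{e(\Pi)}{m}$, one writes
\[
\frac{\binom{N}{t}\binom{e(\Pi)}{m-t}}{\binom{N'}{m}}
= \frac{\binom{N}{t}\binom{e(\Pi)}{m-t}\binom{m}{t}}{\binom{N'}{m-t}\binom{N'-m+t}{t}},
\]
valid by the Vandermonde-type identity $\binom{N'}{m-t}\binom{N'-m+t}{t}=\binom{N'}{m}\binom{m}{t}$. Now $\binom{e(\Pi)}{m-t}/\binom{N'}{m-t}\le (e(\Pi)/N')^{m-t}\le(1-\gamma^2/3)^{m-t}$ gives the exponential gain with no constraint on $e(\Pi)-m$, while $\binom{N}{t}/\binom{N'-m+t}{t}$ is controlled because any nonzero summand has $m-t\le e(\Pi)$, so $N'-m+t\ge N'-e(\Pi)\ge\gamma^2 N'/3=\Omega(n^2)$ --- a quantity that is large \emph{uniformly} over all unbalanced $\Pi$, unlike $e(\Pi)-m$. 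The remaining $\binom{m}{t}$ factor is absorbed into the choice of $\delta$ exactly as you intended. Your calculation should be redone with this decomposition; then the ``residual cases'' disappear.
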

\begin{proof}
  Since making $\delta$ smaller only strengthens the assertion of the theorem, we may assume without loss of generality that $\delta \le \gamma^2 6$ and that
  \begin{equation}
    \label{eq:gamma-delta}
    \delta \cdot \big( 4-\log (\gamma^2\delta) \big) - (1-\delta) \cdot \frac{\gamma^2}{3} < - \frac{\gamma^2}{4};
  \end{equation}
  indeed, as $\delta \to 0$, the left-hand side of~\eqref{eq:gamma-delta} converges to $-\gamma^2/3$.

  Fix an arbitrary partition $\Pi \in \Part \setminus \Partg$, that is, a $\Pi \in \Part$ that does not satisfy~\eqref{eq:Part-gamma} and recall from the proof of Proposition~\ref{prop:unbalanced-graphs}, see~\eqref{eq:ePi-gamma-upper}, that
  \[
    e(\Pi) \le \left(1-\frac{\gamma^2}{3}\right) \cdot \ex(n, K_{r+1}) \le \left(1 - \frac{\gamma^2}{3}\right) \cdot \exnH.
  \]
  Denote $N=\binom{n}{2}$ and $N' = \exnH$. The number $X_\Pi$ of graphs $G \in \Free$ for which $e(G \setminus \Pi) \le \delta m$ satisfies
  \[
    (\star) =\binom{N'}{m}^{-1} \cdot X_\Pi 
    \le \sum_{t = 0}^{\delta m} \frac{\binom{N}{t} \binom{e(\Pi)}{m-t}}{\binom{N'}{m}}
    = \sum_{t = 0}^{\delta m} \frac{\binom{N}{t} \binom{e(\Pi)}{m-t}\binom{m}{t}}{\binom{N'}{m-t}\binom{N'-m+t}{t}}.
  \]
  Note that, for every $t$, either $m-t \le e(\Pi)$ or the corresponding summand is equal  to zero. This observation and the above bound on $e(\Pi)$ imply that
  \[
    \binom{e(\Pi)}{m-t} \binom{N'}{m-t}^{-1} \leByRef{eq:binCoef2} \left(\frac{e(\Pi)}{N'}\right)^{m-t} \le \left(1 - \frac{\gamma^2}{3}\right)^{m-t}
  \]
  and that
  \[
    \begin{split}
      \binom{N}{t} \binom{N'-m+t}{t}^{-1} & \le \binom{N}{t} \binom{N'-e(\Pi)}{t}^{-1} \le \binom{N}{t} \binom{\gamma^2N'/3}{t}^{-1} \\
      & \leByRef{eq:binCoef3} \left(\frac{N}{\gamma^2N'/3-t}\right)^t \le \left(\frac{6N}{\gamma^2N'}\right)^t \le \left(\frac{12}{\gamma^2}\right)^t,
    \end{split}
  \]
  as $t \le \delta m \le \delta N' \le \gamma^2N'/6$ and $N' \ge \ex(n,K_3) \ge N/2$. Consequently,
  \[
    (\star) \le \sum_{t = 0}^{\delta m} \left(1-\frac{\gamma^2}{3}\right)^{m-t} \left(\frac{12}{\gamma^2}\right)^t \binom{m}{t} \le \left(1-\frac{\gamma^2}{3}\right)^{(1-\delta)m} \left(\frac{12}{\gamma^2}\right)^{\delta m} \cdot \sum_{t=0}^{\delta m} \binom{m}{t}.
  \]
  Since $\delta\le \frac{1}{2}$, inequalities~\eqref{eq:binCoef4} and $\log(12e) \le 4$ further imply that
  \[
    \begin{split}
      (\star) &\le \left(1-\frac{\gamma^2}{3}\right)^{(1-\delta)m} \left(\frac{12}{\gamma^2} \cdot \frac{e}{\delta}\right)^{\delta m}
      \le \exp\left(\left(\delta \cdot (4-\log(\gamma^2\delta)) - (1-\delta) \cdot \frac{\gamma^2}{3}\right) \cdot m\right) \\
      & \leByRef{eq:gamma-delta} \exp\left(-\frac{\gamma^2m}{4}\right).
    \end{split}
  \]

  Finally, since there are at most $r^n$ partitions $\Pi \in \Part$ and at least $\binom{N'}{m}$ graphs in $\Free$ and since $m \gg n$, we have
  \[
    \sum_{\Pi \in \Part \setminus \Partg} X_\Pi \le r^n \cdot e^{-\gamma^2m/4} \cdot \binom{N'}{m} \le e^{-\gamma^2m/5} \cdot |\Free|,
  \]
  which implies the assertion of the proposition.
\end{proof}

\begin{proof}[{Proof of Theorem~\ref{thm:aa-balanced-unfriendly}}]
  Let $\Freedg$ be the collection of all graphs $G \in \Free$ that satisfy~\eqref{eq:almostRcolour} for some $\Pi \in \Part(\gamma)$ but no $\Pi \in \Part \setminus \Part(\gamma)$. Let $C = C_{\ref{thm:approx-struct}}(\delta)$ and assume that $m \ge C \twoDensTS$. Since Theorem~\ref{thm:approx-struct} and Proposition~\ref{prop:approx-struct} imply that almost all graphs in $\Free$ belong to $\Freedg$, it is enough to show that every $G \in \Freedg$ admits a partition $\Pi  = \{V_1, \dotsc, V_r\} \in \Partg$ that satisfies both~\eqref{eq:almostRcolour} and~\eqref{eq:Pi-unfriendly}.

  To see this, given an arbitrary $G \in \Freedg$, let $\Pi \in \Part$ be a partition that minimises $e(G \setminus \Pi)$ over all $r$-partitions of $\br{n}$. Since $e(G \setminus \Pi) \le \delta m$, by the definition of $\Freedg$ and the minimality of $\Pi$, then $\Pi \in \Part(\gamma)$, again by the definition of $\Freedg$. Suppose that $\Pi = \{V_1, \dotsc, V_r\}$. If there were $i, j \in \br{r}$ and $v \in V_i$ such that $\deg_G(v, V_i) > \deg_G(v, V_j)$, then the partition $\Pi'$ obtained from $\Pi$ by moving the vertex $v$ from $V_i$ to $V_j$ would satisfy $e(G \setminus \Pi') < e(G \setminus \Pi)$, contradicting the minimality of $\Pi$.
\end{proof}

\section{The 1-statement}
\label{sec:oneState}

In this section, we prepare for the proof of the $1$-statement of Theorem~\ref{thm:full}. Our goal is to show that, if $H$ is a simple vertex-critical graph with criticality $k+1$ and chromatic number $r+1 \ge 3$, then there is a positive constant $C_H$ such that, if $m \ge C_H m_H$, then almost every graph from $\Free$ belongs to $\Grk$; recall that $m_H$ is the threshold function defined in~\eqref{eq:mH-def}. Note that it suffices to prove this statement only for graphs $H$ that have no isolated vertices.

\subsection{A sufficient condition}
\label{sec:sufficient-condition}

Given a positive constant $\delta$ and a balanced $r$-partition $\Pi = \{V_1, \dotsc, V_r\} \in \Part(\gamma)$, let $\Freedp$ be the family of all $G \in \Free$ for which $\Pi$ is an unfriendly partition that leaves at most $\delta m$ edges of $G$ monochromatic, that is,
\[
  \Freedp = \big\{G \in \Free : \text{$(G, \Pi)$ satisfy~\eqref{eq:almostRcolour} and~\eqref{eq:Pi-unfriendly}} \big\}
\]
and let
\[
  \FreesdP = \big\{G \in \Freedp : G \setminus \Pi \notin\BPk\big\}.
\]
In other words, $\FreesdP$ comprises all those graphs $G \in \Freedp$ for which the monochromatic subgraph of $G$ induced by the $r$-colouring $\Pi$ has maximum degree larger than $k$. The following proposition gives a sufficient condition for the assertion of the $1$-statement of Theorem~\ref{thm:full} to hold true, that is, a sufficient condition for the asymptotic inequality $|\Free \setminus \Grk| \ll |\Free|$.

\begin{prop}
  \label{prop:sufficient-1-statement}
  Suppose that $H$ is a simple vertex-critical graph with $\chi(H) = r+1 \ge 3$ and criticality $k+1$. For all positive $\delta$ and $\gamma$, there exists a constant $C$ such that the following holds when $m \ge C \twoDensTS$: Suppose that there is a function $\omega \colon \NN \to (0, \infty)$ satisfying $\omega(n) \to \infty$ as $n \to \infty$ such that, for every $\Pi\in\Partg$, there exists a map $\cM \colon \FreesdP \to \BPk$ that satisfies
  \begin{equation}
    \label{eq:Freesdgp}
    | \cM^{-1}(B) | \le \frac{1}{\omega(n)} \cdot \binom{e(\Pi)}{m-e(B)}
  \end{equation}
  for every $B \in \BPk$. Then
  \[
    |\Free \setminus \Grk| \ll |\Free|.
  \]
\end{prop}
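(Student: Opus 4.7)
The strategy is to combine Theorem~\ref{thm:aa-balanced-unfriendly}, which locates almost all of $\Free$ inside a union of families $\Freedp$ over balanced partitions~$\Pi$, with the hypothesis on $\cM$ to upper-bound $|\Free \setminus \Grk|$ by $\omega(n)^{-1}$ times a double sum over pairs $(\Pi, B)$, and then to bound that sum back by $O(|\Free|)$ using the \emph{simple} vertex-critical structure of $H$ in combination with Propositions~\ref{prop:UP} and~\ref{prop:large-girth}.

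First, I will set $\gamma' = \min\{\gamma, 1/(2r)\}$ and choose $C$ large enough so that Theorem~\ref{thm:aa-balanced-unfriendly} with parameters $(\delta, \gamma')$, Proposition~\ref{prop:UP} with some fixed $a > 0$, and Proposition~\ref{prop:large-girth} with $g = v_H$ all apply. Since $\Part(\gamma') \subseteq \Partg$, the hypothesised maps $\cM$ remain available for every $\Pi \in \Part(\gamma')$. Theorem~\ref{thm:aa-balanced-unfriendly} then asserts that all but $o(|\Free|)$ graphs in $\Free$ belong to $\bigcup_{\Pi \in \Part(\gamma')} \Freedp$, and any such graph $G$ that is also not in $\Grk$ must satisfy $G \setminus \Pi \notin \BPk$ for its witnessing partition $\Pi$ (otherwise $\Pi$ would certify $G \in \Grk$) and therefore lies in $\FreesdP$. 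Plugging in the hypothesised bound $|\cM^{-1}(B)| \le \omega(n)^{-1}|\GPB|$ and using that $|\GPB| = \binom{e(\Pi)}{m-e(B)}$ yields
\[
  |\Free \setminus \Grk| \le o(|\Free|) + \frac{1}{\omega(n)} \sum_{\Pi \in \Part(\gamma')} \sum_{B \in \BPk} |\GPB|.
\]

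The core of the plan, and the step where the simple vertex-critical hypothesis is decisive, is to bound this double sum by $O(|\Free|)$. The key observation is that for every $\Pi \in \Part(1/(2r))$ and every $B \in \BPkv$, every $G \in \GPB$ is $H$-free: a hypothetical copy $H' \subseteq G$ would, under the $r$-colouring of~$H'$ induced by $\Pi$, admit a monochromatic $K_{1,k+1}$ or a monochromatic cycle, both of which would sit inside $B$ and be forbidden either by $\Delta(B) \le k$ or by the girth condition. Consequently, the families $\UPB$ with $\Pi \in \Part(1/(2r))$ and $B \in \BPkv$ are pairwise disjoint subsets of $\Free$. Combining this with Proposition~\ref{prop:large-girth} (which gives $\sum_{B \in \BPkv} |\GPB| \ge c \sum_{B \in \BPk} |\GPB|$ for some $c = c_{\ref{prop:large-girth}} > 0$) and Proposition~\ref{prop:UP} (which gives $|\GPB| \le 2|\UPB|$ for $n$ large), and using $\Part(\gamma') \subseteq \Part(1/(2r))$, one obtains
\[
  \sum_{\Pi \in \Part(\gamma')} \sum_{B \in \BPk} |\GPB| \le \sum_{\Pi \in \Part(1/(2r))} \sum_{B \in \BPk} |\GPB| \le \frac{2}{c} \sum_{\Pi \in \Part(1/(2r)),\, B \in \BPkv} |\UPB| \le \frac{2}{c} |\Free|.
\]
Combining this with the previous display and $\omega(n) \to \infty$ gives $|\Free \setminus \Grk| = o(|\Free|)$, as required.

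The main conceptual hurdle is the alignment of balancedness regimes in the final step: Proposition~\ref{prop:UP}, Proposition~\ref{prop:large-girth}, and the $H$-free embedding argument each demand $\Pi \in \Part(1/(2r))$, whereas the hypothesis is stated for $\Pi \in \Partg$. This is precisely why one must replace $\gamma$ by $\gamma' = \min\{\gamma, 1/(2r)\}$ at the outset; once this compatibility is arranged the remaining steps amount to routine bookkeeping of the hypotheses and the definitions of $\Freedp$, $\FreesdP$, $\GPB$, and $\UPB$.
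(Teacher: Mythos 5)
Your proof is correct and follows essentially the same route as the paper: apply Theorem~\ref{thm:aa-balanced-unfriendly} to reduce $\Free \setminus \Grk$ (up to an $o(|\Free|)$ error) to $\bigcup_\Pi \FreesdP$, plug the hypothesised maps $\cM$ into $|\FreesdP| = \sum_{B} |\cM^{-1}(B)|$ to gain the factor $\omega(n)^{-1}$, and then recover a matching lower bound on $|\Free|$ from the pairwise-disjoint families $\UPB$ together with Propositions~\ref{prop:UP} and~\ref{prop:large-girth} and the observation that $B \cup \Pi$ is $H$-free whenever $B$ has small maximum degree and large girth (simple vertex-criticality). Your device of replacing $\gamma$ by $\gamma' = \min\{\gamma, 1/(2r)\}$ tidily repairs a small implicit assumption in the paper's argument, which applies Propositions~\ref{prop:UP} and~\ref{prop:large-girth} to every $\Pi \in \Partg$ and hence tacitly needs $\gamma \le 1/(2r)$; this is harmless in the paper's actual invocations (where $\gamma \le 1/(20r)$), but your version is the one that matches the stated ``for all positive $\gamma$'' generality.
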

\begin{proof}
  Set $C = C_{\ref{thm:aa-balanced-unfriendly}}(\delta, \gamma)$ and suppose that $m \ge C \twoDensTS$. We claim that
  \[
    \Free \setminus \Grk \subseteq 
		\biggl(\Free \setminus  \underbrace{\bigcup_{\Pi \in \Partg} \Freedp}_{=: \Freedgp}\biggr) \cup \bigcup_{\Pi \in \Partg} \FreesdP .
  \]
  Indeed, if $G \in \Freedgp \setminus \Grk$, then, on the one hand, $G \in \Freedp$ for some $\Pi \in \Partg$ but, on the other hand, $G \setminus \Pi \notin \BPk$ and hence $G \in \FreesdP$. Since Theorem~\ref{thm:aa-balanced-unfriendly} implies that
  \[
    |\Free \setminus \Freedgp| \ll |\Free|,
  \]
  it suffices if we show that our assumptions imply that
  \begin{equation}
    \label{eq:FreesdP-union}
    \sum_{\Pi \in \Partg} |\FreesdP| \ll |\Free|.
  \end{equation}
  To this end, note first that the assumption that $H$ is \emph{simple} vertex-critical implies that, for all $\Pi \in \Part$ and $B \in \BPkv$, the graph $B \cup \Pi$ is $H$-free and, consequently,
  \[
    \UPB \subseteq \GPB \subseteq \Free.
  \]
  Since the families $\UPB$ are pairwise-disjoint and
  \[
    |\UPB| \geBy{P~\ref{prop:UP}} \frac{1}{2} \cdot |\GPB| = \frac{1}{2}\binom{e(\Pi)}{m-e(B)},
  \]
  we have
  \begin{equation}
    \label{eq:Free-union}
    \begin{split}
      |\Free| & \ge \sum_{\Pi \in \Partg}  \sum_{B \in \BPkv} |\UPB| \\
      & \ge \frac{1}{2}\sum_{\Pi \in \Partg} \sum_{B \in \BPkv} \binom{e(\Pi)}{m-e(B)} \\
      & \geBy{P~\ref{prop:large-girth}} \frac{c_{\ref{prop:large-girth}}}{2} \sum_{\Pi \in \Partg}\sum_{B \in \BPk} \binom{e(\Pi)}{m-e(B)}.
    \end{split}
  \end{equation}
  Fix an arbitrary $\Pi \in \Partg$, let $\cM$ be the map satisfying~\eqref{eq:Freesdgp} for every $B \in \BPk$, and observe that
  \begin{equation}
    \label{eq:FreesdP-bound}
    |\FreesdP| = \sum_{B \in \BPk} |\cM^{-1}(B)| \le \frac{1}{\omega(n)} \sum_{B \in \BPk} \binom{e(\Pi)}{m-e(B)}.
  \end{equation}
  Summing~\eqref{eq:FreesdP-bound} over all $\Pi \in \Partg$ and substituting it into~\eqref{eq:Free-union} yields~\eqref{eq:FreesdP-union}.
\end{proof}

\subsection{Splitting into the sparse and the dense cases}

In the remainder of this paper, we will define, for some sufficiently small positive constants $\delta$ and $\gamma$ and every $\Pi \in \Partg$, a map $\cM \colon \FreesdP \to \BPk$ and show that these maps satisfy the assumptions of Proposition~\ref{prop:sufficient-1-statement}. Unfortunately, our main argument, presented in Section~\ref{sec:sparse}, will work only under the assumption that $m \le \ex(n, H) - \Omega(n^2)$; the (much easier) complementary case $m \ge \ex(n, H) - o(n^2)$ will be treated in Section~\ref{sec:dense}.

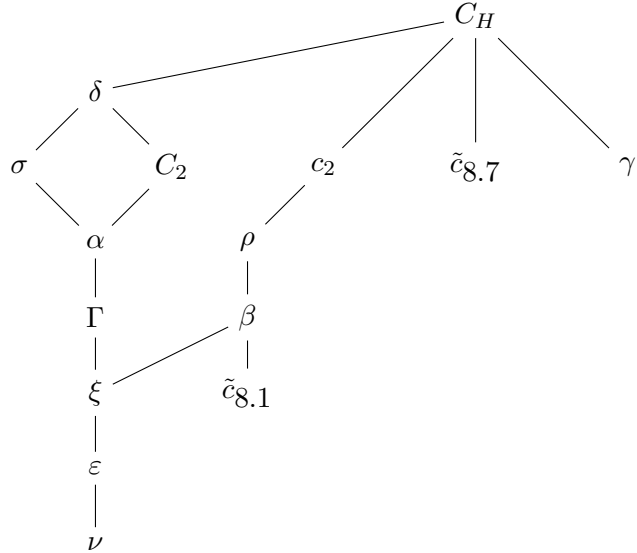
\begin{figure}[h]
  \centering
  \begin{tikzpicture}
    \node (nu) at (1,0) {$\nu$};
    \node (eps) at (1,1) {$\eps$};
    \node (xi) at (1,2) {$\xi$};
    \node (cLD) at (4,2) {$\tilc_{\ref{prop:oneStateLDProb}}$};
    \node (Gamma) at (1,3) {$\Gamma$};
    \node (beta) at (4,3) {$\beta$};
    \node (alpha) at (1,4) {$\alpha$};
    \node (rho) at (4,4) {$\rho$};
    \node (sigma) at (0,5) {$\sigma$};
    \node (C2) at (2,5) {$C_2$};
    \node (c2) at (6,5) {$c_2$};
    \node (cHDreg) at (4,5) {$\tilc_{\ref{lemma:regCase}}$};
    \node (delta) at (1,6) {$\delta$};
    \node (gamma) at (8,5) {$\gamma$};
    \node (CH) at (6,7) {$C_H$};
    \draw (nu) -- (eps) -- (xi) -- (Gamma) -- (alpha) -- (sigma) -- (delta) -- (CH)
    (alpha) -- (C2) -- (delta)
    (cLD) -- (beta) -- (rho) -- (c2) -- (CH)
    (gamma) -- (CH)
    (xi) -- (beta)
    (cHDreg) -- (CH)
		(rho) -- (delta)
		(cHDreg) -- (delta);
  \end{tikzpicture}  
  \caption{The Hasse diagram depicting dependence between the various constants in the proof}
  \label{fig:constants}
\end{figure}

In order to formally define the split between the two cases, we need to introduce several additional parameters (cf.~Figure~\ref{fig:constants}). First, let $\gamma$ be any positive constant satisfying
\begin{equation}
  \label{eq:gamma}
  \gamma \le \frac{1}{20r}.
\end{equation}
Second, let $\xi$ be a positive constant that satisfies inequalities~\eqref{eq:xi-dense} and the first inequality~\eqref{eq:xi-delta-eps-nu-gamma}, which involve absolute constants $\eps$ and $\nu$ that are defined in~\eqref{eq:eps-nu}. Third, let $\delta$ be a small positive constants that also satisfies the first inequality in~\eqref{eq:xi-delta-eps-nu-gamma} and, moreover, the inequalities
\begin{equation}
  \label{eq:delta}
  \delta \le \frac{1}{20r} \qquad \text{and} \qquad \delta \le \frac{\xi \rho \tilc_{\ref{lemma:regCase}}}{70} \cdot \min\left\{\sigma, \frac{1}{C_2}\right\},
\end{equation}
where $\tilc_{\ref{lemma:regCase}}$ is an absolute positive constant implicit in the statement of Lemma~\ref{lemma:regCase}, $\rho$ is a constant that depends on $\xi$ and on $\tilc_{\ref{lemma:regCase}}$ and is defined at the beginning of Section~\ref{sec:sparse}, and $\sigma$ and $C_2$ are constant that depend on $\xi$ and the function $(z, \alpha, \lambda) \mapsto \tau$ implicit in the statement of Lemma~\ref{lemma:d-sets} and are defined in Section~\ref{sec:high}. Finally, define
\begin{equation}
  \label{eq:CH}
  C_H = \max\left\{C_{\ref{prop:sufficient-1-statement}}(\delta, \gamma),\; \frac{1}{\beta}, \; \frac{1}{c_2 \cdot \tilc_{\ref{lemma:regCase}}} \cdot \frac{35r}{\xi}, \; 2\right\},
\end{equation}
where $C_{\ref{prop:sufficient-1-statement}}(\delta, \gamma)$ is a constant that depends on $\delta$ and $\gamma$ and is implicitly defined in the statement of Proposition~\ref{prop:sufficient-1-statement}, $\beta$ is a constant that depends on $\xi$ and on $\tilc_{\ref{prop:oneStateLDProb}}$ and is defined at the beginning of Section~\ref{sec:sparse}, and $c_2$ is a constant that depends on $\rho$ (see above) and is defined in Section~\ref{sec:high}.

Fix an arbitrary $\Pi \in \Partg$. Our definition of the map $\cM \colon \FreesdP \to \BPk$ and the arguments we will use to show that $\cM$ satisfies the assumptions of Proposition~\ref{prop:sufficient-1-statement} with $\omega(n) = 2/n$ will vary depending on whether
\begin{equation}
  \label{eq:sparse-dense}
  C_Hm_H \le m \le e(\Pi) - \xi n^2 \qquad \text{or} \qquad e(\Pi) - \xi n^2 < m \le \exnH.  
\end{equation}
Our analysis under the assumption that $m$ satisfies the first and the second pair of inequalities in~\eqref{eq:sparse-dense} will be referred to as the \emph{sparse case} and the \emph{dense case}, respectively. These two cases will be treated in Sections~\ref{sec:sparse} and~\ref{sec:dense}, respectively.

\section{The 1-statement: the sparse case}
\label{sec:sparse}

Fix a partition $\Pi \in \Partg$. In this section, we verify the assumptions of Proposition~\ref{prop:sufficient-1-statement} in the case where 
\[
  C_Hm_H \le m \le e(\Pi) - \xi n^2.
\]
In order to show that the assumptions of Proposition~\ref{prop:sufficient-1-statement} are satisfied, we will first define a natural map $\cM \colon \FreesdP \to \BPk$ by letting $\cM(G)$ be an arbitrarily chosen maximal subgraph of $G \setminus \Pi$ with maximum degree $k$. We will estimate the left-hand side of~\eqref{eq:Freesdgp} using two different arguments, depending on the distribution of edges in the monochromatic graph $G \setminus \Pi$: the \emph{low-degree case} and the \emph{high-degree case}.

Let $\TT_\Pi$ denote the family of all $T \subseteq \Pic$ that are the monochromatic subgraph of some $G \in \FreesdP$, that is,
\[
  \TT_\Pi = \big\{G \setminus \Pi : G \in \FreesdP\big\};
\]
our definitions imply that every $T\in\TT_\Pi$ satisfies $e(T) \le \delta m$ and $\Delta(T)>k$. Define further, for every $T\in\TT_\Pi$,
\[
  \FreeT = \big\{G\in\FreesdP : G\setminus\Pi = T\big\} ,
\]
and observe that
\[
  |\FreesdP| = \sum_{T\in\TT_\Pi} |\FreeT|.
\]

In order to describe the split between the low-degree and the high-degree cases, let
\begin{equation}
  \label{eq:beta}
  \beta = \min\left\{\frac{e}{\xi(r-1)},\frac{\tilc_{\ref{prop:oneStateLDProb}}}{22}\right\} \qquad\text{and}\qquad D =\floor{\beta\frac{m}{n\log n}},
\end{equation}
where $\tilc_{\ref{prop:oneStateLDProb}}$ is an absolute positive constant that is implicit in the statement of Proposition~\ref{prop:oneStateLDProb}, and choose a $\rho>0$ which satisfies
\begin{equation}
  \label{eq:rhobeta}
  \left(\frac{e}{\xi\rho}\right)^{\rho}\le e^{\beta/2} \qquad \text{and} \quad \rho \le \frac{1}{4r};
\end{equation}
it is possible to choose such $\rho$, since the left-hand side of the first inequality in \eqref{eq:rhobeta} converges to $1$ as $\rho\to 0$. 

\subsection{Decomposing the monochromatic graphs}
\label{sec:decomp-monochr-graph}

For every $T \in \TT_\Pi$, we define the following graphs and sets:
\begin{itemize}
\item
  Let $B_T$ be an arbitrarily chosen maximal subgraph of $T$ with $\Delta(B_T) = k$;  note that $B_T \in\BPk$, as defined in Section~\ref{sec:mono-graphs-small-max-deg}.
\item
  Let $U_T$ be an arbitrarily chosen maximal subgraph of $T$ that extends $B_T$ and satisfies $\Delta(U_T) \le D$.
\item
  Let $X_T$ be the set of vertices whose degrees in $U_T$ are exactly $D$.
\item
  Let $H_T$ be the set of all vertices whose degrees in $T$ are larger than $\rho m / n$; note that $|H_T| \le 2\delta n/\rho$.
\end{itemize}
Finally, for every $B \in \BPk$, let $\TTsubB$ denote the subfamily of $\TT_\Pi$ comprising all $T$ with
\[
  B_T = B, \qquad e(T) = t, \qquad e(U_T) = e(B) + \ell, \qquad \text{and} \qquad |H_T| = h.
\]
The map $\cM$ that we will supply to Proposition~\ref{prop:sufficient-1-statement} is the map defined by $\cM(G) = B_T$, where $T = G \setminus \Pi$.

\subsection{The low-degree and the high-degree cases}
\label{sec:low-high-degree}

We may now define the partition into the low-degree and the high-degree cases. Suppose that $T \in \TT_\Pi$. We place $T$ in $\TTL$ when
\begin{equation}
  \label{eq:LowDegCon}
  e(U_T \setminus B_T) \log n \ge \frac{m |H_T|}{\xi n};
\end{equation}
otherwise, we place $T$ in $\TTH$. Since $\TTL$ and $\TTH$ form a partition of $\TT_\Pi$, we have, for every $B \in \BPk$,
\begin{equation}
  \label{eq:FreeT-hi-lo}
  |\cM^{-1}(B)| = \sum_{\substack{T\in\TT_\Pi \\ B_T = B}}|\FreeT|= \sum_{\substack{T\in\TTL \\ B_T = B}} |\FreeT| + \sum_{\substack{T\in\TTH \\ B_T = B}} |\FreeT|.
\end{equation}
The low-degree and the high-degree cases are estimates of the first and the second sums in the right hand side of~\eqref{eq:FreeT-hi-lo}, respectively.

\subsection{The low-degree case -- summary}
\label{sec:low-degree-summary}

In the low-degree case, we will rely on the following upper bound on $|\FreeT|$, which is established in Section~\ref{sec:low} with the use of the Hypergeometric Janson Inequality (Lemma~\ref{lem:HJI}).

\begin{prop}
  \label{prop:oneStateLDProb}
  There exists a positive constant $\tilc$ that depends only on $H$ such that the following holds. If~$m\ge\tilC m_H$ for some $\tilC \ge 2$, then, for every $\Pi \in \Partg$, every $B \in \BPk$, all $t \le m/2$, $\ell$, and $h$, and every $T\in\TTsubB$,
  \[
    |\FreeT| \le \exp\left(-\frac{\tilc}{\beta + \tilC^{-1}} \cdot \ell \log n\right) \cdot \binom{e(\Pi)}{m-t}.
  \]
\end{prop}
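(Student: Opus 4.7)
The plan is to apply the Hypergeometric Janson Inequality (Lemma~\ref{lem:HJI}) to bound the $H$-free proportion among the possible choices of $G \cap \Pi$. Since $T = G \setminus \Pi$ is fixed, every $G \in \FreeT$ corresponds bijectively to an $(m-t)$-subset $R \subseteq E(\Pi)$ for which $R \cup T$ is $H$-free; thus $|\FreeT|$ equals $\binom{e(\Pi)}{m-t}$ times the probability that, for a uniformly chosen $(m-t)$-subset $R$ of $E(\Pi)$, the graph $R \cup T$ contains no copy of $H$, and it suffices to show this probability is at most $\exp(-\tilc \ell \log n /(\beta + \tilC^{-1}))$.

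To identify the bad events for Janson, I exploit the maximality of $B_T$: since $B_T$ has $\Delta(B_T) = k$ and is maximal in $T$ with this property, every $e \in U_T \setminus B_T$ has an endpoint $v$ with $\deg_{B_T}(v) = k$, and therefore the edge $e$ together with the $k$ edges of $B_T$ incident to $v$ forms a copy $K_e$ of $K_{1,k+1}$ inside $U_T \subseteq T$. Fix a critical vertex $v^*$ of $H$ and a critical star $S$ of size $k+1$ centred at $v^*$. Because $\chi(H) = r+1$ while $\chi(H \setminus S) = r$, each way of mapping $S$ isomorphically onto $K_e$ extends to $\Omega(n^{v_H-k-2})$ injections $\varphi \colon V(H) \to V(K_n)$ for which $\varphi(H \setminus S)$ is properly coloured by $\Pi$; here the hidden constant depends only on $H$ and $\gamma$ (via the balance condition $\Pi \in \Partg$). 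For each such $\varphi$, the $H$-freeness of $G$ forces $R \nsupseteq E_\varphi := \varphi(E(H)) \setminus \varphi(E(S))$. Let $\cI$ be the family of all these $(e_H - k - 1)$-element subsets of $E(\Pi)$, collected over all $e \in U_T \setminus B_T$, all critical stars of $H$, and all admissible extensions $\varphi$.

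Applying Lemma~\ref{lem:HJI} with the family $\cI$ and $p = (m-t)/e(\Pi) = \Theta(m/n^2)$ (using Proposition~\ref{prop:bal-unbal-part-size}\ref{item:Partg-lower} together with $t \le m/2$), the mean satisfies
\[
  \mu \;\ge\; \Omega\bigl(\ell \cdot n^{v_H - k - 2} \cdot p^{e_H - k - 1}\bigr).
\]
The assumption $m \ge \tilC m_H$, combined with the elementary inequality $\eta(H) \ge d_{k+2}(H) = (e_H - k - 1)/(v_H - k - 2)$ (which holds because $F = H$ is itself a candidate in the maximum defining each $\eta_i$) and a Lemma~\ref{lemma:m2H}-type manipulation, yields $\mu \ge c \ell \tilC^{e_H - k - 1} \log n$ for an absolute $c > 0$.

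The hard part will be to control the correlation term $\Delta = \sum_{(\varphi, \varphi') : E_\varphi \cap E_{\varphi'} \ne \emptyset} p^{|E_\varphi \cup E_{\varphi'}|}$. Intersecting pairs arise in two ways: either $K_e = K_{e'}$, in which case the embeddings agree on $S$ but their extensions share some vertices of $V(H) \setminus V(S)$; or $K_e \ne K_{e'}$ meet at a vertex of $U_T$, in which case the images of the non-star parts share vertices in $V(K_n)$. The degree bound $\Delta(U_T) \le D = \lfloor \beta m /(n \log n) \rfloor$ caps the number of copies $K_e$ through any fixed vertex by $O(D^k)$, so classifying overlapping pairs by the set $J \subseteq V(H)$ on which the two embeddings coincide, bounding each resulting sum by standard density considerations, and summing over $J$ should yield
\[
  \Delta \;\le\; O(\beta + \tilC^{-1}) \cdot \frac{\mu^2}{\ell \log n},
\]
where the $\beta$-contribution comes from the degree restriction on $U_T$ and the $\tilC^{-1}$-contribution from the slack in $m \ge \tilC m_H$. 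Plugging into Lemma~\ref{lem:HJI} with $q = \min\{1,\, c_0 \mu / \Delta\}$ for a small absolute $c_0 > 0$ then delivers $\exp\bigl(-\Omega(\mu^2/(\mu+\Delta))\bigr) \le \exp\bigl(-\tilc \ell \log n/(\beta + \tilC^{-1})\bigr)$, as required.
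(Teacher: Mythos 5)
Your overall strategy---apply the Hypergeometric Janson Inequality to the copies of $H$ obtained by extending copies of $K_{1,k+1}$ in $T$ into $\Pi$, obtaining $\mu = \Theta(\ell\,\tilC\log n)$ and then controlling $\Delta$---matches the paper, and your derivation of the $\mu$ lower bound (via $\eta(H)\ge d_{k+2}(H)$, $\zeta(H)\le e_H$, and Lemma~\ref{lemma:m2H}-style manipulations) is essentially the same as the paper's Claim~\ref{claim:criticality}. But the proposal has a genuine gap precisely where you label things ``the hard part'': the family $\cI$ you propose is far too naive for the $\Delta$ bound to go through.

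First, taking $\cI$ to range over \emph{all} $e \in U_T \setminus B_T$ produces stars $K_e$ that are not edge-disjoint: two edges $e,e'$ both incident to the same saturated vertex $v$ yield copies $K_e, K_{e'}$ that share the $k$ edges of $B_T$ at $v$, and there may be as many as $\Theta(D)$ such edges per vertex (with $D = \lfloor \beta m/(n\log n)\rfloor$), giving $\Theta(D^2)$ heavily-overlapping pairs at a single centre. This is exactly the obstruction the paper's Lemma~\ref{lem:goodStars} is designed to remove: one must first pass to a carefully constructed subfamily $\SS$ of \emph{edge-disjoint} stars that still has $\Theta(\ell)$ members (property~\ref{item:Ssize}), has at most $D$ stars through any vertex (property~\ref{item:oneVtxBound}), and---crucially---has the pair-intersection statistic $\sum_{u,v}|A(u,v)|\le C_1\ell$ (property~\ref{item:twoVtxBound}, established via a random ordering of in-neighbours in an auxiliary orientation). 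Without edge-disjointness, the $|J|\ge 2$ contribution $\Delta_2$ in the paper's decomposition would involve pairs of stars intersecting in the centre and a leaf, and the paper's bound via $\sum_{u,v}|A(u,v)|$ (which relies on two distinct edge-disjoint stars intersecting only in leaves) would not apply; "standard density considerations" do not rescue this, since the overlap count itself is the problem, not the density.

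Second, you take all critical stars $S$, whereas the paper fixes a single $\Siz$ with $\eta_{i_0}(H)=\eta(H)$ and $\zeta_{i_0}(H)=\zeta(H)$. This is not cosmetic: Claim~\ref{claim:criticality} needs $m_H/n^2 \ge n^{-1/\eta_{i_0}(H)}(\log n)^{1/(\zeta_{i_0}(H)-k-1)}$, and this lower bound on $m_H/n^2$ can fail if $\eta_i(H) > \eta(H)$. Using other critical stars could thus introduce $\Delta$ terms that are not $O(1/\log n)\cdot n^{2v'}p^{2e'}$, ruining the final estimate.

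Third, a smaller omission: to identify $|\cI|$ with $\mu/p^{e_H-k-1}$ one needs the images $K_\varphi$ to be pairwise distinct; the paper handles this with the random-equipartition trick ($\varphi(w)\in V_{\psi(w),w}$), which you do not mention. In short, the skeleton of your argument is right, but the single missing idea---constructing and exploiting a good subfamily of edge-disjoint stars via Lemma~\ref{lem:goodStars}---is exactly where the work of the proof lies, and without it your claimed $\Delta$ bound does not follow.
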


This upper bound on $|\FreeT|$ will be combined with the following estimate on the size of the sum over all $T \in \TTL$, which is derived in Section~\ref{subsec:enumGraphs}.

\begin{lemma}
  \label{lem:enumeratingT}
  Suppose that $n \log n \ll m \le e(\Pi) - \xi n^2$. For every $B \in \BPk$ and all $t$, $\ell$, and $h$,
  \[
    |\TTsubB| \cdot \binom{e(\Pi)}{m-t} \le \exp\left(14\ell \log n+\frac{2mh}{\xi n}\right) \cdot \binom{e(\Pi)}{m-e(B)}.
  \]
\end{lemma}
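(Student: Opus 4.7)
The plan is to reduce the statement to a clean bound on $|\TTsubB|$ by exploiting the identity
\[
  \binom{e(\Pi)}{m-e(B)} \Big/ \binom{e(\Pi)}{m-t} = \prod_{j=1}^{t-e(B)} \frac{e(\Pi) - m + t - j + 1}{m - t + j}.
\]
Since each numerator is at least $e(\Pi) - m \ge \xi n^2$ while each denominator is at most $m$, this ratio is bounded below by $(\xi n^2 / m)^{t-e(B)}$, so it will suffice to prove
\[
  |\TTsubB| \le \exp\bigl(14 \ell \log n + 2 m h / (\xi n)\bigr) \cdot (\xi n^2 / m)^{t-e(B)}.
\]

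Next I would enumerate each $T \in \TTsubB$ by specifying (i) the $\ell$-edge set $U_T \setminus B \subseteq \Pic$ in at most $\binom{e(\Pic)}{\ell}$ ways, (ii) the set $H_T$ in $\binom{n}{h}$ ways, and (iii) the remaining $s - \ell$ edges of $T \setminus U_T$, where $s := t - e(B)$. By maximality of $U_T$, each of these remaining edges has at least one endpoint in $X_T$; splitting them into a \emph{high part} of size $p$ (edges incident to $H_T$) and a \emph{low part} of size $q = s - \ell - p$ (edges incident only to $X_T \setminus H_T$) gives at most $\binom{hn}{p}\binom{|X_T| n}{q}$ choices. Because every $v \in X_T \setminus H_T$ has $T$-degree at most $\rho m / n$, the low part satisfies $q \le |X_T| \rho m / n =: Q$. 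The essential quantitative ingredient is the bound
\[
  |X_T| \le \frac{2\ell}{D - k} \le \frac{4\ell}{D} \qquad (\text{for large }n),
\]
which follows from the fact that $\deg_B(v) \le k$ and $\deg_{U_T}(v) = D$ for every $v \in X_T$, so $|X_T| (D - k) \le \sum_{v \in X_T} \deg_{U_T \setminus B}(v) \le 2 e(U_T \setminus B) = 2\ell$. Substituting $D = \lfloor \beta m / (n \log n) \rfloor$ yields $Q \le 4 \ell \rho \log n / \beta$.

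The final step is a routine calculation. Multiplying each binomial factor by the matching power of $m/(\xi n^2)$ and using $(e A / x)^x \le e^A$ for the maximisation over $p$: the $U_T \setminus B$ choice contributes $\binom{e(\Pic)}{\ell}(m/(\xi n^2))^\ell \le (e m / (2 \ell \xi))^\ell \le \exp(3 \ell \log n)$; the factor $\binom{n}{h} \le \exp(h \log n) \le \exp(m h/(\xi n))$ using $m \gg n \log n$; the high part contributes $\binom{hn}{p}(m/(\xi n^2))^p \le \exp(h m / (\xi n))$; and, crucially, the low part contributes $\binom{|X_T| n}{q}(m/(\xi n^2))^q \le (e/(\xi \rho))^Q \le \exp(2 \ell \log n)$, where the final inequality invokes the defining property $(e/(\xi \rho))^\rho \le e^{\beta/2}$ of $\rho$ to conclude $Q \log(e/(\xi \rho)) \le Q \beta / (2\rho) = 2 \ell \log n$. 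The sum over $p$ adds only a polynomial prefactor that is absorbed into $\ell \log n$, and the constants combine to yield the claimed bound. The main obstacle, from my perspective, is spotting the sharper estimate $|X_T| \le 2\ell / (D - k)$: the naive bound $|X_T| \le 2 e(U_T) / D$ can be as large as $\Theta(k n / D)$ when $e(B) \asymp k n$, which would introduce an unwanted $\exp(\Theta(k n \log n))$ contribution that cannot be absorbed when $\ell \ll k n$.
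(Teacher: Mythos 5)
Your proof is correct, and it takes a somewhat different (and arguably cleaner) route than the paper's, even though both arguments hinge on the same two structural ingredients: the binomial-ratio bound $\binom{e(\Pi)}{m-e(B)}\big/\binom{e(\Pi)}{m-t}\ge(\xi n^2/m)^{t-e(B)}$, and the fact that $|X_T|$ is small because every vertex of $X_T$ absorbs at least $D-k$ edges of $U_T\setminus B$. The paper decomposes $T\setminus B$ into $U_T'$ (the edges of $U_T\setminus B$ not touching $X_T$), then all edges of $T\setminus B$ touching $X_T\setminus H_T$, then the remaining edges touching $H_T$, and counts the middle block via an iterative vertex-by-vertex accounting with the $|X_T|$-smallness encoded implicitly as the summation constraint $u'+sD/3\le\ell$. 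You instead specify all of $U_T\setminus B$ up front (which determines $X_T$), then $H_T$, then the edges of $T\setminus U_T$ split into the high part (touching $H_T$) and the low part; you make the $|X_T|$-bound explicit in the form $|X_T|\le 2\ell/(D-k)\le 4\ell/D$ and feed it into a single maximum-of-binomial estimate $\binom{|X_T|n}{q}(m/(\xi n^2))^q\le(e/(\xi\rho))^Q$. This reorganization dispenses with the iterative count and in fact gives a slightly better exponent than the stated $14\ell\log n$. Two points worth stating explicitly in a polished writeup: first, $\TTsubB=\emptyset$ unless $\ell\ge 1$ (so the polynomial prefactor from summing over $p$ can be absorbed into $\exp(O(\ell\log n))$); second, the monotonicity of $q\mapsto(ea/q)^q$ up to $q=Q$ requires $Q\le a=|X_T|m/(\xi n)$, i.e.\ $\rho\xi\le 1$, which holds since $\rho\le 1/(4r)$ and $\xi<1$. (Also, accounting for the floor in $D$ gives $Q\le 8\ell\rho\log n/\beta$ rather than $4\ell\rho\log n/\beta$, so the low part costs $\exp(4\ell\log n)$ rather than $\exp(2\ell\log n)$ --- an immaterial change.)
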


Before we close this section, we show how these two lemmas can be used to estimate the first sum in the right-hand side of~\eqref{eq:FreeT-hi-lo}. Let $\cL$ be the family of all triples $(t, \ell, h)$ that satisfy $t \ge \ell \ge 1$ and $\ell \log n \ge mh/(\xi n)$, cf.~\eqref{eq:LowDegCon}, and observe that, for every $B \in \BPk$,
\[
  \sum_{\substack{T\in\TTL \\ B_T = B}}|\FreeT| = \sum_{(t, \ell, h) \in \cL} \; \underbrace{\sum_{T\in\TTsubB}|\FreeT|}_{X_{t,\ell,h}}.
\]
Since $m \ge C_Hm_H$, Proposition~\ref{prop:oneStateLDProb} and Lemma~\ref{lem:enumeratingT} imply that, for every $(t, \ell, h) \in \cL$,
\[
  \begin{split}
    X_{t,\ell,h} & \le |\TTsubB| \cdot \exp\left(-\frac{\tilc}{\beta + C_H^{-1}} \cdot \ell \log n\right) \cdot \binom{e(\Pi)}{m-t} \\
    & \le \exp\left(14\ell \log n+\frac{2mh}{\xi n}-\frac{\tilc}{\beta + C_H^{-1}} \cdot \ell \log n\right) \cdot \binom{e(\Pi)}{m-e(B)} \\
    & \leByRef{eq:CH} \exp\left(\left(16 - \frac{\tilc}{2\beta}\right) \cdot \ell \log n\right) \cdot \binom{e(\Pi)}{m - e(B)} \leByRef{eq:beta} n^{-6\ell} \cdot \binom{e(\Pi)}{m - e(B)}.
  \end{split}
\]
Since $\ell \ge 1$ for every $(t, \ell, h) \in \cL$, we may conclude that
\[
  \sum_{\substack{T\in\TTL \\ B_T = B}}|\FreeT| \le |\cL| \cdot n^{-6} \cdot \binom{e(\Pi)}{m-e(B)} \le \frac{1}{n} \cdot \binom{e(\Pi)}{m-e(B)}.
\]

\subsection{Enumerating the monochromatic graphs}
\label{subsec:enumGraphs}

In this short section, we enumerate graphs in $\TTsubB$, proving Lemma~\ref{lem:enumeratingT}.

\begin{proof}[{Proof of Lemma~\ref{lem:enumeratingT}}]
  We will count the number of ways to construct any $T\in\TTsubB$ in several steps. We first record the following inequality, which holds for all integers $y\le m'\le m$:
  \begin{equation}
    \label{eq:usefulEnumBound}
    \frac{\binom{e(\Pi)}{m'-y}}{\binom{e(\Pi)}{m'}} = \frac{\binom{m'}{y}}{\binom{e(\Pi)-m'+y}{y}} \le \frac{\binom{m}{y}}{\binom{\xi n^2+y}{y}} \leByRef{eq:binCoef3} \left(\frac{m}{\xi n^2}\right)^y.
  \end{equation}
  Since $B = B_T \subseteq T$ for every $T \in \TTsubB$, we only need to choose which $t - e(B)$ edges of $\Pic$ form the graph $T \setminus B$. For every $T \in \TTsubB$, let $U_T'$ be the subgraph of $U_T \setminus B$ obtained by removing all edges touching $X_T$. Since every edge of $U_T\setminus U_T'$ has at least one endpoint in $X_T$ and $\Delta(B) \le k$, we have
  \[
    \ell = e(U_T \setminus B) \ge e(U_T') +|X_T| \cdot (D-k)/2 \ge e(U_T') +|X_T| \cdot D/3.
  \]
  
 We choose the edges of $T \setminus B$ in three steps:
  \begin{enumerate}[label={(S\arabic*)}]
  \item
    \label{item:enumeratingT-1}
    We choose the edges of $U_T'$.
  \item
    \label{item:enumeratingT-2}
    We choose the edges of $T \setminus B$ that touch $X_T \setminus H_T$.
  \item
    \label{item:enumeratingT-3}
    We choose the remaining edges of $T \setminus B$; they all touch $H_T$.
  \end{enumerate}

  We count the number of ways to build a graph $T \in \TTsubB$ with $u'$, $t_X$, and $t_H$ edges chosen in steps~\ref{item:enumeratingT-1}, \ref{item:enumeratingT-2}, and~\ref{item:enumeratingT-3}, respectively. An upper bound on $|\TTsubB|$ will be obtained by summing over all choices for $u'$, $t_X$, and $t_H$. There are at most $\binom{e(\Pic)}{u'}$ ways to choose $u'$ edges of $U_T'$. Since $e(\Pic) \le n^2$, we have
  \[
    \binom{e(\Pic)}{u'} \cdot \frac{\binom{e(\Pi)}{m-e(B)-u'}}{\binom{e(\Pi)}{m-e(B)}} \leByRef{eq:usefulEnumBound}
    n^{2u'} \left(\frac{m}{\xi n^2}\right)^{u'} = \left(\frac{m}{\xi}\right)^{u'} \le m^{2u'}.
  \]
  Next, we bound the number of ways to choose the $t_X$ edges that touch $X_T\setminus H_T$. To this end, we arbitrarily order the vertices of $X_T \setminus H_T$ as $v_1, \dotsc, v_s$ and then, for each $i \in \br{s}$, we choose the edges incident to $v_i$ and not to any of $v_1, \dotsc, v_{i-1}$; denote the number of such edges by $d_i$. Since we are considering only vertices of $X_T \setminus H_T$, we have $d_i \le \rho m / n$; moreover, $d_1 + \dotsb + d_s = t_X$. Let $N_2 = N_2(t_X, s)$ denote the total number of ways to choose the $t_X$ edges when $|X_T \setminus H_T| = s$. We have
  \begin{align*}
    N_2 \cdot \frac{\binom{e(\Pi)}{m-e(B)-u'-t_X}}{\binom{e(\Pi)}{m-e(B)-u'}}
    & \le \binom{n}{s} \sum_{\substack{d_1, \dotsc, d_s \le \rho m/n \\ d_1 + \dotsb +d_s = t_X}} \prod_{i=1}^s
    \binom{n}{d_i} \cdot\frac{\binom{e(\Pi)}{m-e(B)-u'-(d_1+\dotsb+d_i)}}{\binom{e(\Pi)}{m-e(B)-u'-(d_1+\dotsb+d_{i-1})}} \\
    &\le \left(n \cdot \sum_{d=0}^{\rho m/n} \binom{n}{d} \cdot \max_{m' \le m} \frac{\binom{e(\Pi)}{m'-d}}{\binom{e(\Pi)}{m'}}\right)^s \\
    & \leByRef{eq:usefulEnumBound} \left(n + n \cdot \sum_{d=1}^{\rho m/n} \left(\frac{en}{d} \cdot \frac{m}{\xi n^2}\right)^d\right)^s.
  \end{align*}
  Since, for every positive $a$, the function $x \mapsto (ea/x)^x$ is increasing on the interval $(0,a]$ and $\rho < e/\xi$, we conclude that
  \[
    \left(N_2 \cdot \frac{\binom{e(\Pi)}{m-e(B)-u'-t_X}}{\binom{e(\Pi)}{m-e(B)-u'}}\right)^{1/s}
    \le n + \rho m \cdot \left(\frac{e}{\xi \rho}\right)^{\rho m /n} \le \left(\frac{e}{\xi\rho}\right)^{2\rho m /n} \leByRef{eq:rhobeta} e^{\beta m/n} \le m^{D}.
  \]
  Finally, let $N_3 = N_3(t_X, h)$ denote the number of ways to choose the remaining $t_H$ edges of $T \setminus B$. Recalling that $e(B) + u' + t_X + t_H = t$ and arguing similarly as above, we obtain
  \[
    N_3 \cdot \frac{\binom{e(\Pi)}{m-t}}{\binom{e(\Pi)}{m-e(B)-u'-t_X}} \le \left(n + n \cdot \sum_{d=1}^{n-1} \left(\frac{en}{d} \cdot \frac{m}{\xi n^2}\right)^d\right)^h.
  \]
  Using again the fact that $(ea/x)^x \le e^a$ for all $x \in (0, \infty)$, we conclude that
  \[
    \left(N_3 \cdot \frac{\binom{e(\Pi)}{m-t}}{\binom{e(\Pi)}{m-e(B)-u'-t_X}}\right)^{1/h}
    \le n+n^2 \cdot \exp\left(\frac{m}{\xi n}\right) \le \exp\left(\frac{2m}{\xi n}\right).
  \]
  Combining the above bounds, we obtain
  \begin{align*}
    |\TTsubB| \cdot \frac{\binom{e(\Pi)}{m-t}}{\binom{e(\Pi)}{m-e(B)}}
    & \le \sum_{\substack{u',t_X,t_H,s \\ u'+t_X+t_H = t \\ u' + sD/3 \le \ell}} m^{2u'} \cdot m^{Ds} \cdot \exp\left(\frac{2mh}{\xi n}\right) \\
    & \le nm^3 \cdot m^{3\ell} \cdot \exp\left(\frac{2mh}{\xi n}\right)
      \le \exp\left(14\ell \log n+\frac{2mh}{\xi n}\right),
  \end{align*}
  where the final inequality follows as $nm^3 \le m^4 \le m^{4\ell}$ and $m \le n^2$.
\end{proof}

\subsection {The low-degree case}
\label{sec:low}

In this section, we prove Proposition~\ref{prop:oneStateLDProb}, that is, for a given $T \in \TTsubB$, we give an upper bound on the number of graphs in $\FreeT$ in terms of $t$ and $\ell$. To this end, fix an arbitrary critical star $\Siz$ in $H$ that satisfies
\[
  \eta_{i_0}(H) = \eta(H) \qquad \text{and}  \qquad \zeta_{i_0}(H)=\zeta(H),
\]
where $\eta(H)$ and $\zeta(H)$ are the quantities defined above~\eqref{eq:mH-def}. Fix some $T \in \TT_\Pi$. For every injection $\varphi\colon V(H) \to \br{n}$, we let $\Sphi = \varphi(\Siz)$ and $\Kphi=\varphi(H\setminus \Siz)$ be the labelled graphs that are the images of $\Siz$ and $H \setminus \Siz$ via the embedding $\varphi$. Define
\[
\Phi_T = \{\varphi : \Sphi\subseteq T\text{ and } \Kphi\subseteq\Pi\};
\]
in other words, $\Phi_T$ comprises all those embeddings of $H$ into $\Pi\cup T$ that embed $\Siz$ into $T$ and map the remaining edges of $H$ to $\Pi$. Since $T \subseteq G$ for every $G \in \FreeT$, the graph $G \cap \Pi$ does not contain any of the~$K_\varphi$ with $\varphi \in \Phi_T$. In particular, letting $G'$ be a uniformly chosen random subgraph of $\Pi$ with $m - t$ edges, we have
\begin{equation}
  \label{eq:FreeT-Pr}
  |\FreeT| \le \Pr\big(K_{\varphi} \nsubseteq G' \text{ for each } \varphi \in \Phi_T\big) \cdot \binom{e(\Pi)}{m-t}.  
\end{equation}

Proposition~\ref{prop:oneStateLDProb} is derived from~\eqref{eq:FreeT-Pr} and the Hypergeometric Janson Inequality. In order to get a strong bound on the probability in the right-hand side of~\eqref{eq:FreeT-Pr}, we will carefully construct a sub-family of $\Phi_T$ that satisfies some `nice' properties and apply Janson's inequality with $\Phi_T$ replaced by this sub-family.

\begin{lemma}
  \label{lem:goodStars}
  Suppose that $\Pi = \partition$ and let $T \in \TTsubB$. There are an $i \in \br{r}$ and a family $\SS$ of edge-disjoint copies of $K_{1,k+1}$ in $T[V_i]$ that satisfy the following properties for some positive constants $c_1$ and $C_1$ that depend only on $r$ and $k$:
  \begin{enumerate}[label=(GS\arabic*)]
  \item
    \label{item:Ssize}
    We have $c_1\ell \le|\SS| \le \ell$.
  \item
    \label{item:oneVtxBound}
    For every $v\in\br{n}$, we have $|\{S\in\SS : v\in V(S)\}| \le D =\floor{\beta \frac{m}{n\log n}}$.
  \item
    \label{item:twoVtxBound}
    For every two different vertices $v,u\in\br{n}$, let $A(u,v)$ be the set of all pairs of stars $S,S'\in\SS$, each containing both $u$ and $v$ as leaves. Then, $\sum_{u,v}|A(u,v)| \le C_1\ell$.
  \end{enumerate}
\end{lemma}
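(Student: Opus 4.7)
The plan is to build $\SS$ around the edges of $F := U_T \setminus B_T$, using that maximality of $B_T$ as a subgraph of $T$ with $\Delta \le k$ forces every $e \in F$ to have at least one ``saturated'' endpoint $v$ with $\deg_{B_T}(v) = k$; the $k$ edges of $B_T$ at such a $v$ together with $e$ already form a copy of $K_{1,k+1}$ in $T$. Since $T \subseteq \Pic$, every edge of $F$ lies inside some colour class $V_j$, so pigeonhole yields an $i \in \br{r}$ with $|F \cap (V_i \times V_i)| \ge \ell/r$. For each such edge $e$ I mark a saturated endpoint $v_e \in V_i$ and set $c(v) = |\{e : v_e = v\}|$ and $Y = \{v : c(v) \ge 1\}$. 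To resolve conflicts between distinct potential centres that share a $B_T$-edge, I apply a weighted Caro--Wei argument to $B_T[Y]$ (of maximum degree at most $k$) with weights $c(v)$, extracting an independent set $Y' \subseteq Y$ with $\sum_{v \in Y'} c(v) \ge \ell/(r(k+1))$.

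For each $v \in Y'$ I partition the $k + c(v)$ edges consisting of the $k$ $B_T$-edges at $v$ and the $c(v)$ $F$-edges marked at $v$ into $\lfloor (k+c(v))/(k+1) \rfloor$ groups of size $k+1$, each of which is a copy of $K_{1,k+1}$ centred at $v$; the union of these groups over $v \in Y'$ is $\SS$. Edge-disjointness across centres is automatic because every $F$-edge carries a unique mark and every $B_T$-edge at $v \in Y'$ leads to a vertex outside $Y'$ by the independent-set property. Property~\ref{item:Ssize} follows from the inequality $\lfloor (k+c(v))/(k+1) \rfloor \ge c(v)/(2(k+1))$, valid for $c(v) \ge 1$, yielding $|\SS| \ge \ell/(2r(k+1)^2)$, while $|\SS| \le \ell$ because distinct stars use distinct $F$-edges. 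Property~\ref{item:oneVtxBound} is immediate from $\SS \subseteq U_T$, $\Delta(U_T) \le D$, and edge-disjointness: every star containing $v$ can be charged to a distinct $U_T$-edge at $v$.

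The main obstacle is property~\ref{item:twoVtxBound}. Stars at a common centre are edge-disjoint, so share no leaves, and every contribution to $\sum_{u,v} |A(u,v)|$ therefore comes from a pair of stars with distinct centres $w, w' \in Y'$ placed at opposite corners of a $C_4$ in $U_T$. For contributions in which both shared leaves come from $B_T$-edges, the standard double-counting bound $\sum_{w, w' \in Y'} |N_{B_T}(w) \cap N_{B_T}(w')|^2 \le k^3 |Y'| \le k^3 \ell$---which uses only $\Delta(B_T) \le k$---is sufficient. The remaining mixed contributions, in which at least one shared leaf is reached via an $F$-edge, are delicate because a naive bound would pick up an unwanted factor of $\Delta(U_T) \le D$; I would handle these by imposing a uniform cap $c(v) \le k+1$ on the marks per centre (at the cost of only a constant factor in property~\ref{item:Ssize}), so that the used neighbourhood of each centre has size at most $2k+1$. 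A further case analysis, again relying on $\Delta(B_T) \le k$, then gives $\sum_{u,v} |A(u,v)| \le C_1 \ell$ with $C_1 = C_1(r, k)$.
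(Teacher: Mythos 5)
Your construction shares the same basic template as the paper---pick an appropriate colour class, identify a set of centres, and group $k+1$ edges at each centre into a star---but the argument breaks down at property~\ref{item:twoVtxBound}, and the fix you propose does not repair it. First, capping $c(v)$ at $k+1$ does \emph{not} cost only a constant factor in~\ref{item:Ssize}. Consider a $T \in \TTsubB$ in which the $\Theta(\ell)$ marked edges inside $V_i$ are all incident to $O(\ell/D)$ vertices of degree $\Theta(D)$, where $D = \lfloor \beta m/(n\log n)\rfloor \to \infty$. Then $|Y|=O(\ell/D)$, the capped construction yields $|\SS|\le |Y'|\le |Y| = O(\ell/D) = o(\ell)$, and~\ref{item:Ssize} fails. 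Such $T$ are not pathological: they are exactly the monochromatic graphs that make the high-degree analysis delicate, and the lemma must cover them.

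The deeper issue is that your grouping of edges into stars is deterministic, so an adversarial choice of the partitions at each centre can align badly. For instance, if several centres $v_1,\dots,v_t$ share a common set $W$ of $F$-neighbours and the groups of $k+1$ leaves are chosen to coincide across these centres, then each aligned group contributes $(k+1)^2$ to $\sum_{u,v}|A(u,v)|$, for a total of order $|W|\cdot k$ per ordered pair of centres, and the overall sum can be as large as $\Omega(D\ell)$. Nothing in your mark-and-group construction rules this out, and the further ``case analysis relying on $\Delta(B_T)\le k$'' that you gesture at cannot help, because the relevant leaves are the \emph{unmarked} endpoints of $F$-edges, which are not controlled by $\Delta(B_T)$ and can have degree up to $D$. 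The paper avoids this by making the grouping random: it takes a uniformly random ordering of the in-neighbours of each centre, so that the probability a fixed pair of leaves lands in the same group at $w$ is roughly $(k+1)/\dd_w$; it then bounds the \emph{expected} value of $\sum_{u,v}|A(u,v)|$ and fixes a good outcome. Making that expectation computation close requires, in addition, the auxiliary decomposition Claim~\ref{claim:orientation}, which outputs an orientation $\Tz$ in which each centre has in-degree a constant fraction of its $U'$-degree and (via property~\ref{Ezer:NeighboursBound}) the degree of every leaf is dominated by the in-degree of the centres it points into---precisely the degree control that is missing in your construction. Both the randomness and Claim~\ref{claim:orientation} appear to be essential, and I do not see how to close the gap in~\ref{item:twoVtxBound} without analogues of them.
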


We will first derive Proposition~\ref{prop:oneStateLDProb} from Lemma~\ref{lem:goodStars} and and then prove the lemma.

\begin{proof}[{Proof of Proposition~\ref{prop:oneStateLDProb}}]
  Suppose that $\Pi = \{V_1, \dotsc, V_r\}$ and let $T \in \TTsubB$ be a graph with at most $m/2$ edges. Let $i$, $\SS$, $c_1$, and $C_1$ be the colour class, the family of stars, and the two constants from the statement of Lemma~\ref{lem:goodStars}, respectively. Fix an arbitrary colouring $\psi \colon V(H) \to \br{r}$ that leaves only the edges of $\Siz$ monochromatic and such that the vertices of $\Siz$ are coloured~$i$; such a colouring exists because $\Siz$ is a critical star of $H$. For every $j \in \br{r}$, randomly choose an equipartition $\{V_{j,w}\}_{w \in V(H)}$ of $V_j$ into $v_H$ parts. We let $\PhipT$ be the family of all embeddings $\varphi \in \Phi_T$ that satisfy
  \[
    S_\varphi \in \SS \qquad \text{and} \qquad \varphi(w)\in V_{\psi(w),w} \text{ for every } w \in V(H).
  \]
  Let $n' = \min\{|V| : V \in \Pi\} \ge n/(2r)$. Since there are at least $|\SS| \cdot (n'-v_H)^{v_H - (k+2)}$ embeddings $\varphi \in \Phi_T$ such that $S_\varphi \in \SS$ and $\varphi(w) \in V_{\psi(w)}$ for every $w \in V(H)$ and, for each such $\varphi$, the probability that $\varphi \in \Phi_T'$ is at least $v_H^{-v_H}$, there is a positive constant $c$ that depends only on $H$ such that
  \[
    \Ex\big[|\PhipT|\big] \ge c \ell n^{v_H-k-2}.
  \]
   
  We now fix some partitions $\{V_{j,w}\}_{w \in V(H)}$ for which $|\PhipT|$ is at least as large as its expectation and we let
  \[
    \SS' = \big\{ \Sphi : \varphi \in \PhipT \big\}
    \qquad \text{and} \qquad
    \cKp = \big\{ \Kphi : \varphi \in \PhipT \big\}.
  \]
  We claim that $\Kphi \neq \Kphip$ for each pair of distinct $\varphi,\varphi'\in\PhipT$. To see this, note first that, since $\Siz$ is a critical star, every vertex in $V(\Siz)$ must have a neighbour in $\psi(j)^{-1}$, for each $j \in \br{r} \setminus \{i\}$. Since $H$ has no isolated vertices, this means that each vertex of $H$ is incident to an edge of $H \setminus \Siz$. Therefore, since each $\varphi \in \PhipT$ maps every $w \in V(H)$ to its dedicated set $V_{\psi(w),w}$, one can recover $\varphi$ from the graph $\Kphi$. This means, in particular, that
  \begin{equation}
    \label{eq:KTsize}
    |\cKp| = |\PhipT| \ge c \ell n^{v_H-k-2}.
  \end{equation}
  
  Suppose that $m \ge \tilC m_H$ for some $\tilC \ge 2$ and let $G'$ be a uniformly chosen random subgraph of $\Pi$ with $m - t$ edges. The definition of $\cKp$ and~\eqref{eq:FreeT-Pr} imply that
  \[
    |\FreeT| \le \Pr(K \nsubseteq G'\text{ for every }K\in \cKp) \cdot \binom{e(\Pi)}{m-t}.
  \]
  We shall bound this probability from above using the Hypergeometric Janson Inequality. To this end, let $p=\frac{m-e(T)}{e(\Pi)}$ and note that
  \begin{equation}
    \label{eq:p-bounds-Janson}
    \frac{m}{2n^2} \le p \le \frac{5m}{n^2},
  \end{equation}
  where the first inequality holds because $e(T) \le m/2$ and the last inequality follows from part~\ref{item:Partg-lower} of Proposition~\ref{prop:bal-unbal-part-size}, as $\Pi \in \Partg$ and $\gamma \le \frac{1}{20r}$. For any $K, K' \in \cKp$, we write $K \sim K'$ if $K$ and $K'$ share an edge but $K \neq K'$. Let $\mu$ and $\Delta$ be the quantities defined in the statement of the Hypergeometric Janson Inequality (Lemma~\ref{lem:HJI}), that is,
  \[
    \mu = \sum_{K\in\cKp}p^{e_K} \qquad \text{and} \qquad \Delta = \sum_{\substack{K, K' \in \cKp \\ K \sim K'}}p^{e_{K \cup K'}}.
  \]
  Since $e_K = e(H \setminus \Siz) = e_H - k - 1$ for every $K \in \cKp$, we have, by~\eqref{eq:KTsize},
  \begin{equation}
    \label{eq:mu-lower}
    \mu = |\cKp| \cdot p^{e_H - k - 1} \ge c\ell n^{v_H-k-2}p^{e_H-k-1}.
  \end{equation}
  
  We now bound $\Delta$ from above. In order to do this, we shall classify the pairs $(K, K') \in (\cKp)^2$ with $K \sim K'$ according to their intersection. To this end, for each $J \subseteq V(\Siz)$, define $\SSpJ$ to be the set of all pairs of stars from $\SSp$ which agree exactly on (the image of) $J$, that is,
  \[
    \SSpJ = \big\{(\Sphi,\Sphip)\in\SSp \times\SSp : \Sphi\cap\Sphip = \varphi(H[J]) = \varphi'(H[J])\big\}.
  \]
  Further, given $J \subseteq V(\Siz)$ and $I\subseteq V(H) \setminus V(\Siz)$, let $\FIJ = H[I\cup J] \setminus \Siz$, that is, $\FIJ$ is a graph with vertex set $I \cup J$ that comprises the edges of $H \setminus \Siz$ with both endpoints in $I \cup J$. (Let us note here that $\FIJ$ may have some isolated vertices.) Finally, for $J \subseteq V(\Siz)$, $I \subseteq V(H) \setminus V(\Siz)$, and $S, S' \in \SSpJ$, define $\cKIJS$ to be the set of all pairs $K, K' \in \cKp$ which extend the stars $S,S'$, respectively, and agree exactly on (the image of) $I \cup J$. In other words,
  \[
    \cKIJS = \big\{ (\Kphi, \Kphi' )\in (\cKp)^2 : \Kphi\cap\Kphip = \varphi(\FIJ) = \varphi'(\FIJ) , \Sphi= S, \Sphip=S'\big\}.
  \]
  For brevity, set
  \[
    v' = |V(H) \setminus V(\Siz)| = v_H - k - 2
    \qquad \text{and} \qquad
    e' = e(H \setminus \Siz) = e_H - k - 1.
  \]
  These definitions were made in such a way that
  \begin{equation}
    \label{eq:Delta-upp}
    \begin{split}
      \Delta
      & = \sum_{J\subseteq V(\Siz)} \sum_{(S,S')\in\SSpJ} \sum_{\substack{I\subseteq V(H)\setminus V(\Siz) \\ e(\FIJ)>0}} \sum_{(K,K')\in\cKIJS}p^{2e'-e(\FIJ)} \\
      & \le \sum_{J \subseteq V(\Siz)} \sum_{(S,S') \in \SSpJ} \sum_{\substack{I \subseteq V(H) \setminus V(\Siz) \\ e(\FIJ) > 0}}  n^{2v'-|I|}p^{2e'-e(\FIJ)}.
    \end{split}
  \end{equation}
  
  Denote by $\Delta_0$, $\Delta_1$, and $\Delta_2$ the contributions to the sum in the right-hand side of~\eqref{eq:Delta-upp} corresponding to $J = \emptyset$, $|J| = 1$, and $|J| \ge 2$, respectively, so that $\Delta \le \Delta_0 +\Delta_1 +\Delta_2$. Since $F_{I,\emptyset} = H[I] \subseteq H$, we have
  \begin{align*}
    \frac{\Delta_0}{n^{2v'}p^{2e'}}
    & = \sum_{(S,S')\in\SSp(\emptyset)} \sum_{\substack{I\subseteq V(H)\setminus V(\Siz)\\ e(F_{I,\emptyset})>0}}  \frac{1}{n^{|I|} p^{e(F_{I,\emptyset})}} \le |\SSp(\emptyset)| \cdot \sum_{\emptyset \ne F \subseteq H} \frac{1}{n^{v_F}p^{e_F}} \\
    & \le |\SS|^2 \cdot \frac{2^{e_H}}{\min_{\emptyset \neq F \subseteq H} n^{v_F} p^{e_F}} \leBy{L.~\ref{lemma:m2H}}  |\SS|^2\cdot \frac{2^{e_H}}{n^2p} \le \ell^2 \cdot \frac{2^{e_H}}{n^2p},
  \end{align*}
  where the last inequality follows from~\ref{item:Ssize} in Lemma~\ref{lem:goodStars}.  Further, as $v_{\FIJ} = |I| + |J|$,
  \begin{align*}
    \frac{\Delta_1}{n^{2v'}p^{2e'}}
    & = \sum_{\substack{J\subseteq V(\Siz)\\|J|=1}} \sum_{(S,S')\in\SSpJ} \sum_{\substack{I\subseteq V(H)\setminus V(\Siz)\\ e(\FIJ)>0}}  \frac{n}{n^{v_{\FIJ}}p^{e_{\FIJ}}} \\
    & \le \sum_{S \in \SS} \sum_{v \in V(S)} \sum_{\substack{S' \in \SS \\ v \in V(S')}} \sum_{\emptyset \neq F \subseteq H} \frac{n}{n^{v_F} p^{e_F}} \\
    & \le |\SS| \cdot (k+2) \cdot \max_{v} |\{S' \in \SS : v \in V(S')\}| \cdot \frac{2^{e_H} \cdot n}{\min_{\emptyset \neq F \subseteq H} n^{v_F}p^{e_F}} \\
    & \le \ell \cdot (k+2) \cdot D \cdot \frac{2^{e_H}}{np},
  \end{align*}
  where the last inequality follows from \ref{item:Ssize} and \ref{item:oneVtxBound} in Lemma~\ref{lem:goodStars} and from Lemma~\ref{lemma:m2H}. Finally,
  \begin{equation}
    \label{eq:Delta-ge2-upp}
    \begin{split}
      \frac{\Delta_2}{n^{2v'}p^{2e'}}
      & = \sum_{\substack{J\subseteq V(\Siz)\\|J|\ge2}} \sum_{(S,S')\in\SSpJ} \sum_{\substack{I\subseteq V(H)\setminus V(\Siz)\\ e(\FIJ)>0}}  \frac{n^{|J|}}{n^{v_{\FIJ}}p^{e_{\FIJ}}} \\
      & \le \sum_{\substack{u, v \in V_i \\ u \neq v}} \sum_{\substack{S, S' \in \SSp \\ u,v\in V(S) \cap V(S')}} \sum_{\substack{J \subseteq V(\Siz) \\ |J| \ge 2}} \sum_{\substack{I \subseteq V(H) \setminus V(\Siz) \\ e(\FIJ) > 0}} \frac{n^{|J|}}{n^{v_{\FIJ}}p^{e_{\FIJ}}} \\
      & \le C_1 \ell \cdot 2^{v_H} \cdot \max\left\{\frac{n^{|V(F) \cap V(\Siz)|}}{n^{v_F} p^{e_F}} : \emptyset \neq F \subseteq H \setminus \Siz\right\},
    \end{split}
  \end{equation}
  where the last inequality follows from~\ref{item:twoVtxBound} in Lemma~\ref{lem:goodStars} (since the stars in $\SS \supseteq \SSp$ are edge-disjoint, two different $S, S' \in \SSp$ that intersect in more than one vertex have to intersect only in leaf vertices). In order to bound the maximum in the right-hand side of \eqref{eq:Delta-ge2-upp}, given an arbitrary nonempty $F \subseteq H \setminus \Siz$, we let $F' =F \cup \Siz$, so that
  \begin{equation}
    \label{eq:Delta-ge2-max-upp}
    \frac{n^{|V(F) \cap V(\Siz)|}}{n^{v_F} p^{e_F}} = n^{-v_{F'}+k+2}p^{-e_{F'}+k+1}.
  \end{equation}
  
  \begin{claim}
    \label{claim:criticality}
    For every $F'$ satisfying $\Siz \subsetneq F' \subseteq H$, we have
    \[
      n^{-v_{F'}+k+2}p^{-e_{F'}+k+1}\le \frac{2}{\tilC\log n}.
    \]
  \end{claim}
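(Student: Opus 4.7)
The claim is equivalent to the lower bound $n^{v_{F'}-k-2} p^{e_{F'}-k-1} \ge \tilC(\log n)/2$. My plan is to replace $p$ by its lower bound $p \ge m/(2n^2)$ (valid since $e(T) \le m/2$ and $e(\Pi) \le n^2/2$; cf.~\eqref{eq:p-bounds-Janson}), substitute the hypothesis $m \ge \tilC m_H$, and then split according to the two regimes in the definition~\eqref{eq:mH-def} of $m_H$. Write $\alpha = e_{F'}-k-1 \ge 1$ and $\beta = v_{F'}-k-2 \ge 0$; we focus on the main case $\beta \ge 1$ (when $F'$ uses at least one vertex outside $V(S_{i_0})$; any edge cases with $\beta = 0$ are ruled out structurally by $H$ or treated directly). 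Since $S_{i_0} \subsetneq F' \subseteq H$ and $\eta_{i_0}(H) = \eta(H)$, the definition of $\eta_{i_0}$ gives $\alpha/\beta = d_{k+2}(F') \le \eta(H)$. After substitution, the target reduces to verifying
\[
  (m/2)^\alpha \cdot n^{\beta - 2\alpha} \ge \tilC(\log n)/2.
\]

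In the regime $m_2(H) > \eta(H)$, substituting $m \ge \tilC n^{2-1/m_2(H)}$ turns the left-hand side into $(\tilC/2)^\alpha \cdot n^{\beta - \alpha/m_2(H)}$. The $n$-exponent satisfies $\beta - \alpha/m_2(H) \ge \beta(1 - \eta(H)/m_2(H)) \ge 1 - \eta(H)/m_2(H)$, which is a strictly positive constant uniformly in $F'$ (using finiteness of subgraphs of $H$). Hence a positive power of $n$ absorbs the $\log n$ factor for all large $n$.

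The interesting case is $m_2(H) \le \eta(H)$, where $m_H$ carries the extra $(\log n)^{1/(\zeta(H)-k-1)}$ factor. Substitution yields $(\tilC/2)^\alpha \cdot n^{\beta - \alpha/\eta(H)} \cdot (\log n)^{\alpha/(\zeta(H)-k-1)}$. When $d_{k+2}(F') < \eta(H)$, finiteness again produces a uniform strictly positive lower bound on $\beta - \alpha/\eta(H)$, giving a positive power of $n$ and the conclusion is routine. The main obstacle, and the reason the exponent of $\log n$ in~\eqref{eq:mH-def} must be exactly $1/(\zeta(H)-k-1)$, is the borderline subcase $d_{k+2}(F') = \eta(H)$: here the $n$-exponent vanishes and the factor of $\log n$ must be supplied entirely by the polylogarithmic term. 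At this point I would invoke the identity $\zeta_{i_0}(H) = \zeta(H)$: by the definition of $\zeta_{i_0}(H)$ as the \emph{minimum} of $e_F$ over $F$ with $S_{i_0} \subsetneq F \subseteq H$ and $d_{k+2}(F) = \eta_{i_0}(H)$, the hypothesis on $F'$ forces $e_{F'} \ge \zeta(H)$, hence $\alpha \ge \zeta(H)-k-1$. Therefore $(\log n)^{\alpha/(\zeta(H)-k-1)} \ge \log n$, and combining with $(\tilC/2)^\alpha \ge \tilC/2$ (valid since $\tilC \ge 2$ and $\alpha \ge 1$) produces exactly the required factor $\tilC(\log n)/2$. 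This tight matching of the exponents built into $m_H$ with those appearing in the target inequality is precisely what makes the threshold work.
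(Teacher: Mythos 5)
Your proof is correct and follows essentially the same approach as the paper: lower-bound $p$ by $\tilC m_H/(2n^2)$, use $d_{k+2}(F')\le\eta_{i_0}(H)=\eta(H)$ to control the $n$-exponent, and use $\zeta_{i_0}(H)=\zeta(H)$ in the borderline subcase $d_{k+2}(F')=\eta(H)$ to extract the $\log n$ factor. (The paper telescopes the two regimes of~\eqref{eq:mH-def} into a single inequality $m_H/n^2\ge n^{-1/\eta(H)}(\log n)^{1/(\zeta(H)-k-1)}$ --- the displayed $\le$ in the paper's proof is a typo, as the subsequent step raises to a negative power --- whereas you split the cases explicitly; the content is identical.) One small point worth tightening: you set aside the possibility $\beta=v_{F'}-k-2=0$ as ``ruled out structurally,'' which is indeed true but deserves a sentence --- since $S_{i_0}$ is a \emph{minimal} critical star, all its edges are monochromatic under any proper $r$-colouring of $H\setminus S_{i_0}$, so its center and leaves share a colour and $H$ can have no edge between two leaves; hence any $F'$ with $S_{i_0}\subsetneq F'\subseteq H$ must contain a vertex outside $V(S_{i_0})$, giving $\beta\ge 1$ automatically.
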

  \begin{proof}
    Since $e_{F'} > e_{\Siz} = k+1$, we have
    \[
      \begin{split}
        n^{-v_{F'}+k+2}p^{-e_{F'}+k+1}
        & \leByRef{eq:p-bounds-Janson} n^{-v_{F'}+k+2} \cdot \left(\frac{\tilC}{2}\cdot\frac{m_H}{n^2}\right)^{-e_{F'}+k+1} \\
        & \le \frac{2}{\tilC} \cdot n^{-v_{F'}+k+2} \cdot \left(\frac{m_H}{n^2}\right)^{-e_{F'}+k+1} \\
        & = \frac{2}{\tilC} \cdot \left(n^{\frac{1}{d_{k+2}(F')}} \cdot \frac{m_H}{n^2}\right)^{-e_{F'}+k+1}.
      \end{split}
    \]
    Regardless of which case holds true in the definition of $m_H$ given in~\eqref{eq:mH-def}, we have
    \[
      \frac{m_H}{n^2} \le n^{-\frac{1}{\eta(H)}} (\log n)^{\frac{1}{\zeta(H)-k-1}} = n^{-\frac{1}{\eta_{i_0}(H)}}(\log n)^{\frac{1}{\zeta_{i_0}(H)-k-1}}
    \]
    and, consequently,
    \[
      n^{-v_{F'}+k+2}p^{-e_{F'}+k+1} \le \frac{2}{\tilC} \cdot n^{\left(\frac{1}{d_{k+2}(F')} - \frac{1}{\eta_{i_0}(H)}\right)(-e_{F'}+k+1)} \cdot (\log{n})^{-\frac{e_{F'}-k-1}{\zeta_{i_0}(H) -k-1}}.
    \]
    The claimed upper bound follows since $d_{k+2}(F') \le \eta_{i_0}(H)$ and $e_{F'} \ge \zeta_{i_0}(H)$ whenever $d_{k+2}(F') = \eta_{i_0}(H)$.
  \end{proof}
  
  Substituting~\eqref{eq:Delta-ge2-max-upp} into~\eqref{eq:Delta-ge2-upp} and invoking Claim~\ref{claim:criticality} yields
  \[
    \Delta_2 \le \frac{C_1 \ell \cdot 2^{v_H+1}}{\tilC \log n} \cdot n^{2v'}p^{2e'}.
  \]
  Recalling~\eqref{eq:mu-lower} and the definitions of $v'$ and $e'$, we thus obtain
  \[
    \frac{\Delta}{\mu^2} \le \frac{\Delta_0 + \Delta_1 + \Delta_2}{\mu^2} \le \frac{1}{c^2\ell} \cdot \left(\frac{2^{e_H}\ell}{n^2p} + \frac{2^{e_H}(k+2)D}{np} + \frac{C_1 2^{v_H+1}}{\tilC \log n}\right).
  \]
  Since $\ell \le Dn$, or otherwise $\TTsubB$ is empty (see Section~\ref{sec:decomp-monochr-graph}), and
  \[
    \frac{D}{np} \le \frac{\beta m}{n^2p \log n} \leByRef{eq:p-bounds-Janson} \frac{2\beta}{\log n},
  \]
  we conclude that
  \begin{equation}
    \label{eq:Delta-mu-2}
    \frac{\Delta}{\mu^2} \le \frac{C'(\beta + \tilC^{-1})}{\ell\log n} ,
  \end{equation}
  where $C'$ is some constant that depends only on $H$. On the other hand, \eqref{eq:mu-lower} and Claim~\ref{claim:criticality} with $F' = H$ imply that
  \begin{equation}
    \label{eq:mu-lower-final}
    \mu \ge\frac{c\tilC\ell\log n}{2}.
  \end{equation}
  
  Finally, we invoke Lemma~\ref{lem:HJI} with $q = \frac{\mu}{\mu + \Delta} \le 1$ to conclude that
  \[
    \begin{split}
      \frac{|\FreeT|}{\binom{e(\Pi)}{m-t}}
      & \le \Pr(K \nsubseteq G'\text{ for every }K\in \cKp) \le \exp\left(-\frac{\mu^2}{\mu+\Delta} + \frac{\mu^2\Delta}{2(\mu+\Delta)^2}\right) \\
      & \le \exp\left(-\frac{\mu^2}{2(\mu + \Delta)}\right) \le \exp\left(-\min\left\{\frac{\mu}{4}, \frac{\mu^2}{4\Delta}\right\}\right).
    \end{split}
  \]
  Substituting inequalities~\eqref{eq:Delta-mu-2} and~\eqref{eq:mu-lower-final} into this bound, we obtain the assertion of the proposition with $\tilc = \min\{1/(4C'), c/8\}$.
\end{proof}

\begin{proof}[{Proof of Lemma~\ref{lem:goodStars}}]
  Suppose that $\Pi = \partition$ and let $T \in \TTsubB$ for some $B \in \BPk$. Recall from Section~\ref{sec:decomp-monochr-graph} that $U_T$ is a canonically chosen maximal subgraph of $T$ that extends $B$ and satisfies $\Delta(U_T) \le D$.
  \begin{claim}
    \label{claim:orientation}
    There are $U' \subseteq U_T$ and an orientation $\Tz$ of a subgraph of $U'$ that satisfy
    \begin{enumerate}[label=(\roman*)]
    \item\label{Ezer:Gpp}
      We have $B \subseteq U'$ and $e(U' \setminus B) \ge \ell/2$.
    \item\label{Ezer:NeighboursBound}
      For every $(u,v)\in \Tz$, we have $\deg_{U'}(u) \le \max\{\deg_{U'}(v),4(k+1)\}$.
    \item\label{Ezer:DegBound}
      For every $v\in \br{n}$, either $\deg_{\Tz}^-(v)=0$ or $\deg_{\Tz}^-(v)\ge \max\{\deg_{U'}(v)/4, k+1\}$.
    \item\label{Ezer:EdgesBound}
      We have $e(\Tz) \ge \ell/(8k+8)$.
    \end{enumerate} 
  \end{claim}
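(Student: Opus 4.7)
The plan is to set $U' := U_T$, which immediately gives (i) since $B \subseteq U_T$ and $e(U' \setminus B) = \ell \ge \ell/2$. To produce $\Tz$, I would first orient every edge of $U_T$ from its endpoint of smaller degree in $U_T$ toward the endpoint of larger degree, breaking ties consistently by vertex label; call this initial orientation $\Tz_0$. By construction, every $(u,v) \in \Tz_0$ satisfies $\deg_{U'}(u) = \deg_{U_T}(u) \le \deg_{U_T}(v) = \deg_{U'}(v) \le \max\{\deg_{U'}(v), 4(k+1)\}$, so condition (ii) holds for every edge of $\Tz_0$ and, \emph{a fortiori}, for any suborientation of $\Tz_0$.

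Next, I would call a vertex $v$ a \emph{sink} if $\deg^-_{\Tz_0}(v) \ge \max\{\deg_{U_T}(v)/4,\, k+1\}$, and let $\Tz$ be the sub-orientation of $\Tz_0$ consisting of those edges whose head is a sink. Condition (iii) is then automatic: any $v$ with $\deg^-_\Tz(v) > 0$ is a sink, so $\deg^-_\Tz(v) = \deg^-_{\Tz_0}(v) \ge \max\{\deg_{U'}(v)/4,\, k+1\}$. The one nontrivial thing that remains is to verify (iv), namely that $e(\Tz) \ge \ell/(8k+8)$. Writing $W$ for the set of non-sink vertices with positive in-degree in $\Tz_0$, this amounts to showing $\sum_{v \in W} \deg^-_{\Tz_0}(v)$ is at most $e(U_T) - \ell/(8k+8)$.

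To control this sum, I would split $W$ into a \emph{heavy} part (vertices with $\deg_{U_T}(v) \ge 4(k+1)$, each of which contributes at most $\deg_{U_T}(v)/4$ to the sum) and a \emph{light} part (contribution at most $k$ per vertex). The heavy part contributes at most $\sum_v \deg_{U_T}(v)/4 = e(U_T)/2$. For the light part, the key observation is that an in-edge at a light vertex $v$ comes from a neighbour of smaller-or-equal degree, which is therefore also light; together with the structural fact that every edge of $U_T \setminus B$ has at least one endpoint saturated in $B$ (so that $\deg_{U_T} \ge k+1$ there) by maximality of $B$, this confines the contribution of light vertices to edges inside a fairly thin sub-configuration.

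The main obstacle will be this last step. The $\ell/(8k+8)$ target allows an $\Omega(k)$ slack factor, but the naive bound $(k+1)n$ on the light-part contribution is too weak when $\ell = O(kn)$. I anticipate having to refine the construction in that regime either by modifying $U'$ (removing a small set of edges among light vertices from $U'$, at the cost of a tolerable reduction in $e(U' \setminus B)$) or by performing a more careful accounting that tracks the saturated-endpoint structure of $U_T \setminus B$ edge by edge. Once this bookkeeping is complete, (iv) follows and the four properties in the claim are established.
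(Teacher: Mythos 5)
Your construction is exactly the paper's \emph{Case 2} construction (the one used when at least half the edges of $U_T\setminus B$ touch the set $Q=\{v : \deg_{U_T}(v)\ge 4(k+1)\}$), but it genuinely breaks down in the complementary regime, and your sketch of a fix points in the wrong direction.

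Here is a concrete failure. Take $k=1$, let $U_T$ be a single long even cycle $v_1v_2\dotsm v_{2n}v_1$, and let $B$ be the perfect matching $\{v_1v_2, v_3v_4, \dotsc\}\subseteq U_T$. Then every vertex has degree exactly $2$ in $U_T$, and $\ell = e(U_T\setminus B) = n$. In your construction $U'=U_T$, and since all degrees are equal the orientation $\Tz_0$ is determined entirely by the tie-break; breaking ties by label gives every vertex in-degree $1$ except for the two wrap-around endpoints. A sink must satisfy $\deg^-_{\Tz_0}(v)\ge\max\{\deg_{U'}(v)/4,\,k+1\}=2$, so at most one vertex is a sink and $e(\Tz)\le 2$, far below the required $\ell/(8k+8)=n/16$. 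The same pathology occurs whenever $U_T\setminus B$ is concentrated on vertices of bounded degree: the orientation simply cannot concentrate enough in-degree at any vertex to clear the $k+1$ threshold. Your proposed remedy of ``removing a small set of edges among light vertices from $U'$'' would only make this worse, since the light vertices are where all the edges live in this regime.

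The paper resolves this with a dichotomy on $Q$. When fewer than half the edges of $U_T\setminus B$ touch $Q$, it \emph{removes the edges touching $Q$} (not the light ones), so that $\Delta(U')<4(k+1)$, and then abandons the degree-comparison orientation entirely: it takes $W=\{w : \deg_{U'}(w)\ge k+1\}$, chooses a maximal $U'$-independent set $W'\subseteq W$, and orients every $U'$-edge with an endpoint in $W'$ towards that endpoint. Because $\Delta(U')$ is bounded and $W'$ is a dominating set of $W$, a short calculation gives $(4k+4)e(\Tz)\ge \ell/2$; and since every vertex of $W'$ has degree at least $k+1$ and $W'$ is independent, (ii) and (iii) hold automatically. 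Only when at least half of $\ell$ sits on $Q$ does the paper use your orientation-and-filtering argument, and there the lower bound $\deg_{U'}\ge 4(k+1)$ on $Q$ is precisely what guarantees that the filtering discards at most half of the in-degree mass on $Q$. So the missing ingredient in your proof is a second, structurally different construction --- independent-set based rather than orientation based --- together with a split criterion telling you which of the two to use.
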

  \begin{proof}
    Let $Q=\{v : \deg_{U_T}(v)\ge 4(k+1)\}$. We split the proof into two cases, depending on how many edges of $U_T \setminus B$ have an endpoint in $Q$.
    
    \medskip
    \noindent
    \textit{Case 1.} Fewer than half  the edges of $U_T \setminus B$ touch $Q$.
    
    \medskip
    \noindent
    Let $U'$ be the graph obtained from $U_T$ by removing all edges of $U_T \setminus B$ that touch $Q$. As $\ell = e(U_T \setminus B)$, the graph $U'$ satisfies~\ref{Ezer:Gpp}; moreover, as $\Delta(B) \le k$, then $\Delta(U') < 4(k+1)$. Let $W = \{w \in U' : \deg_{U'}(w) \ge k+1\}$, let $W'$ be a largest $U'$-independent subset of $W$, and let
    \[
      \Tz = \big\{(u,v) : \{u, v\} \in U' \text{ and } v \in W'\}.
    \]
    Since $\Delta(U') < 4(k+1)$, property~\ref{Ezer:NeighboursBound} clearly holds. To see that~\ref{Ezer:DegBound} holds, choose an arbitrary $v \in \br{n}$ and note that $\deg_{\Tz}^-(v)=0$ if $v \notin W'$; if $v \in W' \subseteq W$, then
    \[
      \deg_{\Tz}^-(v) =\deg_{U'}(v) \ge k+1 = \max\{\deg_{U'}(v)/4, k+1\}.
    \]
    Finally, we argue that~\ref{Ezer:EdgesBound} holds as well. Since $B$ is a maximal subgraph of $U_T$ with maximum degree at most $k$ and $U' \supseteq B$, every edge of $U' \setminus B$ must have an endpoint with degree larger than $k$. Therefore, by~\ref{Ezer:Gpp},
    \[
      \ell/2 \le e(U' \setminus B) \le \sum_{w \in W} \deg_{U'}(w).
    \]
    As every vertex in $W \setminus W'$ has a $U'$-neighbour in $W'$ (since $W'$ is a maximal $U'$-independent subset of $W$), we further have
    \[
      \sum_{w \in W \setminus W'} \deg_{U'}(w) \le \sum_{v \in W'} \sum_{w \in N_{U'}(v)} \deg_{U'}(w) \le \sum_{v \in W'} \deg_{U'}(v) \cdot \Delta(U')
    \]
    Recalling that $\Delta(U') \le 4k+3$, that $(u,w) \in \Tz$ for every $\{u,w\} \in U'$ such that $w \in W'$, and that $W'$ is an independent set in $U'$, we conclude that
    \[
      \ell/2 \le (4k+4) \sum_{w \in W'} \deg_{U'}(w) = (4k+4) e(\Tz).
    \]
    
    \medskip
    \noindent
    \textit{Case 2.} At least half the edges of $U_T \setminus B$ touch $Q$.
    
    \medskip
    \noindent     
    In this case we just take $U'=U_T$, so that \ref{Ezer:Gpp} clearly holds. We first let $\Tpd$ be an arbitrary orientation of $U'$ such that $\deg_{U'}(u) \le \deg_{U'}(v)$ for all $(u, v) \in \Tpd$. We then obtain $\Tz$ from $\Tpd$ by removing all edges directed to a vertex $v$ that satisfies $\deg^-_{\Tpd}(v)<\max\{k+1, \deg_{U'}(v)/4\}$. The construction of $\Tz$ guarantees that both~\ref{Ezer:NeighboursBound} and~\ref{Ezer:DegBound} are satisfied. Since every edge of $U'$ between $Q$ and $Q^c$ is directed (in $\Tpd$) towards its $Q$-endpoint, we have
    \[
      \sum_{v\in Q} \deg_{\Tpd}^-(v) \ge \frac{1}{2}\sum_{v\in Q}\deg_{U'}(v).
    \]
    Consequently,
    \begin{align*}
      e(\Tz)
      &\ge \sum_{v\in Q}\deg_{\Tz}^-(v) = \sum_{v\in Q} \deg_{\Tpd}^-(v) - \sum_{\substack{v\in Q \\ \deg_{\Tz}^-(v)=0}} \deg_{\Tpd}^-(v)  \\
      &\ge \frac{1}{2}\sum_{v\in Q}\deg_{U'}(v)  - \sum_{v\in Q} \max\{k+1, \deg_{U'}(v)/4\} = \frac{1}{4}\sum_{v\in Q}\deg_{U'}(v),
    \end{align*}
    since $\deg_{U'}(v) \ge 4(k+1)$ for every $v \in Q$. Finally, as at least half the edges of $U' \setminus B$ touch $Q$, we have $\sum_{v \in Q} \deg_{U'}(v) \ge \ell/2$ and we may conclude that $e(\Tz) \ge \ell/8$.
  \end{proof}
  
  Let $U' \subseteq U_T$ and an orientation $\Tz$ of a subgraph of $U'$ be as in Claim~\ref{claim:orientation}. For each vertex $v$, denote $\dd_v = \deg^-_{\Tz}(v)$ and let $u_1^v, \dotsc, u_{\dd_v}^v$ be a uniformly chosen random ordering of the set of the in-neighbours of $v$ in $\Tz$. Given $v \in \br{n}$ and $A \subseteq \br{n} \setminus \{v\}$, denote by $S_v(A)$ the $|A|$-star centred at $v$ whose leaves are all elements of $A$. Define
  \[
    \SSp =\big\{ S_v(\{u_i^v,\dots,u_{i+k}^v\}) : v\in \br{n}, \, i\in \br{\dd_v-k}, \text{ and } (k+1)|(i-1)\big\}.
  \]
  In other words, $\SSp$ is a (random) collection of $K_{1,k+1}$s in $U'$ created by taking, for every vertex $v$ with positive in-degree in $\Tz$, the $\lfloor \dd_v/(k+1) \rfloor$ stars centred at $v$ whose leaves are $v$'s first $k+1$ in-neighbours (in the random ordering defined above), $v$'s next $k+1$ in-neighbours, etc. By construction, the stars in $\SSp$ are edge-disjoint and $|\SSp| \le e(U' \setminus B) \le \ell$, as each star must contain an edge of $U \setminus B$ (since $\Delta(B) \le k$). On the other hand, since $\dd_v \ge k+1$ for every $v$ such that $\dd_v > 0$,
  \[
    |\SSp| = \sum_{v} \left\lfloor \frac{\dd_v}{k+1} \right\rfloor \ge \sum_{v} \frac{\dd_v}{2(k+1)} = \frac{e(\Tz)}{2(k+1)} \ge \frac{\ell}{16(k+1)^2},
  \]
  by~\ref{Ezer:EdgesBound} in Claim~\ref{claim:orientation}. Finally, since, for every $S \in \SSp$, there is an index $i_S \in \br{r}$ such that $S \subseteq U[V_{i_S}]$, by the pigeonhole principle, there must be an $i \in \br{r}$ such that the set
  \[
    \SS = \{S \in \SSp : S \subseteq T[V_i]\}
  \]
  has size at least $|\SSp|/r$. This family satisfies~\ref{item:Ssize} with $c_1 = \big(16(k+1)^2r\big)^{-1}$. To see that~\ref{item:oneVtxBound} holds as well, recall that the stars in $\SS$ are edge-disjoint, contained in $U'$, and $\Delta(U') \le D$.
  
  In the remainder of the proof we show that, with nonzero probability, our collection~$\SS$ satisfies also~\ref{item:twoVtxBound}. To this end, recall that
  \[
    A(u,v) = \big\{(S,S')\in(\SSp)^2 : \text{$u$  and $v$ are leaves of both $S$ and $S'$}\big\}.
  \]
  Since $\SS \subseteq \SS'$, it will suffice to show that, with nonzero probability,
  \begin{equation}
    \label{eq:Auv-sum-bound}
    \sum_{\substack{u,v\in \br{n}\\ u\ne v}} |A(u,v)| \le C_1\ell.
  \end{equation}
  for some $C_1$ that depends only on $r$ and $k$. For each pair of distinct $u,v \in \br{n}$, define
  \begin{align*}
    \Suv &= \{S\in\SSp : \text{$u$ and $v$ are leaves of }S\}, \\
    \Duv&=\{w\in \br{n} : u, v \in N_{\Tz}^-(w) \text{ and }  \dd_w\ge k+1\}.
  \end{align*}
  Since the stars in $\SSp$ are edge-disjoint, for every $w \in \Duv$, there is at most one $S \in \Suv$ whose $w$ is the centre. Moreover, if $\Suv$ contains such a star, then both $u$ and $v$ must fall into one of the $\lfloor \dd_w / (k+1) \rfloor$ intervals of length $k+1$ in the random ordering $u_1^w, \dotsc, u_{\dd_w}^w$ of~$N_{\Tz}^-(w)$. In particular, for every $w \in \Duv$,
  \begin{equation}\label{eq:starPrBound}
    \Pr\big(\text{$\Suv$ contains a star centred at $w$}\big) \le \frac{k}{\dd_w-1} \le \frac{k+1}{\dd_w},
  \end{equation}
  as $\dd_w \ge k+1$. Moreover, if $w \in \Duv$, then $(u,w) \in \Tz$ and hence, by~\ref{Ezer:NeighboursBound} and~\ref{Ezer:DegBound} in Claim~\ref{claim:orientation},
  \begin{equation}\label{eq:DuvBound}
    \dd_w \ge \frac{\max\{\deg_{U'}(w), 4(k+1)\}}{4} \ge \frac{\deg_{U'}(u)}{4} \ge \frac{|\Duv|}{4}.
  \end{equation}
  We conclude that
  \begin{align*}
    \sum_{\substack{u,v\in \br{n} \\u\ne v}} \Ex\big[|\Suv|\big]
    & = \Ex\big[|\SSp|\big] + \sum_{u,v} \sum_{\substack{w_1,w_2 \in \Duv \\ w_1 \neq w_2}} \prod_{i=1}^2 \Pr(\text{$S_{u,v}$ contains a star centred at $w_i$}) \\
    & \leByRef{eq:starPrBound} \ell + \sum_{u, v} \left(\sum_{\substack{w \in \Duv}} \frac{k+1}{\dd_w}\right)^2
      \leByRef{eq:DuvBound} \ell + \sum_{u,v} \sum_{w \in \Duv} \frac{4(k+1)^2}{\dd_w} \\
    & \le \ell + \sum_{w : \dd_w \ge k+1}\frac{4(k+1)^2}{\dd_w} \cdot \binom{\dd_w}{2} \le \ell + 2(k+1)^2\sum_{w : \dd_w \ge k+1}\dd_w.
  \end{align*}
  Finally, since $\dd_w \ge k+1$ implies that
  \[
    \dd_w \le \deg_{U'}(w) \le \deg_{U' \setminus B}(w) + k \le (k+1) \deg_{U' \setminus B}(w),
  \]
  we have
  \[
    \begin{split}
      \sum_{\substack{u,v \in \br{n} \\ u \neq v}} \Ex\big[|A(u,v)|\big] & = \sum_{u,v} \Ex\big[|\Suv|\big] \le \ell + 2(k+1)^3 \sum_{w} \deg_{U' \setminus B}(w) \\
      & = \ell + 2(k+1)^3 \cdot 2e(U' \setminus B) \le \big(4(k+1)^3+1\big)\ell.
    \end{split}
  \]
  In particular, taking $C_1 = 4(k+1)^3+1$, inequality \eqref{eq:Auv-sum-bound} must hold with nonzero probability.
\end{proof}

\subsection{The high-degree case -- introduction}
\label{sec:high}
 
Recall from Section~\ref{sec:decomp-monochr-graph} that, for $T\in\TT_\Pi$, we defined subgraphs $B_T$ and $U_T$ satisfying $B_T \subseteq U_T \subseteq T$ and we denoted by $H_T$ the set of all vertices of $T$ whose degree is larger than $\rho m/n$. Then, $\TTH$ was the family of all $T\in\TT_\Pi$ that satisfy (cf.~\eqref{eq:LowDegCon})
\begin{equation}\label{eq:HighDegCon}
  e(U_T \setminus B_T) \log n < \frac{m |H_T|}{\xi n},
\end{equation}
Our argument in the high-degree case will analyse the distribution of edges incident to a subset of the set $H_T$ of high-degree vertices that has convenient properties specified by our next lemma.

\begin{lemma}
  \label{lemma:YT}
  Suppose that $\Pi = \partition$. For every $T \in \TT_\Pi$, there exist $i \in \br{r}$ and $Y \subseteq V_i$ with $|Y| \ge |H_T| / (2r)$ such that, for every $v \in Y$,
  \[
    \deg_{T \setminus U_T}(v, V_i \setminus Y) \ge \frac{\rho m}{3n}.
  \]
\end{lemma}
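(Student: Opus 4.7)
The strategy is to pick a colour class $V_i$ that captures many high-degree vertices via pigeonhole, and then build $Y$ as a carefully pruned subset of $H_T \cap V_i$. The preliminary observation is that, since $T \subseteq \Pic$, every edge of $T$ stays inside a single colour class, so for every $v \in V_i$ one has $\deg_T(v) = \deg_T(v, V_i)$. Combined with $\deg_T(v) > \rho m/n$ for $v \in H_T$ and the bound $\deg_{U_T}(v) \le D = \floor{\beta m/(n\log n)} \le \rho m/(2n)$ (valid for $n$ large, since $\beta/\log n \to 0$), this yields $\deg_{T\setminus U_T}(v, V_i) \ge \rho m/(2n)$ for every $v \in H_T \cap V_i$. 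I then choose $i \in \br{r}$ maximising $|H_i| := |H_T \cap V_i|$, so that $|H_i| \ge |H_T|/r$ by pigeonhole.

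The construction of $Y$ splits according to whether $|H_i|$ is small or large compared to $\rho m/(6n)$. If $|H_i| \le \rho m/(6n)$, I simply set $Y = H_i$: each $v \in Y$ then has at most $|H_i| - 1 \le \rho m/(6n)$ in-$Y$ neighbours, hence $\deg_{T\setminus U_T}(v, V_i \setminus Y) \ge \rho m/(2n) - \rho m/(6n) = \rho m/(3n)$, and $|Y| = |H_i| \ge |H_T|/r \ge |H_T|/(2r)$ as required. If instead $|H_i| > \rho m/(6n)$, I perform a greedy removal: starting from $Y = H_i$, iteratively remove any $v \in Y$ with $\deg_{T\setminus U_T}(v, V_i \setminus Y) < \rho m/(3n)$. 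The output $Y$ automatically satisfies the desired degree inequality, so all that remains is a lower bound on $|Y|$.

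To bound the number $s$ of removals in the second regime, observe that at the moment $v$ is removed one has $\deg_{T\setminus U_T}(v, Y) > \rho m/(2n) - \rho m/(3n) = \rho m/(6n)$, and all of these edges lie inside $(T \setminus U_T)[H_i]$. Charging each such edge to the first of its endpoints to leave $Y$ (so each edge is charged at most once) yields $s \cdot \rho m/(6n) \le e((T \setminus U_T)[H_i])$. The main technical task is then to combine the two bounds $e((T \setminus U_T)[H_i]) \le e(T) \le \delta m$ and $e((T \setminus U_T)[H_i]) \le \binom{|H_i|}{2}$, and exploit that $\delta$ is chosen much smaller than $\rho$ in~\eqref{eq:delta}, in order to deduce $|H_i| - s \ge |H_T|/(2r)$. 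This last step is the main obstacle: neither bound on $e((T \setminus U_T)[H_i])$ is tight enough on its own in every sub-regime, so one has to interpolate between them depending on whether $|H_i|$ is close to $\rho m/(6n)$ or considerably larger, invoking the smallness of the constants $\delta/\rho$ dictated by~\eqref{eq:delta} to close the arithmetic.
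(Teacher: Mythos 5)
The preliminary observation, the pigeonhole choice of $i$, and Case~1 ($|H_i| \le \rho m/(6n)$, take $Y = H_i$) are all fine. The gap is in Case~2, and it is not merely a loose end: the charging inequality you derive,
\[
  s \cdot \frac{\rho m}{6n} \;\le\; e\bigl((T\setminus U_T)[H_i]\bigr),
\]
is genuinely too weak to conclude $|H_i| - s \ge |H_T|/(2r)$, and the ``interpolation'' step cannot be made to close the arithmetic. To see this, note that $e(T)\le\delta m$ together with $\deg_T(v)>\rho m/n$ for $v\in H_T$ already forces $|H_T|\le 2\delta n/\rho$. Plugging $e((T\setminus U_T)[H_i])\le e(T)\le\delta m$ into the charging bound yields $s \le 6\delta n/\rho \ge 3|H_T|$, which is \emph{weaker} than the trivial bound $s\le |H_i|\le |H_T|$; this branch never helps. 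Plugging $e((T\setminus U_T)[H_i])\le\binom{|H_i|}{2}$ yields $s\le 3n|H_i|^2/(\rho m)$, which is at most $|H_i|/2$ only when $|H_i|\le\rho m/(6n)$ — i.e.\ exactly Case~1. In the range $\rho m/(3n) < |H_i| \le 2\delta n/\rho$ (nonempty throughout the sparse regime since $\delta$, $\rho$ are fixed constants and $m\le n^2/2$) both estimates on $s$ exceed $|H_i|$, so there is no ``sub-regime'' left in which either one closes the gap, and hence nothing to interpolate. Although in concrete examples (e.g.\ disjoint cliques in $(T\setminus U_T)[H_i]$) the greedy process does in fact self-limit and terminate with $|Y|=\Theta(|H_i|)$, your charging potential does not see this self-limiting effect, and no choice of removal order rescues the argument, since the charge bound is order-independent.

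The paper takes a much shorter, structurally different route that sidesteps the issue entirely: it considers a bipartition $V_i = V_i'\cup V_i''$ of the \emph{whole colour class} maximising the number of crossing monochromatic edges incident to $H_T\cap V_i$, and sets $Y$ to be the larger of $H_T\cap V_i'$ and $H_T\cap V_i''$. Then $|Y|\ge|H_T\cap V_i|/2\ge|H_T|/(2r)$ automatically, and the max-cut property gives every $v\in Y$ at least half of its $(T\setminus U_T)$-degree across the cut, which lands inside $V_i\setminus Y$. That directly yields $\deg_{T\setminus U_T}(v, V_i\setminus Y)\ge(\rho m/n - D)/2\ge\rho m/(3n)$ with no case split and no charging. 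If you want to keep a greedy flavour, the cleanest fix is to replace your ``remove low-degree vertices one by one from $H_i$'' by this one-shot max-cut of $V_i$.
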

\begin{proof}
  By the pigeonhole principle, there is an $i \in \br{r}$ such that $|H_T\cap V_i|\ge |H_T|/r$. Fix any such $i$ and let $V_i = V_i' \cup V_i''$ be an arbitrary partition that maximises the number of edges of $T \setminus B_T$ incident to $H_T \cap V_i$ that cross the partition. Then, for every $v \in V_i'$, we have $\deg_T(v,V_i'') \ge \deg_T(v,V_i')$ and vice-versa. We let $Y$ be the larger of the two sets $H_T \cap V_i'$ and $H_T \cap V_i''$, so that $|Y| \ge |H_T \cap V_i| /2 \ge |H_T|/(2r)$. Without loss of generality, $Y = H_T \cap V_i'$. Writing $U = U_T$, we have, for every $v \in Y \subseteq H_T$,
  \[
    \begin{split}
      \deg_{T \setminus U}(v, V_i \setminus Y) & \ge \deg_{T \setminus U}(v, V_i'') \ge \frac{\deg_{T \setminus U}(v, V_i)}{2} = \frac{\deg_Tv - \deg_Uv}{2} \\
      & \ge \frac{\rho m}{2n} - \frac{D}{2} \ge \frac{\rho m}{2n} - \frac{\beta m}{2n \log n} \ge \frac{\rho m}{3n},
    \end{split}
  \]
  as claimed.
\end{proof}

Fix some $\Pi = \partition$ and $T\in\TTH$. Let $i_T \in \br{r}$ and $Y_T \subseteq V_{i_T}$ be the index and the set from the statement of Lemma~\ref{lemma:YT}. Let
\[
  \Dh = \left\lceil \frac{\rho m}{3n} \right\rceil ,
\]
and define $\ZT$ to be the family of all graphs that are obtained from $T$ by adding to it edges connecting each $v \in Y$ to some $\Dh$ vertices in each $V_i$ with $i \neq i_T$. Note that, for every $Z\in\ZT$,
\[
  e(Z) = e(T) + |Y_T| \cdot (r-1) \cdot \Dh.
\]
Recall from~\eqref{eq:Pi-unfriendly} that, for every $G \in \FreeT$ and every $v \in H_T$, we have $\deg_{G}(v,V_i) \ge \rho m/n \ge \Dh$ for every $i\in\br{r}$. This means, in particular, that for each $G \in \FreeT$, there is some $Z \in \ZT$ such that $Z \subseteq G$. In other words, defining, for each $Z \in \ZT$,
\[
  \FreeZ = \{G\in\FreeT : Z\subseteq G\},
\]
we have
\begin{equation}
  \label{eq:FreeT-FreeZ}
  \FreeT = \bigcup_{Z \in \ZT} \FreeZ.
\end{equation}

We now turn to bounding $|\FreeZ|$ from above.  To this end, fix some $T \in \TTH$ and $Z\in\ZT$. For every $v \in Y_T$ and every $i \in \br{r}$, let $N_i(v)$ be an arbitrary subset of $N_{Z \setminus U_T}(v) \cap (V_i \setminus Y)$ with $\Dh$ elements (and note that $N_i(v) = N_Z(v) \cap V_i$ when $i \neq i_T$).

Let $\vcrit$ be the centre of any critical star of $H$ and let $\Hm$ be the subgraph of $H$ obtained by removing $\vcrit$ and all the vertices whose only neighbour in $H$ is $\vcrit$. (As $H$ has no isolated vertices, neither does $\Hm$.) Let $W_1 = N_H(\vcrit) \cap V(\Hm)$ and $W_2 = V(\Hm) \setminus W_1$; denote $v_1 = |W_1|$ and $v_2 = |W_2|$. Since $\Hm$ is obtained from $H$ by removing the critical vertex $\vcrit$ (and possibly some additional vertices), it is $r$-colourable; let us fix an arbitrary proper colouring $\psi \colon V(\Hm) \to \br{r}$.

Define a $v_1$-partite $v_1$-uniform hypergraph $\cH_Z$ as follows:
\begin{align*}
  V(\cH_Z) & = \bigsqcup_{w \in W_1}V_{\psi(w)}, \\
  E(\cH_Z) & = \bigcup_{v \in Y_T} \left\{(v_w)_{w \in W_1} : v_w \in N_{\psi(w)}(v) \text{ for all $w \in W_1$, all distinct}\right\}.
\end{align*}
For every injection $\varphi\colon V(\Hm) \to \br{n}$, let $\Kphi$ be the labelled graph that is the image of $\Hm$ via the embedding $\varphi$. Define
\[
\Phi_Z = \left\{\varphi : \Kphi \subseteq \Pi - Y_T \text{ and } \big(\varphi(w)\big)_{w \in W_1} \in \cH_Z \right\};
\]
in other words, $\Phi_Z$ comprises all embeddings of $\Hm$ into $\Pi$ that avoid the set $Y_T$ and such that $W_1$ is mapped into $N_1(v) \cup \dotsb \cup N_r(v)$ for some $v \in Y_T$, accordingly with the colouring $\psi$.

Choose an arbitrary $G \in \FreeZ$. We claim that $G \cap \Pi$ cannot contain any of the~$K_\varphi$ with $\varphi \in \Phi_Z$. Suppose to the contrary that $K_\varphi \subseteq G \cap \Pi$ for some $\varphi \in \Phi_Z$. By the definitions of $\cH_Z$ and $\Phi_Z$, there is a vertex $v \in Y_T$ such that $\varphi(w) \in N_{\psi(w)}(v)$ for all $w \in W_1$. Since $N_i(v) \subseteq N_Z(v) \subseteq N_G(v)$ for all $i \in \br{r}$, extending $\varphi$ to $V(H)$ by first letting $\varphi(\vcrit) = v$ and then choosing $\varphi(w) \in N_Z(v)$ arbitrarily\footnote{One can keep $\varphi$ injective since $v_H \ll \rho m / n \le \deg_Z(v)$.} for all $w \in N_H(\vcrit) \setminus V(\Hm)$ would give an embedding of $H$ into $G$. In particular, letting $G'$ be a uniformly chosen random subgraph of $\Pi \setminus Z$ with $m - e(Z)$ edges, we have
\begin{equation}
  \label{eq:FreeZ-Pr}
  |\FreeZ| \le \Pr\big(K_{\varphi} \nsubseteq G' \text{ for each } \varphi \in \Phi_Z\big) \cdot \binom{e(\Pi)}{m-e(Z)}.  
\end{equation}

The probability in~\eqref{eq:FreeZ-Pr} can vary greatly with the distribution of the edges of the associated hypergraph $\cH_Z$. For a vast majority of $Z \in \ZT$, an upper bound on this probability that we will obtain using the Hypergeometric Janson Inequality will be sufficient to survive a naive union bound argument; we shall refer to this as the \emph{regular case}. There will be, however, a family of exceptional graphs $Z \in \ZT$ for which the distribution of the edges of the associated hypergraph $\cH_Z$ precludes obtaining a strong upper bound on the probability in~\eqref{eq:FreeZ-Pr}. We shall prove (using Lemma~\ref{lemma:d-sets}) that the number of such exceptional graphs $Z$ is extremely small; we shall refer to this as the \emph{irregular case}.

To make the above discussion precise, given a hypergraph $\cH$ on $\bigsqcup_{w \in W_1}V_{\psi(w)}$, a set $I\subseteq W_1$, and an $L \in\prod_{w\in I}V_{\psi(w)}$, the degree $\deg_{\cH}(L)$ of $L$ in $\cH$ is defined by
\[
  \deg_{\cH}(L) = |\{K\in\cH\colon L\subseteq K\}|,
\]
where we write $L \subseteq K$ to mean that $K$ agrees with $L$ on the coordinates indexed by $I$, and the maximal $I$-degree of $\cH$, denoted by $\Delta_I(\cH)$, is defined by
\[
  \Delta_I(\cH) = \max\big\{\deg_{\cH}(L) \colon L\in\prod_{w \in I}V_{\psi(w)}\big\};
\]
in particular $\Delta_\emptyset(\cH) = e(\cH)$.

In order to describe the split between the regular and the irregular cases, we need to introduce several additional parameters. First, let $\Gamma$ be a constant satisfying
\begin{equation}
  \label{eq:Gamma}
  \Gamma \ge \frac{21r}{\xi} ,
\end{equation}
and let $\alpha$ be a positive constant that satisfies
\begin{equation}
  \label{eq:alphaGamma}
  \big(3er\alpha^{1/v_H}v_H\big)^{\gamma} \le \exp(-12\Gamma).
\end{equation}
Moreover, let
\begin{equation}
  \label{eq:c2}
  c_2 = \frac{1}{2} \cdot \left(\frac{\rho}{2v_H}\right)^{v_H} ,
\end{equation}
and let $\sigma$ and $C_2$ be positive constants satisfying
\begin{equation}
  \label{eq:sigma-C_2}
  \max\left\{ \sigma \cdot (2r)^{v_1}, \frac{(4r)^{v_1}}{C_2} \right\} \le \min\big\{\tau_{\ref{lemma:d-sets}}(z, \alpha, \lambda \leftarrow 2^{-v_1}) : z \in \br{v_H} \big\}
\end{equation}

Let $\Zfr$ be the family of all $Z\in\ZT$ such that
\begin{equation}
  \label{eq:reg1}
  e(\cH_Z) \ge \sigma n^{v_1}.
\end{equation}
Let $\Zsr$ be the family of all $Z\in\ZT\setminus\Zfr$ such that $\cH_Z$ contains a subhypergraph $\cH\subseteq\cH_Z$ which satisfies
\begin{equation}\label{eq:reg2hedges}
  e(\cH)\ge c_2 \cdot |Y_T| \cdot \left(\frac{m}{n}\right)^{v_1}
\end{equation}
and, for every nonempty $I \subseteq W_1$,
\begin{equation}\label{eq:reg2delta}
  \Delta_I(\cH) \le \max\left\{\left(\frac{m}{n}\right)^{v_1 - |I|}, C_2\cdot\frac{e(\cH)}{n^{|I|}}\right\}.
\end{equation}
Finally, let $\Zr = \Zfr \cup \Zsr$ and $\Zi=\ZT\setminus\Zr$. Since $\Zr$ and $\Zi$ form a partition of $\ZT$ for every $T \in \TTH$, it follows from~\eqref{eq:FreeT-FreeZ} that
\begin{equation}
  \label{eq:reg-irreg}
  \sum_{\substack{T\in\TTH \\ B_T = B}}|\FreeT| \le \sum_{\substack{T \in \TTH \\ B_T = B}} \sum_{Z \in \Zr} |\FreeZ| + \sum_{\substack{T \in \TTH \\ B_T = B}} \sum_{Z \in \Zi} |\FreeZ|.
\end{equation}
The regular and the irregular cases are estimates of the first and the second sum in the right-hand side of~\eqref{eq:reg-irreg}, respectively.

\subsection{The regular case -- summary}
\label{sec:regular-case-summary}

In the regular case, we will rely on the following upper bound on the cardinality of $\FreeZ$, which is established in Section~\ref{sec:regular-case} with the use of the Hypergeometric Janson Inequality (Lemma~\ref{lem:HJI}).

\begin{lemma}
  \label{lemma:regCase}
  There exists a positive constant $\tilc$ that depends only on $H$ such that the following holds for every $T \in \TTH$ and each $Z\in\Zr$. If $n$ is sufficiently large and $m\ge \tilC\twoDensTS$ for some $\tilC \ge 2$, then
  \[
    |\FreeZ| \le \exp\left(- \tilc \cdot \min\left\{\frac{c_2 \cdot \tilC \cdot |Y_T|}{n}, \frac{1}{C_2}, \sigma\right\} \cdot m\right) \cdot \binom{e(\Pi)}{m - e(Z)}.
  \]
\end{lemma}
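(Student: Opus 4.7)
\emph{Proof sketch.}
The plan is to estimate the probability on the right-hand side of~\eqref{eq:FreeZ-Pr} via the Hypergeometric Janson Inequality (Lemma~\ref{lem:HJI}), applied to the family $\{K_\varphi : \varphi \in \Phi_Z\}$ viewed as subsets of $E(\Pi \setminus Z)$. We set $p := (m-e(Z))/e(\Pi \setminus Z)$ and first check that $p \asymp m/n^2$: the bounds $e(T) \le \delta m$, $|Y_T| \le |H_T| \le 2\delta n/\rho$, and $\Dh \le \rho m/(3n)+1$ combined with our choice of $\delta$ in~\eqref{eq:delta} give $e(Z) \le m/2$, while part~\ref{item:Partg-lower} of Proposition~\ref{prop:bal-unbal-part-size} supplies $e(\Pi \setminus Z) \asymp n^2$.

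To lower bound $\mu = |\Phi_Z|\, p^{\emi}$, we observe that every $\varphi \in \Phi_Z$ is specified by its restriction $\varphi|_{W_1} \in E(\cH_Z)$ together with a $\psi$-respecting injection $\varphi|_{W_2}\colon W_2 \to \br{n} \setminus (Y_T \cup \varphi(W_1))$; since every part of $\Pi$ retains at least $n/(3r)$ available vertices, there are $c' n^{v_2}$ valid extensions for some $c' = c'(H,r)$, so $|\Phi_Z| \ge c' n^{v_2} e(\cH_Z)$. In Case~A ($Z \in \Zfr$), substituting $e(\cH_Z) \ge \sigma n^{v_1}$ and applying Lemma~\ref{lemma:m2H} to $H^- \subseteq H$ yields $\mu \gtrsim \sigma \cdot m$. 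In Case~B ($Z \in \Zsr$), we instead restrict $\Phi_Z$ to $\Phi_Z^\cH := \{\varphi : \varphi|_{W_1} \in E(\cH)\}$, substitute $e(\cH) \ge c_2 |Y_T|(m/n)^{v_1}$, and invoke Lemma~\ref{lemma:m2H} once more to conclude $\mu_\cH \gtrsim c_2 \tilC |Y_T|\, m /n$, using the hypothesis $m \ge \tilC \twoDensTS$.

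The correlation term $\Delta$ is bounded by decomposing pairs of embeddings according to the isomorphism type of the subgraph $F \subseteq H^-$ with $e_F \ge 1$ on which they agree. In Case~A, the trivial pair count $N_F \le |\Phi_Z|^2 n^{-v_F}$ together with Lemma~\ref{lemma:m2H} applied to each supergraph of $F$ in $H$ delivers $\Delta/\mu^2 \lesssim 1/m$, hence $\mu^2/\Delta \gtrsim m \ge \sigma m$. In Case~B we split further by $I := V(F) \cap W_1$: the case $I = \emptyset$ is analogous to Case~A, while for $I \neq \emptyset$ the number of hyperedge-pairs of~$\cH$ agreeing on the $I$-coordinates is bounded via~\eqref{eq:reg2delta}, with the two alternatives of the $\max$ therein producing, respectively, the $c_2 \tilC |Y_T|/n$ and the $1/C_2$ contributions in the statement. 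Lemma~\ref{lem:HJI} applied with $q = \min\{1, \mu/\Delta\}$ then gives $\Pr(\cB) \le 2 \exp\bigl(-\min\{\mu/4,\, \mu^2/(4\Delta)\}\bigr)$, which after substitution into~\eqref{eq:FreeZ-Pr} yields the claimed bound, with the various structural constants absorbed into~$\tilc$. The most delicate step will be the bookkeeping in Case~B, where the pseudo-random degree condition~\eqref{eq:reg2delta} on~$\cH$ must be combined with the two-density estimates from Lemma~\ref{lemma:m2H} across the various subgraphs of $H$ arising from different intersection patterns of two $H^-$-copies, so that the resulting ratio $\mu^2/\Delta$ cleanly matches the minimum appearing in the conclusion.
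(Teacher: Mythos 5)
Your proposal follows essentially the same approach as the paper's proof: apply the Hypergeometric Janson Inequality (Lemma~\ref{lem:HJI}) to the family $\{K_\varphi\}$, lower-bound $\mu$ via $|\Phi_Z| \gtrsim e(\cH) n^{v_2}$ together with Lemma~\ref{lemma:m2H}, treat $\Zfr$ and $\Zsr$ separately (using the subhypergraph $\cH$ satisfying~\eqref{eq:reg2hedges} and~\eqref{eq:reg2delta} in the latter case), and bound $\Delta$ by splitting pairs of embeddings according to the intersection pattern $(I,J)$, with the $I\neq\emptyset$ terms controlled by~\eqref{eq:reg2delta}. The one refinement the paper makes which you elide is a random equipartition of each $V_j\setminus Y_T$ into $v_H$ dedicated cells, used to ensure the map $\varphi\mapsto K_\varphi$ is injective on the restricted family $\Phi'_Z$ (so $|\cK'|$ is genuinely a lower bound for $\mu/p^{\emi}$ without automorphism over- or under-counting); this is a bounded-factor bookkeeping device rather than a substantive difference, and your sketch is otherwise sound.
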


This upper bound on $|\FreeZ|$ provided by Lemma~\ref{lemma:regCase} will be combined with the following estimate on the size of the sum over all $Z \in \ZT$.

\begin{lemma}
  \label{lem:number-ZZT}
  For every $T \in \TTH$,
  \[
    |\ZT| \cdot \binom{e(\Pi)}{m-e(T)-|Y_T|\cdot (r-1) \cdot \Dh} \le \exp\left(\frac{|Y_T| \cdot  m}{\xi n}\right) \cdot \binom{e(\Pi)}{m-e(T)}.
  \]
\end{lemma}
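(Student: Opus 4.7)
The plan is a straightforward enumeration in the same style as the proof of Lemma~\ref{lem:enumeratingT}.  First, observe that each $Z \in \ZT$ is specified by choosing, for every $v \in Y_T$ and every colour class $V_i$ with $i \neq i_T$, a $\Dh$-element subset of $V_i$ that will serve as the $\Pi$-neighbourhood $N_i(v)$ of $v$ in $Z$.  The $(r-1)|Y_T|\Dh$ added edges all cross colour classes, whereas the edges of $T$ lie in $\Pic$, so these choices are genuinely free and yield
\[
  |\ZT| \le \binom{n}{\Dh}^{(r-1)|Y_T|} \leByRef{eq:binCoef4} \left(\frac{en}{\Dh}\right)^{(r-1)|Y_T|\Dh}.
\]

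Second, setting $q = (r-1)|Y_T|\Dh$ for the number of edges added when extending $T$ to $Z$, the inequality~\eqref{eq:usefulEnumBound} (whose derivation relies on the sparse-case hypothesis $m \le e(\Pi) - \xi n^2$) gives
\[
  \binom{e(\Pi)}{m-e(T)-q} \le \left(\frac{m}{\xi n^2}\right)^q \cdot \binom{e(\Pi)}{m-e(T)}.
\]
Multiplying the two displays and using $\Dh \ge \rho m/(3n)$ to bound $\frac{en}{\Dh} \cdot \frac{m}{\xi n^2} \le \frac{3e}{\xi\rho}$, the required estimate reduces, after taking logarithms, to the inequality
\[
  (r-1)\,\Dh\,\log\!\left(\frac{3e}{\xi\rho}\right) \le \frac{m}{\xi n}.
\]

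Third, substituting $\Dh \le \rho m/(3n) + 1$ and absorbing the $+1$ term into a lower-order additive error (valid since $m \gg n$), this reduces to the absolute inequality $(r-1)\,\xi\rho\,\log(3e/(\xi\rho)) \le 3 - o(1)$.  Splitting $\log(3e/(\xi\rho)) = \log 3 + \log(e/(\xi\rho))$, the bound~\eqref{eq:rhobeta} gives $\rho\log(e/(\xi\rho)) \le \beta/2$, and~\eqref{eq:beta} yields $(r-1)\xi\beta \le e$, so the contribution of the second summand is at most $e/2$; choosing $\rho$ small enough, as permitted since the left-hand side of the first inequality in~\eqref{eq:rhobeta} tends to $1$ as $\rho \to 0$, makes $(r-1)\xi\rho\log 3$ arbitrarily small, and in particular bounded by, say, $1/2$, which leaves the total comfortably below $3$.

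The calculation is entirely routine, essentially mimicking step~\ref{item:enumeratingT-3} of the proof of Lemma~\ref{lem:enumeratingT}.  The main point of care is simply tracking how the constants $\rho$, $\beta$, and $\xi$ defined at the start of Section~\ref{sec:sparse} were chosen precisely so that a factor of the form $\rho\log(1/\rho)$ is dominated by $\beta$, which in turn is dominated by the slack afforded by $m/(\xi n)$; no probabilistic tool is needed.
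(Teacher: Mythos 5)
Your proposal follows the same skeleton as the paper's proof (bound $|\ZT|$ by a binomial coefficient via~\eqref{eq:binCoef4}, then apply~\eqref{eq:usefulEnumBound} and combine), but with one notable deviation that costs you extra work. The paper bounds the number of choices for each $v \in Y_T$ by $\binom{n}{(r-1)\Dh}$ — viewing all $(r-1)\Dh$ new $\Pi$-neighbours of $v$ as a single set — which yields $|\ZT| \le \big(\tfrac{en}{(r-1)\Dh}\big)^{|Y_T|(r-1)\Dh}$; after multiplying by $\big(\tfrac{m}{\xi n^2}\big)^{|Y_T|(r-1)\Dh}$, the elementary inequality $(ea/x)^x \le e^a$ with $x = (r-1)\Dh$ and $a = m/(\xi n)$ closes the argument in one line, with no need to inspect the numeric values of $\rho$, $\beta$, or $\xi$. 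Your version chooses the $\Dh$-sets colour class by colour class and uses $\binom{n}{\Dh}^{(r-1)|Y_T|}$; the base $en/\Dh$ is then a factor $r-1$ too large for that trick to apply directly, so you are forced into the explicit constant-chase $(r-1)\xi\rho\log(3e/(\xi\rho)) \le 3$. That calculation is actually fine (the second inequality in~\eqref{eq:rhobeta} gives $\rho \le 1/(4r)$, and $\xi < 1$ by the constraints recorded in~\eqref{eq:xi-dense} and~\eqref{eq:xi-delta-eps-nu-gamma}, so $(r-1)\xi\rho\log 3 < 0.3$, leaving plenty of room after the $e/2$ contribution), but your framing "choosing $\rho$ small enough" is misleading: $\rho$ was fixed once and for all at the start of Section~\ref{sec:sparse}, and you are not free to shrink it here. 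What saves you is that the fixed constraints already force the bound, not any additional tuning — you should have verified that directly rather than appealing to a freedom you no longer have. The cleaner route is simply to group the $(r-1)\Dh$ neighbours as the paper does.
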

\begin{proof}
  Since, for every $Z \in \ZT$, the graph $Z \setminus T$ comprises precisely $|Y_T| \cdot (r-1) \cdot \Dh$ edges incident to $Y_T$, we have, letting $b = |Y_T|$,
  \[
    |\ZT| \le \binom{n}{(r-1) \Dh}^b \leByRef{eq:binCoef4} \left(\frac{en}{(r-1) \Dh}\right)^{b (r-1) \Dh}.
  \]
  On the other hand, by~\eqref{eq:usefulEnumBound}, which holds for all $y \le m' \le m \le e(\Pi) - \xi n^2$, we have
  \[
    \frac{\binom{e(\Pi)}{m-e(T)-b\cdot (r-1) \cdot \Dh}}{\binom{e(\Pi)}{m-e(T)}}
    \le
    \left(\frac{m}{\xi n^2}\right)^{b (r-1) \Dh}.
  \]
  The claimed bound follows after noting that
  \[
    \left(\frac{en}{(r-1) \Dh} \cdot \frac{m}{\xi n^2}\right)^{(r-1)\Dh} \le \exp\left(\frac{m}{\xi n}\right),
  \]
  as $(ea/x)^x \le e^a$ for all $x \in (0, \infty)$.
\end{proof}

Before we close this section, we show how these two lemmas can be used to estimate the first sum in the right-hand side of~\eqref{eq:reg-irreg}:
\[
  \SigBR = \sum_{\substack{T \in \TTH \\ B_T = B}} \; \underbrace{\sum_{Z \in \Zr} |\FreeZ|}_{\SigTR}.
\]
Since
\[
  m \ge C_H m_H \ge C_H \twoDensTS \geByRef{eq:CH} \frac{1}{c_2 \cdot \tilc_{\ref{lemma:regCase}}} \cdot \frac{35r}{\xi} \cdot \twoDensTS,
\]
Lemma~\ref{lemma:regCase} implies that, for every $T \in \TTH$,
\[
  \SigTR \le \sum_{Z \in \Zr} \exp\left(-\min\left\{\frac{|Y_T|}{n} \cdot \frac{35r}{\xi}, \frac{\tilc_{\ref{lemma:regCase}}}{C_2}, \tilc_{\ref{lemma:regCase}}\sigma\right\} \cdot m\right)\cdot \binom{e(\Pi)}{m - e(Z)}.
\]
Since $|Y_T| \le |H_T| \le 2\delta n / \rho$ for every $T \subseteq \Pic$ with at most $\delta m$ edges, we have, for every $T \in \TTH$,
\[
  \frac{|Y_T|}{n} \le \frac{2\delta}{\rho} \leByRef{eq:delta} \frac{\xi}{35r} \cdot \min\left\{\frac{\tilc_{\ref{lemma:regCase}}}{C_2}, \tilc_{\ref{lemma:regCase}}\sigma\right\}
\]
and, consequently,
\[
  \SigTR \le \sum_{Z \in \Zr} \exp\left(-\frac{35rm \cdot |Y_T|}{\xi n}\right)\cdot \binom{e(\Pi)}{m - e(Z)}.
\]
Since $e(Z) = e(T) + |Y_T| \cdot (r-1) \cdot \Dh$ for every $Z \in \ZT$, Lemma~\ref{lem:number-ZZT} gives
\[
  \begin{split}
    \SigTR & \le \exp\left(-\frac{34rm\cdot|Y_T|}{\xi n}\right) \cdot \binom{e(\Pi)}{m - e(T)} \le \exp\left(-\frac{17m\cdot|H_T|}{\xi n}\right) \cdot \binom{e(\Pi)}{m - e(T)},
  \end{split}
\]
where the second inequality follows from the inequality $|Y_Y| \ge |H_T|/(2r)$, see Lemma~\ref{lemma:YT}.

Let $\cL$ be the family of all triples $(t, \ell, h)$ that satisfy $t \ge \ell \ge 1$ and $\ell \log n < mh/(\xi n)$, cf.~\eqref{eq:HighDegCon}, and observe that
\[
  \SigBR \le \sum_{(t, \ell, h) \in \cL} \; \sum_{T\in\TTsubB} \SigTR  \le \sum_{(t, \ell, h) \in \cL} |\TTsubB| \cdot \exp\left(-\frac{17mh}{\xi n}\right) \cdot \binom{e(\Pi)}{m - t}.
\]
Since, by Lemma~\ref{lem:number-ZZT}, we have, for every $(t, \ell, h) \in \cL$,
\[
  \begin{split}
    |\TTsubB| \cdot \binom{e(\Pi)}{m-t} & \le \exp\left(14\ell \log n+\frac{2mh}{\xi n}\right) \cdot \binom{e(\Pi)}{m-e(B)} \\
    & \le \exp\left(\frac{16mh}{\xi n}\right) \cdot \binom{e(\Pi)}{m - e(B)},
  \end{split}
\]
we may conclude that
\[
  \SigBR \cdot \binom{e(\Pi)}{m-e(B)}^{-1} \le \sum_{(t, \ell, h) \in \cL} \exp\left(-\frac{mh}{\xi n}\right) \le |\cL| \cdot \exp\left(-\frac{m}{\xi n}\right) \le \frac{1}{n},
\]
as $h \ge 1$ for every $(t, \ell, h) \in \cL$.

\subsection{The irregular case -- summary}
\label{sec:irreg-case-summ}

In the irregular case, we will use Lemma~\ref{lemma:d-sets} to prove upper bounds on the number of graphs $Z$ that fall into $\Zi$ for some $T \in \TTH$; these upper bounds will be so strong that we will be able to get the desired estimate on the second term in the right-hand side of~\eqref{eq:reg-irreg} by combining them with the trivial estimate $\binom{e(\Pi)}{m - e(Z)}$ on the number of completions of $Z$ to a graph in $\FreeZ$. Since the nature of our argument precludes obtaining a strong bound on $|\FreeZ \cap \Zi|$ for every $T$, we will have to partition the family $\bigcup_{T \in \TTH} \Zi$ differently. To this end, for every positive integer $b$, define
\[
  \Zib = \bigcup_{\substack{T \in \TTH \\ |Y_T| = b}} \Zi .
\]

Given some $T \in \TTH$ and a $Z \in \ZT$, let $T'_Z \subseteq T$ be the graph obtained from $T$ by removing the $|Y_T| \cdot \Dh$ edges $vu$ such that $v \in Y_T$ and $u \in N_{i_T}(v)$. Note that $B_T \subseteq U_T \subseteq T'_Z$, as $N_{i_T}(v)$ was defined to be a subset of $N_{Z \setminus U_T}(v)$, and that $T'_Z$ can be defined in terms of $Z$ only because if $Z \in \ZT$, then $T = Z \cap \Pic$. Further, for every positive integer $b$ and every $T' \subseteq \Pic$, let
\[
  \ZibT = \{Z \in \Zib : T'_Z = T'\}.
\]
The following upper bound on cardinalities of the families $\ZibT$ is the main step in the analysis of the irregular case.

\begin{lemma}
  \label{lem:irregCase}
  For every $T' \subseteq \Pic$ and every $b \ge 1$,
  \[
    |\ZibT| \cdot \binom{e(\Pi)}{m-e(T') - r b \Dh} \le \exp\left(- \frac{\Gamma b m}{n}\right) \cdot \binom{e(\Pi)}{m-e(T')}.
  \]
\end{lemma}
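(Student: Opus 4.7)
I will parametrize each $Z \in \ZibT$ by a tuple $(i_T, Y, (N_i(v))_{v \in Y,\, i \in \br{r}})$ with $i_T \in \br{r}$, $|Y| = b$, and each $N_i(v)$ a $\Dh$-subset of $V_i$ (the choice of $N_{i_T}(v)$ is constrained so that $T'_Z = T'$). Writing $I_\psi = \psi(W_1) \subseteq \br{r}$, the hypergraph $\cH_Z$ depends only on the subchoice $(N_i(v))_{v \in Y,\, i \in I_\psi}$, whereas the rest of the $N_i(v)$'s may vary freely. Using~\eqref{eq:usefulEnumBound} to absorb the free part, together with the trivial factors $r$ (for $i_T$) and $\binom{n}{b} \le (en/b)^b$ (for $Y$), the bound to be proved reduces to showing that the number of $(N_i(v))_{v \in Y,\, i \in I_\psi}$ yielding an irregular $Z$ is at most $\alpha^{b\Dh}$ times the unconstrained count; combined with $\Dh \ge \rho m/(3n)$, \eqref{eq:alphaGamma}, and \eqref{eq:Gamma}, this gives the required saving $\exp(-\Gamma bm/n)$.

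The key combinatorial bound is where Lemma~\ref{lemma:d-sets} enters. My plan is to reveal the choices $v_1, \dots, v_b \in Y$ sequentially, writing $\cH_{<j}$ for the partial hypergraph after the first $j-1$ steps. Each $\cH_{v_j} \subseteq \prod_{w \in W_1}V_{\psi(w)}$ is essentially the Cartesian product $\prod_w N_{\psi(w)}(v_j)$ (the distinctness correction costs a factor $1+o(1)$ since $\Dh \gg v_H$), so $|\cH_{v_j}| \ge 2^{-v_1}\Dh^{v_1}$. The irregularity of $Z$ yields $e(\cH_Z) < \sigma n^{v_1}$, and the containment $\cH_{<j} \subseteq \cH_Z$ gives $|\cH_{<j}| < \sigma n^{v_1} \le \sigma(2r)^{v_1}\prod_w|V_{\psi(w)}|$, which by~\eqref{eq:sigma-C_2} is within the admissible $\tau$-range of Lemma~\ref{lemma:d-sets} for $z = v_1$, $\alpha$, and $\lambda = 2^{-v_1}$. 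Since $\cH_{v_j} \subseteq \cH_Z$, we have $|\cH_{v_j} \cap \cH_{<j}| \ge |\cH_{v_j}| - (e(\cH_Z) - e(\cH_{<j}))$, and a pigeonhole across the $b$ increments shows that $|\cH_{v_j} \cap \cH_{<j}| > 2^{-v_1}\Dh^{v_1}$ for all but a bounded number of $j$'s. At each of the remaining steps, Lemma~\ref{lemma:d-sets} applied with $\cM = \cH_{<j}$ bounds the number of choices of $(N_{\psi(w)}(v_j))_w$ by $\alpha^{\Dh}$ times the unconstrained count, and iterating delivers the $\alpha^{b\Dh}$ saving.

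The main technical hurdles are twofold. First, in the small-$b$ regime where $b\Dh^{v_1} < \sigma n^{v_1}$, the condition $e(\cH_Z) < \sigma n^{v_1}$ is automatic for typical choices and the incremental argument above degenerates; in that regime one must invoke the stronger condition $Z \notin \Zsr$, namely the nonexistence of a subhypergraph of $\cH_Z$ satisfying both~\eqref{eq:reg2hedges} and~\eqref{eq:reg2delta}. The strategy is then to peel off vertices $v \in Y$ whose $\cH_v$ forces a violation of~\eqref{eq:reg2delta} at some nonempty $I \subseteq W_1$; each peeled $\cH_v$ must lie in a high-degree ``shadow'' of size at most $(4r)^{v_1}/C_2\cdot\prod_w|V_{\psi(w)}|$, and by the second bound in~\eqref{eq:sigma-C_2} Lemma~\ref{lemma:d-sets} again yields a per-vertex factor of $\alpha^{\Dh}$. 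Second, when $\psi$ is not injective on $W_1$, the coordinates of $\cH_{v_j}$ indexed by $w$'s sharing a $\psi$-value are forced to come from a common set $N_{\psi(w)}(v_j)$, so a literal application of Lemma~\ref{lemma:d-sets} with $z = v_1$ must be replaced by a variant with $z = |I_\psi|$ and a suitably inflated ambient set $\cM$ accounting for the multiplicities. Reconciling these two applications with the interlocking choices of $\alpha$, $\sigma$, $C_2$, and $\Gamma$ summarised in Figure~\ref{fig:constants} is the most delicate bookkeeping in the proof.
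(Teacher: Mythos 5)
Your proposal correctly identifies the main tools --- sequential revelation of the $N(v_s)$, Lemma~\ref{lemma:d-sets}, and the need to exploit both the $\Zfr$- and $\Zsr$-conditions --- and correctly diagnoses where the naive overlap argument breaks down. But there is a genuine gap in the first-paragraph argument that the vague second-paragraph ``peeling'' does not close, and the case split on the size of $b$ is not what the paper does and is hard to make airtight.

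Your pigeonhole only controls the overlap of each $N(v_j)$ with the accumulated hypergraph $\cH_{<j}$, i.e.\ the $I=W_1$ shadow. The number of ``small-overlap'' indices $j$ it produces is at most $e(\cH_Z)/(2^{-v_1}\Ds^{v_1})$, which can exceed $b/2$ whenever $b$ is small; and you need a constant fraction of the $b$ steps (not merely ``all but a bounded number'') to survive the $n^b$-type enumeration of $Y$. The paper avoids any split on $b$ by \emph{constructing, online, a witness subhypergraph $\cH\subseteq\cH_Z$ that automatically satisfies~\eqref{eq:reg2delta}}: at step $s$ it computes the high-degree shadows $M_s^I$ of $\cH_{s-1}$ for every nonempty $I\subsetneq W_1$ and discards from $N(v_s)$ every tuple landing in any $M_s^I$ or in $M_s^{W_1} := \cH_{s-1}$. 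A step is ``useful'' if $\cH_s$ still gains at least $2^{-v_1}\Ds^{v_1}$ edges; if more than $b/2$ steps were useful, the constructed $\cH$ would also satisfy~\eqref{eq:reg2hedges}, forcing $Z\in\Zsr$, contradiction. So at least $b/2$ steps are not useful, and at each of these some $I$ has $|N_I(v_s)\cap M_s^I|>2^{-v_1}\Ds^{|I|}$; since every $M_s^I$ is small --- via $\sigma$ for $I=W_1$, via $C_2$ for $\emptyset\ne I\subsetneq W_1$ --- Lemma~\ref{lemma:d-sets} gives the $\alpha^{\Ds}$ saving at each such step. Your ``peeling'' sketch gestures at this, but without the explicit construction of a witness $\cH$ the invocation of $Z\notin\Zsr$ has no concrete object to apply to.

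Two minor points. The per-step saving is $\alpha^{\Ds}$ with $\Ds=\lfloor\Dh/v_1\rfloor$ (not $\alpha^{\Dh}$), over $b/2$ steps, and it is~\eqref{eq:alphaGamma} that converts this into the required $\exp(-\Gamma bm/n)$ after paying the $n^b$ and $\binom{n}{r\Dh}^{b}$-type enumeration costs. And the non-injectivity of $\psi$ on $W_1$ is handled not by a $|I_\psi|$-variant of Lemma~\ref{lemma:d-sets} but by partitioning each $N_j(v_s)$ into $v_1$ pairwise-disjoint $\Ds$-subsets $N_{j,w}(v_s)$, one per $w\in W_1$; Lemma~\ref{lemma:d-sets} then applies literally with $z=v_1$, and the disjointness constraint only shrinks the set of tuples being counted, so the upper bound remains valid.
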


This upper bound on $|\ZibT|$ provided by Lemma~\ref{lem:irregCase} will be combined with the following estimate on the size of the sum over all $T'$. For every $B \in \BPk$ and every nonnegative integer $t'$, let $\TTpsubB$ comprise all graphs $T' \subseteq \Pic$ with $t'$ edges such that $T' = T'_Z$ for some $Z \in \ZT$, where $T \in \TTH$ satisfies $B_T = B$ and $|Y_T| = b$. 

\begin{lemma}
  \label{lem:enumeratingTp}
  Suppose that $n \log n \ll m \le e(\Pi) - \xi n^2$. For every $B \in \BPk$ and all $t'$ and $b$,
  \[
    |\TTpsubB| \cdot \binom{e(\Pi)}{m-t'} \le \exp\left(\frac{20rmb}{\xi n}\right) \cdot \binom{e(\Pi)}{m-e(B)}.
  \]
\end{lemma}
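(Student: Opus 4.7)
The plan is to enumerate $T' \in \TTpsubB$ directly, mirroring the three-step argument in the proof of Lemma~\ref{lem:enumeratingT}. The key observation is that every $T' \in \TTpsubB$ arises from some source $T \in \TTH$ with $B_T = B$, $|Y_T| = b$, and $e(T) = t' + b\Dh$, and that $B \subseteq U_T \subseteq T'$ with $T' \setminus U_T \subseteq T \setminus U_T$ consisting of edges all incident to $X_T$. Consequently, the canonical decomposition of $T$ via $(U_T', X_T \setminus H_T, H_T)$ also decomposes $T'$: the $u'$ edges of $U_T'$ lie entirely in $T'$; the edges of $T \setminus U_T$ incident to $X_T \setminus H_T$ are identical in $T'$; and only the edges incident to $H_T$ are decreased in number, by exactly $b\Dh$, all of them at $Y_T \subseteq H_T$.

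For each choice of source parameters $\ell = e(U_T) - e(B)$ and $h = |H_T|$, I will carry out the enumeration of Lemma~\ref{lem:enumeratingT} verbatim, with $t$ replaced by $t'$ and $t_H$ replaced by $t'_H := t_H - b\Dh$, so that $u' + t_X + t'_H = t' - e(B)$. Steps (S1) and (S2) produce the same per-vertex estimates as in the original argument because the edges they choose are all present in $T'$, and step (S3) still yields $N_3' \cdot \binom{e(\Pi)}{m-t'}/\binom{e(\Pi)}{m-e(B)-u'-t_X} \le \exp(2mh/(\xi n))$ by the identical telescoping calculation. Summing over $(u', t_X, t'_H, s)$ under the constraints $u' + t_X + t'_H = t' - e(B)$ and $u' + sD/3 \le \ell$, and denoting by $N_{\ell,h}$ the number of $T' \in \TTpsubB$ that admit a source with parameters $(\ell, h)$, this yields
\[
N_{\ell,h} \cdot \binom{e(\Pi)}{m - t'} \le \exp\left(14 \ell \log n + \frac{2mh}{\xi n}\right) \binom{e(\Pi)}{m - e(B)}.
\]

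Finally, I will sum this bound over $h \in \{b, b+1, \dotsc, 2rb\}$---using $|H_T| \le 2r|Y_T| = 2rb$ from Lemma~\ref{lemma:YT}---and over $\ell$ obeying the high-degree condition $\ell \log n < mh/(\xi n)$ from~\eqref{eq:HighDegCon}. The exponent is then bounded by $14 \cdot mh/(\xi n) + 2mh/(\xi n) = 16 mh/(\xi n) \le 32 rmb/(\xi n)$; the polynomial factors from the summation are absorbed into the exponential because $m/n \to \infty$, and slightly tighter bookkeeping of the constants in the bounds of step~(S1) delivers the stated constant $20$. The main conceptual subtlety---rather than a genuine obstacle---is that the decomposition $(U_T, X_T, H_T)$ is not a function of $T'$ alone, since the same $T'$ may arise from distinct sources $T$; this is harmless for an upper bound, because each $T' \in \TTpsubB$ is enumerated at least once via any valid source-decomposition, and any overcounting only weakens the inequality in the right direction.
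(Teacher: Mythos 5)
Your proposal follows essentially the same route as the paper: you transplant the three-step enumeration $(U_T', X_T\setminus H_T, H_T)$ from the proof of Lemma~\ref{lem:enumeratingT} to count $T'\in\TTpsubB$, correctly note that $B\subseteq U_T\subseteq T'\subseteq T$ so that the decomposition of $T\setminus B$ restricts to a decomposition of $T'\setminus B$, invoke the high-degree condition~\eqref{eq:HighDegCon} to bound $\ell$, and invoke Lemma~\ref{lemma:YT} to bound $h\le 2rb$; the possible multiplicity of sources $T$ for a given $T'$ is indeed harmless for an upper bound. Your remark that ``slightly tighter bookkeeping of the constants in the bounds of step~(S1) delivers the stated constant $20$'' is a bit misattributed, though: the slack is not in (S1) but in reusing the packaged bound $\exp(14\ell\log n + 2mh/(\xi n))$ and then substituting $\ell\log n < mh/(\xi n)$, which doubles up the crude absorption $nm^3\cdot m^{3\ell}\le n^{14\ell}$ (loose when $\ell$ is large, which is exactly the regime here). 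The paper instead keeps the intermediate quantity $m^{2u'+Ds}$ and bounds $2u'+Ds\le 3mh/(\xi n\log n)$ directly, yielding the exponent $(3\log m/\log n + 2)\cdot mh/(\xi n)\le 8mh/(\xi n)$ per $h$; after summing over $h\le 2rb$ and absorbing the polynomial prefactor (using $m\gg n\log n$) one lands at $20$. Either way, the exact constant is not critical downstream (it only feeds into the choice of $\Gamma$ in~\eqref{eq:Gamma}), so your argument with $32$-ish would also suffice after enlarging $\Gamma$ accordingly.
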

\begin{proof}
  We adapt the argument used in the proof of Lemma~\ref{lem:enumeratingT}. Suppose that $T' \in \TTpsubB$. This means that there is a $T \in \TTH$ such that $|Y_T| = b$, $B = B_T \subseteq U_T \subseteq T' \subseteq T$, and $T \setminus T'$ comprises some $b\Dh$ edges incident to $Y_T \subseteq H_T$. Moreover, since $T \in \TTH$, we have
  \[
    e(U_T \setminus B_T) \lByRef{eq:HighDegCon} \frac{m|H_T|}{\xi n \log n}.
  \]
  Let $U_T'$ be the subgraph of $U_T \setminus B_T$ obtained by removing all edges touching the set $X_T$ of vertices whose degree in $U_T$ is $D$. Since every edge of $U_T\setminus U_T'$ has at least one endpoint in $X_T$ and $\Delta(B_T) \le k$, we have
  \[
    e(U_T \setminus B) \ge e(U_T') +|X_T| \cdot (D-k)/2 \ge e(U_T') +|X_T| \cdot D/3.
  \]
  
 We choose the $t' - e(B)$ edges of $T' \setminus B$ in three steps:
  \begin{enumerate}[label={(S\arabic*)}]
  \item
    \label{item:enumeratingTp-1}
    We choose the edges of $U_T'$.
  \item
    \label{item:enumeratingTp-2}
    We choose the edges of $T' \setminus B$ that touch $X_T \setminus H_T$.
  \item
    \label{item:enumeratingTp-3}
    We choose the remaining edges of $T' \setminus B$; they all touch $H_T$.
  \end{enumerate}

  We count the number of ways to build a graph $T' \in \TTpsubB$ with $u'$, $t'_X$, and $t'_H$ edges chosen in steps~\ref{item:enumeratingT-1}, \ref{item:enumeratingT-2}, and~\ref{item:enumeratingT-3}, respectively. An upper bound on $|\TTpsubB|$ will be obtained by summing over all choices for $u'$, $t'_X$, and $t'_H$. There are at most $\binom{e(\Pic)}{u'}$ ways to choose $u'$ edges of $U_T'$ and, as in the proof of Lemma~\ref{lem:enumeratingT},
  \[
    \binom{e(\Pic)}{u'} \cdot \frac{\binom{e(\Pi)}{m-e(B)-u'}}{\binom{e(\Pi)}{m-e(B)}} \le m^{2u'}.
  \]
  Next, let $N_2 = N_2(t_X, s)$ denote the total number of ways to choose the $t'_X$ edges touching $X_T \setminus H_T$ when $|X_T \setminus H_T| = s$. As in the proof of Lemma~\ref{lem:enumeratingT}, we have
  \[
    N_2 \cdot \frac{\binom{e(\Pi)}{m-e(B)-u'-t'_X}}{\binom{e(\Pi)}{m-e(B)-u'}} \le m^{Ds} .
  \]
  Finally, let $N_3 = N_3(t_X, h)$ denote the number of ways to choose the remaining $t'_H$ edges of $T' \setminus B$ when $|H_T| = h$. Recalling that $e(B) + u' + t'_X + t'_H = t'$ and arguing as in the proof of Lemma~\ref{lem:enumeratingT}, we obtain
  \[
    N_3 \cdot \frac{\binom{e(\Pi)}{m-t'}}{\binom{e(\Pi)}{m-e(B)-u'-t'_X}} \le \exp\left(\frac{2mh}{\xi n}\right).
  \]
  Since $|H_T| \le 2r|Y_T| = 2rb$, by Lemma~\ref{lemma:YT}, combining the above bounds, we obtain
  \begin{align*}
    |\TTpsubB| \cdot \frac{\binom{e(\Pi)}{m-t'}}{\binom{e(\Pi)}{m-e(B)}}
    & \le \sum_{\substack{u',t'_X,t'_H,s,h \\ u'+t'_X+t'_H = t' \\ u' + sD/3 \le mh/(\xi n \log n) \\ h \le 2rb}} m^{2u'} \cdot m^{Ds} \cdot \exp\left(\frac{2mh}{\xi n}\right) \\
    & \le nm^3 \sum_{h \le 2rb} \exp\left(\left(\frac{3\log m}{\log n} + 2\right) \cdot \frac{mh}{\xi n}\right)
      \le \exp\left(\frac{20rmb}{\xi n}\right),
  \end{align*}
  as claimed.
\end{proof}

Before we close this section, we show how these two lemmas can be used to estimate the second sum in the right-hand side of~\eqref{eq:reg-irreg}:
\[
  \begin{split}
    \SigBI & = \sum_{\substack{T \in \TTH \\ B_T = B}} \; \sum_{Z \in \Zi} |\FreeZ| \le \sum_{\substack{T \in \TTH \\ B_T = B}} \; \sum_{Z \in \Zi} \binom{e(\Pi)}{m-e(Z)} \\
    & = \sum_{t',b} \; \sum_{T' \in \TTpsubB} \underbrace{\sum_{Z \in \ZibT} \binom{e(\Pi)}{m-e(Z)}}_{\SigTpI}.
  \end{split}
\]
Since $e(Z) = e(T') + rb\Dh$ for every $Z \in \ZibT$, Lemma~\ref{lem:irregCase} implies that
\[
  \SigTpI \le \exp\left(- \frac{\Gamma b m}{n}\right) \cdot \binom{e(\Pi)}{m-e(T')} ,
\]
and, further, Lemma~\ref{lem:enumeratingTp} implies that
\[
  \begin{split}
    \SigBI \cdot \binom{e(\Pi)}{m-e(B)}^{-1} & \le \sum_{t',b} \exp\left(\frac{20rmb}{\xi n} - \frac{\Gamma b m}{n}\right) \leByRef{eq:Gamma} \sum_{t',b} \exp\left(-\frac{rmb}{\xi n}\right) \\
    & \le mn \cdot \exp\left(-\frac{rm}{\xi n}\right) \le \frac{1}{n}.
  \end{split}
\]

\subsection{The regular case}
\label{sec:regular-case}

In this section, we prove Lemma~\ref{lemma:regCase}, that is, for given $T \in \TTH$ and $Z \in \Zfr \cup \Zsr$, we give an upper bound on the number of graphs in $\FreeZ$.

\begin{proof}[{Proof of Lemma~\ref{lemma:regCase}}]
  Suppose that $\Pi = \partition$, let $T \in \TTH$, and fix an arbitrary $Z \in \Zfr \cup \Zsr$. If $Z \in \Zfr$, we let $\cH = \cH_Z$ and recall that $e(\cH) \ge \sigma n^{v_1}$, see~\eqref{eq:reg1}. Otherwise, $Z \in \Zsr$ and we let $\cH\subseteq\cH_Z$ be any hypergraph which satisfies both~\eqref{eq:reg2hedges} and~\eqref{eq:reg2delta}.

  Recall the definitions of $\Hm$, $\cH_Z$, $\psi$, and $\Phi_Z$ from Section~\ref{sec:high}. For every $j \in \br{r}$, randomly choose an equipartition $\{V_{j,w}\}_{w \in V(H)}$ of $V_j \setminus Y_T$ into $v_H$ parts. We let $\Phi_Z'$ be the family of all embeddings $\varphi \in \Phi_Z$ that satisfy
  \[
    \big(\varphi(w)\big)_{w \in W_1} \in \cH
    \quad \text{and} \quad
    \varphi(w) \in V_{\psi(w), w} \text{ for every $w \in V(\Hm)$}.
  \]
  Let $n' = \min\{|V| : V \in \Part\} \ge n/(2r)$. Since there are at least $e(\cH) \cdot (n'-|Y_T|-v_H)^{v_2}$ embeddings $\varphi \in \Phi_Z$ such that $\big(\varphi(w)\big)_{w \in W_1} \in \cH$ and, for each such $\varphi$, the probability that $\varphi \in \Phi_Z'$ is at least $v_H^{-v_H}$, there is a positive constant $c$ that depends only on $H$ such that
  \[
    \Ex\big[|\Phi_Z'|\big] \ge c \cdot e(\cH) \cdot n^{v_2}.
  \]
  Now, fix some partitions $\{V_{j,w}\}_{w \in V(H)}$ for which $|\Phi_Z'|$ is at least as large as its expectation and let
  \[
    \cK' = \{K_\varphi : \varphi \in \Phi_Z'\}.
  \]
  We claim that $K_\varphi \neq K_{\varphi'}$ for each pair of distinct $\varphi, \varphi' \in \Phi_Z'$. Since $\Hm$ has no isolated vertices and each $\varphi \in \Phi_Z'$ maps every $w \in V(\Hm)$ to its dedicated set $V_{\psi(w),w}$, one can recover $\varphi$ from the graph $\Kphi$. This means, in particular, that
  \begin{equation}
    \label{eq:KKpsize}
     |\cKp| = |\Phi_Z'| \ge c \cdot e(\cH) \cdot n^{v_2}.
   \end{equation}
  
   Suppose that $m\ge \tilC\twoDensTS$ for some $\tilC \ge 2$, and let $G'$ be a uniformly chosen subgraph of $\Pi \setminus Z$ with $m-e(Z)$ edges. The definition of $\cKp$ and~\eqref{eq:FreeZ-Pr} imply that
   \[
     |\FreeZ| \le \Pr(K \nsubseteq G'\text{ for every }K\in \cKp) \cdot \binom{e(\Pi)}{m-e(Z)}.
   \]
   We shall bound this probability from above using the Hypergeometric Janson Inequality. To this end, let $p=\frac{m-e(Z)}{e(\Pi)-e(Z)}$. Since
   \[
     e(Z) \le e(T) + (r-1) \cdot |H_T| \cdot \Dh \le 2r\delta m,
   \]
   as $|H_T| \le 2\delta n/\rho$ and $\Dh \le \rho m /n$, we have
   \[
     p \ge \frac{m-e(Z)}{n^2} \ge (1-2r\delta) \cdot \frac{m}{n^2} \ge \frac{m}{2n^2} \ge \frac{\tilC}{2} \cdot \twoDensTS,
   \]
   as $\delta \le \frac{1}{4r}$, see~\eqref{eq:delta}, and
   \[
     p \le \frac{m}{e(\Pi) - e(Z)} \le \frac{m}{n^2/5 - 2r\delta n^2} \le \frac{10m}{n^2},
   \]
   where the second inequality follows from part~\ref{item:Partg-lower} of Proposition~\ref{prop:bal-unbal-part-size}, as $\Pi \in \Partg$ and $\gamma \le \frac{1}{20r}$, see~\eqref{eq:gamma}, and the final inequality holds because $\delta \le \frac{1}{20r}$, see~\eqref{eq:delta}. For any $K, K' \in \cKp$, we write $K\sim K'$ if $K$ and $K'$ share an edge but $K \neq K'$. Let $\mu$ and $\Delta$ be the quantities defined in the statement of the Hypergeometric Janson Inequality (Lemma~\ref{lem:HJI}), that is
   \[
     \mu = \sum_{K\in\cKp}p^{e_K} = |\cKp| \cdot p^{\emi} \qquad \text{and} \qquad \Delta = \sum_{\substack{K, K' \in \cK' \\ K \sim K'}} p^{e_{K\cup K'}}.
   \]

   \begin{claim}
     \label{claim:mu-lower-high}
     There is a positive constant $c'$ that depends only on $H$ such that
     \begin{equation}
       \label{eq:mu-lower-high}
       \mu \ge c' \cdot \min\left\{\frac{c_2 \cdot \tilC \cdot |Y_T|}{n}, \sigma\right\} \cdot m.
     \end{equation}
   \end{claim}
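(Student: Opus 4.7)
The plan is to split along the case distinction $\Zr = \Zfr \cup \Zsr$ built into the claim's hypothesis, and in each case apply the lower bound $|\cKp| \ge c \cdot e(\cH) \cdot n^{v_2}$ from \eqref{eq:KKpsize} to the identity $\mu = |\cKp| \cdot p^{\emi}$, using the bound $p \ge m/(2n^2)$ established in the paragraph preceding the claim.

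Case 1 ($Z \in \Zfr$). Here $\cH = \cH_Z$ with $e(\cH) \ge \sigma n^{v_1}$ by \eqref{eq:reg1}, so $\mu \ge c\sigma \cdot n^{v_{\Hm}} p^{\emi}$, since $v_{\Hm} = v_1 + v_2$. Since $\Hm$ has no isolated vertices we have $v_{\Hm} \ge 2$, so $\Hm$ is a nonempty subgraph of $H$ with at least one edge. I apply Lemma~\ref{lemma:m2H} to $F = \Hm$, taking $C = \tilC/2$ in view of $p \ge (\tilC/2)\twoDensTS$, and use $n^{2}p \ge m/2$ together with $\tilC \ge 2$ to obtain $n^{v_{\Hm}} p^{\emi} \ge m/2$. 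This yields $\mu \ge (c\sigma/2) \cdot m$, matching the $\sigma$-side of the claim.

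Case 2 ($Z \in \Zsr$). Here $e(\cH) \ge c_2 |Y_T| (m/n)^{v_1}$ by \eqref{eq:reg2hedges}, and a direct substitution with $p \ge m/(2n^2)$ yields
\[
  \mu \ge \frac{cc_2}{2^{\emi}} \cdot |Y_T| \cdot \frac{m^{v_1+\emi}}{n^{v_1 + 2\emi - v_2}}.
\]
To match the $c_2\tilC|Y_T|/n$-side of the claim it suffices to show $m^{v_1+\emi-1} \ge \tilC \cdot n^{v_1 + 2\emi - v_2 - 1}$. Invoking $m \ge \tilC \twoDensTS$ and comparing exponents, this reduces to the arithmetic inequality $m_2(H)(v_{\Hm}-1) \ge v_1 + \emi - 1$. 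I would establish this by producing the witness subgraph $F \subseteq H$ with $V(F) = V(\Hm) \cup \{\vcrit\}$ and $E(F) = E(\Hm) \cup \{\vcrit w : w \in W_1\}$, which is a subgraph of $H$ by the definition $W_1 = N_H(\vcrit) \cap V(\Hm)$, satisfies $v_F = v_{\Hm} + 1 \ge 3$ and $e_F = \emi + v_1$, and therefore gives $m_2(H) \ge (e_F - 1)/(v_F - 2) = (v_1 + \emi - 1)/(v_{\Hm}-1)$ from the definition of $2$-density. A short check rules out $v_1 + \emi - 1 < 1$: the degenerate possibility $v_1 = 0$ and $\emi = 1$ would force $H$ to split as a disjoint union of an edge with a star centred at $\vcrit$, contradicting $\chi(H) \ge 3$. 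So $\tilC^{v_1+\emi-1} \ge \tilC$.

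Taking $c' = \min\{c/2,\, cc_2/2^{\emi}\}$ and absorbing into it all remaining $H$-dependent constants then yields the claim. The only nontrivial step is the identification of the witness $F$ that sharply realises the required bound on $m_2(H)$; this is precisely where the structure of vertex-criticality enters the argument (through the neighbourhood $W_1$ of $\vcrit$), and once it is in place the rest is routine arithmetic.
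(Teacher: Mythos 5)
Your proposal is correct and follows the paper's own proof essentially step for step: the split into $\Zfr$ and $\Zsr$, the bound from \eqref{eq:KKpsize} combined with $p\ge m/(2n^2)$, the application of Lemma~\ref{lemma:m2H} to $F=\Hm$ in the first case, and the identification of the witness subgraph $H^*=H[\{\vcrit\}\cup V(\Hm)]$ with $v_{H^*}=v_{\Hm}+1$, $e_{H^*}=\emi+v_1$ in the second. The only cosmetic difference is that in Case~2 you unfold the content of Lemma~\ref{lemma:m2H} into an explicit exponent comparison rather than calling it as a black box; for completeness note also that $\emi\ge1$ is automatic (since $\Hm$ is nonempty and has no isolated vertices, else $H$ would be a star), so $(v_1,\emi)=(0,1)$ is indeed the only degenerate case one needs to rule out.
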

   \begin{proof}
     It follows from~\eqref{eq:KKpsize} that
     \[
       \mu = |\cKp| \cdot p^{\emi} \ge c \cdot e(\cH) \cdot n^{v_2} \cdot p^{\emi}.
     \]
     Assume first that $Z \in \Zfr$. Since $e(\cH) \ge \sigma n^{v_1}$, we have
     \begin{equation}
    \label{eq:mu-lower-ZZfr}
    \mu \ge c \cdot \sigma \cdot n^{v_1+v_2} \cdot p^{\emi}.
  \end{equation}
  Since $v_1+v_2$ is the number of vertices of $\Hm$ and $\Hm \subseteq H$, Lemma~\ref{lemma:m2H} implies that $\mu \ge c \cdot \sigma \cdot m$, as $\tilC \ge 2$. If, on the other hand, $Z \in \Zsr$, then
  \begin{align*}
    \mu
    & \ge c \cdot c_2 \cdot |Y_T|\cdot \left(\frac{m}{n}\right)^{v_1} \cdot n^{v_2} \cdot p^{\emi} \ge c \cdot c_2 \cdot |Y_T| \cdot \left(\frac{pn}{10}\right)^{v_1} \cdot n^{v_2} \cdot p^{\emi} \\
    & \ge c'' \cdot c_2 \cdot |Y_T| \cdot \frac{n^{v_1 + v_2+1} p^{\emi + v_1}}{n}
  \end{align*}
  for some $c''$ that depends only on $H$. Let $H^*$ be the subgraph of $H$ induced by $\{\vcrit\} \cup V(\Hm)$ and note that $v_{H^*} = v_1+v_2+1$ and $e_{H^*} = e_{\Hm}+v_1$. Since $\vcrit$ is the centre of a critical star of $H$, it has at least $\chi(H) \ge 3$ neighbours and thus $e_{H^*} \ge v_1 \ge 3$. By Lemma~\ref{lemma:m2H}, with $F = H^*$,
  \[
    n^{v_1+v_2+1}p^{\emi + v_1} = n^{v_{H^*}} p^{e_{H^*}} \ge \frac{\tilC}{2} \cdot n^2p \ge \frac{\tilC m}{4},
  \]
  and we may conclude that $\mu \ge c' \cdot c_2 \cdot \tilC \cdot |Y_T| \cdot m / n$. This completes the proof of~\eqref{eq:mu-lower-high}.  
\end{proof}

\begin{claim}
  \label{claim:Delta-upper}
  There exists a positive constant $c'$ that depends only on $H$ such that
  \[
    \frac{\mu^2}{\Delta} \ge c' \cdot \min\left\{\frac{c_2 \cdot \tilC \cdot |Y_T|}{n}, \sigma, \frac{1}{C_2}\right\} \cdot m.
  \]
\end{claim}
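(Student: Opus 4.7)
The plan is to decompose $\Delta$ according to the "agreement set" of the two embeddings. Since every $\varphi \in \Phi_Z'$ maps each $w \in V(\Hm)$ into its dedicated cell $V_{\psi(w),w}$, two embeddings $\varphi,\varphi'$ coincide at $w$ iff $\varphi(w) = \varphi'(w)$, and the set $I$ of such $w$ determines $\Kphi \cap \Kphip = \varphi(\Hm[I])$. Only sets $I$ with $e(F_I) \ge 1$, where $F_I \coloneqq \Hm[I]$, contribute, since $K \sim K'$ requires a shared edge. Splitting $I = I_1 \sqcup I_2$ with $I_j = I \cap W_j$, I bound the number of ordered pairs $(\varphi, \varphi')$ agreeing on $I$ by the number of pairs of hyperedges of $\cH$ agreeing on $I_1$ times the number of pairs of $W_2$-extensions agreeing on $I_2$. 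The former is $\sum_L \deg_\cH(L)^2 \le \Delta_{I_1}(\cH) \cdot e(\cH)$ (the sum is over $L \in \prod_{w \in I_1} V_{\psi(w)}$), and the latter is at most $n^{2v_2 - |I_2|}$ since each $V_{\psi(w),w}$ has at most $n$ elements. Each pair contributes $p^{2\emi - e(F_I)}$. Combined with the bound $\mu \ge c \cdot e(\cH) \cdot n^{v_2} p^{\emi}$ derived in the proof of Claim~\ref{claim:mu-lower-high}, the contribution of $I$ to $\Delta/\mu^2$ is at most
\[
  \frac{\Delta_{I_1}(\cH)}{c^2 \cdot e(\cH)} \cdot \frac{1}{n^{|I_2|} p^{e(F_I)}}.
\]

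In the $\Zfr$ regime, $e(\cH) \ge \sigma n^{v_1}$ and the trivial bound $\Delta_{I_1}(\cH) \le n^{v_1 - |I_1|}$ yield $\Delta_{I_1}(\cH)/e(\cH) \le (\sigma n^{|I_1|})^{-1}$, so the ratio reduces to $1/(c^2 \sigma \cdot n^{v_{F_I}} p^{e_{F_I}})$. Since $F_I \subseteq H$ is nonempty, Lemma~\ref{lemma:m2H} applied with the lower bound $p \ge \tilC \twoDensTS/(2n^2)$ recorded in the proof of Claim~\ref{claim:mu-lower-high} gives $n^{v_{F_I}} p^{e_{F_I}} \ge \tilC m/4$. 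Summing over the at most $2^{v_{\Hm}}$ choices of $I$ then yields $\mu^2/\Delta = \Omega(\sigma m)$.

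In the $\Zsr$ regime, I split by which side of the $\max$ in \eqref{eq:reg2delta} controls $\Delta_{I_1}(\cH)$. When $\Delta_{I_1}(\cH) \le (m/n)^{v_1 - |I_1|}$, combining with \eqref{eq:reg2hedges} gives $\Delta_{I_1}(\cH)/e(\cH) \le \bigl(c_2 |Y_T| (m/n)^{|I_1|}\bigr)^{-1}$. The key observation is that, since $W_1 = N_H(\vcrit) \cap V(\Hm)$, adjoining $\vcrit$ and the edges $\vcrit w$ for $w \in I_1$ to $F_I$ produces a subgraph $F_I^+ \subseteq H$ with $v_{F_I} + 1$ vertices and $e_{F_I} + |I_1|$ edges; Lemma~\ref{lemma:m2H} applied to $F_I^+$, together with the substitution $(m/n)^{|I_1|} \ge (np/2)^{|I_1|}$ from $p \ge m/(2n^2)$, delivers a per-$I$ contribution of $O\bigl(n/(c_2 \tilC |Y_T| m)\bigr)$. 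When instead $\Delta_{I_1}(\cH) \le C_2 \cdot e(\cH)/n^{|I_1|}$, the ratio collapses to $C_2/(c^2 n^{v_{F_I}} p^{e_{F_I}}) = O(C_2/m)$, again by Lemma~\ref{lemma:m2H}. Summing these bounds over $I$ and combining with the $\Zfr$ case produces
\[
  \frac{\mu^2}{\Delta} = \Omega\!\left(\min\!\left\{\frac{c_2 \tilC |Y_T|}{n},\; \frac{1}{C_2},\; \sigma\right\} \cdot m\right),
\]
as claimed.

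The main obstacle is identifying the "star extension" $F_I^+$ in the $\Zsr$ low-degree subcase: the factors of $(m/n)^{|I_1|}$ produced by the hypergraph regularity condition \eqref{eq:reg2delta} can only be absorbed once one observes that $F_I$ together with the star from $\vcrit$ to $I_1$ is a subgraph of $H$ with $|I|+1$ vertices and $e(F_I)+|I_1|$ edges, so that Lemma~\ref{lemma:m2H} can be applied with these parameters. Everything else is routine counting together with the bounds on $p$ already recorded in the proof of Claim~\ref{claim:mu-lower-high}.
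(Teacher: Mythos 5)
Your argument is essentially identical to the paper's: the same decomposition of $\Delta$ by the set on which $\varphi$ and $\varphi'$ agree, the same split into the $\Zfr$ and $\Zsr$ regimes, and the same key observation that one should apply Lemma~\ref{lemma:m2H} to the ``star extension'' $H[\{\vcrit\}\cup I_1\cup I_2]$ (what the paper calls $H[\{\vcrit\}\cup I\cup J]$) in order to absorb the $(m/n)^{|I_1|}$ factor produced by~\eqref{eq:reg2delta}. The only slip is a sign error in the justification of the substitution $(m/n)^{|I_1|}\ge (np/c)^{|I_1|}$: this requires the \emph{upper} bound $p\le 10m/n^2$ established in Lemma~\ref{lemma:regCase}, not the lower bound $p\ge m/(2n^2)$ that you cite (and consequently the constant should be $np/10$ rather than $np/2$); this affects only the implicit constant.
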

\begin{proof}
  For every $I \subseteq W_1$ and $J \subseteq W_2$ let $\HIJ$ be the subgraph of $\Hm$ (and thus also of $H$) induced by $I \cup J$; note that $\HIJ$ may have isolated vertices. Further, let $\cK(I,J)$ be the set of all pairs $K, K' \in \cK'$ that agree exactly on (the image of) $I \cup J$, that is,
  \[
    \cK(I,J) = \left\{(K_\varphi, K_{\varphi'}) \in (\cKp)^2 : K_{\varphi} \cap K_{\varphi'} = \varphi(H_{I,J}) = \varphi'(H_{I,J}) \right\}.
  \]
  These definitions were made in such a way that
  \begin{equation}
    \label{eq:Delta-high-deg-upper}
    \begin{split}
      \Delta
      & = \sum_{K \in \cKp} \sum_{\substack{I \subseteq W_1, J \subseteq W_2 \\ \emptyset \neq H_{I,J} \subsetneq \Hm}} \sum_{\substack{K' \in \cKp \\ (K, K') \in \cKp(I,J)}} p^{2\emi - e(H_{I,J})} \\
      & \le \sum_{K \in \cKp} p^{\emi} \sum_{\substack{I \subseteq W_1, J \subseteq W_2 \\ \emptyset \neq H_{I,J} \subsetneq \Hm}} |\{K' \in \cKp : (K, K') \in \cKp(I,J)\}| \cdot p^{\emi-e_{\HIJ}} \\
      & \le \mu \sum_{\substack{I \subseteq W_1, J \subseteq W_2 \\ \emptyset \neq H_{I,J} \subsetneq \Hm}} \Delta_I(\cH) \cdot n^{v_2 - |J|} \cdot p^{\emi - e_{\HIJ}}.
    \end{split}
  \end{equation}
  
  Assume first that $Z\in\Zfr$. Using the trivial bound $\Delta_I(\cH)\le n^{v_1-|I|}$, which is valid for all $I \subseteq W_1$, and~\eqref{eq:Delta-high-deg-upper}, we obtain
  \[
    \begin{split}
      \frac{\Delta}{\mu} & \le \sum_{\substack{I \subseteq W_1, J \subseteq W_2 \\ \emptyset \neq H_{I,J} \subsetneq \Hm}} n^{v_1+v_2-|I|-|J|} \cdot p^{\emi - e_{\HIJ}} = \sum_{\substack{I \subseteq W_1, J \subseteq W_2 \\ \emptyset \neq H_{I,J} \subsetneq \Hm}} \frac{n^{v_1+v_2} p^{\emi}}{n^{v_{\HIJ}} p^{e_{\HIJ}}} \\
      & \le 2^{v_1+v_2} \cdot \frac{n^{v_1+v_2}p^{\emi}}{\min_{\emptyset \neq F \subseteq \Hm} n^{v_F}p^{e_F}} \leBy{L.~\ref{lemma:m2H}} 2^{v_1+v_2} \cdot \frac{n^{v_1+v_2} p^{\emi}}{n^2p}.
    \end{split}
  \]
  Since $n^{v_1+v_2} p^{\emi} \le \mu/(c \cdot \sigma)$, see~\eqref{eq:mu-lower-ZZfr} and $n^2p \ge m/2$, we may conclude that
  \[
    \frac{\mu^2}{\Delta} \ge \frac{c \cdot \sigma}{2^{v_1+v_2+1}} \cdot m.
  \]
  
  Suppose now that $Z \in \Zsr$. In this case, for all nonempty $I \subseteq W_1$,
  \[
    \begin{split}
      \Delta_I(\cH) & \le \max\left\{\left(\frac{m}{n}\right)^{v_1 - |I|}, C_2\cdot\frac{e(\cH)}{n^{|I|}}\right\}
      \leByRef{eq:reg2hedges} \max\left\{\frac{1}{c_2|Y_T|} \cdot \left(\frac{m}{n^2}\right)^{-|I|}, C_2 \right\} \cdot \frac{e(\cH)}{n^{|I|}} \\
      & \le \max\left\{\frac{1}{c_2|Y_T|} \cdot \left(\frac{10}{p}\right)^{|I|}, C_2 \right\} \cdot \frac{e(\cH)}{n^{|I|}}.
    \end{split}
  \]
  Denote by $\Delta_0$ and $\Delta_1$ the contributions to the sum in the right-hand side of~\eqref{eq:Delta-high-deg-upper} corresponding to $I = \emptyset$ and $I \neq \emptyset$, respectively, so that $\Delta \le \Delta_0 + \Delta_1$. Since $H_{\emptyset, J} = H[J] \subseteq H$ and $\Delta_\emptyset(\cH) = e(\cH)$, we have
  \[
    \frac{\Delta_0}{\mu} \le e(\cH) \cdot n^{v_2}p^{\emi} \cdot \sum_{\substack{J \subseteq W_2 \\ H[J] \neq \emptyset}} \frac{1}{n^{|J|} p^{e(H[J])}} \le 2^{v_2} \cdot \frac{e(\cH) \cdot n^{v_2}p^{\emi}}{\min_{\emptyset \neq F \subseteq H} n^{v_F}p^{e_F}}.
  \]
  Recalling that $e(\cH) \cdot n^{v_2}p^{\emi} \le \mu / c$, we conclude, using Lemma~\ref{lemma:m2H}, that
  \[
    \frac{\Delta_0}{\mu} \le \frac{2^{v_2}}{c} \cdot \frac{\mu}{n^2p} \le \frac{2^{v_2+1} \mu}{cm}.
  \]
  On the other hand,
  \[
    \begin{split}
      \frac{\Delta_1}{\mu} & \le e(\cH) \cdot n^{v_2}p^{\emi}  \cdot \sum_{\substack{\emptyset \neq I \subseteq W_1, J \subseteq W_2 \\ \emptyset \neq H_{I,J} \subsetneq \Hm}} \max\left\{\frac{1}{c_2|Y_T|} \cdot \frac{10^{|I|}}{p^{|I|}}, C_2 \right\} \cdot \frac{1}{n^{|I| + |J|} p^{e_{\HIJ}}}\\
      & = e(\cH) \cdot n^{v_2} p^{\emi} \cdot \sum_{\substack{\emptyset \neq I \subseteq W_1, J \subseteq W_2 \\ \emptyset \neq H_{I,J} \subsetneq \Hm}} \max\left\{ \frac{n}{c_2|Y_T|} \cdot \frac{10^{|I|}}{np^{|I|}}, C_2 \right\} \cdot \frac{1}{n^{v_{\HIJ}} p^{e_{\HIJ}}}.
    \end{split}
  \]
  Fix a nonempty $I \subseteq W_1$ and a $J \subseteq W_2$ such that $H_{I,J}$ is nonempty. Since $v_{\HIJ}+1$ and $e_{\HIJ} + |I| \ge 2$ are the numbers of vertices and edges of the subgraph of $H$ induced by $\{\vcrit\} \cup I \cup J$, Lemma~\ref{lemma:m2H} implies that
  \[
    \max\left\{ \frac{n}{c_2|Y_T|} \cdot \frac{10^{|I|}}{np^{|I|}}, C_2 \right\} \cdot \frac{1}{n^{v_{\HIJ}} p^{e_{\HIJ}}} \le \max\left\{\frac{n}{c_2|Y_T|} \cdot \frac{2 \cdot 10^{|I|}}{\tilC}, C_2 \right\} \cdot \frac{1}{n^2p}.
  \]
  Recalling again that $e(\cH) \cdot n^{v_2}p^{\emi} \le \mu/c$, we have
  \[
    \frac{\Delta_1}{\mu} \le \frac{\mu}{c} \cdot 2^{v_1+v_2} \cdot \max\left\{\frac{n \cdot 10^{v_1+1}}{c_2|Y_T| \cdot \tilC}, C_2\right\} \cdot \frac{2}{m}.
  \]
  We may conclude that
  \[
    \frac{\mu^2}{\Delta} \ge \frac{\mu^2}{\Delta_0+\Delta_1} \ge c' \cdot \min\left\{\frac{c_2 \cdot \tilC \cdot |Y_T|}{n}, \frac{1}{C_2}\right\} \cdot m,
  \]
  where $c'$ is a positive constants that depends only on $H$.
\end{proof}
   Finally, we invoke Lemma~\ref{lem:HJI} with $q = \frac{\mu}{\mu + \Delta} \le 1$ to conclude that
   \[
     \begin{split}
       \frac{|\FreeZ|}{\binom{e(\Pi)}{m-e(Z)}}
       & \le \Pr(K \nsubseteq G'\text{ for every }K\in \cKp) \le \exp\left(-\frac{\mu^2}{\mu+\Delta} + \frac{\mu^2\Delta}{2(\mu+\Delta)^2}\right) \\
       & \le \exp\left(-\frac{\mu^2}{2(\mu + \Delta)}\right) \le \exp\left(-\min\left\{\frac{\mu}{4}, \frac{\mu^2}{4\Delta}\right\}\right).
     \end{split}
   \]
   The assertion of the lemma now follows from Claims~\ref{claim:mu-lower-high} and~\ref{claim:Delta-upper}.
\end{proof}

\subsection{The irregular case}

In this section, we prove Lemma~\ref{lem:irregCase}, that is, for given $T' \subseteq \Pic$, we give an upper bound on the number of graphs $Z \in \Zi$, for some $T \in \TTH$ satisfying $|Y_T| = b$, such that $T_Z' = T'$.

\begin{proof}[{Proof of Lemma \ref{lem:irregCase}}]
  Fix some graph $T' \subseteq \Pic$, an integer $b \ge 1$, a colour $i \in \br{r}$, and distinct $v_1,\dotsc, v_b \in V_i$. We will describe a procedure that constructs, for every graph $Z$ such that $T'_Z = T'$ and $Y_{T_Z} = \{v_1, \dotsc, v_b\}$, a hypergraph $\cH \subseteq \cH_Z$ that satisfies condition~\eqref{eq:reg2delta} for every nonempty $I \subseteq W_1$. Our procedure will examine the neighbourhoods of $v_1, \dotsc, v_b$ in the graph $Z \setminus T'$ one-by-one and build $\cH$ in an online fashion. If $Z\in\Zib$, then the constructed hypergraph $\cH$ cannot have too many edges. More precisely, $\cH$ has to fail condition~\eqref{eq:reg2hedges} and, moreover, $\cH_Z$ must not satisfy~\eqref{eq:reg1}. This means, roughly speaking, that, when $Z \in \Zib$, the neighbourhoods of $v_1, \dotsc, v_b$ in $Z \setminus T'$ are highly correlated. This will allow us, with the use of Lemma \ref{lemma:d-sets}, to bound the number of choices for these neighbourhoods that result in a graph $Z \in \Zib$. Consequently, we will obtain an upper bound on  the size of the set $\ZibT$.

  Let
  \[
    \Ds = \left\lfloor \frac{\Dh}{v_1} \right\rfloor \ge \left\lfloor \frac{\rho m}{2v_1 n} \right\rfloor ,
  \]
  and let $\cH_0$ be the empty hypergraph with vertex set $\bigsqcup_{w \in W_1} V_{\psi(w)}$. Do the following for $s = 1, \dotsc, b$:
  \begin{enumerate}[label=(\roman*)]
  \item
    For every nonempty $I \subsetneq W_1$, let
    \[
      M_s^I =\left\{ L \in \prod_{w \in I}V_{\psi(w)} :  \deg_{\cH_{s-1}}(L) > \frac{C_2}{2}\cdot\frac{e(\cH_{s-1})}{n^{|I|}}\right\}.
    \]
  \item
    \label{item:alg-step-two}
    For each $j \in \br{r}$, choose an arbitrary collection $\{N_{j,w}(v_s)\}_{w \in W_1}$ of $v_1$ pairwise disjoint subsets of $N_j(v_s)$, each of size $\Ds$, denote $N(v_s) =\prod_{w \in W_1} N_{\psi(w),w}(v_s)$, and let
    \[
      \cH_s = \cH_{s-1} \cup \left\{K \in N(v_s) :  L \nsubseteq K\text{ for all }L\in \bigcup_{\emptyset\ne I\subsetneq W_1}M_s^I\right\}.
    \]
  \end{enumerate}
  Finally, let $\cH =\cH_b$.

  By construction, every $(v_w)_{w \in W_1} \in N(v_s)$ has distinct coordinates and hence $\cH \subseteq \cH_Z$. Moreover, for every nonempty $I\subsetneq W_1$,
  \[
    \begin{split}
      \Delta_I(\cH)
      &\le \frac{C_2}{2}\cdot\frac{e(\cH)}{n^{|I|}} + \Delta_I\big(N(v_s)\big) \le \frac{C_2}{2}\cdot\frac{e(\cH)}{n^{|I|}} + \prod_{w \in W_1 \setminus I} |N_{\psi(w)}(v_s)|
      \\
      & \le\max\left\{2\Dh^{v_1 - |I|}, C_2\cdot\frac{e(\cH)}{n^{|I|}}\right\} \le\max\left\{\left(\frac{m}{n}\right)^{v_1 - |I|}, C_2\cdot\frac{e(\cH)}{n^{|I|}}\right\},
    \end{split}
  \]
  as $|N_j(v_s)| = \Dh \le \rho m / n \le m/(2n)$ for all $j \in \br{r}$ and $s \in \br{b}$. Moreover, since $\Delta_{W_1}(\cH) \le 1 = (m/n)^{v_1-|W_1|}$, our $\cH$ satisfies~\eqref{eq:reg2delta} for every nonempty $I \subseteq W_1$.
  
  We say that $s \in \br{b}$ is \textit{useful} if
  \[
    e\big(\cH_s\setminus\cH_{s-1}\big) \ge 2^{-v_1} \cdot \Ds^{v_1}.
  \]
  If more than half of $s \in \br{b}$ are useful, then
  \[
    e(\cH) = \sum_{s=1}^b e\big(\cH_s \setminus \cH_{s-1}\big) \ge \frac{b}{2} \cdot 2^{-v_1} \cdot \Ds^{v_1} \ge 2^{-v_H} \cdot b \cdot \left\lfloor\frac{\rho m}{2v_1n}\right\rfloor^{v_1} \ge c_2 \cdot b \cdot\left(\frac{m}{n}\right)^{v_1},
  \]
  where the last inequality follows from~\eqref{eq:c2}; in particular $\cH$ satisfies condition~\eqref{eq:reg2hedges} and thus $Z\in\Zsrz$. Therefore, if $Z \in \Zib$, then at least half of $s \in \br{b}$ are not useful.

  \begin{claim}
    \label{claim:not-useful}
    Let $s \in \br{b}$ and suppose that $e(\cH_{s-1}) < \sigma n^{v_1}$. Then, there are at most
    \[
      \exp\left(-\frac{4\Gamma m}{n}\right) \cdot \binom{n}{r \Dh}
    \]
    choices for $N_1(v_s), \dotsc, N_r(v_s)$ such that $s$ is not useful.
  \end{claim}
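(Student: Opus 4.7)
The approach is to apply Lemma~\ref{lemma:d-sets} to the set $\cM$ of ``blocked'' tuples, where $(u_w)_{w \in W_1} \in \prod_{w \in W_1} V_{\psi(w)}$ is called blocked if either $(u_w)_{w \in W_1} \in \cH_{s-1}$ or $(u_w)_{w \in I} \in M_s^I$ for some nonempty $I \subsetneq W_1$. By the construction of $\cH_s$, the only tuples of $N(v_s)$ contributing to $\cH_s \setminus \cH_{s-1}$ are the non-blocked ones, so the hypothesis that $s$ is not useful forces more than $(1 - 2^{-v_1}) \Ds^{v_1} \geq 2^{-v_1} \Ds^{v_1}$ tuples of $N(v_s)$ to lie in $\cM$. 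A bound from Lemma~\ref{lemma:d-sets} on the number of disjoint $\Ds$-subset collections $(N_{\psi(w),w}(v_s))_{w \in W_1}$ for which this happens will then translate into the desired bound on $(N_j(v_s))_j$.

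First I would check that $|\cM|/\prod_{w \in W_1} |V_{\psi(w)}| \leq \tau_{\ref{lemma:d-sets}}(v_1, \alpha, 2^{-v_1})$. For each nonempty $I \subsetneq W_1$, summing the degree inequality defining $M_s^I$ over $L \in M_s^I$ and using $\sum_L \deg_{\cH_{s-1}}(L) = e(\cH_{s-1})$ yields $|M_s^I| \leq 2 n^{|I|}/C_2$, so
\[
  |\cM| \leq e(\cH_{s-1}) + \sum_{\emptyset \neq I \subsetneq W_1} |M_s^I| \prod_{w \notin I} |V_{\psi(w)}| \leq \sigma n^{v_1} + \frac{2^{v_1+1}}{C_2} n^{v_1}.
\]
Dividing through by $\prod_w |V_{\psi(w)}| \geq (n/(2r))^{v_1}$ gives $|\cM|/\prod_w |V_{\psi(w)}| \leq \sigma (2r)^{v_1} + 2(4r)^{v_1}/C_2$, which is $\leq \tau_{\ref{lemma:d-sets}}(v_1, \alpha, 2^{-v_1})$ by~\eqref{eq:sigma-C_2} (after tightening the constants by a harmless factor).

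Next I would invoke Lemma~\ref{lemma:d-sets} with $z = v_1$, $U_w = V_{\psi(w)}$, $d = \Ds$, and $\lambda = 2^{-v_1}$. The lemma counts \emph{independently} chosen $\Ds$-subsets, while in our setting the subsets sharing the same $\psi(w)$ must be pairwise disjoint; but this is only a restriction on which tuples are counted, so the same upper bound applies, yielding at most $\alpha^{\Ds} \prod_{w \in W_1} \binom{|V_{\psi(w)}|}{\Ds}$ ``bad'' tuples of disjoint blocks. Each such tuple extends to at most $\prod_{j \in \br{r}} \binom{|V_j| - m_j \Ds}{\Dh - m_j \Ds}$ neighbourhoods $(N_j(v_s))_j$, where $m_j = |\psi^{-1}(j)| \leq v_H$, by selecting the extra $\Dh - m_j \Ds$ elements of $N_j(v_s)$ outside $\bigcup_{w : \psi(w) = j} N_{j,w}(v_s)$. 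Elementary binomial estimates of the form $\binom{a}{b} \leq (ea/b)^b$ together with $\prod_j \binom{|V_j|}{\Dh} \leq \binom{n}{r\Dh}$ then bound the product $\prod_w \binom{|V_{\psi(w)}|}{\Ds} \cdot \prod_j \binom{|V_j| - m_j \Ds}{\Dh - m_j \Ds}$ by at most $(3 e r v_H)^{r \Dh} \binom{n}{r \Dh}$. Combining this with the factor $\alpha^{\Ds}$ and recalling $\Ds \geq \rho m/(6 v_H n)$, condition~\eqref{eq:alphaGamma}---which makes $\alpha$ extremely small relative to $\Gamma$, $\rho$, $v_H$, and $r$---yields the desired exponential factor $\exp(-4 \Gamma m / n)$.

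The main obstacle is the binomial bookkeeping in the final step: one must carefully track the multinomial overhead from specifying the $v_1$ pairwise-disjoint $\Ds$-blocks inside the $r$ neighbourhoods of size $\Dh$, and absorb this overhead into the exponential factor $\alpha^{\Ds}$ via the smallness of $\alpha$ guaranteed by~\eqref{eq:alphaGamma}. A secondary point to verify is that Lemma~\ref{lemma:d-sets}, formulated for independently chosen subsets, still yields a valid upper bound when the subsets are constrained to be pairwise disjoint within each colour class; this is immediate because disjointness is a strictly stronger condition.
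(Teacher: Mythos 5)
Your proposal is correct and takes a genuinely different (and arguably cleaner) route from the paper. The paper observes that if $s$ is not useful there must exist some nonempty $I \subseteq W_1$ with $|N_I(v_s) \cap M_s^I| > 2^{-v_1} \Ds^{|I|}$, then applies Lemma~\ref{lemma:d-sets} separately for each such $I$ with $z = |I|$ and takes a union bound over all $2^{v_1}-1$ choices of $I$; this is why condition~\eqref{eq:sigma-C_2} only needs a $\max$ and why the final bound carries a $2^{v_1}$ factor in front of $\alpha^{\Ds}$. You instead roll all the $M_s^I$ (for $I \subsetneq W_1$) together with $\cH_{s-1}$ itself into a single ``blocked'' set $\cM \subseteq \prod_{w \in W_1} V_{\psi(w)}$ and apply Lemma~\ref{lemma:d-sets} once with $z = v_1$; since $s$ not useful forces a $(1-2^{-v_1})$-fraction of $N(v_s)$ to be blocked, a single application with $\lambda = 2^{-v_1}$ suffices. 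The trade-offs are minor and comparable: your route requires the \emph{sum} of the densities $\sigma(2r)^{v_1} + 2(4r)^{v_1}/C_2$ to sit below $\tau$, which is a factor of roughly $3$ stronger than what~\eqref{eq:sigma-C_2} as written provides (you correctly flag that tightening $\sigma, C_2$ by a constant fixes this), whereas the paper's route costs the extra $2^{v_1}-1$ multiplicative factor that then has to be absorbed into $\alpha^{\Ds}$. Your observation that Lemma~\ref{lemma:d-sets} applies a fortiori when the subsets within a colour class are constrained to be pairwise disjoint is exactly right, and the final extension from the $v_1$ disjoint $\Ds$-blocks to $(N_j(v_s))_j$---you use $\prod_j \binom{|V_j|-m_j\Ds}{\Dh - m_j\Ds}$, the paper uses the cruder $\binom{n}{r\Dh - v_1\Ds}$---is a cosmetic difference handled by the same elementary binomial manipulations in both cases.
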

  \begin{proof}
    For every $I \subseteq W_1$, denote $N_I(v_s) =\prod_{w \in I} N_{\psi(w),w}(v_s)$, where $\{N_{j,w}(v_s)\}$ is the collection defined in step~\ref{item:alg-step-two} of the algorithm building $\cH$. Letting $M_s^{W_1} = \cH_{s-1}$, we have
    \[
      e\big(\cH_s \setminus \cH_{s-1}\big) \ge \Ds^{v_1} - \sum_{\emptyset \neq I \subseteq W_1} |N_I(v_s) \cap M_s^I| \cdot \Ds^{v_1-|I|}.
    \]
    In particular, if $s$ is not useful then there must be some nonempty $I \subseteq W_1$ such that
    \begin{equation}
      \label{eq:not-useful-I}
      \big|N_I(V_s) \cap M_s^I\big| > 2^{-v_1} \cdot \Ds^{|I|}.
    \end{equation}
    Since $|V_j| \ge n/(2r)$ for every $j \in \br{r}$, we have
    \[
      |M_s^{W_1}| = e(\cH_{s-1}) < \sigma n^{v_1} \le \sigma \cdot (2r)^{v_1} \cdot \prod_{w \in W_1} |V_{\psi(w)}|.
    \]
    Moreover, for every $\emptyset \neq I \subsetneq W_1$,
    \[
      |M_s^I| \cdot \frac{C_2}{2} \cdot \frac{e(\cH_{s-1})}{n^{|I|}} \le \sum_{L \in M_s^I} \deg_{\cH_{s-1}}(L) \le \binom{v_1}{|I|}\cdot e(\cH_{s-1})
    \]
    and hence
    \[
      |M_s^I| \le \frac{1}{C_2} \cdot \binom{v_1}{|I|} \cdot n^{|I|} \le \frac{2^{v_1}}{C_2} \cdot n^{|I|} \le \frac{(4r)^{v_1}}{C_2} \cdot \prod_{w \in I} |V_{\psi(w)}|.
    \]
    
    Since we chose $\sigma$ to be sufficiently small and $C_2$ to be sufficiently large as a function of $\alpha$ and $v_1$, see~\eqref{eq:sigma-C_2}, Lemma~\ref{lemma:d-sets} applied $2^{v_1}-1$ times implies that there are at most
    \[
      (2^{v_1} - 1) \cdot \alpha^{\Ds} \cdot \prod_{w \in W_1} \binom{|V_{\psi(w)}|}{\Ds}
    \]
    choices of $N(v_s)$ such that~\eqref{eq:not-useful-I} holds for some nonempty $I \subseteq W_1$. On the other hand, the number of choices for $N_1(v_s), \dotsc, N_r(v_s)$ that can yield a given $N(v_s)$ is at most $\binom{n}{r \Dh - v_1 \Ds}$. We conclude that the number $X$ of choices for $N_1(v_s), \dotsc, N_r(v_s)$ that render $s$ not useful satisfies
    \[
      \begin{split}
        X
        & \le 2^{v_1} \cdot \alpha^{\Ds} \cdot \binom{n}{r \Dh - v_1 \Ds} \cdot \prod_{w \in W_1} \binom{|V_{\psi(w)}|}{\Ds} \\
        & = 2^{v_1} \cdot \alpha^{\Ds} \cdot \binom{n}{r \Dh} \cdot \binom{r \Dh}{v_1 \Ds} \cdot \underbrace{{\binom{n-r \Dh + v_1 \Ds}{v_1 \Ds}}^{-1} \cdot \prod_{w \in W_1} \binom{|V_{\psi(w)}|}{\Ds}}_{(\star)}.
      \end{split}
    \]
    Since $n - r \Dh \ge 2n/3 \ge |V_j|$ for every $j \in \br{r}$, we have
    \[
      (\star) \le \binom{2n/3 + v_1 \Ds}{v_1 \Ds}^{-1} \cdot \binom{2n/3}{\Ds}^{v_1} \le \binom{2n/3 + v_1 \Ds}{v_1 \Ds}^{-1} \cdot \binom{v_1 \cdot 2n/3}{v_1 \Ds} \leByRef{eq:binCoef3} v_1^{v_1 \Ds}.
    \]
    Finally, since $v_1 \Ds \ge \Dh - v_1 \ge 2\Dh/3 \ge \rho m / (3n)$, we conclude that
    \[
      \begin{split}
        X \cdot \binom{n}{r \Dh}^{-1}
        & \le 2^{v_1} \cdot \alpha^{\Ds} \cdot \binom{r \Dh}{v_1 \Ds} \cdot v_1^{v_1 \Ds} \leByRef{eq:binCoef4} \left(2 \cdot \alpha^{1/v_1} \cdot \frac{e r \Dh}{v_1 \Ds} \cdot v_1 \right)^{v_1 \Ds} \\
        & \le \left(3er \cdot \alpha^{1/v_1} v_1 \right)^{\frac{\rho m}{3n}} \leByRef{eq:alphaGamma} \exp\left(-\frac{4\Gamma m}{n}\right),
      \end{split}
    \]
    giving the assertion of the claim.
  \end{proof}
  
  We are now ready to prove the claimed upper bound on the size of the family $\ZibT$. Each graph $Z$ in this family can be constructed by specifying an $i \in \br{r}$, a sequence of distinct vertices $v_1, \dotsc, v_b \in V_i$, and a set $S \subseteq \br{b}$ of size at least $b/2$ such that, when we execute the algorithm described above, every $s \in S$ is not useful. Since the number of choices for $N_1(v_s), \dotsc, N_r(v_s)$ is at most $\exp(-4\Gamma m/n) \cdot \binom{n}{r \Dh}$ when $s \in S$, by Claim~\ref{claim:not-useful}, and at most $\binom{n}{r \Dh}$ when $s \in \br{r} \setminus S$, we have
  \[
    |\ZibT| \le r \cdot n^b \cdot 2^b \cdot \exp\left(-\frac{4\Gamma m}{n} \cdot \frac{b}{2}\right) \binom{n}{r \Dh}^b \leByRef{eq:binCoef4} \exp\left(-\frac{3\Gamma m}{2n} \cdot b\right) \left(\frac{en}{r \Dh}\right)^{b r \Dh}.
  \]
  On the other hand, by~\eqref{eq:usefulEnumBound}, which holds for all $y \le m' \le m \le e(\Pi) - \xi n^2$, we have
  \[
    \frac{\binom{e(\Pi)}{m-e(T')-b r\Dh}}{\binom{e(\Pi)}{m-e(T')}}
    \le
    \left(\frac{m}{\xi n^2}\right)^{b r \Dh}.
  \]
  It follows that
  \begin{multline*}
    |\ZibT| \cdot \binom{e(\Pi)}{m-e(T')-b r \Dh} \\
    \le \exp\left(-\frac{3\Gamma m}{2n} \cdot b\right) \cdot \left(\frac{en}{r \Dh} \cdot \frac{m}{\xi n^2}\right)^{b r\Dh} \binom{e(\Pi)}{m-e(T')}.
  \end{multline*}
  The claimed bound follows after noting that, since $(ea/x)^x \le e^a$ for all $x \in (0, \infty)$,
  \[
    \left(\frac{en}{r \Dh} \cdot \frac{m}{\xi n^2}\right)^{r\Dh} \le \exp\left(\frac{m}{\xi n}\right) \leByRef{eq:Gamma} \exp\left(\frac{\Gamma m}{2 n}\right).\qedhere
  \]
\end{proof}

\section{The 1-statement: the dense case}
\label{sec:dense}

Fix a partition $\Pi \in \Partg$. In this section, we verify the assumptions of Proposition~\ref{prop:sufficient-1-statement} in the case where 
\[
  e(\Pi) - \xi n^2 \le m\le \exnH.
\]
We start by introducing two additional parameters. Let $\eps$ and $\nu$ be positive constants satisfying
\begin{equation}
  \label{eq:eps-nu}
  \eps + v_H \nu \le \frac{1}{2r} \qquad \text{and} \qquad \eps \le \nu/8.
\end{equation}
Earlier on, we chose $\gamma$, $\delta$, and $\xi$ sufficiently small so that
\begin{equation}
  \label{eq:xi-dense}
  320\xi \le \nu \qquad \text{and} \qquad  \left(\frac{4e}{\nu \eps}\right)^\eps \cdot \left(\frac{320\xi}{\nu}\right)^{\nu/4} \le e^{-2} ,
\end{equation}
and, additionally,
\begin{equation}
  \label{eq:xi-delta-eps-nu-gamma}
  \xi + \delta \le 2\max\left\{\xi, \delta\right\} < \min\left\{\frac{\eps^2}{v_H^2}, \frac{\nu}{8r}\right\} \qquad \text{and} \qquad \gamma \le \frac{1}{20r}.
\end{equation}

In order to show that the assumptions of Proposition~\ref{prop:sufficient-1-statement} are satisfied, we will define a natural map $\cM \colon \FreesdP \to \BPk$ by letting $\cM(G)$ be the subgraph of $G \setminus \Pi$ obtained by deleting from it all vertices that are non-adjacent to more than $\nu n$ vertices of a different colour class of $\Pi$; we shall show that this graph has maximum degree $k$. We will then estimate the left-hand side of~\eqref{eq:Freesdgp} using ad-hoc, combinatorial arguments.

Suppose that $\Pi = \partition$. For a graph $G \in \FreesdP$, let $X_G$ denote the set of all vertices of $G$ that have fewer than $|V_j| - \nu n$ neighbours in some colour class $V_j$ other than their own. More precisely,
\[
  X_G = \bigcup_{i=1}^r \big\{v \in V_i : \deg_G(v, V_j) < |V_j| - \nu n \text{ for some $j \neq i$}\big\}.
\]
We first show that the set $X_G$ is rather small and that the graph $(G \setminus \Pi) - X_G$ has maximum degree at most $k$.

\begin{lemma}
  \label{lemma:XG-small}
  For every $G \in \FreesdP$, we have
  \[
    |X_G| \le \frac{n}{4r}.
  \]
\end{lemma}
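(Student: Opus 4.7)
The plan is to bound $|X_G|$ by counting the edges of $\Pi$ that are \emph{missing} from $G$. Since each vertex of $X_G$ must be missing at least $\nu n$ edges to some other colour class, a double-counting argument will suffice.

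First I would compute the number of missing $\Pi$-edges. Since $G \in \FreesdP$, we have $m = e(G) = e(G \cap \Pi) + e(G \setminus \Pi)$ and $e(G \setminus \Pi) \le \delta m$ by~\eqref{eq:almostRcolour}. Combining this with the dense-case hypothesis $e(\Pi) - \xi n^2 \le m$, I obtain
\[
  e(\Pi) - e(G \cap \Pi) = \bigl(e(\Pi) - m\bigr) + e(G \setminus \Pi) \le \xi n^2 + \delta m \le (\xi + \delta) n^2,
\]
using $m \le \exnH \le n^2/2$.

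Next I would use the definition of $X_G$: for every $v \in X_G$ there exists some class $V_j$ (different from $v$'s own class) in which $v$ has fewer than $|V_j| - \nu n$ neighbours, which means $v$ is non-adjacent in $G$ to at least $\nu n$ of its $\Pi$-neighbours. Summing over $v \in X_G$ and noting that each missing $\Pi$-edge is counted at most twice, this yields
\[
  \nu n \cdot |X_G| \le 2 \bigl(e(\Pi) - e(G \cap \Pi)\bigr) \le 2(\xi + \delta) n^2.
\]

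Finally, rearranging gives $|X_G| \le 2(\xi + \delta) n / \nu$, and the second inequality in~\eqref{eq:xi-delta-eps-nu-gamma} ensures $\xi + \delta < \nu/(8r)$, so $|X_G| < n/(4r)$, as desired. There is no real obstacle here; the only subtlety is to use the right pair of inequalities from the parameter chain of Section~\ref{sec:oneState}, namely $m \ge e(\Pi) - \xi n^2$ (to bound non-$\Pi$-edges of $G$) together with $\xi + \delta \le \nu/(8r)$ (to convert the resulting estimate into the desired $n/(4r)$ bound).
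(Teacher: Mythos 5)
Your proof is correct and takes essentially the same approach as the paper: both bound the number of edges of $\Pi$ missing from $G$ above by $(\xi+\delta)n^2$ via the hypotheses $m \ge e(\Pi) - \xi n^2$ and $e(G \setminus \Pi) \le \delta m$, and below by $\tfrac{1}{2}\nu n |X_G|$ by counting missing edges at vertices of $X_G$, then invoke~\eqref{eq:xi-delta-eps-nu-gamma}. (A small nitpick: the relevant estimate is $\xi + \delta < \nu/(8r)$, which comes from the first, chained, inequality of~\eqref{eq:xi-delta-eps-nu-gamma}, not the second.)
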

\begin{proof}
  Since
  \[
    e(\Pi) - e(G \cap \Pi) = \frac{1}{2} \cdot \sum_{i = 1}^r \sum_{v \in V_i} \sum_{j \neq i} \big(|V_j| - \deg(v, V_j)\big) \ge \frac{1}{2} \cdot |X_G| \cdot \nu n,
  \]
  we have
  \[
    e(\Pi) - \xi n^2 \le e(G) \le e(G \cap \Pi) + \delta n^2 \le e(\Pi) + \delta n^2 - \frac{1}{2} \cdot |X_G| \cdot \nu n.
  \]
  We conclude that
  \[
    |X_G| \le 2 \cdot \frac{\delta + \xi}{\nu} \cdot n \leByRef{eq:xi-delta-eps-nu-gamma} \frac{n}{4r},
  \]
  as claimed.
\end{proof}

\begin{lemma}
  \label{lemma:dense-star}
  For every $G \in \FreesdP$, the maximum degree of $(G \setminus \Pi) - X_G$ is at most $k$.
\end{lemma}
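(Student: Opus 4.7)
My plan is a proof by contradiction. Suppose some vertex $v \in V_i \setminus X_G$ has at least $k+1$ monochromatic neighbours $x_1, \dotsc, x_{k+1}$ lying in $V_i \setminus X_G$. I will build a copy of $H$ inside $G$, contradicting $G \in \Free$.

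The main preliminary step is structural. Fix a critical vertex $v_c \in V(H)$ with $\crit(v_c) = k+1$ and a critical star $S \subseteq H$ at $v_c$ with $e_S = k+1$; write $u_1, \dotsc, u_{k+1}$ for its leaves. I claim that these leaves are pairwise non-adjacent in $H$. Indeed, fix any proper $r$-colouring $\phi$ of $H \setminus S$ (which exists since $\chi(H \setminus S) = r$). If $\phi(u_j) \neq \phi(v_c)$ for some $j$, then $\phi$ would still be a proper $r$-colouring of $H \setminus (S \setminus v_c u_j)$, contradicting the minimality clause in the definition of a critical star; hence $\phi(v_c) = \phi(u_j)$ for every $j$. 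But any edge $u_j u_l \in E(H)$ would lie in $E(H) \setminus E(S)$ (as $S$ is a star centred at $v_c$) and thereby receive equal $\phi$-colours, violating properness. Permuting colours, I may henceforth assume $\phi(v_c) = i$, so $\phi(u_j) = i$ for every $j$.

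I then construct the embedding $\varphi \colon V(H) \to \br{n}$ by setting $\varphi(v_c) = v$, $\varphi(u_j) = x_j$, and extending greedily: in an arbitrary order I place each remaining vertex $w \in V(H) \setminus \{v_c, u_1, \dotsc, u_{k+1}\}$ at some element of $V_{\phi(w)} \setminus X_G$, distinct from already-placed images and lying in $N_G(\varphi(u))$ for every previously placed $u$ with $uw \in E(H)$. The edges of $S$ map to the monochromatic edges $v x_j \in G$ by construction; the leaf-non-adjacency ensures there are no other edges internal to $\{v_c,u_1,\dots,u_{k+1}\}$ to verify. For every remaining constraint arising from an edge $uw$, we have $uw \in E(H) \setminus E(S)$, hence $\phi(u) \neq \phi(w)$ by properness of $\phi$ on $H \setminus S$; since $\varphi(u) \notin X_G$ and $\varphi(u) \in V_{\phi(u)} \neq V_{\phi(w)}$, at most $\nu n$ vertices of $V_{\phi(w)}$ fail to be $G$-neighbours of $\varphi(u)$.

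What remains is a counting check at each greedy step: $|V_{\phi(w)}| \ge (1/r - \gamma)n$ since $\Pi \in \Partg$, Lemma~\ref{lemma:XG-small} gives $|X_G| \le n/(4r)$, and the at most $v_H$ adjacency constraints together with distinctness from $\le v_H$ already-placed images exclude an additional $v_H \nu n + v_H$ vertices. Using $\gamma \le \frac{1}{20r}$ from~\eqref{eq:gamma} and $v_H \nu \le \frac{1}{2r}$ from~\eqref{eq:eps-nu}, the set of admissible choices has size at least
\[
  \left(\frac{1}{r} - \frac{1}{20r} - \frac{1}{4r} - \frac{1}{2r}\right) n - v_H \;=\; \frac{n}{5r} - v_H \;>\; 0
\]
for $n$ large, so the greedy procedure never stalls. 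The resulting $\varphi$ is an embedding of $H$ into $G$, contradicting $G \in \Free$. The only delicate point is the leaf-non-adjacency claim, without which the embedding would demand monochromatic edges in $V_i$ beyond those incident to $v$; once that is in hand, the rest is a routine parameter-driven greedy count.
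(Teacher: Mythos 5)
Your proof is correct and takes a genuinely different route from the paper. Both arguments rest on the same structural fact about a minimal critical star $S$ at $v_c$ (in any proper $r$-colouring of $H\setminus S$ the centre and all leaves of $S$ receive the same colour, so the leaves are pairwise non-adjacent in $H$), which you prove explicitly while the paper leaves it implicit in the one-line claim that ``$G$ would contain every $(r+1)$-colourable vertex-critical graph of criticality $k+1$ with at most $v_H$ vertices.'' The difference lies in how the embedding is produced. You build it vertex by vertex by a greedy argument, using only the bound $|X_G|\le n/(4r)$ from Lemma~\ref{lemma:XG-small}, the pointwise degree bound built into the definition of $X_G$, and the parameter constraints~\eqref{eq:gamma} and~\eqref{eq:eps-nu}. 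The paper instead fixes $v_H$ vertices $v,u_1,\dotsc,u_{v_H-1}\in V_i\setminus X_G$, forms the common $G$-neighbourhoods $N_j=V_j\cap N_G(v)\cap\bigcap_\ell N_G(u_\ell)$ for $j\neq i$ (each of size at least $\eps n$ via~\eqref{eq:eps-nu}), observes that the induced $(r-1)$-partite subgraph of $G\cap\Pi$ on $\bigcup_{j\neq i}N_j$ must be $K_{r-1}(v_H)$-free, and then invokes the $r$-partite Tur\'an bound (Lemma~\ref{lemma:Turan}) to force $e(\Pi)-e(G\cap\Pi)\ge(\eps n)^2/v_H^2$, contradicting $m>e(\Pi)-\xi n^2$ via the first half of~\eqref{eq:xi-delta-eps-nu-gamma}. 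Your route is a little more elementary in that it avoids Lemma~\ref{lemma:Turan} and does not use the constraint $\xi+\delta<\eps^2/v_H^2$; the paper's counting route is a bit more concise on the page and has the modest side benefit of making visible that $G$ excludes \emph{every} $(r+1)$-chromatic vertex-critical graph of criticality $k+1$ on at most $v_H$ vertices, not just $H$.
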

\begin{proof}
  Suppose that there were a $G \in \FreesdP$ such that $(G \setminus \Pi) - X_G$ has a vertex $v$ of degree at least $k+1$. Let $\Pi = \partition$ and suppose that $v \in V_i$. Let $u_1, \dotsc, u_{k+1} \in V_i \setminus X_G$ be arbitrary neighbours of $v$ and let $u_{k+2}, \dotsc, u_{v_H-1}$ be arbitrary vertices of $V_i \setminus X_G$ that are distinct from $v$ and $u_1, \dotsc, u_{k+1}$; such vertices exist since, by Lemma~\ref{lemma:XG-small}, we have $|V_i \setminus X_G| \ge n/(2r) - n/(4r) = n/(4r)$. For each $j \in \br{r} \setminus \{i\}$, let
  \[
    N_j = V_j \cap N_G(v) \cap \bigcap_{\ell=1}^{v_H-1} N_G(u_\ell).
  \]
  and observe that, by the definition of $X_G$,
  \[
    |N_j| \ge |V_j| - v_H \cdot \nu n \ge n/(2r) - v_H \cdot \nu n \geByRef{eq:eps-nu} \eps n.
  \]
  Observe further that the subgraph of $G \cap \Pi$ that is induced by $N_1 \cup \dotsb \cup N_{i-1} \cup N_{i+1} \cup \dotsb \cup N_r$ is $K_{r-1}(v_H)$-free; indeed, otherwise $G$ would contain every $(r+1)$-colourable vertex-critical graph of criticality $k+1$ with at most $v_H$ vertices, contradicting the fact that $G$ is $H$-free. This implies, in particular, that
  \[
    e(\Pi) - e(G \cap \Pi) \ge e\big(K_{r-1}(\eps n)\big) - \ex\big(K_{r-1}(\eps n), K_{r-1}(v_H)\big) \ge (\eps n)^2/v_H^2,
  \]
  where the last inequality follows from Lemma~\ref{lemma:Turan}. Consequently,
  \[
    m = e(G \cap \Pi) + e(G \setminus \Pi) \le e(\Pi) - (\eps n)^2/v_H^2 + \delta n^2 \lByRef{eq:xi-delta-eps-nu-gamma} e(\Pi) - \xi n^2,
  \]
  a contradiction.
\end{proof}

For every $G \in \FreesdP$, let $B_G = (G \setminus \Pi) - X_G$ and note, by Lemma~\ref{lemma:dense-star}, we have $B_G \in \BPk$. Define, for every $B \in \BPk$,
\[
  \FreeB = \{G \in \FreesdP : B_G = B\}.
\]
The following proposition, which is the main result of this section, implies that the map $\cM \colon G \mapsto B_G$ satisfies the assumptions of Proposition~\ref{prop:sufficient-1-statement}.

\begin{prop}
  \label{prop:dense-main}
  For every $B \in \BPk$, we have
  \[
    |\FreeB| \le \exp(-n) \cdot \binom{e(\Pi)}{m - e(B)}.
  \]
\end{prop}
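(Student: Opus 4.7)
The plan is to enumerate each $G \in \FreeB$ via the canonical decomposition $G = B \cup E_X \cup F$, where $X := X_G$, $E_X := (G \setminus \Pi) \setminus B$ is the set of extra monochromatic edges (all incident to $X$ by construction of $B_G$), and $F := G \cap \Pi$. Setting $a := |X|$ and $t := |E_X|$, Lemma~\ref{lemma:XG-small} gives $a \le n/(4r)$, while $a \ge 1$ follows since $G \setminus \Pi \notin \BPk$ forces a vertex of monochromatic degree exceeding $k$, which by Lemma~\ref{lemma:dense-star} cannot exist outside $X$. By the definition of $X_G$, each $v \in X$ admits a colour class $V_{j(v)} \ne V_{i(v)}$ in which it has at least $\nu n$ non-neighbours in $F$, forcing at least $\nu n$ edges of $\Pi$ between $v$ and $V_{j(v)}$ to be missing from $F$.

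For fixed $a$, I would first enumerate $X$ and the function $j \colon X \to \br{r}$ at cost at most $r^a \binom{n}{a}$, and for each $v \in X$ specify a set $T_v \subseteq V_{j(v)}$ of $\nu n$ non-neighbours at cost at most $\binom{|V_{j(v)}|}{\nu n} \le (4e/(\nu r))^{\nu n}$ per vertex. Once these data are fixed, $F$ must avoid at least $a \nu n/2$ specified edges of $\Pi$ (allowing for double counting between pairs of $X$-vertices) and satisfy $|F| = m - e(B) - t$. Applying~\eqref{eq:usefulEnumBound} to bound the ratio $\binom{e(\Pi) - a\nu n/2}{m - e(B) - t}/\binom{e(\Pi)}{m - e(B)}$ produces a savings factor of at most $(320\xi/\nu)^{a\nu n/2}$ after summing over $t$, with the enumeration of $E_X$ (at most $\binom{an}{t}$) absorbed into the same sum. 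The $H$-free property, invoked via a Lemma~\ref{lemma:dense-star}-style argument around the witness vertex whose monochromatic degree exceeds $k$, further forces common neighbourhoods in the "good" directions to be $K_{r-1}(v_H)$-free, yielding at least $(\eps n)^2/v_H^2$ additional missing $F$-edges by Lemma~\ref{lemma:Turan}. This cannot be offset by $|E_X| \le \delta m$ in view of $\xi + \delta < \eps^2/v_H^2$ from~\eqref{eq:xi-delta-eps-nu-gamma}.

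The main obstacle is calibrating the constants so that the enumeration costs and the $H$-free savings combine to give the required $\exp(-n)$ factor. The key inequality~\eqref{eq:xi-dense}, namely $(4e/(\nu\eps))^{\eps}(320\xi/\nu)^{\nu/4} \le e^{-2}$, is engineered for precisely this purpose: the first factor absorbs the per-vertex enumeration costs, while the second captures the combined savings from forced missing $F$-edges, together yielding $\le \exp(-2n)$ with enough slack to handle the sum over $a$ and $t$ and the trivial union bound over the witness vertex. A subtle point is that for $v \in X$ the "bad" direction $j(v)$ can prevent Lemma~\ref{lemma:dense-star}'s conclusion from applying in full, so one has to carefully identify the correct witness vertex---possibly not $v$ itself but a monochromatic neighbour of some $v' \in X$---in order for the strong $K_{r-1}(v_H)$-free constraint to hold in $r-1$ directions rather than only in $r-2$.
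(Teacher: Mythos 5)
Your high-level framework is the right one and tracks the paper closely: decompose $G$ into $B$, the extra monochromatic edges $E_X$ (all incident to $X := X_G$), and $F = G\cap\Pi$; use Lemmas~\ref{lemma:XG-small}, \ref{lemma:dense-star}, \ref{lemma:densBoundDeg} to get $1 \le |X| \le n/(4r)$ and $\Delta(G\setminus\Pi) < \eps n$; exploit the fact that each $v\in X$ is missing at least $\nu n$ $\Pi$-edges; and calibrate the resulting gain against the enumeration cost via~\eqref{eq:xi-dense}. That is indeed the paper's outline.

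However, the central technical step is wrong. You propose to obtain the savings factor by applying~\eqref{eq:usefulEnumBound} to the ratio $\binom{e(\Pi) - a\nu n/2}{m - e(B) - t}\big/\binom{e(\Pi)}{m - e(B)}$. Two problems. First, \eqref{eq:usefulEnumBound} compares $\binom{e(\Pi)}{m'-y}$ to $\binom{e(\Pi)}{m'}$ --- same top, different bottom --- whereas your ratio has different tops, so the inequality does not literally apply; and even the conclusion it yields, $(m/(\xi n^2))^y$, is an \emph{upper} bound that in general exceeds $1$ and thus cannot be used as a ``savings'' factor. Second, and more importantly, \eqref{eq:usefulEnumBound} is derived under the assumption $m \le e(\Pi) - \xi n^2$ (which guarantees $e(\Pi) - m' \ge \xi n^2$ in the intermediate step); this is exactly the complementary, sparse regime. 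In the dense regime $m > e(\Pi) - \xi n^2$, that derivation breaks. What makes the dense case work is a different, more delicate estimate: define $N_{X,y}=\binom{e(\Pi_X)}{y}\binom{e(\Pi - X)}{m - y - e(B)}$, bound each $N_{X,y}$ by $\binom{e(\Pi)}{m-e(B)}$ via Vandermonde, and then show by a term-by-term ratio computation $\frac{N_{X,y}}{N_{X,y+1}} \le \frac{320\xi}{\nu}$ for $y+1\le y_X'$ that $N_{X,y}$ for $y\le y_X$ already carries a factor $(320\xi/\nu)^{|X|\nu n/4}$. Note the exponent is $\nu n/4$, not $\nu n/2$; the latter is not derivable from the paper's ratio bound.

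Finally, your last paragraph reintroduces the $H$-free property via a Tur\'an/$K_{r-1}(v_H)$-free argument to extract an additional $(\eps n)^2/v_H^2$ missing edges. This is both unnecessary and somewhat confused here: Lemmas~\ref{lemma:dense-star} and~\ref{lemma:densBoundDeg} are precisely where the $H$-freeness (via Lemma~\ref{lemma:Turan}) is consumed, and Proposition~\ref{prop:dense-main} itself needs only the structural consequences of those lemmas together with the purely combinatorial enumeration; it never re-touches $H$-freeness. There is also no ``witness vertex whose monochromatic degree exceeds $k$'' to identify at this stage --- the fact that $X_G$ is nonempty and that the extra monochromatic edges all touch $X_G$ is all that is used, and both follow from Lemma~\ref{lemma:dense-star}.
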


The proof of Proposition~\ref{prop:dense-main} will require one additional lemma, which states that the maximum degree of the graph $G \setminus \Pi$ cannot be very large.

\begin{lemma}
  \label{lemma:densBoundDeg}
  For every $G \in \FreesdP$, we have $\Delta(G \setminus \Pi) < \eps n$.
\end{lemma}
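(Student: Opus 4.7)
The plan is to argue by contradiction, closely mirroring the strategy of Lemma~\ref{lemma:dense-star}. Suppose some vertex $v \in V_i$ satisfies $\deg_{G\setminus\Pi}(v) \ge \eps n$ and let $M = V_i \cap N_G(v)$. By the unfriendly condition~\eqref{eq:Pi-unfriendly}, every $j\ne i$ satisfies $\deg_G(v, V_j) \ge |M| \ge \eps n$. The crucial preliminary observation is that the parameter hierarchy forces $|X_G|$ to be negligible compared to $\eps n$: the bound $|X_G| \le 2(\xi+\delta)n/\nu$ (implicit in the proof of Lemma~\ref{lemma:XG-small}), combined with $\eps \le \nu/8$ from~\eqref{eq:eps-nu} and $\xi+\delta < \eps^2/v_H^2$ from~\eqref{eq:xi-delta-eps-nu-gamma}, yields $|X_G| \le \eps n / (4 v_H^2)$. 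Consequently $|M\setminus X_G| \ge \eps n - |X_G| \ge \eps n/2$, which is much larger than $v_H$.

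The second step is to pick $u_1,\dotsc,u_{k+1} \in M\setminus X_G$ to be $G$-neighbours of $v$ and $u_{k+2},\dotsc,u_{v_H-1} \in V_i\setminus X_G$ distinct arbitrary vertices; existence is guaranteed by the lower bound on $|M\setminus X_G|$ just obtained. As a by-product, the case $v \notin X_G$ is ruled out: Lemma~\ref{lemma:dense-star} would then force $|M\setminus X_G| \le k$, contradicting $|M\setminus X_G| \ge \eps n/2 \gg k$, so necessarily $v \in X_G$. Define now
\[
  N_j \;=\; V_j \cap N_G(v) \cap \bigcap_{\ell=1}^{v_H-1} N_G(u_\ell)
  \qquad (j \ne i).
\]
Each $u_\ell \notin X_G$ contributes a loss of at most $\nu n$ in every class, whereas the unfriendly condition guarantees $|V_j\cap N_G(v)| \ge \eps n$; with a careful accounting using~\eqref{eq:eps-nu} one then shows that $|N_j| \ge \eps n$ for every $j\ne i$.

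The conclusion is reached exactly as in Lemma~\ref{lemma:dense-star}: the graph $G\cap\Pi$ restricted to $\bigcup_{j\ne i} N_j$ must be $K_{r-1}(v_H)$-free, for otherwise the structure formed by $v, u_1,\dotsc,u_{v_H-1}$ together with such a $K_{r-1}(v_H)$ contains every $(r+1)$-chromatic vertex-critical graph of criticality $k+1$ on at most $v_H$ vertices, including $H$, contradicting $H$-freeness of $G$. Applying Lemma~\ref{lemma:Turan} to this $K_{r-1}(v_H)$-free restriction gives at least $\eps^2 n^2/v_H^2$ missing $\Pi$-edges, which contradicts $e(\Pi) - e(G\cap\Pi) \le (\xi+\delta) n^2 < \eps^2 n^2/v_H^2$ from~\eqref{eq:xi-delta-eps-nu-gamma}. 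The main technical obstacle lies in establishing $|N_j|\ge\eps n$ in the case where $j$ coincides with a ``bad class'' of $v \in X_G$: there the base quantity $|V_j\cap N_G(v)|$ drops from $|V_j|-\nu n$ down to only $\eps n$, so the naive inclusion-exclusion $|N_j|\ge\eps n - (v_H-1)\nu n$ becomes vacuous, and one must use the tight balance between $\eps$ and $\nu$ dictated by~\eqref{eq:eps-nu}---perhaps via a sharper selection of the $u_\ell$'s minimising their overlap with the non-neighbours of $v$ in the bad class---to restore the desired bound.
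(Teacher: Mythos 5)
Your approach mirrors Lemma~\ref{lemma:dense-star} (picking auxiliary vertices $u_1,\dotsc,u_{v_H-1}$ in $V_i\setminus X_G$ and intersecting their neighbourhoods with $N(v)$), but this machinery is both unnecessary and—as you yourself flag at the end—not something you can make work. The obstacle you identify is real: once $v\in X_G$, the quantity $|V_j\cap N_G(v)|$ may be as small as $\eps n$, and the inclusion--exclusion $|N_j|\ge \eps n - (v_H-1)\nu n$ is negative because $\eps\le\nu/8$. Your suggestion that a ``sharper selection of the $u_\ell$'s'' could repair this is pure speculation and is not substantiated; in fact I do not see how to choose the $u_\ell$'s to guarantee they avoid the $|V_j|-\eps n$ non-neighbours of $v$ in a bad class $V_j$, since those non-neighbours form almost all of $V_j$ and you have no leverage on the neighbourhoods of generic vertices of $V_i\setminus X_G$ inside that set. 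So the proof as written does not close.

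The paper's actual argument is much shorter and sidesteps all of this. It never introduces the $u_\ell$'s at all. Since $\deg_{G\setminus\Pi}(v)\ge\eps n$ and $\Pi$ is an unfriendly partition (condition~\eqref{eq:Pi-unfriendly}), the vertex $v$ has at least $\eps n$ neighbours in \emph{every} colour class $V_i$, $i\in\br{r}$ (including its own). The paper picks $N_i\subseteq N(v)\cap V_i$ of size exactly $\eps n$ for each $i$ and observes that the subgraph of $G\cap\Pi$ induced by $N_1\cup\dotsb\cup N_r$ must be $K_r(v_H)$-free: any copy of $K_r(v_H)$ here, together with $v$, yields a complete $(r+1)$-partite graph $K(1,v_H,\dotsc,v_H)\subseteq G$, which contains every $(r+1)$-chromatic vertex-critical graph on at most $v_H$ vertices, in particular $H$. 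Then Lemma~\ref{lemma:Turan} gives $e(\Pi)-e(G\cap\Pi)\ge(\eps n)^2/v_H^2$, contradicting $m>e(\Pi)-\xi n^2$ and $e(G\setminus\Pi)\le\delta n^2$ via~\eqref{eq:xi-delta-eps-nu-gamma}. The decisive difference is that the forbidden structure is $K_r(v_H)$ (over all $r$ classes, living entirely inside $N(v)$), not $K_{r-1}(v_H)$, so no extra vertices $u_\ell$ and hence no neighbourhood intersections are needed, and the ``bad class'' problem simply never arises.
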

\begin{proof}
  Suppose that there was a $G \in \FreesdP$ such that $\Delta(G \setminus \Pi) \ge \eps n$ and pick an arbitrary $v$ with $\deg_{G \setminus \Pi}(v) \ge \eps n$. Suppose that $\Pi = \partition$ and recall from~\eqref{eq:Pi-unfriendly} that $\deg_G(v, V_i) \ge \eps n$ for every $i\in\br{r}$. For each $i$, let $N_i \subseteq N(v) \cap V_i$ be an arbitrary subset of size exactly $\eps n$. Observe that the subgraph of $G \cap \Pi$ that is induced by $N_1 \cup \dotsb \cup N_r$ is $K_r(v_H)$-free; indeed, otherwise $G$ would contain every vertex-critical $(r+1)$-colourable graph with at most $v_H$ vertices, contradicting the fact that $G$ is $H$-free. This implies, in particular, that
  \[
    e(\Pi) - e(G \cap \Pi) \ge e\big(K_r(\eps n)\big) - \ex\big(K_r(\eps n), K_r(v_H)\big) \ge (\eps n)^2/v_H^2,
  \]
  where the last inequality follows from Lemma~\ref{lemma:Turan}. Consequently,
  \[
    m = e(G \cap \Pi) + e(G \setminus \Pi) \le e(\Pi) - (\eps n)^2/v_H^2 + \delta n^2 \lByRef{eq:xi-delta-eps-nu-gamma} e(\Pi) - \xi n^2,
  \]
  a contradiction.
\end{proof}

\begin{proof}[{Proof of Proposition~\ref{prop:dense-main}}]
  Fix an arbitrary $B \in \BPk$ and choose some $G \in \FreeB$. Note that $X_G$ cannot be empty as otherwise $G \subseteq \Pi \cup B$, contradicting the fact that $G \in \FreesdP$. Moreover, by Lemma~\ref{lemma:densBoundDeg}, every vertex of $X_G$ has degree at most $\eps n$ in $G \setminus \Pi$. Denote $\Pi_{X_G} = \Pi \setminus (\Pi - X_G)$; in other words, $\Pi_{X_G}$ comprises all edges of $\Pi$ that have an endpoint in $X_G$. By the definition of $X_G$,
  \[
    e(\Pi_{X_G}) - e(G \cap \Pi_{X_G}) \ge \frac{1}{2} \cdot |X_G| \cdot \nu n.
  \]
  Observe that every graph in $G \in \FreeB$ may be constructed as follows:
  \begin{itemize}
  \item
    Choose a nonempty vertex set $X$ with at most $n/(4r)$ elements (to serve as $X_G$).
  \item
    Choose at most $|X| \cdot \eps n$ edges of $\Pic$, each touching $X$, to form $(G \setminus \Pi) \setminus B$.
  \item
    Choose at most $e(\Pi_X) - |X| \cdot \nu n / 2$ edges of $\Pi_X$ to form $G \cap \Pi_X$.
  \item
    Choose the remaining edges of $G$ from $\Pi - X$.
  \end{itemize}
  In particular, letting
  \[
    t_X = |X| \cdot \eps n \qquad \text{and} \qquad z_X = e(\Pi_X) - |X| \cdot \nu n /2,
  \]
  we have
  \[
    |\FreeB| \le \sum_{\substack{X \neq \emptyset \\ |X| \le n/(4r)}} \sum_{t\le t_X} \sum_{z \le z_X} \binom{|X| \cdot n}{t}  \binom{e(\Pi_X)}{z} \binom{e(\Pi - X)}{m - z - t - e(B)}.
  \]
  Note that, for all $X$ and all $t \le t_X$ and $z \le z_X$,
  \[
    \begin{split}
      \binom{e(\Pi_X)}{z} \cdot \binom{e(\Pi_X)}{z+t}^{-1}
      & \leByRef{eq:binCoef1} \left(\frac{z_X+t_X}{e(\Pi_X) - z_X - t_X}\right)^t \le \left(\frac{e(\Pi_X) + |X| \cdot (\eps - \nu/2) n}{|X| \cdot (\nu/2-\eps) n}\right)^t \\
      & \le \left(\frac{1+\eps-\nu/2}{\nu/2-\eps}\right)^t \leByRef{eq:eps-nu} \left(\frac{4}{\nu}\right)^t \le \left(\frac{4}{\nu}\right)^{t_X},
    \end{split}
  \]
  so that
  \[
    \begin{split}
      \binom{|X| \cdot n}{t} \binom{e(\Pi_X)}{z} \cdot \binom{e(\Pi)_X}{z+t}^{-1} & \le \binom{|X| \cdot n}{t_X} \cdot \left(\frac{4}{\nu}\right)^{t_X} \\
      & \leByRef{eq:binCoef4} \left(\frac{4e \cdot |X| \cdot n}{\nu t_X}\right)^{t_X} = \left(\frac{4e}{\nu \eps}\right)^{|X| \cdot \eps n}.
    \end{split}
  \]
  This gives
  \begin{equation}
    \label{eq:FreeB-second-bound}
    |\FreeB| \le \sum_{\substack{X \neq \emptyset \\ |X| \le n/(4r)}} \left(\frac{4e}{\nu \eps}\right)^{|X| \cdot \eps n} \sum_{z \le z_X} \sum_{t\le t_X} \underbrace{\binom{e(\Pi_X)}{z+t}\binom{e(\Pi - X)}{m - z - t - e(B)}}_{N_{X,z+t}}.
  \end{equation}
  
  By Vandermonde's identity, we have, for every $y$,
  \begin{equation}
    \label{eq:Nz-Vandermonde}
    N_{X,y} \le \binom{e(\Pi_X) + e(\Pi - X)}{m-e(B)} = \binom{e(\Pi)}{m-e(B)}.
  \end{equation}
  Moreover, direct calculation shows that
  \begin{equation}
    \label{eq:Nz-ratio}
    \frac{N_{X,y}}{N_{X,y+1}} = \underbrace{\frac{y+1}{e(\Pi_X)-y}}_{\rho_{X,y}} \cdot \underbrace{\frac{e(\Pi-X)-m+y+1+e(B)}{m-y-e(B)}}_{\rho_{X,y}'}.
  \end{equation}
  Set $y_X = z_X + t_X$ and
  \[
    y_X' = y_X + |X| \cdot \nu n /4 = e(\Pi_X) - |X| \cdot (\nu/4-\eps) n \leByRef{eq:eps-nu} e(\Pi_X) - |X| \cdot \nu n /8.
  \]
  Assume that $|X| \le n/(4r)$ and $y+1 \le y_X'$. Using $e(\Pi_X) \le |X| \cdot n$, we have $\rho_y \le 8/\nu$. Moreover,
  \[
    m-y-1-e(B) \ge e(\Pi) - \xi n^2 - e(\Pi_X) - kn \ge e(\Pi - X) - 2\xi n^2
  \]
  and, by part~\ref{item:Partg-lower} of Proposition~\ref{prop:bal-unbal-part-size} and our assumption that $\gamma \le \frac{1}{20r}$,  
  \[
    m-y-e(B) \ge e(\Pi) - 2\xi n^2 - |X|\cdot n \ge \frac{n^2}{5} -2\xi n^2 - \frac{n^2}{4r} \ge \frac{n^2}{20}.
  \]
  Consequently, $\rho_{X,y}' \le 40\xi$. Substituting these two estimates into~\eqref{eq:Nz-ratio} yields
  \begin{equation}
    \label{eq:Nz-ratio-bound}
    \frac{N_{X,y}}{N_{X,y+1}} \le \frac{320\xi}{\nu} \leByRef{eq:xi-dense} 1.
  \end{equation}
  We may conclude that, when $|X| \le n/(4r)$ and $y \le y_X$,
  \[
    \begin{split}
      N_{X,y}
      & = N_{X,y_X'} \cdot \prod_{y' = y}^{y_X'-1} \frac{N_{X,y'}}{N_{X,y'+1}} \leBy{\eqref{eq:Nz-Vandermonde}, \eqref{eq:Nz-ratio-bound}} \binom{e(\Pi)}{m-e(B)} \cdot \left(\frac{320 \xi}{\nu}\right)^{y_X'-y} \\
      & \leByRef{eq:Nz-ratio-bound} \binom{e(\Pi)}{m-e(B)} \cdot \left(\frac{320\xi}{\nu}\right)^{y_X'-y_X} = \binom{e(\Pi)}{m-e(B)} \cdot \left(\frac{320\xi}{\nu}\right)^{|X| \cdot \nu n/4}.
    \end{split}
  \]
  Finally, substituting this estimate into~\eqref{eq:FreeB-second-bound} yields
  \[
    \begin{split}
      |\FreeB| \cdot \binom{e(\Pi)}{m-e(B)}^{-1} & \le \sum_{\substack{X \neq \emptyset \\ |X| \le n/(4r)}} \left(\frac{4e}{\nu \eps}\right)^{|X| \cdot \eps n} (z_X+1)(t_X+1) \cdot \left(\frac{320\xi}{\nu}\right)^{|X| \cdot \nu n/4} \\
      & \le \sum_{x=1}^{n/(4r)} \binom{n}{x} \cdot (xn)^2 \cdot \left[\left(\frac{4e}{\nu \eps}\right)^\eps \cdot \left(\frac{320\xi}{\nu}\right)^{\nu/4}\right]^{xn} \\
      & \leByRef{eq:xi-dense} \sum_{x=1}^n n^{5x} \cdot e^{-2xn} \le e^{-n},
    \end{split}
  \]
  provided that $n$ is sufficiently large.
\end{proof}

\bibliographystyle{amsplain}
\bibliography{vx-critical}

\end{document}